\newtheorem{thm}{THEOREM}[section]
\newtheorem{cor}[thm]{COROLLARY}
\newtheorem{defn}[thm]{DEFINITION}
\newtheorem{lemma}[thm]{LEMMA}
\newtheorem{prob}[thm]{PROBLEM}
\newtheorem{prop}[thm]{PROPOSITION}
\newcommand{\ds}{\displaystyle}
\newcommand{\F}{{\mathcal F}} 
\newcommand{\wtF}{{\widetilde{\mathcal F}}} 
\newcommand{\G}{\Gamma}
\newcommand{\e}{{\epsilon}} 
\newcommand{\eF}{{\epsilon_{\mathcal F}}} 
\newcommand{\eU}{{\epsilon_{\cU}}} 
\newcommand{\eFU}{{\epsilon^{\F}_{\cU}}} 
\newcommand{\eTU}{{\epsilon^{\cT}_{\cU}}} 
\newcommand{\dFU}{\delta^{\F}_{\cU}} 
\newcommand{\dTU}{\delta^{\cT}_{\cU}} 
\newcommand{\lF}{{\lambda_{\mathcal F}}} 
\newcommand{\rp}{\rho_{\pi}} 
\newcommand{\rt}{\rho_{\tau}} 
\newcommand{\cGF}{\cG_{\F}} 
\newcommand{\cRF}{\cR_{\F}} 
\newcommand{\GF}{\Gamma_{\F}} 
\newcommand{\dX}{\text{diam}_{\fX}} %
\newcommand{\wth}{{\widetilde{h}}}
\newcommand{\wtx}{{\widetilde{x}}}
\newcommand{\wtz}{{\widetilde{z}}}
\newcommand{\wty}{{\widetilde{y}}}
\newcommand{\wtL}{{\widetilde L}}
\newcommand{\wtM}{\widetilde{M}}
\newcommand{\wtN}{\widetilde{N}}
\newcommand{\wtU}{{\widetilde U}}
\newcommand{\wtV}{\widetilde{V}}
\newcommand{\wtpi}{{\widetilde{\pi}}}
\newcommand{\wttau}{{\widetilde{\tau}}}
\newcommand{\wtvp}{{\widetilde{\varphi}}}
\newcommand{\wtgamma}{{\widetilde{\gamma}}}
\newcommand{\wtlambda}{{\widetilde{\lambda}}}
\newcommand{\wtcN}{{\widetilde{\mathcal N}}}
\newcommand{\wtcP}{{\widetilde{\mathcal P}}}
\newcommand{\wtfN}{\widetilde{\mathfrak{N}}}
\newcommand{\wtfT}{\widetilde{\mathfrak{T}}}
\newcommand{\wtfU}{\widetilde{\mathfrak{U}}}
\newcommand{\wtcH}{\widetilde{\mathcal H}}
\newcommand{\whU}{{\widehat U}}
\newcommand{\whcP}{{\widehat{\mathcal P}}}
\newcommand{\oU}{{\overline{U}}}
\newcommand{\mG}{{\mathbb G}}
\newcommand{\mN}{{\mathbb N}}
\newcommand{\mR}{{\mathbb R}}
\newcommand{\mT}{{\mathbb T}}
\newcommand{\mZ}{{\mathbb Z}}
\newcommand{\cC}{{\mathcal C}}
\newcommand{\cG}{{\mathcal G}}
\newcommand{\cH}{{\mathcal H}}
\newcommand{\cI}{{\mathcal I}}
\newcommand{\cJ}{{\mathcal J}}
\newcommand{\cK}{{\mathcal K}}
\newcommand{\cN}{{\mathcal N}}
\newcommand{\cO}{{\mathcal O}}
\newcommand{\cP}{{\mathcal P}}
\newcommand{\cR}{{\mathcal R}}
\newcommand{\cS}{{\mathcal S}}
\newcommand{\cT}{{\mathcal T}}
\newcommand{\cU}{{\mathcal U}}
\newcommand{\cV}{{\mathcal V}}
\newcommand{\cW}{{\mathcal W}}
\newcommand{\fB}{{\mathfrak{B}}}
\newcommand{\fD}{{\mathfrak{D}}}
\newcommand{\fF}{\mathfrak{F}}
\newcommand{\fM}{{\mathfrak{M}}}
\newcommand{\fS}{{\mathfrak{S}}}
\newcommand{\fT}{{\mathfrak{T}}}
\newcommand{\fU}{{\mathfrak{U}}}
\newcommand{\fX}{{\mathfrak{X}}}
\newcommand{\vp}{{\varphi}}
\begin{document}

\title{Homogeneous matchbox manifolds}

\thanks{2010 {\it Mathematics Subject Classification}. Primary 57S10, 54F15, 37B10, 37B45; Secondary 57R05, }
\thanks{Both authors supported by NWO travel grant 040.11.132}
\author{Alex Clark}
\thanks{AC supported in part by EPSRC grant EP/G006377/1}
\address{Alex Clark, Department of Mathematics, University of Leicester, University Road, Leicester LE1 7RH, United Kingdom}
\email{adc20@le.ac.uk}

\author{Steven Hurder}
\address{Steven Hurder, Department of Mathematics, University of Illinois at Chicago, 322 SEO (m/c 249), 851 S. Morgan Street, Chicago, IL 60607-7045}
\email{hurder@uic.edu}
\thanks{Version date: June 28, 2010; revised   July 11, 2011}

\date{}


\keywords{solenoids, matchbox manifold, laminations, equicontinuous foliation, Effros Theorem, foliations}

\maketitle



\section{Introduction} \label{sec-intro}

A \emph{continuum} is a compact, connected, and non-empty
metrizable space. A topological  space $X$ is \emph{homogeneous} if for
every $x, y \in X$, there exists a homeomorphism $h \colon X \to X$
such that $h(x) = y$.

\begin{thm}[Bing  \cite{Bing1960}] \label{thm-bing}
Let $X$ be    a homogeneous, circle-like
continuum that contains an arc. Then either $X$ is homeomorphic to a circle, or to   a Vietoris solenoid.
\end{thm}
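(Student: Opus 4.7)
The plan is to use the circle-like hypothesis to produce a sequence of $(1/n)$-maps $f_n : X \to S^1$, and then to use homogeneity together with the existence of an arc to upgrade this sequence to an inverse system of covering maps of $S^1$. Since the inverse limit of such a system is by definition either $S^1$ (degrees all equal to $1$) or a Vietoris solenoid, this forces the dichotomy of the theorem.

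\emph{Step 1 (Chain $\epsilon$-maps).} By circle-likeness, for each $n$ choose a circular chain cover $\cU_n = \{U_0, \ldots, U_{k_n-1}\}$ of mesh less than $1/n$, meaning $U_i \cap U_j \ne \emptyset$ iff $|i - j| \le 1 \pmod{k_n}$. Collapsing each link $U_i$ onto an arc in $S^1$ and gluing cyclically yields a continuous surjection $f_n : X \to S^1$ whose point preimages have diameter less than $1/n$. This part is essentially formal.

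\emph{Step 2 (Local matchbox structure).} Next I would show that the hypotheses force $X$ to be locally the product of an arc and a compact totally disconnected transversal. Homogeneity lets us translate the given arc to pass through any prescribed point $x \in X$; the Effros theorem, which for a homogeneous Polish space provides self-homeomorphisms moving points by arbitrarily small amounts, then lets one assemble arcs through nearby points into a foliation of a neighborhood $V \ni x$ by parallel arc leaves indexed by a zero-dimensional transversal. In the terminology of this paper, $X$ acquires the structure of a one-dimensional matchbox manifold.

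\emph{Step 3 (Covering bonding maps and identification with a solenoid).} With local product structure available, I would refine each $\cU_n$ so that every link is a product chart and consecutive links overlap in matching product sub-charts. Restricted to an arc leaf, $f_n$ is then locally injective; by compactness it is a local homeomorphism, and on each arc component of $X$ it restricts to a covering of $S^1$ of some fixed degree. Passing to a cofinal subsequence of the $\cU_n$, the induced bonding maps $p_n : S^1 \to S^1$ are themselves finite-degree covering maps, and the maps $f_n$ assemble into a homeomorphism $X \cong \varprojlim(S^1, p_n)$, which is by definition either $S^1$ or a Vietoris solenoid.

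The crux is Step 3: the chains produced abstractly in Step 1 may be arbitrarily ``crooked,'' and the resulting maps $f_n$ far from local homeomorphisms. In the chainable analogue of Bing's theorem, crooked chains are precisely what produce the pseudo-arc; in the circle-like setting they produce the pseudo-circle. The strategy above uses the matchbox structure of Step 2 to straighten the chains locally, and a uniformity argument (the homogeneity of $X$ and compactness together ensure that the product charts of Step 2 can be chosen of a uniform scale) to straighten them globally. Converting this locally-to-globally straightening into an actual refinement procedure for circular chains that respects the arc/transverse decomposition is the technical heart of the argument.
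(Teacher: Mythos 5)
The paper does not prove Theorem~\ref{thm-bing}; it is quoted from Bing's 1960 paper, and the modern proofs via local product structures are attributed to Mislove--Rogers and Aarts--Hagopian--Oversteegen. So the relevant comparison is against those arguments and against the structure of the $n$-dimensional proof developed in this paper. Your Step~2 is aligned with the Mislove--Rogers route and is correct in outline, though ``assemble arcs \ldots into a foliation'' compresses substantial work: one must rule out that the translated arcs bifurcate, merge, or fail to form a genuine product, and the circle-likeness of $X$ enters essentially (through $1$-dimensionality) to force the transversal to be totally disconnected. That this can be done is not a routine corollary of Effros.

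The genuine gap is in Step~3, and you have correctly located it but not supplied the missing idea. Refining the circular chains to flow-box covers does not make the maps $f_n$ locally injective on leaves: a leaf can enter a link, exit, and re-enter the same link, and this folding is a global dynamical phenomenon invisible to any local refinement of charts. What actually eliminates folding is \emph{equicontinuity of the holonomy pseudogroup}, which is exactly what this paper derives from Effros in Theorem~\ref{thm-equic} and then exploits through the orbit coding of Section~\ref{sec-codes}. Once equicontinuity is in hand, the cleaner route is to discard the a priori $\epsilon$-maps of Step~1 altogether and build the inverse system intrinsically from coding partitions $\cV_\ell$ of a clopen transversal, taking $M_\ell = \fM/\!\approx_\ell$; in dimension $1$ each quotient is a connected closed $1$-manifold and hence automatically a circle, so nothing like the delicate Theorem~\ref{thm-foliate} is needed. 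Your closing paragraph gestures at a ``uniformity argument'' as a stand-in for all of this, but without an equicontinuity statement and a coding step, Step~3 as written does not rule out precisely the nonmonotone, pseudo-circle--type behavior you yourself flag as the obstruction.
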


In the course of the proof of Theorem~\ref{thm-bing},
Bing raised the question: If $X$ is a homogeneous
continuum, and if every proper subcontinuum of $X$ is an arc, must
$X$ then be a circle or a solenoid?
An affirmative answer to this question was given by Hagopian~\cite{Hagopian1977}, and
 subsequent (simpler) proofs  in the framework of $1$-dimensional
matchbox manifolds were given by Mislove and Rogers \cite{MR1989} and by  Aarts,  Hagopian and Oversteegen~\cite{AHO1991}. In this paper,
we prove the generalization of this result to  $n$-dimensional matchbox manifolds, for all $n \geq 1$.

We introduce some notations required to state our main result precisely.
An \emph{$n$-dimensional
solenoid} is an inverse limit space
\begin{equation}\label{eq-solenoid}
\cS = \lim_{\leftarrow} ~ \{p_{\ell+1} \colon M_{\ell +1} \to M_{\ell}\}
\end{equation}
where for $\ell \geq
0$, $M_{\ell}$ is a compact, connected, $n$-dimensional manifold without boundary, and the maps
$p_{\ell+1} \colon M_{\ell +1} \to M_{\ell}$ are   \emph{proper}
covering maps.
 A \emph{Vietoris solenoid}  is   a $1$-dimensional solenoid, where each $M_{\ell}$ is a circle.

If all of the defined compositions of the covering
maps $p_{\ell}$ are normal coverings, then $\cS$ is said to be a
\emph{McCord solenoid}.
McCord solenoids are homogeneous  \cite{McCord1965},
and conversely,  Fokkink and Oversteegen showed in \cite{FO2002} that any
homogeneous $n$-dimensional solenoid is homeomorphic to a McCord
solenoid.

An \emph{$n$-dimensional foliated space} $\fM$ is a continuum
which has a local product structure \cite{CandelConlon2000,MS2006};
that is, every point of $\fM$ has an open neighborhood homeomorphic
to an open subset of $\mR^n$ times a compact metric space (the local
transverse model). The leaves of the foliation $\F$ of $\fM$ are the
maximal connected components with respect to the fine topology on
$\fM$ induced by the plaques of the local product structure. Precise
definitions are given in Section~\ref{sec-concepts}.

A \emph{matchbox manifold} is a foliated space $\fM$ such that the
local transverse models are totally disconnected. Intuitively, a
$1$-dimensional matchbox manifold $\fM$ has local coordinate charts
$U$ which are homeomorphic to a ``box of matches.'' Manifolds and
$n$-dimensional solenoids   provide examples of matchbox
manifolds.

As remarked above, every homogeneous
$1$-dimensional matchbox manifold is homeomorphic to a circle or a
solenoid \cite{AHO1991}.  Our primary
result is the generalization of this $1$-dimensional result to
$n$-dimensions, thereby proving a strong version of a conjecture of
Fokkink and Oversteegen \cite[Conjecture 4]{FO2002} under a
smoothness assumption, as clarified in Section~\ref{sec-concepts}.

\begin{thm}\label{thm-main1}
Let $\fM$ be an  homogeneous smooth matchbox manifold. Then $\fM$ is
homeomorphic to a McCord solenoid. In particular, $\fM$ is minimal.
\end{thm}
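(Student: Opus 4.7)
The plan is to combine homogeneity with the Effros theorem to force the foliation $\F$ of $\fM$ to be equicontinuous, then to take the closure of the holonomy pseudogroup to obtain a profinite transverse group, and finally to read off from this group an inverse system of normal finite covers whose inverse limit is homeomorphic to $\fM$. This parallels the $n=1$ proofs of Aarts--Hagopian--Oversteegen \cite{AHO1991} and Mislove--Rogers \cite{MR1989}, with the leafwise geometry of smooth $n$-manifolds replacing the use of arcs.

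First I would observe that homogeneity implies minimality: for any leaf $L$, the closure $\overline{L}$ is a closed $\text{Homeo}(\fM)$-invariant subcontinuum, so by transitivity $\overline{L}=\fM$. Next, equipping $\text{Homeo}(\fM)$ with the sup-metric topology makes it a Polish group acting continuously and transitively on the Polish space $\fM$. The Effros theorem then yields micro-transitivity: for every $\ve>0$ there exists $\delta>0$ such that whenever $x,y\in\fM$ satisfy $d(x,y)<\delta$, some $h\in\text{Homeo}(\fM)$ sends $x$ to $y$ while moving every point of $\fM$ by less than $\ve$. Restricting such an $h$ to a foliated chart near $x$ and using the smooth leafwise structure to decompose $h$ into a leafwise component along $\F$ and a transverse component on a complete transversal $\cT$, I would deduce that the holonomy pseudogroup $\cH_{\F}$ acts equicontinuously on $\cT$.

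With equicontinuity in place and $\cT$ compact and totally disconnected, the uniform closure of $\cH_{\F}$ is a compact profinite group $\fG$ of homeomorphisms of $\cT$, and this action is transitive by minimality. Writing $\fG=\varprojlim \fG/\fK_{\ell}$ for a descending chain of open normal subgroups $\fK_{\ell}$ of finite index, the equicontinuous transverse action together with the smooth leafwise geometry lets one construct a compact smooth $n$-manifold $M_{0}$ and a quotient map $\pi_{0}\colon\fM\to M_{0}$ collapsing each transversal to a point. At level $\ell$, replacing $\fG$ by $\fG/\fK_{\ell}$ refines this to a finite normal cover $M_{\ell}\to M_{0}$, and these fit into an inverse system of finite normal covering maps $p_{\ell+1}\colon M_{\ell+1}\to M_{\ell}$ as in \eqref{eq-solenoid}. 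Its inverse limit is by construction a McCord solenoid $\cS$. The canonical maps $\fM\to M_{\ell}$ assemble into a continuous leaf-preserving $\Phi\colon\fM\to\cS$, which is surjective by minimality and injective because the $\fK_{\ell}$-cosets in $\cT$ separate points of $\fM$ in the limit; hence $\Phi$ is a homeomorphism of compact Hausdorff spaces, and minimality of $\fM$ follows from minimality of $\cS$.

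The main obstacle will be the passage in Step~1 from the abstract micro-transitivity supplied by Effros' theorem to genuine equicontinuity of the holonomy pseudogroup of $\F$. A global homeomorphism of $\fM$ produced by Effros need not preserve the foliation, so splitting it into a leafwise piece, absorbed by the smooth leaf geometry, and a transverse piece, which becomes a controlled holonomy element, requires a careful analysis of how plaques deform under small global perturbations, and it is here that the smoothness hypothesis is essential --- accounting for why the purely topological Fokkink--Oversteegen conjecture remains open. A secondary but nontrivial difficulty is to produce in Step~3 a compact smooth base $M_{0}$ making the covering maps $p_{\ell+1}$ manifestly finite, smooth, and normal; the normality is precisely what upgrades $\cS$ from a generic $n$-dimensional solenoid to a McCord solenoid.
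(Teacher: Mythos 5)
Your first two steps track the paper faithfully: the Effros theorem applied to $\text{Homeo}(\fM)$ does yield micro-transitivity, and the paper's Theorem~\ref{thm-equic} converts this into equicontinuity of the holonomy pseudogroup (using, as you correctly anticipate, that every homeomorphism of a matchbox manifold is automatically foliated, Corollary~\ref{cor-foliated2}, so there is no need to ``decompose'' a perturbing homeomorphism -- it already sends plaques near plaques). But your Step~1 minimality argument is not correct as stated: $\overline{L}$ is not $\text{Homeo}(\fM)$-invariant, since a homeomorphism permutes leaves, hence sends $\overline{L}$ to $\overline{L'}$ for a possibly different leaf $L'$. The paper instead proves minimality from \emph{equicontinuity} (Theorem~\ref{thm-minimal}), and that proof exploits the total disconnectedness of the transversal in an essential way to show that the saturation of a clopen set is clopen; there is no short homogeneity-only argument in the paper.

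The more serious gap is your claim that ``the uniform closure of $\cH_{\F}$ is a compact profinite group $\fG$ of homeomorphisms of $\cT$.'' The holonomy pseudogroup is a \emph{pseudogroup}, not a group: its elements are local homeomorphisms with restricted domains, and nothing forces them to extend to bijections of a fixed complete transversal. This closure-to-profinite-group step is only available in the Cantor bundle case, i.e.\ when $\fM$ is \emph{given} as a suspension over a base manifold with a global fibered transversal. The paper is explicit that this is precisely where the general case becomes hard: it replaces your profinite-group argument with the coding construction of Section~\ref{sec-codes} (Thomas-style partitions $\cV_\ell$ of a clopen transversal with uniform return behavior), the Thomas tubes of Section~\ref{sec-tubes}, and, crucially, Theorem~\ref{thm-foliate} on the existence of a transverse Cantor foliation adapted to these tubes --- a long technical result whose proof is deferred to a companion paper. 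Your Step~3 sentence ``the equicontinuous transverse action together with the smooth leafwise geometry lets one construct a compact smooth $n$-manifold $M_0$ and a quotient map $\pi_0\colon\fM\to M_0$ collapsing each transversal to a point'' is exactly the content of Theorem~\ref{thm-foliate} plus Proposition~\ref{prop-structure1}, and is the single hardest step in the whole program; it cannot be treated as a byproduct. Finally, even granting a profinite transverse group, you would still need an argument that the resulting tower of covers can be chosen \emph{normal}: the open normal subgroups $\fK_\ell$ of $\fG$ do not automatically correspond to normal subgroups of $\pi_1(M_0)$. The paper handles this in Section~\ref{sec-mccord} by a second application of Effros (Proposition~\ref{prop-cores}), showing that the normal cores $C_{\cH_{\ell_1}}(\cH_\ell)$ still cofinal in the chain, and then invoking the equivalence criterion of Theorem~\ref{thm-equivalence}; this does not fall out of the profinite picture for free.
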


As a consequence of Theorem~\ref{thm-main1} and the impossibility of
codimension-$1$ embeddings of solenoids as shown
in~\cite{ClarkFokkink2004}, we obtain the following corollary, which
is a generalization of the result of Prajs~\cite{Prajs1990} that any
homogeneous continuum in $\mR^{n+1}$ which contains an $n$-cube is
an $n$-manifold.

\begin{cor}\label{cor-codimen1}
Let $\fM$ be an   homogeneous, smooth $n$-dimensional matchbox manifold
which embeds in a closed orientable $(n+1)$-dimensional manifold. Then $\fM$
is   a manifold.
\end{cor}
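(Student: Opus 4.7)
The plan is to combine Theorem~\ref{thm-main1} directly with the Clark--Fokkink non-embedding result, so the corollary follows formally in just a few lines; the only conceptual work is unpacking what ``$\fM$ is a manifold'' means in terms of the McCord solenoid classification.

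First, I would apply Theorem~\ref{thm-main1} to the given homogeneous smooth $n$-dimensional matchbox manifold $\fM$: it yields a homeomorphism $\fM \cong \cS$, where
\[
\cS = \lim_{\leftarrow} \{p_{\ell+1} \colon M_{\ell+1} \to M_{\ell}\}
\]
is a McCord solenoid, with each $M_{\ell}$ a closed connected $n$-manifold and each $p_{\ell+1}$ a normal covering. Under this identification, the transverse local model of $\fM$ is the inverse limit of the fibers of the tower, which is a Cantor set whenever the tower is not eventually trivial.

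Next, dichotomize on the tower. If all but finitely many of the covering maps $p_{\ell+1}$ are homeomorphisms, then the inverse limit $\cS$ is homeomorphic to some $M_{\ell_0}$, which is itself an $n$-manifold, and we are done. Otherwise $\cS$ has non-trivial Cantor transverse structure, and I would invoke the result of Clark and Fokkink~\cite{ClarkFokkink2004} which asserts that such a non-trivial $n$-dimensional solenoid admits no embedding of codimension one into a closed orientable $(n+1)$-dimensional manifold. This directly contradicts the hypothesis that $\fM$ embeds in a closed orientable $(n+1)$-manifold, so this case cannot occur.

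Combining the two cases, only the trivial tower is possible, so $\fM$ is homeomorphic to a closed $n$-manifold, as required. The only subtlety — and it is minor, not a real obstacle — is making sure the non-embedding theorem of~\cite{ClarkFokkink2004} is being applied in exactly the form stated there (homogeneous / McCord, closed ambient manifold, orientability); everything else is a direct concatenation of Theorem~\ref{thm-main1} with a black-box citation. No extra lemmas or estimates are needed.
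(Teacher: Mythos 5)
Your high-level strategy -- apply Theorem~\ref{thm-main1} to conclude $\fM$ is a McCord solenoid, then rule out the nontrivial solenoid case via the codimension-one non-embedding theorem of Clark--Fokkink -- is exactly the argument the paper has in mind, and the trivial-vs-nontrivial tower dichotomy you set up is a reasonable way to organize the application of the contrapositive. So there is no disagreement about the skeleton of the proof.

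However, the ``subtlety'' you flag at the end and dismiss as minor is in fact the one substantive point, and the paper does \emph{not} treat it as a formality. Lemma~2 of \cite{ClarkFokkink2004}, as the paper invokes it in the proof of Theorem~\ref{thm-Denjdichot} and in the discussion just before Corollary~\ref{cor-codimen1a}, is applied to a solenoidal presentation whose base manifolds $M_{\ell}$ are \emph{orientable}. Orientability of the $M_{\ell}$ is not automatic from the homogeneous/McCord hypothesis: in Section~\ref{sec-codim1} the authors explicitly derive it from an additional hypothesis, namely that $\fM$ is orientable as a foliated space (Definition~\ref{orientable}), using that an orientable foliated space admits an atlas with positively oriented leafwise transitions, from which the quotient manifolds $M_{\ell}$ inherit orientations. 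Accordingly, the version actually proved in Section~\ref{sec-codim1} is Corollary~\ref{cor-codimen1a}, which adds ``orientable'' to the hypotheses on $\fM$. Your argument as written never establishes orientability of the $M_{\ell}$, so the invocation of the non-embedding theorem is not justified in the form you use it. To close the gap you would need either (i) to add the orientability hypothesis on $\fM$, as in Corollary~\ref{cor-codimen1a}, and carry through the orientation argument from Section~\ref{sec-codim1}, or (ii) to verify that the statement of the Clark--Fokkink result you are quoting genuinely applies to solenoids over possibly non-orientable base manifolds; neither is done in your write-up. Everything else -- the reduction via Theorem~\ref{thm-main1} and the dichotomy -- is fine.
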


The work \cite{ClarkHurder2011} by the authors studies the problem of finding smooth embeddings of solenoids into foliated manifolds with codimension $q \geq 2$. It is an open problem, in general, to determine the lowest codimension $q > 1$ in which a given solenoid can be embedded, either into a compact manifold, or as a minimal set for a $C^r$-foliation of a compact manifold.

The proof of the main theorem involves drawing an important connection between homogeneity and equicontinuity, based on the fundamental result of Effros that transitive continuous actions of Polish groups are micro-transitive \cite{Ancel1987, Effros1965,vanMill2004, vanMill2008}.
As a step in the proof of Theorem~\ref{thm-main1}, we show in Theorem~\ref{thm-equic}  that Effros' Theorem implies that  a homogeneous matchbox manifold is \emph{equicontinuous}. Combining the results  Theorem~\ref{thm-imasolenoid} and Proposition~\ref{prop-equicex}, we obtain:

\begin{thm}\label{thm-main2}
A smooth matchbox manifold $\fM$ is
homeomorphic to an $n$-dimensional solenoid if and only if $\fM$ is equicontinuous.
\end{thm}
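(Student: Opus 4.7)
The biconditional in Theorem~\ref{thm-main2} splits naturally into two independent implications. Proposition~\ref{prop-equicex} supplies the ``only if'' direction, namely that any $n$-dimensional solenoid is equicontinuous, while Theorem~\ref{thm-imasolenoid} supplies the ``if'' direction, that a smooth equicontinuous matchbox manifold is homeomorphic to a solenoid. The plan is to sketch an independent argument for each half.

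For the forward direction, I would work directly from the inverse-limit description $\cS = \varprojlim \{p_{\ell+1} \colon M_{\ell+1} \to M_{\ell}\}$ of equation~(\ref{eq-solenoid}). A complete transversal at a basepoint $x = (x_{\ell})_{\ell}$ identifies naturally with the profinite fiber $\varprojlim p_{\ell}^{-1}(x_0)$, on which I place the standard ultrametric in which two points lie within distance $2^{-\ell}$ if and only if their projections to $M_{\ell}$ coincide. Holonomy of $\F$ is obtained by lifting loops from $M_0$ through each covering $p_\ell$, and because each lift at finite stage is a bijection of a finite fiber, the induced holonomy permutes the clopen $2^{-\ell}$-balls and is an isometry for this ultrametric. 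Equicontinuity of the holonomy pseudogroup is then immediate.

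For the reverse direction, assume the holonomy pseudogroup $\cH$ acts equicontinuously on a complete transversal $\cT \subset \fM$, and fix a basepoint $x_0 \in \cT$. Using equicontinuity together with the totally disconnected topology of $\cT$, I would construct a nested sequence $\cT = V_0 \supset V_1 \supset V_2 \supset \cdots$ of clopen neighborhoods of $x_0$ with diameters tending to zero, each $V_\ell$ invariant under a finite-index sub-pseudogroup $\cH_\ell \subset \cH$ containing the germs of all return maps of sufficiently short plaque chains. The $\F$-saturation of $V_\ell$ modulo the equivalence induced by $\cH_\ell$ should then carry the structure of a compact $n$-manifold $M_\ell$, and the inclusions $V_{\ell+1} \hookrightarrow V_\ell$ induce finite-sheeted covering maps $p_{\ell+1} \colon M_{\ell+1} \to M_\ell$. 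A standard compatibility check identifies $\fM$ with $\varprojlim M_\ell$.

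The main obstacle lies in the reverse direction, specifically in producing the quotients $M_\ell$ as bona fide compact manifolds rather than foliated spaces with holonomy singularities. Two issues must be resolved: first, that the $\cH_\ell$-action on a neighborhood of each point is free and properly discontinuous along the smooth leaf structure, so that the quotient inherits a smooth atlas---this is precisely where the smoothness hypothesis on $\fM$ is used; second, that the moduli of equicontinuity and the diameters of $V_\ell$ can be synchronized so that each $p_{\ell+1}$ is everywhere a covering map of constant finite degree. Granting these, the assembly into an inverse system and the identification $\fM \cong \varprojlim M_\ell$ are formal consequences of the universal property of inverse limits.
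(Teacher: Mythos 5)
Your split into the two implications, with Proposition~\ref{prop-equicex} supplying ``solenoid $\Rightarrow$ equicontinuous'' and Theorem~\ref{thm-imasolenoid} supplying the converse, is exactly the paper's decomposition. Your forward-direction sketch is a mild reformulation of the paper's proof of Proposition~\ref{prop-equicex}: putting the profinite ultrametric on the fiber makes holonomy an isometry (lifting a path through a finite cover at each level is determined by its starting point there), so equicontinuity is immediate. The paper runs the same estimate directly in the ambient metric by choosing $\delta$ small enough that two transversal points agree up to a fixed level $k$, which is your isometry claim unwound. To make the metric switch legitimate you should note that the condition in Definition~\ref{def-equicontinuous} is invariant under replacing the metric on the compact transverse model $\fT_*$ by a topologically equivalent one, which follows from uniform continuity of the identity in both directions on a compact space.

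For the reverse direction your outline matches Sections~\ref{sec-codes}--\ref{sec-projections}, but your diagnosis of the main obstacle is wrong in a way that would derail an actual proof. You require that ``the $\cH_\ell$-action on a neighborhood of each point is free and properly discontinuous'' so that the quotient inherits a smooth atlas. Freeness is neither needed nor available: the construction in Section~\ref{subsec-mmwh} produces equicontinuous matchbox manifolds whose leaves have infinite germinal holonomy groups, and Theorem~\ref{thm-imasolenoid} applies to them nonetheless. The actual difficulty is not a fixed-point issue. The quotient $M_\ell$ is not obtained by dividing the transversal by a holonomy pseudogroup action; it is obtained by collapsing the leaves of a \emph{transverse Cantor foliation} of $\fM$ (Definition~\ref{def-cantorfol}) adapted to the Thomas tube, and the existence of such a foliation is precisely Theorem~\ref{thm-foliate}, proved in the companion paper~\cite{CHL2011a}. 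The problem solved there is coherence of the ``vertical'' Cantor fibers across chart overlaps: a regular foliation atlas guarantees that horizontal plaques match up on overlaps but says nothing about matching transversals, and it is the leafwise Riemannian metric (hence the smoothness hypothesis) together with a stable Voronoi/Delaunay triangulation of the leaves that makes the vertical fibers compatible. Your second point, about synchronizing the equicontinuity moduli with the diameters of the $V^\ell$, is on target and corresponds to the orbit coding in Section~\ref{sec-codes}; and the final ``formal'' identification $\fM \cong \varprojlim M_\ell$ still requires showing the equivalence classes shrink to points (Proposition~\ref{prop-invlim}), which uses those same diameter bounds.
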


Examples of equicontinuous smooth matchbox manifolds   which are not homogeneous are given in  Section~\ref{sec-examples}, showing  the results of Theorem~\ref{thm-main1} and Theorem~\ref{thm-main2} are optimal.

 There is an analogy between Theorem~\ref{thm-main1}, and the classification theory for Riemannian foliations \cite{Molino1988,MoerdijkMrcunbook2003}. Recall that a Riemannian foliation $\F$ on a compact manifold $M$ is said to be {\it transversally parallelizable} (or TP) if the group of foliation-preserving diffeomorphisms of $M$ acts transitively. In this case, the minimal sets for $\F$ are principle $H$-bundles, where $H$ is the structural Lie group of the foliation. Theorem~\ref{thm-main1} is the analog of this result for matchbox manifolds. It is interesting to compare this result with the theory of equicontinuous foliations on compact manifolds, as in \cite{ALC2009}.

 However, if $\fM$ is equicontinuous but not homogeneous, then the analogy becomes more tenuous. Clark, Fokkink and Lukina introduce in \cite{CFL2010} the Schreier continuum for   solenoids, an invariant of the topology of $\fM$,  which they use to calculate the end structures of leaves. In particular, they show that there exists   non-McCord solenoids for which the number of ends of leaves can be between 2 and infinity. It is not known if such behavior is possible  for Riemannian foliations which are not transversally parallelizable. (See \cite{Zhukova2010} for a discussion of ends of leaves in Riemannian foliations.)

We say that a matchbox manifold $\fM$ is a \emph{Cantor bundle}, if there exists a base manifold $M_0$ and a fibration $\pi_0 \colon \fM \to M_0$ so that for each $b \in M_0$ the fiber $\fF_b = \pi_0^{-1}(b)$ is a Cantor set.
The proofs of Theorem~\ref{thm-main1} and \ref{thm-main2} are much simpler, at least technically, if we assume that $\fM$ is a Cantor bundle.
In fact, in   \cite{Clark2002} the first author gave a proof of Theorem~\ref{thm-main1} in the case where $\fM$ is a Cantor bundle with base   an $n$-torus $\mT^n$, for $n > 1$. The technical simplifications are due to two properties of Cantor bundles, one is that for each $b \in M_0$ the fiber $\fF_b \equiv \pi_0^{-1}(b) \subset \fM$ is a transversal to the foliation $\F$ of $\fM$. The second simplification is that the local holonomy maps along leaves of $\F$ are the restrictions of global automorphisms of a fixed fiber  $\fF_0 \equiv \pi_0^{-1}(b_0)$. The extension of the arguments of  \cite{Clark2002} from the case of a base manifold $M_0 = \mT^n$ to an arbitrary compact base manifold $M_0$  involves few technical complications. In   the general case, the main technical  difficulties arise  due to the absence of given uniform transversals to the foliation $\F$ on $\fM$, and the consequent need for  uniform  estimates on the domains and dynamical behavior of the leafwise holonomy maps.

Section~\ref{sec-concepts} introduces the basic concepts of matchbox manifolds, and   in Section~\ref{sec-holonomy} the   holonomy maps and their properties are considered.
Properties of equicontinuous matchbox manifolds are developed in Section~\ref{sec-mme},
and properties of homogeneous matchbox manifolds in Section~\ref{sec-hmm}.

Section~\ref{sec-codes} begins the proof of Theorem~\ref{thm-main1} in ernest, as we develop the notion of the orbit coding for an equicontinuous matchbox manifold. This leads to a ``Borel'' version of the results of the main theorem.
Section~\ref{sec-tubes} shows how to obtain  the covering quotient maps associated to the Borel structures obtained in the previous section. The results of Section~\ref{sec-projections} depend  upon Theorem~\ref{thm-foliate}, which is fundamental for the analysis of the general case, but whose proof is quite technical and long, and thus relegated to the companion work \cite{CHL2011a}.

Finally, in Section~\ref{sec-mccord} we show that the solenoid structure obtained in Section~\ref{sec-projections} is a McCord solenoid with the additional hypothesis of homogeneity. This completes the proof of the Theorem~\ref{thm-main1}.

Section~\ref{sec-examples} gives   examples of matchbox manifolds which are equicontinuous but not homogeneous.
Section~\ref{sec-codim1} discusses an application of the main theorem to codimension one embeddings of solenoids.
Finally, Section~\ref{sec-remarks} discusses a selection of   open problems.

The second author would like to thank the Department of Mathematics at Leicester University for the warm hospitality given during a visit in February 2009, when this work was begun. The first author would like to thank the Department of Mathematics at the University of Illinois at Chicago for its generous hospitality for a visit in November 2010 that was vital to the completion of this project. Both authors are grateful to Robbert Fokkink for his comments on this work, and the hospitality provided by the University of Delft for hosting both authors during a mini-workshop in August 2009, and to our colleague Olga Lukina for her many helpful comments and questions. Finally, the authors are grateful to the referees for this paper, whose detailed reading(s) and comments greatly improved the exposition.

\section{Foliated spaces} \label{sec-concepts}

In this section, we discuss the basic concepts of foliated spaces. A more detailed discussion with examples can be found in \cite[Chapter 11]{CandelConlon2000} and \cite[Chapter 2]{MS2006}.
\begin{defn} \label{def-fs}
A continuum $\fM$ is a \emph{foliated space of dimension $n$} if there exists a compact separable metric space $\fX$, and
for each $x \in \fM$ there is a compact subset $\fT_x \subset \fX$, open subset $U_x \subset \fM$, and homeomorphism defined on its closure
$\vp_x \colon \oU_x \to [-1,1]^n \times \fT_x$ such that $\vp_x(x) = (0, w_x)$ where $w_x \in int(\fT_x)$. The
subspace $\fT_x$ of $\fX$ is called the \emph{local transverse model} at $x$.

Let $\pi_x \colon \oU_x \to \fT_x$ denote the composition of $\vp_x$ with projection onto the second factor.

For $w \in \fT_x$ the set $\cP_x(w) = \pi_x^{-1}(w) \subset \oU_x$ is called a \emph{plaque} for the coordinate chart $\vp_x$. We adopt the notation, for $z \in \oU_x$ that $\cP_x(z) = \cP_x(\pi_x(z))$, so that $z \in \cP_x(z)$. Note that each plaque $\cP_x(w)$ is given the topology so that the restriction $\vp_x \colon \cP_x(w) \to [-1,1]^n \times \{w\}$ is a homeomorphism. Then $int (\cP_x(w)) = \vp_x^{-1}((-1,1)^n \times \{w\})$.

Let $U_x = int (\oU_i) = \vp_x^{-1}((-1,1)^n \times int(\fT_x))$.
We require, in addition, that if $z \in U_x \cap U_y$, then $int(\cP_x(z)) \cap int( \cP_y(z))$ is an open subset of both
$\cP_x(z) $ and $\cP_y(z)$.
\end{defn}

The collection of sets
$$\cV = \{ \vp_x^{-1}(V \times \{w\}) \mid x \in \fM ~, ~ w \in \fT_x ~, ~ V \subset (-1,1)^n ~ {\rm open}\}$$
forms the basis for the \emph{fine topology} of $\fM$. The connected components of the fine topology are called leaves, and define the foliation $\F$ of $\fM$.
For $x \in \fM$, let $L_x \subset \fM$ denote the leaf of $\F$ containing $x$.

Note that in the above definition, the collection of transverse models
$\{\fT_x \mid x \in \fM\}$ need not have union equal to $\fX$. This is similar to the situation for a smooth foliation of codimension $q$, where each foliation chart projects to an open subset of $\mR^q$, but the collection of images need not cover $\mR^q$.

A \emph{smooth foliated space} is a foliated space $\fM$ as above, such that there exists a choice of local charts $\vp_x \colon \oU_x \to [-1,1]^n \times \fT_x$ such that for all $x,y \in \fM$ with $z \in U_x \cap U_y$, there exists an open set $z \in V_z \subset U_x \cap U_y$ such that $\cP_x(z) \cap V_z$ and $\cP_y(z) \cap V_z$ are connected open sets, and the composition
$$\psi_{x,y;z} \equiv \vp_y^{-1} \circ \vp_x \colon \vp_x(\cP_x (z) \cap V_z) \to \vp_y(\cP_y (z) \cap V_z)$$
is a smooth map, where $\vp_x(\cP_x (z) \cap V_z) \subset \mR^n \times \{w\} \cong \mR^n$ and $\vp_y(\cP_y (z) \cap V_z) \subset \mR^n \times \{w'\} \cong \mR^n$. Moreover, we require that the  maps $\psi_{x,y;z}$   depend continuously on $z$ in the $C^{\infty}$-topology on maps.

A closed saturated  subset $\fM \subset M$ of a smooth foliation $\F$ of a compact Riemannian manifold $M$ defines a smooth foliated space; but there are many other types of examples of smooth foliated spaces, as  discussed in \cite[Chapter 11]{CandelConlon2000} and also in this paper.

A map $f \colon \fM \to \mR$ is said to be \emph{smooth} if for each flow box
$\vp_x \colon \oU_x \to [-1,1]^n \times \fT_x$ and $w \in \fT_x$ the composition
$y \mapsto f \circ \vp_x^{-1}(y, w)$ is a smooth function of $y \in (-1,1)^n$, and depends continuously on $w$ in the $C^{\infty}$-topology on maps of the plaque coordinates $y$. As noted in \cite{MS2006} and \cite[Chapter 11]{CandelConlon2000}, this allows one to define smooth partitions of unity, vector bundles, and tensors for smooth foliated spaces. In particular, one can define leafwise Riemannian metrics. We recall a standard result, whose basic idea dates back to the work of Plante \cite{Plante1975b} if not before. The proof for foliated spaces can be found in \cite[Theorem~11.4.3]{CandelConlon2000}.
\begin{thm}\label{thm-riemannian}
Let $\fM$ be a smooth foliated space. Then there exists a leafwise Riemannian metric for $\F$, such that for each $x \in \fM$, $L_x$ inherits the structure of a complete Riemannian manifold with bounded geometry, and the Riemannian geometry depends continuously on $x$ . \hfill $\Box$
\end{thm}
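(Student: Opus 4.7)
The plan is to build the leafwise metric by gluing local model metrics via a smooth partition of unity, then harvest completeness and bounded geometry from the compactness of $\fM$ together with the $C^{\infty}$-continuity clause in the definition of a smooth foliated space. First I would use compactness of $\fM$ to extract a finite subcover $\{U_{x_1},\ldots,U_{x_N}\}$ from the open cover $\{U_x\}_{x\in\fM}$. On each chart $\vp_{x_i}\colon \oU_{x_i}\to [-1,1]^n\times \fT_{x_i}$, pull back the standard Euclidean metric on $(-1,1)^n$ via $\vp_{x_i}$, obtaining a Riemannian metric $g_i$ on each plaque of $U_{x_i}$ which varies continuously (in the $C^{\infty}$-sense on plaque coordinates) with the transverse parameter $w\in \fT_{x_i}$.

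Next I would construct smooth leafwise bump functions $\{\rho_i\}$ subordinate to the cover $\{U_{x_i}\}$ in the sense of the smooth foliated space structure; the framework described after Theorem~2.4 (and developed in \cite{CandelConlon2000,MS2006}) guarantees that such leafwise partitions of unity exist, with each $\rho_i$ depending continuously on $x\in\fM$ in the $C^{\infty}$-topology on plaque coordinates. Setting
\begin{equation*}
g \;=\; \sum_{i=1}^{N} \rho_i \, g_i
\end{equation*}
produces a globally defined leafwise Riemannian metric. By construction, $g$ restricts to a smooth Riemannian metric on each leaf $L_x$ and depends continuously on the transverse direction in the $C^{\infty}$-topology, which is the desired continuity of the Riemannian geometry in $x$.

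For completeness and bounded geometry, the key observation is that every quantity derived from $g$ (the metric tensor itself, its finite-order covariant derivatives, the curvature tensor, and hence sectional curvatures) is a continuous function of $x\in\fM$ when read in the $C^{\infty}$-topology on plaque coordinates. Since $\fM$ is compact, such continuous quantities attain uniform bounds, giving curvature bounds on every leaf simultaneously. The same compactness argument gives a uniform lower bound on the plaque ``size'' measured in $g$: there is $r_0>0$ such that for every $x\in\fM$, the $g$-ball of radius $r_0$ in $L_x$ centered at $x$ is contained in a single plaque, on which $g$ is quasi-isometric to the Euclidean metric with constants independent of $x$. This yields a uniform lower bound on the injectivity radius, and also shows that any unit-speed geodesic in $L_x$ can be extended by time $r_0$ regardless of where it is; iterating, geodesics extend for all time, so $L_x$ is geodesically complete, hence metrically complete by Hopf--Rinow.

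The main technical obstacle I expect is the construction of the leafwise smooth partition of unity with the proper continuity along transversals: one must verify that the standard manifold construction (Whitney-type bump functions pulled back from charts) adapts to the foliated-space setting and that the resulting $\rho_i$ depend continuously on the transverse parameter in the $C^{\infty}$-topology, not merely pointwise. This is essentially bookkeeping, and is carried out in detail in \cite[Chapter 11]{CandelConlon2000}; everything else in the argument then reduces to extracting uniform estimates from continuous functions on the compact space $\fM$.
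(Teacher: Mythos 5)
The paper does not prove Theorem~\ref{thm-riemannian} itself: it records the statement and refers the reader to \cite[Theorem~11.4.3]{CandelConlon2000} for the proof. Your sketch — patching local Euclidean models with a leafwise smooth partition of unity, then extracting uniform curvature and injectivity-radius bounds from compactness of $\fM$ together with $C^{\infty}$-continuity in the transverse parameter, and closing with Hopf--Rinow — is exactly the standard argument carried out in that reference, so it is the same approach modulo the paper's choice to cite rather than reproduce it.
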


In this paper, all foliated spaces are assumed to be smooth, equipped   with a   leafwise Riemannian metric as in Theorem~\ref{thm-riemannian}.

\begin{defn} \label{def-mm}
A \emph{matchbox manifold} is a continuum with the structure of a
smooth foliated space $\fM$, such that for each $x \in \fM$, the
transverse model space $\fT_x \subset \fX$ is totally disconnected.
\end{defn}

\subsection{Metric properties}\label{subsec-metricprops}

Bounded geometry on the leafwise metric for $\F$ implies  that for each $x \in \fM$, there is a leafwise exponential map
$\exp^{\F}_x \colon T_x\F \to L_x$ which is a surjection, and the composition $\iota \circ \exp^{\F}_x \colon T_x\F \to L_x \subset \fM$ depends continuously on $x$ in the compact-open topology.

The study of the dynamics of a foliated space $\fM$ requires
generalizing various concepts for flows, and group actions more generally,
 about the orbits of points in $\fM$,
 to the properties of   leaves $L$ of a foliation $\F$. On a
technical level, it is very useful in developing these
generalizations to have a strong local convexity property for the
leaves, generalizing the local convexity of the
orbit of a flow.

Another  nuance about the definition of foliated spaces, and matchbox manifolds in particular, is that for given $x \in \fM$,
the neighborhood $\oU_x$ in Definition~\ref{def-fs} need not be ``local''. As the transversal model $\fT_x$ need not be connected, the set $\oU_x$ need not be connected, and \emph{a priori} its connected components need not be contained in a metric ball around $x$.

The following technical procedures ensure that we can always choose the local charts for a matchbox manifold $\fM$ to satisfy strong  local convexity, as well as other metric regularity properties.

Let $d_{\fM} \colon \fM \times \fM \to [0,\infty)$ denote the metric on $\fM$, and $d_{\fX} \colon \fX \times \fX \to [0,\infty)$ the metric on $\fX$.

For $x \in \fM$ and $\e > 0$, let $D_{\fM}(x, \e) = \{ y \in \fM \mid d_{\fM}(x, y) \leq \e\}$ be the closed $\e$-ball about $x$ in $\fM$, and $B_{\fM}(x, \e) = \{ y \in \fM \mid d_{\fM}(x, y) < \e\}$ the open $\e$-ball about $x$.

Similarly, for $w \in \fX$ and $\e > 0$, let $D_{\fX}(w, \e) = \{ w' \in \fX \mid d_{\fX}(w, w') \leq \e\}$ be the closed $\e$-ball about $w$ in $\fX$, and $B_{\fX}(w, \e) = \{ w' \in \fX \mid d_{\fX}(w, w') < \e\}$ the open $\e$-ball about $w$.

Each leaf $L \subset \fM$ has a complete path-length metric induced from the leafwise Riemannian metric. That is, for $x,y \in L$ define
$$d_{\F}(x,y) = \inf \left\{\| \gamma\| \mid \gamma \colon [0,1] \to L ~ {\rm is } ~ C^1~, ~ \gamma(0) = x ~, ~ \gamma(1) = y  \right\}$$
and where $\| \gamma \|$ denotes the path length of the $C^1$-curve $\gamma(t)$. If $x,y \in \fM$   are not on the same leaf, then set $d_{\F}(x,y) = \infty$.

For each $x \in \fM$ and $r > 0$, let $D_{\F}(x, r) = \{y \in L_x \mid d_{\F}(x,y) \leq r\}$.
The Gauss Lemma implies that there exists $\lambda_x > 0$ such that $D_{\F}(x, \lambda_x)$ is a \emph{strongly convex} subset for the metric $d_{\F}$. That is, for any pair of points $y,y' \in D_{\F}(x, \lambda_x)$ there is a unique shortest geodesic segment in $L_x$ joining $y$ and $y'$ and it is contained in $D_{\F}(x, \lambda_x)$ (cf. \cite{BC1964}, \cite[Chapter 3, Proposition 4.2]{doCarmo1992}). Note then, that for all $0 < \lambda < \lambda_x$ the disk $D_{\F}(x, \lambda)$ is also strongly convex.

\begin{lemma}\label{lem-stronglyconvex}
There exists $\lF > 0$ such that for all $x \in \fM$, $D_{\F}(x, \lF)$ is strongly convex.
\end{lemma}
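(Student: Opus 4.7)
The plan is a standard compactness argument: the pointwise convexity radius $x \mapsto \lambda_x$ is lower semi-continuous on $\fM$, and $\fM$ is compact, so the infimum is positive. Any positive number smaller than this infimum will serve as $\lF$.

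First, I would recall that the existence of each $\lambda_x$ is a purely local Riemannian fact (Whitehead's theorem; see do~Carmo, Chapter~3, Proposition~4.2 as cited), and the maximal such radius can be bounded below in terms of \emph{local} invariants of the leafwise metric at $x$: an upper bound $\kappa(x)$ on the sectional curvature of $L_x$ on a neighborhood of $x$, and the injectivity radius $i(x)$ at $x$. Explicitly, any positive number less than $\min\{\pi/(2\sqrt{\kappa(x)}), i(x)/2\}$ is admissible as $\lambda_x$. Then I would invoke the smoothness hypothesis on $\fM$ together with Theorem~\ref{thm-riemannian}: the leafwise Riemannian metric, its curvature tensor, and the exponential map $\exp^{\F}_x$ vary continuously with $x \in \fM$ in the $C^{\infty}$-topology on plaque coordinates. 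Consequently both $\kappa(x)$ and $i(x)$ are lower semi-continuous (respectively upper/lower) on $\fM$, so the function $x \mapsto \lambda_x$ is lower semi-continuous.

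Next, since $\fM$ is compact and $x \mapsto \lambda_x$ is positive and lower semi-continuous, it attains a positive infimum on $\fM$. Set
$$\lF \;=\; \tfrac{1}{2}\, \inf_{x \in \fM}\, \lambda_x \;>\; 0.$$
By the monotonicity remark preceding the statement, strong convexity of $D_{\F}(x,\lambda_x)$ implies strong convexity of $D_{\F}(x,\lambda)$ for every $0 < \lambda \leq \lambda_x$. Applied at each $x$ with $\lambda = \lF \leq \tfrac12 \lambda_x < \lambda_x$, this shows that $D_{\F}(x,\lF)$ is strongly convex for every $x \in \fM$, as required.

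The only real technical point — and the step I would expect to require the most care — is justifying the lower semi-continuity of the convexity radius across leaves, since \emph{a priori} it is merely a property of the intrinsic Riemannian geometry of each leaf and one has to verify that the continuous transverse dependence in Definition~\ref{def-fs} and Theorem~\ref{thm-riemannian} is strong enough to control the relevant geodesic exponential-map estimates uniformly on a neighborhood of a given $x$. Once the lower semi-continuity is in place, the conclusion is immediate from compactness of $\fM$.
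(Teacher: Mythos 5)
Your argument is correct and is essentially the paper's own proof, which is the one-liner ``$\fM$ is compact and the leafwise metrics have uniformly bounded geometry''; you simply unpack what that means, namely that the pointwise convexity radius $\lambda_x$ is controlled by local curvature and injectivity-radius bounds that vary lower semi-continuously with $x$ by Theorem~\ref{thm-riemannian}, so compactness of $\fM$ forces a positive infimum. (One small slip: you write that both $\kappa(x)$ and $i(x)$ are lower semi-continuous; you want $\kappa$ upper semi-continuous and $i$ lower semi-continuous so that $\lambda_x$ is lower semi-continuous, which is clearly what you intended.)
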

\proof $\fM$ is compact and the leafwise metrics have uniformly bounded geometry.
\endproof
If $\F$ is defined by a flow without periodic points, so
that every leaf is diffeomorphic to $\mR$, then the entire leaf is
strongly convex, so $\lF > 0$ can be chosen arbitrarily. For a
foliation with leaves of dimension $n > 1$, the constant $\lF$ must
be less than the injectivity radius for each of the leaves.

\subsection{Regular covers}\label{subsec-regcovers}
We next define  a ``regular covering'' of $\fM$ by foliation charts, which  is a finite collection of foliation charts which are well-adapted to the metrics $d_{\fM}$ on $\fM$ and $d_{\fX}$ on $\fX$, and   the leafwise metric $d_{\F}$. The definition is somewhat  technical, but this seems to be a necessary aspect of working with foliated spaces, as the usual metric properties of charts which hold for smooth foliations need not hold in general, and are replaced by the estimates imposed below on the charts.

\begin{lemma} \label{lem-conhull}
There exists $\eF > 0$ such that for all $x \in \fM$, there exists a
compact set $\oU' \subset \fM$ such that $D_{\fM}(x, 3 \eF) \subset
int( \oU')$, and for each leaf $L$ of $\F$, each connected component
of $L \cap \oU'$ is a strongly convex subset of $L$.
\end{lemma}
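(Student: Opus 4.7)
The plan is to build, for each $x \in \fM$, a foliation chart around $x$ whose plaques are leafwise geodesic disks of a uniform radius, and hence automatically strongly convex by Lemma~\ref{lem-stronglyconvex}. A finite subcover combined with a Lebesgue number argument then produces the uniform constant $\eF > 0$.

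First I would fix a radius $r_0 \in (0, \lF)$, where $\lF > 0$ is the uniform strong convexity radius from Lemma~\ref{lem-stronglyconvex}. For each $x \in \fM$, take a foliation chart $\vp_x \colon \oU_x \to [-1,1]^n \times \fT_x$ as in Definition~\ref{def-fs}, and let $z_w = \vp_x^{-1}(0, w)$ be the ``centers'' of the plaques for $w \in \fT_x$. Using the smooth structure on $\fM$, which supplies a leafwise Riemannian metric of bounded geometry varying continuously with the transverse coordinate (Theorem~\ref{thm-riemannian}), I would construct a map $\psi_x \colon \overline{B^n(0, r_0)} \times \fT_x \to \fM$ given by $\psi_x(\xi, w) = \exp^{\F}_{z_w}(\xi)$, with respect to a family of leafwise orthonormal frames at the centers $z_w$ that depend continuously on $w$. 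For $r_0$ sufficiently small, $\psi_x$ is a homeomorphism onto its image, the plaque over each $w$ is the closed geodesic disk $D_{\F}(z_w, r_0)$, and each such disk is strongly convex by the choice $r_0 < \lF$. Shrinking $r_0$ further uniformly in $x$ if necessary (which is permitted by compactness of $\fM$ and uniform bounded leafwise geometry), I would arrange that distinct disks $D_{\F}(z_w, r_0)$ and $D_{\F}(z_{w'}, r_0)$ lying on a common leaf are disjoint, so that the connected components of $L \cap \text{image}(\psi_x)$ are precisely these individual geodesic disks.

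Next I would pass to a finite subcover. Let $\psi_x^{\circ}$ denote the restriction of $\psi_x$ to the open set $B^n(0, r_0/2) \times \fT_x$. The family $\{\text{image}(\psi_x^{\circ}) \mid x \in \fM\}$ is an open cover of $\fM$, and compactness yields a finite subcover $\{W_1, \ldots, W_N\}$ with $W_i = \text{image}(\psi_{x_i}^{\circ})$. Let $\delta > 0$ be a Lebesgue number for this cover with respect to $d_{\fM}$ and set $\eF = \delta/6$. For any $x \in \fM$, the closed ball $D_{\fM}(x, 3\eF)$ has diameter at most $\delta$ and therefore lies inside some $W_i$. Taking $\oU' = \text{image}(\psi_{x_i})$, which is compact as the continuous image of the compact set $\overline{B^n(0, r_0)} \times \fT_{x_i}$, we obtain $D_{\fM}(x, 3\eF) \subset W_i \subset \text{int}(\oU')$, and each connected component of $L \cap \oU'$ is the strongly convex disk $D_{\F}(z_w, r_0)$ for some $w \in \fT_{x_i}$, as required.

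The main technical difficulty is assembling the leafwise exponential maps at the varying centers $z_w$ into a continuous chart $\psi_x$ whose plaques are honest geodesic disks of a \emph{uniform} radius. This requires a continuous choice of leafwise orthonormal frames over the transversal, and relies on both the smoothness of the foliated space structure and the continuous dependence of the leafwise Riemannian metric in the $C^\infty$-topology. A secondary delicate point is that distinct centers $z_w$ on the transversal that happen to lie on a common leaf give disjoint geodesic disks of radius $r_0$ for $r_0$ small enough; this follows because within the original chart $\oU_x$ the plaques $\cP_x(w)$ are pairwise disjoint, and for small enough $r_0$ each geodesic disk $D_{\F}(z_w, r_0)$ is contained in its own plaque $\cP_x(w)$.
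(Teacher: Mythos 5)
Your proof follows the same strategy as the paper: surround each point of a local transversal with a leafwise geodesic disk of uniform radius below the strong-convexity radius $\lF$ of Lemma~\ref{lem-stronglyconvex}, take the union over the transversal as the compact set $\oU'$, and extract the uniform $\eF$ via a finite subcover and a Lebesgue number. The paper does not construct an explicit chart $\psi_x$ with exponential maps and continuously varying orthonormal frames -- it simply sets $\oU_x' = \bigcup_{y \in \cT_x'} D_{\F}(y, \delta_x')$ -- so that part of your argument is an over-engineered (though not incorrect) elaboration of the same idea, and it introduces continuity and injectivity claims that the lemma does not actually need.

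Two technical points deserve tightening. First, the family $\{\mathrm{image}(\psi_x^{\circ})\}$ is not an open cover of $\fM$ as written, because $\fT_x$ is compact, not open, in $\fX$; the image of $B^n(0,r_0/2) \times \fT_x$ is not open in $\fM$, and neither is the inclusion $W_i \subset \mathrm{int}(\oU')$ automatic. You must restrict the transverse parameter to a closed neighborhood of $w_x$ contained in $\mathrm{int}(\fT_x)$ -- this is exactly what the paper does by choosing $D_{\fX}(w_x, \e_x') \subset \mathrm{int}(\fT_x)$ -- so that the interiors of the resulting sets form an open cover to which the Lebesgue-number argument applies. Second, your justification that distinct disks $D_{\F}(z_w, r_0)$ and $D_{\F}(z_{w'}, r_0)$ on a common leaf are disjoint (by shrinking $r_0$ so each disk sits inside its plaque) is workable but is more cleanly obtained by choosing the radius $\delta_x'$ so that every disk lies inside $B_{\fM}(x, \e_x) \subset U_x$: a connected subset of a leaf contained in a single foliation chart lies in a single plaque, and distinct plaques are disjoint. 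That is the route the paper takes, and it avoids the need to estimate leafwise distances from plaque centers to plaque boundaries.
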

\proof
For each $x \in \fM$,
let $\vp_x \colon \oU_x \to [-1,1]^n \times \fT_x$ be a foliation chart with $\vp_x(x) = (0, w_x)$ as above. Then there exists $\e_x > 0$ such that $D_{\fM}(x, \e_x) \subset U_x$.
By the continuity of $\vp_x$ and the assumption that $w_x \in int(\fT_x)$, there exists $\e'_x > 0$
such that $D_{\fX}(w_x, \e'_x) \subset int(\fT_x)$ and
\begin{equation}\label{eq-localtrans}
\cT_x' \equiv \vp_x^{-1}\left(\{0\}\times D_{\fX}(w_x, \e'_x)\right) ~ \subset ~ B_{\fM}(x, \e_x) ~ .
\end{equation}

As $\cT_x'$ is compact and $B_{\fM}(x, \e_x)$ is open, there exists $0 < \delta_x' \leq \lF$ such that for each $y \in \cT_x'$
the strongly convex disk
$D_{\F}(y, \delta_x') \subset B_{\fM}(x, \e_x)$. Let
$$
\oU_x' = \bigcup_{y \in \cT_x'} ~ D_{\F}(y, \delta_x') ~ \subset ~ B_{\fM}(x, \e_x) ~ .
$$
The image $\vp_x(\oU_x') \subset [-1,1]^n \times \fT_x$ contains $(0, w_x)$ in its interior, so the collection
$\cU' = \{ \oU_x' \mid x \in \fM\}$ forms a covering. Let $\eU' > 0$ be a Lebesgue number for this covering.
Set $\eF = \eU'/3$.
\endproof

Next, introduce  coordinate charts with diameter bounded above by   $\eF$.
For each $x \in \fM$,
let $\vp_x \colon \oU_x \to [-1,1]^n \times \fT_x$ be a foliation chart with $\vp_x(x) = (0,  w_x)$ as above.
Then there exists $\e''_x > 0$
such that $D_{\fX}(w_x, \e''_x) \subset int(\fT_x)$ and
$$ \cT_x'' \equiv \vp_x^{-1}\left(\{0\}\times D_{\fX}(w_x, \e''_x)\right) ~ \subset ~ B_{\fM}(x, \eF) ~ .$$

As $\cT_x''$ is compact and $B_{\fM}(x, \eF)$ is open, there exists $0 < \delta_x \leq \lF/4$ such that for each $y \in \cT_x''$
the strongly convex disk
$D_{\F}(y, \delta_x) \subset B_{\fM}(x, \eF)$. Let
\begin{equation}\label{eq-prodcover}
\oU_x'' = \bigcup_{y \in \cT_x''} ~ D_{\F}(y, \delta_x) ~ \subset ~ B_{\fM}(x, \eF) ~ .
\end{equation}
The restriction of $\vp_x$ to $\oU_x''$ can be smoothly modified to $\vp_x''$ (for example, using the inverse of the leafwise exponential map followed by a smooth map from the $r_x$-ball in $T_xL_x$ to the unit cube) so that $\vp_x'' \colon \oU_x'' \to [-1,1]^n \times D_{\fX}(w_x, \e_x'')$ is a homeomorphism onto.

Replace $\fT_x$ with $D_{\fX}(w_x, \e_x'')$, $\oU_x$ with $\oU_x''$, and $\vp_x$ with $\vp_x''$.
Thus, for each $x \in \fM$, we can assume there are given $\e_x'', \delta_x > 0$, $w_x \in \fX$ and a foliation chart $\vp_x \colon \oU_x \to [-1,1]^n \times \fT_x$ such that $\oU_x \subset B_{\fM}(x, \eF)$, and the plaques of $\vp_x$ are leafwise strongly convex subsets with diameter $2\delta_x \leq \lF/2$.

The collection of open sets
$$ \{U_x \equiv int(\oU_x) = \vp_x^{-1}\left( (-1,1)^n \times B_{\fX}(w_x, \e''_x)\right) \mid x \in \fM \}$$
forms an open cover of the compact space $\fM$, so there exists a finite subcover ``centered'' at the points $\{x_1, \ldots , x_{\nu}\}$ where $\vp_{x_i}(x_i) = (0,  w_{x_i})$ for $w_{x_i} \in \fX$. Set
\begin{equation}\label{eq-Fdelta}
\dFU = \min \{\delta_{x_1} , \ldots , \delta_{x_{\nu}}\} ~ .
\end{equation}
Each open set $\oU_{x_j}$ can be covered by a finite collection of foliation charts of the form (\ref{eq-prodcover}) with leafwise radius $\dFU$. Thus, we can assume without loss that each $\oU_{x_i}$ is defined by (\ref{eq-prodcover}) where $\delta_{x_i} = \dFU$.
This covering by foliation coordinate charts will be fixed and used throughout, so we simplify notation.

For $1 \leq i \leq \nu$, set $\oU_i = \oU_{x_i}$, $U_i = U_{x_i}$, and $\e_i = \e_{x_i}''$.
Let $\cU = \{U_{1}, \ldots , U_{\nu}\}$ denote the corresponding open covering of $\fM$.
Then there are corresponding coordinate maps
$$
\vp_i = \vp_{x_i} \colon \oU_i \to [-1,1]^n \times \fT_i \quad , \quad
\pi_i = \pi_{x_i} \colon \oU_i \to \fT_i \quad , \quad \lambda_i \colon \oU_i \to [-1,1]^n ~ .
$$
For $z \in \oU_i$, the plaque of the chart $\vp_i$ through $z$ is denoted by $\cP_i(z) = \cP_i(\pi_i(z)) \subset \oU_i$. Note that the restriction $\lambda_i \colon \cP_i(z) \to [-1,1]^n$ is a homeomorphism onto.

Also, define sections
\begin{equation}\label{def-tau}
\tau_{i, \xi} \colon \fT_i \to \oU_i ~ , ~   \tau_{i, \xi}(w) = \vp_i^{-1}(\xi,  w)  ~; ~  \tau_i = \tau_{i,\vec{0}} ~.
\end{equation}
 Note that $\pi_i(\tau_{i, \xi}(w)) = w$.  Let $\cT_i$ denote the image of $\tau_i$ and  set $\cT = \cT_1 \cup \cdots \cup \cT_{\nu} \subset \fM$.

Let $\ds \fT_* = \fT_1 \cup \cdots \cup \fT_{\nu} \subset ~ \fX$; note that $\fT_*$ is compact, and if each $\fT_i$ is totally disconnected, then $\fT_*$ will also be totally disconnected.

\begin{defn}\label{def-regcover}
A \emph{regular covering} of a smooth foliated space $\fM$ is a covering by foliation charts satisfying   the above conditions: locality; that is, each $\oU_i \subset B_{\fM}(x_i , \eF)$,  and local convexity.
\end{defn}
We assume   that such a covering $\cU = \{\vp_i \colon \oU_i \to [-1,1]^n \times \fT_i \mid 1 \leq i \leq \nu\}$ of $\fM$ has been chosen.

If $\F$ is a smooth foliation of a compact manifold $M$, and $\fM \subset M$ is a closed saturated set, then the restriction to $\fM$ of a regular covering for $\F$ on $M$ (as defined for example in  \cite[Chapter 2]{CandelConlon2000}) provides a regular covering of the foliated space $\fM$ in the sense of Definition~\ref{def-regcover}.

\begin{lemma}\label{lem-regular} Suppose that $z \in U_i \cap U_j$ then $\cP_i(z) \cap \cP_j(z)$ is a strongly convex subset of $L_z$.
\end{lemma}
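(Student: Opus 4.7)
The plan is to unpack the construction of plaques done just above the lemma, reducing the claim to the following general fact about strongly convex sets in a Riemannian manifold: the intersection of two strongly convex subsets of a complete Riemannian manifold is again strongly convex, provided the two sets lie inside a common strongly convex disk (so that uniqueness of minimizing geodesics is available). Here the ambient Riemannian manifold is the leaf $L_z$, with the induced complete metric of bounded geometry.

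First I would observe that by construction in equation~(\ref{eq-prodcover}) and the subsequent choice of uniform leafwise radius $\dFU$ in~(\ref{eq-Fdelta}), each plaque of the regular cover has the form
\begin{equation*}
\cP_i(z) \;=\; D_{\F}\bigl(\tau_i(\pi_i(z)),\,\dFU\bigr) \;\subset\; L_z,
\end{equation*}
and similarly for $\cP_j(z)$. Since $\dFU \le \lF/4 < \lF$, Lemma~\ref{lem-stronglyconvex} ensures that both $\cP_i(z)$ and $\cP_j(z)$ are strongly convex subsets of the leaf $L_z$, each of leafwise diameter at most $2\dFU \le \lF/2$.

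Next, pick any two points $y,y' \in \cP_i(z) \cap \cP_j(z)$. Applying the strong convexity of $\cP_i(z)$ to $y,y'$, there is a unique shortest geodesic segment $\gamma \subset L_z$ from $y$ to $y'$, and $\gamma \subset \cP_i(z)$. Applying the strong convexity of $\cP_j(z)$ to the same pair $y,y'$, the unique shortest geodesic in $L_z$ from $y$ to $y'$ is contained in $\cP_j(z)$; but that geodesic is again $\gamma$ by uniqueness in the ambient leaf. Hence $\gamma \subset \cP_i(z) \cap \cP_j(z)$, which is exactly the defining property of a strongly convex subset of $L_z$. (Non-emptiness of the intersection is guaranteed since $z$ itself lies in it.)

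The only subtlety to check is that one really is entitled to apply uniqueness of minimizing geodesics in the ambient leaf $L_z$, and not just inside each plaque: this follows from the definition of strong convexity recalled in Section~\ref{subsec-metricprops}, which asserts uniqueness of the shortest geodesic segment in $L_z$ (not merely in the plaque). Consequently, no separate argument comparing ``intrinsic'' and ``extrinsic'' geodesics is required, and the proof reduces to a one-line application of strong convexity to each plaque in turn.
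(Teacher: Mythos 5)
Your proof is correct, and it takes a shorter route than the paper. The paper's proof first invokes Lemma~\ref{lem-conhull} to produce a larger chart $\whU$ with $\oU_i \cup \oU_j \subset int(\whU)$, observes that both plaques $\cP_i(z)$, $\cP_j(z)$ sit inside a single strongly convex connected component of $L_z \cap \whU$ (since they intersect at $z$), and then concludes. Your argument bypasses Lemma~\ref{lem-conhull} entirely by observing that the construction in Section~\ref{subsec-regcovers} already makes each plaque a leafwise strongly convex disk $D_\F(\tau_i(\pi_i(z)),\dFU)$, and that the definition of strong convexity in Section~\ref{subsec-metricprops} asserts uniqueness of the minimizing geodesic in the ambient leaf $L_z$ --- so strong convexity is closed under intersection with no need to exhibit a common convex envelope. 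Your own parenthetical ``provided the two sets lie inside a common strongly convex disk'' is, as you note a paragraph later, superfluous under the paper's definition; the paper's use of Lemma~\ref{lem-conhull} plays exactly the role of that superfluous safety net, and would only be needed if ``strongly convex'' were interpreted as uniqueness of geodesics within the set rather than in $L_z$. Both arguments are sound; yours is more elementary and exposes more cleanly which hypothesis (each plaque already being a strongly convex $\dFU$-disk) is doing the work.
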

\proof
Our assumptions imply that $U_i \cup U_j$ has diameter at most $2\eF$ hence there exists $\whU \subset \fM$ as in Lemma~\ref{lem-conhull} such that each plaque $\cP_i(z)$ and $\cP_j(z)$ is contained in a strongly convex subset of $L_z \cap \whU$. As these sets intersect, they must be contained in the same connected component of $L_z \cap \whU$, which is strongly convex, and thus $\cP_i(z) \cap \cP_j(z)$ is also strongly convex.
\endproof

 Lemma~\ref{lem-regular} eliminates  the possibility that one of the charts $\oU_i$ might contain ``very long'' leaf segments, which could intersect another chart $\oU_j$ in more than one connected component.

\subsection{Local estimates}\label{subsec-locestimates}
We next introduce a number of constants based on the  choices made in Section~\ref{subsec-regcovers}, which will be used throughout the paper when making metric estimates.

Let $\eU > 0$ be a Lebesgue number for the covering $\cU$. That is, given any $z \in \fM$ there exists some index $1 \leq i_z \leq \nu$ such that the open metric ball $B_{\fM}(z, \eU) \subset U_{i_z}$.

The local projections $\pi_i \colon \oU_i \to \fT_i$ and sections
$\tau_{i, \xi} \colon \fT_i \to \oU_i$ are continuous maps of compact spaces, so admit uniform metric estimates as follows.
\begin{lemma}\label{lem-modpi}
There exists a continuous increasing function $\rp$ (the \emph{modulus of continuity} for the projections $\pi_i$) such that:
\begin{equation}\label{eq-modpi}
\forall ~ 1 \leq i \leq \nu ~ \text{and}~  x,y \in \oU_i \quad , \quad d_{\fM}(x,y) <\rp(\e) ~ \Longrightarrow ~ d_{\fX}(\pi_i(x), \pi_i(y)) < \e ~.
\end{equation}
\end{lemma}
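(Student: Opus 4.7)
The plan is a straightforward application of uniform continuity on compact spaces, combined with a routine smoothing step to obtain a \emph{continuous increasing} modulus rather than just a positive non-decreasing gauge.

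First, I would fix an index $1\leq i\leq \nu$ and treat $\pi_i\colon \oU_i\to \fT_i$ as a continuous map between compact metric spaces (recall $\oU_i$ is compact by construction). By the Heine--Cantor theorem, $\pi_i$ is uniformly continuous, so its modulus of continuity
\begin{equation*}
\omega_i(\delta) \;=\; \sup\bigl\{ d_{\fX}(\pi_i(x),\pi_i(y)) \,:\, x,y\in \oU_i,\ d_{\fM}(x,y)\leq \delta\bigr\}
\end{equation*}
is finite, non-decreasing in $\delta$, and satisfies $\omega_i(\delta)\to 0$ as $\delta\to 0^+$. Since the regular cover $\cU$ consists of only finitely many charts, the global modulus $\omega(\delta):=\max_{1\leq i\leq \nu}\omega_i(\delta)$ inherits the same three properties.

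Next, I would build a positive gauge by inverting $\omega$ in a weak sense: set
\begin{equation*}
\delta_*(\e) \;=\; \sup\{\delta>0 \,:\, \omega(\delta)<\e\}.
\end{equation*}
The condition $\omega(\delta)\to 0$ as $\delta\to 0^+$ guarantees $\delta_*(\e)>0$ for each $\e>0$, and $\delta_*$ is manifestly non-decreasing. If $x,y\in \oU_i$ satisfy $d_{\fM}(x,y)<\delta_*(\e)$, one can choose $\delta<\delta_*(\e)$ with $d_{\fM}(x,y)\leq \delta$ and $\omega(\delta)<\e$; this yields $d_{\fX}(\pi_i(x),\pi_i(y))\leq \omega_i(\delta)\leq \omega(\delta)<\e$, which is already the content of implication (\ref{eq-modpi}) but with the merely non-decreasing gauge $\delta_*$ in place of $\rp$.

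Finally, I would upgrade $\delta_*$ to a continuous, strictly increasing function $\rp\leq \delta_*$ by a standard smoothing argument. Because $\delta_*$ is positive and non-decreasing on $(0,\infty)$, one may take for instance
\begin{equation*}
\rp(\e) \;=\; \int_0^{\e} \min\!\left\{\delta_*(t/2),\, t\right\}\, \frac{dt}{1+\e},
\end{equation*}
which is manifestly continuous and strictly increasing in $\e$, positive, and satisfies $\rp(\e)\leq \delta_*(\e/2)\leq \delta_*(\e)$; therefore the implication established in the previous step transfers to $\rp$. The only ``obstacle'' is this last bookkeeping step of producing a continuous, strictly increasing dominant below $\delta_*$, which is purely cosmetic --- the substantive content is simply compactness of each $\oU_i$ together with the finiteness of the cover $\cU$.
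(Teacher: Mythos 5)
Your proof is correct and rests on exactly the same substantive ingredient as the paper's: compactness of each $\oU_i$ (and finiteness of the cover) yields uniform continuity of the projections $\pi_i$, and the modulus is extracted from that. The paper's proof is a one-line formula, $\rp(\e) = \min\bigl\{\e, \min\{d_{\fM}(x,y) : 1\leq i\leq \nu,\ x,y\in\oU_i,\ d_{\fX}(\pi_i(x),\pi_i(y))\geq\e\}\bigr\}$, which is essentially your gauge $\delta_*(\e)$ with the empty-set case handled by the outer $\min$ with $\e$. Where you differ is purely in the final bookkeeping: the paper's formula is non-decreasing and positive but is not obviously continuous or strictly increasing (the inner minimum is a lower-semicontinuous step-like function and can jump), whereas your integral smoothing step genuinely produces a continuous, strictly increasing $\rp$ dominated by $\delta_*$, and your monotonicity check (using $\int_0^\e f \le \e f(\e) < (1+\e)f(\e)$) is sound. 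So your version is slightly more careful than the paper's about delivering the literal properties stated in the lemma, but the mathematical content is identical and the extra step is, as you say, cosmetic.
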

\proof
Set $\ds\rp(\e) = \min \left\{\e, \min \left\{ d_{\fM}(x,y) \mid 1 \leq i \leq \nu ~ , ~ x,y \in \oU_i ~ , ~ d_{\fX}(\pi_i(x), \pi_i(y)) \geq \e\right\}\right\}$.
\endproof

\begin{lemma}\label{lem-modtau}
There exists a continuous   function $\rt$ (the \emph{modulus of continuity} for the sections $\tau_{i, \xi}$) such that:
\begin{equation}\label{eq-modtau}
\forall ~ \xi \in [-1,1]^n ~, ~ \forall ~ 1 \leq i \leq \nu ~, ~ \forall  w, w' \in \fT_i ~ , ~ d_{\fX}(w,w') <\rt(\e) ~ \Longrightarrow ~ d_{\fM}(\tau_{i,\xi}(w), \tau_{i, \xi}(w')) < \e ~ .
\end{equation}
\end{lemma}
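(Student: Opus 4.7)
The plan is to mimic the proof of Lemma~\ref{lem-modpi} essentially verbatim, treating the collection $\{\tau_{i,\xi}\}$ as the family of $w$-slices of a single continuous map on a compact product space. Specifically, for each $1\leq i \leq \nu$ I would consider the map
$$\Phi_i \colon [-1,1]^n \times \fT_i \longrightarrow \oU_i, \qquad \Phi_i(\xi,w) = \vp_i^{-1}(\xi,w) = \tau_{i,\xi}(w),$$
which is continuous by Definition~\ref{def-fs}. Equipping the domain with the product metric and the codomain with $d_\fM$, the space $[-1,1]^n \times \fT_i$ is compact (since $\fT_i$ is compact), so $\Phi_i$ is \emph{uniformly} continuous. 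This uniformity is exactly what delivers the key feature the lemma demands: a modulus that is independent of the ``plaque parameter'' $\xi$.

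Second, I would package this quantitatively in the same style as Lemma~\ref{lem-modpi}. Define, for $\e > 0$,
$$\wtrho(\e) \;=\; \min\Bigl\{\,\e,\; \min_{1 \leq i \leq \nu}\; \inf\bigl\{d_\fX(w,w') \mid w,w' \in \fT_i,\; \exists\, \xi \in [-1,1]^n \text{ with } d_\fM(\tau_{i,\xi}(w),\tau_{i,\xi}(w')) \geq \e\bigr\}\,\Bigr\},$$
with the convention that the inner infimum equals $+\infty$ when the set is empty. By the uniform continuity of each $\Phi_i$ on the compact set $[-1,1]^n \times \fT_i$, this infimum is strictly positive for every $\e > 0$, so $\wtrho(\e) > 0$, and $\wtrho$ is manifestly non-decreasing in $\e$. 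By construction, whenever $d_\fX(w,w') < \wtrho(\e)$ the implication (\ref{eq-modtau}) holds for every choice of $\xi$ and $i$.

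Finally, to meet the continuity requirement on $\rt$, I would pass from $\wtrho$ to a continuous function $\rt$ with $0 < \rt(\e) \leq \wtrho(\e)$ for all $\e > 0$; any such underapproximation preserves the implication (\ref{eq-modtau}). One concrete choice is $\rt(\e) = \inf_{\e' \geq \e}\{\wtrho(\e')\cdot (\e'/\e')\}$ smoothed by convolution, or more simply a piecewise linear function joining values $\wtrho(2^{-k})/2$ at $\e = 2^{-k}$. I do not expect any genuine obstacle here: the only mild subtlety is packaging uniform continuity simultaneously in $\xi$ and $w$ and across the finite index set $i$, and all three are handled by the compactness of $[-1,1]^n$, of $\fT_i$, and of $\{1,\ldots,\nu\}$ respectively.
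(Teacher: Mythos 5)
Your proposal follows essentially the same approach as the paper: the paper also defines $\rt(\e)$ as the minimum of $d_{\fX}(w,w')$ over the set where $d_{\fM}(\tau_{i,\xi}(w),\tau_{i,\xi}(w'))\geq\e$ (with the convention $\rt(\e)=\e$ when that set is empty), relying implicitly on compactness of $[-1,1]^n\times\fT_i$ and continuity of $\vp_i^{-1}$ exactly as you do. The one place you go beyond the paper is in noting that this raw infimum is merely non-decreasing and need not itself be continuous, and in supplying a continuous underapproximation; the paper's proof silently elides this point, so your extra care is warranted rather than redundant, though it does not change the substance of the argument.
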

\proof
Set $\ds\rt(\e) =   \min \left\{ d_{\fX}(w,w') \mid \xi \in [-1,1]^n  ~, ~ 1 \leq i \leq \nu ~ , ~ w,w' \in \fT_i ~ , ~ d_{\fM}(\tau_{i,\xi}(w), \tau_{i, \xi}(w')) \geq \e\right\}$, unless the set of points $(w,w')$ satisfying these restraints is empty, in which case we set $\rt(\e) = \e$.
\endproof

Finally, we introduce two additional constants, derived from the Lebesgue number $\eU$ chosen above.

The first is  derived from a ``converse'' to the modulus function $\rp$. Set:
\begin{equation}\label{eq-transdiam}
\eTU = \max \left\{\e \mid \forall ~ 1 \leq i \leq \nu, ~\forall ~ x \in \oU_i ~, ~ D_{\fM}(x, \eU/2) \subset \oU_i ~ , ~ D_{\fX}(\pi_i(x),\e) \subset \pi_i\left( D_{\fM}(x, \eU/2)\right)\right\}.
\end{equation}
Note that $\eTU \geq \rt(\eU/2 )$.

 Introduce a form of  ``leafwise Lebesgue number'',   defined by
\begin{equation}\label{eq-leafdiam}
\eFU(y) = \sup \left\{ \e \mid \forall ~ y \in \fM ~, ~ D_{\F}(y, \e) \subset D_{\fM}(y, \eU/8)\right\} ~ , ~ \eFU = \min \left\{ \eFU(y) \mid ~ \forall ~ y \in \fM \right\}.
\end{equation}
Thus, for all $y \in \fM$, $D_{\F}(y, \eFU) \subset D_{\fM}(y, \eU/8)$.
Note that for all $r > 0$ and $z' \in D_{\F}(z, \eFU)$, the triangle inequality implies that
$B_{\fM}(z', r) \subset B_{\fM}(z, r + \eU/8)$.

\section{Holonomy of foliated spaces} \label{sec-holonomy}

The holonomy pseudogroup   of a foliated manifold $(M, \F)$ generalizes the discrete cascade associated to a section of a flow. The holonomy pseudogroup for a matchbox manifold $(\fM, \F)$ is defined analogously, although there are delicate issues of domains which must be considered.

A pair of indices $(i,j)$, $1 \leq i,j \leq \nu$, is said to be \emph{admissible} if the \emph{open}  coordinate charts  satisfy $U_i \cap U_j \ne \emptyset$.
For $(i,j)$ admissible, define $\fD_{i,j} = \pi_i(U_i \cap U_j) \subset \fT_i \subset \fX$. Then the closure
$\overline{\fD_{i,j}} = \pi_i(\oU_i \cap \oU_j)$.
The hypotheses on foliation charts imply that plaques are either disjoint, or have connected intersection. This   implies that there is a well-defined homeomorphism $h_{j,i} \colon \fD_{i,j} \to \fD_{j,i}$ with domain $D(h_{j,i}) = \fD_{i,j}$ and range $R(h_{j,i}) = \fD_{j,i}$. The map $h_{j,i}$ admits a unique continuous extension to
$\overline{h}_{j,i} \colon \overline{\fD_{i,j}} \to \overline{\fD_{j,i}}$.

The maps $\cGF^{(1)} = \{h_{j,i} \mid (i,j) ~{\rm admissible}\}$ are the transverse change of coordinates defined by the foliation charts. By definition they satisfy $h_{i,i} = Id$, $h_{i,j}^{-1} = h_{j,i}$, and if $U_i \cap U_j\cap U_k \ne \emptyset$ then $h_{k,j} \circ h_{j,i} = h_{k,i}$ on their common domain of definition. The \emph{holonomy pseudogroup} $\cGF$ of $\F$ is the topological pseudogroup modeled on $\fX$ generated by compositions of the elements of $\cGF^{(1)}$.

A sequence $\cI = (i_0, i_1, \ldots , i_{\alpha})$ is \emph{admissible}, if each pair $(i_{\ell -1}, i_{\ell})$ is admissible for $1 \leq \ell \leq \alpha$, and the composition
\begin{equation}\label{eq-defholo}
 h_{\cI} = h_{i_{\alpha}, i_{\alpha-1}} \circ \cdots \circ h_{i_1, i_0}
\end{equation}
 has non-empty domain.
The domain $D(h_{\cI})$ is the \emph{maximal open subset} of $\fD_{i_0 , i_1} \subset \fT_{i_0}$ for which the compositions are defined.

Given any open subset $U \subset D(h_{\cI})$ we obtain a new element $h_{\cI} | U \in \cGF$ by restriction.  Introduce
\begin{equation}\label{eq-restrictedgroupoid}
\cGF^* = \left\{ h_{\cI} |  U \mid   \cI ~ {\rm admissible} ~ \& ~ U \subset D(h_{\cI}) \right\} \subset \cGF ~ .
\end{equation}
 The range of $g = h_{\cI} |  U$ is the open set $R(g) = h_{\cI}(U) \subset \fT_{i_{\alpha}} \subset \fX$. Note that each map $g \in \cGF^*$ admits a
continuous extension $\overline{g} \colon \overline{D(g)} = \overline{U} \to \fT_{i_{\alpha}}$.

We introduce the standard  notation for the orbits of the pseudogroup $\cGF$, where for $w \in \fX$, set
\begin{equation}\label{eq-orbits}
\cO(w) = \{g(w) \mid g \in \cGF^* ~, ~ w \in D(g) \} \subset \fT_* ~ .
\end{equation}

Given an admissible sequence $\cI  = (i_0, i_1, \ldots , i_{\alpha})$,   for each $0 \leq \ell \leq \alpha$,   set   $\cI_{\ell} = (i_0, i_1, \ldots , i_{\ell})$  and
\begin{equation}\label{eq-pcmaps}
h_{\cI_{\ell}} = h_{i_{\ell} , i_{\ell -1}} \circ \cdots \circ h_{i_{1} , i_{0}}~.
\end{equation}
Given $\xi \in D(h_{\cI})$ we adopt the notation $\xi_{\ell} = h_{\cI_{\ell}}(\xi) \in \fT_{i_{\ell}}$. So $\xi_0 = \xi$ and
$h_{\cI}(\xi) = \xi_{\alpha}$.

\medskip

Given $\xi \in D(h_{\cI})$,  let $x = x_0 = \tau_{i_0}(\xi_0) \in L_x$. Introduce the   \emph{plaque chain}
$$\cP_{\cI}(\xi) = \{\cP_{i_0}(\xi_0), \cP_{i_1}(\xi_1), \ldots , \cP_{i_{\alpha}}(\xi_{\alpha}) \} ~ .$$
 For each $0 \leq \ell < \alpha$, we have  $int(\cP_{i_{\ell}}(\xi_{\ell})) \cap int( \cP_{i_{\ell +1}}(\xi_{\ell+1})) \ne \emptyset$.
 Moreover,  each $\cP_{i_{\ell}}(\xi_{\ell})$ is a strongly convex subset of the   leaf $L_x$ in the leafwise metric $d_{\F}$.
 Recall that $\cP_{i_{\ell}}(x_{\ell}) = \cP_{i_{\ell}}(\xi_{\ell})$, so  we also adopt the notation $\cP_{\cI}(x) \equiv \cP_{\cI}(\xi)$.

  Intuitively, a plaque chain $\cP_{\cI}(\xi)$ is a sequence of successively overlapping convex ``tiles'' in $L_0$ starting at $x_0 = \tau_{i_0}(\xi_0)$, ending at
$x_{\alpha} = \tau_{i_{\alpha}}(\xi_{\alpha})$, and with each $\cP_{i_{\ell}}(\xi_{\ell})$ ``centered'' on the point $x_{\ell} = \tau_{i_{\ell}}(\xi_{\ell})$.

\medskip

\subsection{Leafwise path holonomy}
A \emph{leafwise path}  is a continuous map $\gamma \colon [0,1] \to \fM$ with image in some leaf $L$ of $\F$. The construction of the holonomy map $h_{\gamma}$ associated to a leafwise path $\gamma$ is a standard construction in foliation theory
(\cite{Reeb1952}, \cite{Haefliger1984}, \cite{CN1985}, \cite[Chapter 2]{CandelConlon2000}). We describe this in detail below, paying particular attention to domains and metric estimates.

Let $\cI$ be an admissible sequence. We say that $(\cI , w)$ \emph{covers} $\gamma$,
if  there exists a partition $0 = s_0 < s_1 < \cdots < s_{\alpha} = 1$ such that for the plaque chain
$\cP_{\cI}(w) = \{\cP_{i_0}(w_0), \cP_{i_1}(w_1), \ldots , \cP_{i_{\alpha}}(w_{\alpha}) \}$ we have
\begin{equation}\label{eq-cover}
\gamma([s_{\ell} , s_{\ell + 1}]) \subset int (\cP_{i_{\ell}}(w_{\ell}) )~ , ~ 0 \leq \ell < \alpha, \quad \& \quad \gamma(1) \in int( \cP_{i_{\alpha}}(w_{\alpha}))
\end{equation}
It follows that $w_0 = \pi_{i_0}(\gamma(0)) \in D(h_{\cI})$.

Now suppose we have two admissible sequences, $\cI = (i_0, i_1, \ldots, i_{\alpha})$ and
$\cJ = (j_0, j_1, \ldots, j_{\beta})$, such that both $(\cI, w)$ and $(\cJ, v)$ cover the leafwise path $\gamma \colon [0,1] \to \fM$.
Then
$$\gamma(0) \in int( \cP_{i_0}(w_0)) \cap int( \cP_{j_0}(v_0)) \quad , \quad \gamma(1) \in int(\cP_{i_{\alpha}}(w_{\alpha})) \cap int( \cP_{j_{\beta}}(v_{\beta})).$$
Thus both $(i_0 , j_0)$ and $(i_{\alpha} , j_{\beta})$ are admissible, and
$v_0 = h_{j_{0} , i_{0}}(w_0)$, $w_{\alpha} = h_{i_{\alpha} , j_{\beta}}(v_{\beta})$.

\medskip

\begin{prop}\label{prop-copc}
The maps $h_{\cI}$ and
$\ds h_{i_{\alpha} , j_{\beta}} \circ h_{\cJ} \circ h_{j_{0} , i_{0}}$
agree on their common domains.
\end{prop}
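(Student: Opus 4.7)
The plan is to reduce to a common refinement of the two plaque chains and then iterate the cocycle identity $h_{k,j} \circ h_{j,i} = h_{k,i}$, which (per the excerpt) holds on the common domain of definition whenever $U_i \cap U_j \cap U_k \neq \emptyset$.

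First I would choose a common refinement $0 = t_0 < t_1 < \cdots < t_N = 1$ of the covering partitions $\{s_\ell\}_{\ell=0}^\alpha$ and $\{s'_m\}_{m=0}^\beta$ coming from \eqref{eq-cover}. For each $0 \leq k < N$ the subinterval $[t_k, t_{k+1}]$ lies inside one interval of each original partition, so there are unique indices $\ell(k)$ and $m(k)$ with $\gamma([t_k, t_{k+1}]) \subset \mathrm{int}(\cP_{i_{\ell(k)}}(w_{\ell(k)})) \cap \mathrm{int}(\cP_{j_{m(k)}}(v_{m(k)}))$; set $\ell(N) = \alpha$, $m(N) = \beta$, and note $\ell(0) = m(0) = 0$. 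Crucially, for each $1 \leq k < N$ the point $\gamma(t_k)$ lies in the interiors of the four plaques indexed by $\ell(k-1), \ell(k), m(k-1), m(k)$, so every triple drawn from $\{U_{i_{\ell(k-1)}}, U_{i_{\ell(k)}}, U_{j_{m(k-1)}}, U_{j_{m(k)}}\}$ has non-empty intersection and the cocycle identity applies.

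Using the convention $h_{i,i} = \mathrm{Id}$, define partial compositions $G_k = h_{i_{\ell(k)}, i_{\ell(k-1)}} \circ \cdots \circ h_{i_{\ell(1)}, i_{\ell(0)}}$ and $\tilde G_k = h_{j_{m(k)}, j_{m(k-1)}} \circ \cdots \circ h_{j_{m(1)}, j_{m(0)}}$, so that $G_N = h_\cI$ and $\tilde G_N = h_\cJ$. I would prove by induction on $k$ that
\begin{equation*}
G_k = h_{i_{\ell(k)}, j_{m(k)}} \circ \tilde G_k \circ h_{j_0, i_0}
\end{equation*}
on the common domain. The base case $k=0$ reduces to $h_{i_0, j_0} \circ h_{j_0, i_0} = h_{i_0, i_0} = \mathrm{Id}$. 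For the inductive step, applying the hypothesis to rewrite $G_k$ and cancelling $\tilde G_k \circ h_{j_0, i_0}$, the needed identity reduces to
\begin{equation*}
h_{i_{\ell(k+1)}, i_{\ell(k)}} \circ h_{i_{\ell(k)}, j_{m(k)}} \;=\; h_{i_{\ell(k+1)}, j_{m(k+1)}} \circ h_{j_{m(k+1)}, j_{m(k)}},
\end{equation*}
both sides of which equal $h_{i_{\ell(k+1)}, j_{m(k)}}$ by two cocycle applications, valid since the relevant triples of charts share the point $\gamma(t_{k+1})$. Setting $k = N$ yields $h_\cI = h_{i_\alpha, j_\beta} \circ h_\cJ \circ h_{j_0, i_0}$.

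The main obstacle I anticipate is domain bookkeeping. Each cocycle application is only an identity on the common domain of the relevant compositions, so the proof must track how the valid set shrinks at each of the $N$ steps. However, the proposition only claims agreement on the common domain of the full two-sided identity, which is by definition the largest open set on which every intermediate composition makes sense simultaneously; each elementary rewrite $h_{k,i} = h_{k,j} \circ h_{j,i}$ preserves this domain, since the constraint that $h_{j,i}$ be defined and its image lie in the domain of $h_{k,j}$ coincides with the natural domain of the composition. Consequently the induction carries through on precisely the common domain named in the statement.
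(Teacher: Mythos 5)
Your cocycle-iteration argument takes a different route than the paper's proof (which builds a chain of enclosing plaques $\whcP_\ell$ inside larger charts furnished by Lemma~\ref{lem-conhull}), and the final paragraph on domain bookkeeping is where the argument breaks down. The cocycle relation $h_{k,j} \circ h_{j,i} = h_{k,i}$ holds only on the \emph{common} domain of the two sides, and neither domain need contain the other. In your inductive step, after cancelling $\tilde{G}_k \circ h_{j_0, i_0}$, you need the identity $h_{i_{\ell(k+1)}, i_{\ell(k)}} \circ h_{i_{\ell(k)}, j_{m(k)}} = h_{i_{\ell(k+1)}, j_{m(k+1)}} \circ h_{j_{m(k+1)}, j_{m(k)}}$ to hold at the specific point $\zeta_k = \tilde{G}_k \circ h_{j_0, i_0}(\xi)$, and both cocycle applications require $\zeta_k \in \fD_{j_{m(k)}, i_{\ell(k+1)}}$, i.e.\ that the plaque $\cP_{j_{m(k)}}(\zeta_k)$ reaches into $U_{i_{\ell(k+1)}}$. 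The inductive hypothesis tells you only that $\cP_{j_{m(k)}}(\zeta_k)$ meets $\cP_{i_{\ell(k)}}(\xi_k)$, and the chain structure tells you that $\cP_{i_{\ell(k)}}(\xi_k)$ meets $\cP_{i_{\ell(k+1)}}(\xi_{k+1})$; but overlapping is not transitive, so the needed containment does not follow.

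The fact that $\gamma(t_{k+1})$ lies in $U_{i_{\ell(k)}} \cap U_{i_{\ell(k+1)}} \cap U_{j_{m(k)}}$ only makes the triple intersection non-empty; it does not place $\zeta_k$ (which is traced from an arbitrary $\xi$ in the common domain of the proposition, not from $\gamma$ itself) in the relevant domains. Your closing claim that the common domain in the statement is ``by definition the largest open set on which every intermediate composition makes sense simultaneously'' is not correct: the merged maps $h_{i_{\ell(k)}, j_{m(k)}}$ appear in neither side of the proposition, so requiring $\zeta_k$ to lie in their domains is a genuinely new constraint, not one implied by $\xi \in D(h_{\cI}) \cap D(h_{i_\alpha, j_\beta} \circ h_{\cJ} \circ h_{j_0, i_0})$. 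The paper's proof closes exactly this gap geometrically: Lemma~\ref{lem-conhull} supplies at each step a chart $\whU_{k+1}$ whose strongly convex plaque $\whcP_{k+1}$ contains all four relevant plaques at once, and the inductive nesting of these enclosing plaques is what forces the terminal plaques $\cP_{i_\alpha}(\xi_\alpha)$ and $\cP_{j_\beta}(\zeta_\beta)$ to actually intersect, yielding $\xi' = h_{i_\alpha, j_\beta}(\zeta')$. You would need to supply an analogous geometric argument to make the cocycle iteration legitimate.
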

\proof
Let $\xi \in D(h_{\cI}) \cap D( h_{i_{\alpha} , j_{\beta}} \circ h_{\cJ} \circ h_{j_{0} , i_{0}})$.
Set $\xi' = h_{\cI}(\xi)$, $\zeta = h_{j_{0} , i_{0}}(\xi)$ and $\zeta' = h_{\cJ}(\zeta)$. We must show that
$\xi' = h_{i_{\alpha} , j_{\beta}}(\zeta')$.

Let $0 = s_0 < s_1 < \cdots < s_{\alpha} = 1$ and $0 = r_0 < r_1 < \cdots < r_{\beta} = 1$ be the partitions associated to $\cI$ and $\cJ$, respectively. The condition (\ref{eq-cover}) is open, so without loss of generality, we can assume that the two partitions have no points in common except endpoints.
Let $0 = t_0 < t_1 < \cdots < t_{\omega} = 1$ be the partition obtained by forming the common refinement of the two partitions: for each $\ell$, either $t_{\ell} = s_m$ for some $0 \leq m \leq \alpha$, or $t_{\ell} = r_{m'}$ for some $0 \leq m' \leq \beta$.

For each $0 \leq \ell \leq \omega$ we are given that $\gamma(t_{\ell}) \in U_{i_{m_{\ell}}} \cap U_{j_{m_{\ell}'}}$ where $m_{\ell}$ is the largest $m$ with $s_m \leq t_{\ell}$ and $m_{\ell}'$ is the largest $m'$ with $r_{m'} \leq t_{\ell}$.
Re-index the plaque chains $\cP_{\cI}(\xi)$ and $\cP_{\cJ}(\zeta)$ as follows:

Let $\xi_{\ell} = \xi_{m_{\ell}} = h_{\cI_{m_{\ell}}}(\xi)$,
so that $\cP_{i_{m_{\ell}}}(\xi_{\ell})$ denotes the plaque of $\oU_{i_{m_{\ell}}}$
corresponding to $\xi_{m_{\ell}}$.

Let $\zeta_{\ell} = \zeta_{m_{\ell}'} = h_{\cJ_{m_{\ell}'}}(\zeta)$,
so that $\cP_{j_{m_{\ell}'}}(\zeta_{\ell})$ denotes the plaque of $\oU_{j_{m_{\ell}'}}$
corresponding to $\zeta_{m_{\ell}'}$.

We inductively construct a plaque chain $\whcP = \{\whcP_0 , \whcP_1 , \ldots , \whcP_{\omega}\}$ which covers both plaque chains $\cP_{\cI}(\xi)$ and $\cP_{\cJ}(\zeta)$, so that
$\cP_{i_{\alpha}}(\xi_{\alpha}) \cup \cP_{j_{\beta}}(\zeta_{\beta}) \subset \whcP_{\omega}$ and thus
$\xi' = h_{i_{\alpha} , j_{\beta}}(\zeta')$.

For $\ell = 0$, the plaques $\cP_{\cI}(\xi_0) \cap \cP_{\cJ}(\zeta_0) \ne \emptyset$ as $\xi \in D(h_{j_{0} , i_{0}})$.
Thus, the diameter of the set $\oU_{i_0} \cup \oU_{j_0}$ is at most $2\eF$. By Lemma~\ref{lem-conhull}, there exists a coordinate chart $\whU_{0}$ such that
$\oU_{i_0} \cup \oU_{j_0} \subset int( \whU_{0})$.
Let $\whcP_0$ be the plaque of $\whU_0$ containing the connected set $\cP_{\cI}(\xi_0) \cup \cP_{\cJ}(\zeta_0)$.

Now proceed by induction. Assume that coordinate charts $\ds \left\{ \whU_0 , \whU_1 , \ldots , \whU_{k}\right\}$ have been chosen so that $\oU_{i_{\ell}} \cup \oU_{j_{\ell}} \subset int( \whU_{\ell})$ for $0 \leq \ell \leq k$, and
a plaque chain $\{\whcP_0 , \whcP_1 , \ldots , \whcP_{k}\}$ defined with
$\whcP_{\ell} \subset \whU_{\ell}$ and for $0 < \ell \leq k$,
$$ \cP_{i_{m_{\ell -1}}}(\xi_{\ell -1}) \cup \cP_{j_{m_{\ell -1}'}}(\zeta_{\ell -1}) \cup \cP_{i_{m_{\ell}}}(\xi_{\ell}) \cup \cP_{j_{m_{\ell}'}}(\zeta_{\ell}) \subset \whcP_{\ell} ~.$$

There are now two cases: either $m_{k} \ne m_{k+1}$ and $m_{k}' = m_{k+1}'$,
or $m_{k} = m_{k+1}$ and $m_{k}' \ne m_{k+1}'$. Consider the first case, so that
$\oU_{j_{m_{k}'}} = \oU_{j_{m_{k+1}'}}$ and $\cP_{i_{m_{k}}}(\xi_{k}) \cap \cP_{i_{m_{k+1}}}(\xi_{k+1}) \ne \emptyset$.
We also have $\gamma(t_{k +1}) \in U_{i_{m_{k +1}}} \cap U_{j_{m_{k +1}'}}$, from which it follows that the union $\ds \{ \oU_{i_{k}} \cup \oU_{j_{k}} \cup \oU_{i_{k+1}} \cup \oU_{j_{k+1}} \}$ is connected with
diameter at most $3 \eF$. By Lemma~\ref{lem-conhull}, there exists a coordinate chart $\whU_{k+1}$ containing the union in its interior. Let $\whcP_{k+1}$ be the plaque of $\whU_{k+1}$ containing the connected set
$$ \cP_{i_{m_{k}}}(\xi_{k}) \cup \cP_{j_{m_{k}'}}(\zeta_{k}) \cup \cP_{i_{m_{k+1}}}(\xi_{k+1}) \cup \cP_{j_{m_{k+1}'}}(\zeta_{k+1}) \subset \whcP_{k+1} ~ .$$
This completes the induction.
The resulting plaque chain $\{\whcP_0 , \whcP_1 , \ldots , \whcP_{\omega}\}$ thus covers both plaque chains $\cP_{\cI}(\xi)$ and $\cP_{\cJ}(\zeta)$.
In particular,
$$\cP_{i_{\alpha}}(\xi_{\alpha}) \cup \cP_{j_{\beta}}(\zeta_{\beta}) = \cP_{\omega} \cap \left( \oU_{i_{\alpha}} \cup \oU_{j_{\beta}}\right) \Longrightarrow \cP_{i_{\alpha}}(\xi_{\alpha}) \cap \cP_{j_{\beta}}(\zeta_{\beta}) = \cP_{\omega} \cap \left( \oU_{i_{\alpha}} \cap \oU_{j_{\beta}}\right) \ne \emptyset ~ .$$
Now,
$\xi' = \pi_{i_{\alpha}}(\cP_{i_{\alpha}}(\xi_{\alpha}))$ and
$\zeta' = \pi_{j_{\beta}}( \cP_{j_{\beta}}(\zeta_{\beta}) )$ so $\xi' = h_{i_{\alpha} , j_{\beta}}(\zeta')$ follows.
\endproof

The interested reader can compare the above argument to the proof of \cite[Proposition~2.3.2]{CandelConlon2000} where it is shown that the germinal holonomy along a path is well-defined. The two proofs are essentially the same; yet a detailed proof of Proposition~\ref{prop-copc} is included, as the study of equicontinuous maps depends fundamentally on having equality on   domains of fixed size, and not just germinal equality.

\medskip

\subsection{Admissible sequences}\label{subsec-admissible}
Given a leafwise path $\gamma \colon [0,1] \to \fM$, we next construct an admissible sequence
$\cI = (i_0, i_1, \ldots, i_{\alpha})$ with $w \in D(h_{\cI})$ so that $(\cI , w)$ covers $\gamma$, and has ``uniform domains''.

Inductively, choose a partition of the interval $[0,1]$, $0 = s_0 < s_1 < \cdots < s_{\alpha} = 1$ such that for each $0 \leq \ell \leq \alpha$,
$\gamma([s_{\ell}, s_{\ell + 1}]) \subset D_{\F}(x_{\ell}, \eFU)$ where $x_{\ell} = \gamma(s_{\ell})$. As a notational convenience,  we have let
$s_{\alpha+1} = s_{\alpha}$, so that $\gamma([s_{\alpha}, s_{\alpha + 1}]) = x_{\alpha}$.

For each $0 \leq \ell \leq \alpha$, choose an index $1 \leq i_{\ell} \leq \nu$ so that $ B_{\fM}(x_{\ell}, \eU) \subset U_{i_{\ell}}$.
Note that, for all $s_{\ell} \leq t \leq s_{\ell +1}$, $B_{\fM}(\gamma(t), \eU/2) \subset U_{i_{\ell}}$, so that
$x_{\ell+1} \in U_{i_{\ell}} \cap U_{i_{\ell +1}}$. It follows that $\cI_{\gamma} = (i_0, i_1, \ldots, i_{\alpha})$ is an admissible sequence.
Set $h_{\gamma} = h_{\cI_{\gamma}}$. Then $h_{\gamma}(w) = w'$, where $w = \pi_{i_0}(x_0)$ and
$w' = \pi_{i_{\alpha}}(x_{\alpha})$.

The construction of the admissible sequence $\cI_{\gamma}$ above has an important special property.
For $0 \leq \ell < \alpha$, note that $x_{\ell+1} \in D_{\F}(x_{\ell +1}, \eFU)$ implies that for some $s_{\ell} < s_{\ell + 1}' < s_{\ell + 1}$, we have that
$\gamma([s_{\ell + 1}', s_{\ell + 1}]) \subset D_{\F}(x_{\ell +1}, \eFU)$. Hence,
\begin{equation} \label{eq-unifest}
B_{\fM}(\gamma(t), \eU/2) \subset U_{i_{\ell}} \cap U_{i_{\ell +1}} ~ , ~ {\rm for ~ all} ~ s_{\ell + 1}' \leq t \leq s_{\ell + 1} ~.
\end{equation}
Then  for all $s_{\ell + 1}' \leq t \leq s_{\ell + 1}$, the uniform estimate defining $\eTU > 0$ in (\ref{eq-transdiam}) implies that
\begin{equation} \label{eq-domains}
B_{\fX}(\pi_{i_{\ell}}(\gamma(t)), \eTU ) \subset \fD_{i_{\ell} , i_{\ell +1}} \quad \& \quad
B_{\fX}(\pi_{i_{\ell +1}}(\gamma(t)), \eTU ) \subset \fD_{i_{\ell +1} , i_{\ell}} ~ .
\end{equation}
 For the admissible sequence $\cI_{\gamma} = (i_0, i_1, \ldots, i_{\alpha})$,
    recall that $x_{\ell} = \gamma(s_{\ell})$ and we set $w_{\ell} = \pi_{i_{\ell}}(x_{\ell})$.
 Then by the definition (\ref{eq-defholo}) of $\ds h_{\cI_{\gamma}}$ the condition (\ref{eq-domains}) implies that
$D_{\fX}(w_{\ell} , \eTU) \subset D(h_{\ell})$.

That is, $h_{\cI_{\gamma}}$ is the composition of generators of $\cGF^*$ which have uniform estimates on the radii of the metric balls contained in their domains, where $ \eTU$ is independent of $\gamma$.

\medskip

There is a converse to the above construction, which associates to an admissible sequence a leafwise path.
Let $\cI = (i_0, i_1, \ldots, i_{\alpha})$ be admissible, with corresponding holonomy map $h_{\cI}$,
and choose $w \in D(h_{\cI})$ with $x = \tau_{i_0}(w)$.

For each $1 \leq \ell \leq \alpha$, recall that
$\cI_{\ell} = (i_0, i_1, \ldots, i_{\ell})$, and let $h_{\cI_{\ell}}$ denote the corresponding holonomy map. For $\ell = 0$, let $\cI_0 = (i_0 , i_0)$.
Note that $h_{\cI_{\alpha}} = h_{\cI}$ and $h_{\cI_{0}} = Id \colon \fT_0 \to \fT_0$.

For each $0 \leq \ell \leq \alpha$, set $w_{\ell} = h_{\cI_{\ell}}(w)$ and
$x_{\ell}= \tau_{i_{\ell}}(w_{\ell})$. By assumption, for $\ell > 0$, there exists $z_{\ell} \in \cP_{\ell -1}(w_{\ell -1}) \cap \cP_{\ell}(w_{\ell})$.

Let
$\gamma_{\ell} \colon [(\ell -1)/\alpha , \ell / \alpha] \to L_{x_0}$ be the leafwise piecewise geodesic segment from $x_{\ell -1}$ to $z_{\ell}$ to $x_{\ell}$. Define the leafwise path $\gamma^x_{\cI} \colon [0,1] \to L_{x_0}$ from $x_0$ to $x_{\alpha}$ to be the concatenation of these paths.
If we then cover $\gamma^x_{\cI}$ by the charts determined by the given admissible sequence $\cI$, it follows that $h_{\cI} = h_{\gamma^x_{\cI}}$.

Thus,  given an admissible sequence $\cI = (i_0, i_1, \ldots, i_{\alpha})$ and $w \in D(h_{\cI})$ with $w' = h_{\cI}(w)$, the choices above determine an initial chart $\vp_{i_0}$ with ``starting point''
$x = \tau_{i_0}(w) \in U_{i_0} \subset \fM$. Similarly, there is a terminal chart $\vp_{i_{\alpha}}$ with
  ``terminal point'' $x' = \tau_{i_{\alpha}}(w') \in U_{i_{\alpha}} \subset \fM$. The leafwise path $\gamma^x_{\cI}$ constructed above starts at $x$, ends at $x'$, and has image contained in the plaque chain $\cP_{\cI}(x)$.

On the other hand, if we start with a leafwise path $\gamma \colon [0,1] \to \fM$, then the initial point $x = \gamma(a)$ and the terminal point $x' = \gamma(b)$ are both well-defined. However, there need not be a unique index $j_0$ such that $x \in U_{j_0}$ and similarly for the index $j_{\beta}$ such that $x' \in U_{j_{\beta}}$. Thus, when one constructs an admissible sequence $\cJ = (j_0, \ldots , j_{\beta})$ from $\gamma$, the initial and terminal charts need not be well-defined. This was observed already in the proof of Proposition~\ref{prop-copc}, which proved that
$$h_{\cI} | U = h_{i_{\alpha} , j_{\beta}} \circ h_{\cJ} \circ h_{j_{0} , i_{0}} | U \quad {\rm for} \quad U = D(h_{\cI}) \cap D(h_{i_{\alpha} , j_{\beta}} \circ h_{\cJ} \circ h_{j_{0} , i_{0}}) ~ .$$
We introduce the following definition, which gives a uniform estimate of the effect of this ambiguity.
\begin{lemma}\label{lem-modgen}
There exists a continuous function $\kappa \colon (0,\infty)  \to (0,\infty)$   such that for all admissible $(i,j)$ there is a uniform estimate:
\begin{equation}\label{eq-modgen}
d_{\fX}(h_{j,i}(w) , h_{j,i}(w')) \leq \kappa(r)  ~ {\rm for ~ all} ~ w , w' \in \fD_{i,j} ~ ~ {\rm with }~ d_{\fX}(w,w') \leq r ~.
\end{equation}
Moreover, $\ds \lim_{r \to 0} ~  \kappa(r) = 0$.
\end{lemma}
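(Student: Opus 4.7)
The plan is to exploit compactness of the relevant closures together with finiteness of the admissible pair set. Since $1 \leq i,j \leq \nu$, there are at most $\nu^2$ admissible pairs. For each admissible pair $(i,j)$, the set $\overline{\fD_{i,j}} = \pi_i(\oU_i \cap \oU_j)$ is compact as the continuous image of a compact set, and we are given that $h_{j,i}$ extends to a continuous map $\overline{h}_{j,i} \colon \overline{\fD_{i,j}} \to \overline{\fD_{j,i}}$. A continuous map on a compact metric space is uniformly continuous, so an intrinsic modulus exists for each admissible pair.

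Concretely, for each admissible $(i,j)$ I would define
\[
\kappa_{j,i}(r) \;=\; \sup\bigl\{\, d_{\fX}(\overline{h}_{j,i}(w), \overline{h}_{j,i}(w')) \,\bigm|\, w, w' \in \overline{\fD_{i,j}}, \ d_{\fX}(w,w') \leq r \,\bigr\}.
\]
This is nondecreasing in $r$, satisfies $\kappa_{j,i}(r) \to 0$ as $r \to 0^+$ by uniform continuity, and bounds $d_{\fX}(h_{j,i}(w), h_{j,i}(w'))$ on $\fD_{i,j}$. Taking the pointwise maximum over the finite collection of admissible pairs gives a nondecreasing function
\[
\kappa_0(r) \;=\; \max\bigl\{ \kappa_{j,i}(r) \,\bigm|\, (i,j) \text{ admissible}\bigr\}
\]
with $\lim_{r \to 0^+} \kappa_0(r) = 0$, and by construction (\ref{eq-modgen}) holds with $\kappa_0$ in place of $\kappa$.

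The one mildly delicate step is upgrading $\kappa_0$ to a \emph{continuous} function $\kappa \colon (0,\infty) \to (0,\infty)$ while preserving the limit at $0$ and the majorization of $\kappa_0$. A standard averaging trick suffices: set
\[
\kappa(r) \;=\; r \;+\; \frac{1}{r}\int_{r}^{2r} \kappa_0(s)\, ds.
\]
This is continuous on $(0,\infty)$, strictly positive, and since $\kappa_0$ is nondecreasing we have $\kappa(r) \geq \kappa_0(r)$; also $\kappa(r) \to 0$ as $r \to 0^+$ because the integrand tends uniformly to $0$ on shrinking intervals. Then (\ref{eq-modgen}) holds for all admissible $(i,j)$ and all $w, w' \in \fD_{i,j}$, and the limit condition is immediate. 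No genuine obstacle arises here; the entire argument is really just compactness plus finiteness, with the continuity requirement on $\kappa$ handled by a one-line smoothing.
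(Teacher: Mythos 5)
Your proof is essentially the same as the paper's: both reduce to finiteness of the admissible pair set, compactness of the closures $\overline{\fD_{i,j}}$, and uniform continuity of the extended maps $\overline{h}_{j,i}$, arriving at the oscillation function $\kappa_0(r)$ (the paper writes this as a single max over the finitely many pairs and over the compact set $\overline{\fB_{i,j}^r}$, rather than splitting it into $\kappa_{j,i}$ and then maximizing, but the content is identical).

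The one place where you go beyond the paper is the smoothing step, and it is not idle: the paper simply names the sup $\kappa(r)$ and asserts the continuity of the resulting function, without proof. In fact, on a totally disconnected space such as $\fT_*$, the oscillation modulus $\kappa_0(r)$ is nondecreasing with $\lim_{r\to 0^+}\kappa_0(r)=0$ but is typically a step function and hence genuinely discontinuous (already on a two-point metric space the modulus jumps at the mutual distance). So the paper's $\kappa$ as defined in (\ref{eq-modgendef}) does not satisfy the continuity claim of the lemma statement. Your averaging $\kappa(r)=r+\frac{1}{r}\int_r^{2r}\kappa_0(s)\,ds$ is a valid fix: it is continuous, strictly positive, majorizes $\kappa_0$ since $\kappa_0$ is nondecreasing, and tends to $0$ as $r\to 0^+$ because $\kappa(r)\leq r+\kappa_0(2r)$. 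It is worth noting, however, that the paper's subsequent uses of this lemma (e.g. in the proof of Theorem~\ref{thm-equic}) invoke only the limit $\lim_{r\to 0}\kappa(r)=0$ and monotonicity, never the continuity of $\kappa$ per se, so the gap is harmless in context; you have simply proved what was stated.
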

\proof
For $(i,j)$ admissible, the holonomy map $h_{j,i}$ extends to a homeomorphism of the closure of its domain,
$\overline{h}_{j,i} \colon \overline{\fD_{i,j}} \to \overline{\fD_{j,i}}$. Thus, for $r > 0$, the product map $\overline{h}_{j,i} \times \overline{h}_{j,i}$ is continuous on the compact set
$\overline{\fB_{i,j}^r} = \{ (w,w') \mid w,w' \in \overline{\fD_{i,j}} ~ , ~ d_{\fX}(w,w') \leq r\}$, hence we obtain a finite upper bound
\begin{equation}\label{eq-modgendef}
\kappa(r) = \max \left\{  d_{\fX}(h_{j,i}(w) , h_{j,i}(w'))  \mid (i,j) ~ {\rm admissible} ~, ~ (w,w') \in\overline{\fB_{i,j}^r}\right\}~ .
\end{equation}
  Note that $\ds \lim_{r \to 0} ~   \kappa(r) = 0$ follows  from continuity of the maps  $h_{i,j}$.
\endproof

We conclude this discussion with a useful observation which yields a   key technical point,   that the holonomy along a path is independent of ``small deformations'' of  the path. First, we recall a standard definition:

Let $h \colon U \to V$ be a homeomorphism, where $U, V \subset \fT_*$ are open subsets,  and let $w \in U$. Given a second homeomorphism
$h' \colon U' \to V'$ be a homeomorphism, where $U', V' \subset \fT_*$ are also open subsets,  with $w \in U'$.
Then define an equivalence relation, where $h \sim h'$ if there exists an open set $w \in V \subset U \cap U'$ such that $h | V= h' |V$.
\begin{defn}\label{def-germ}
The \emph{germ of $h$ at $w$} is the equivalence class $[h]_w$ under this relation, which is also called the \emph{germinal class of $h$ at $w$}. The  map  $h \colon U \to V$ is called a \emph{representative} of  $[h]_w$.
The point $w$ is called the source of  $[h]_w$ and denoted $s([h]_w)$, while $w' = h(w)$ is called the range of  $[h]_w$ and denoted $r([h]_w)$.
\end{defn}

  Let $\cI = (i_0, i_1, \ldots, i_{\alpha})$ be admissible, with associated holonomy map $h_{\cI}$.  Given   $w, u \in D(h_{\cI})$, then   the germs of $h_{\cI}$ at $w$ and $u$   admit a common representative, namely $h_{\cI}$. Thus, if $\gamma$, $\gamma'$ are leafwise paths defined as above from the plaque chains associated to $(\cI, w)$ and $(\cI,u)$ then the germinal holonomy maps along $\gamma$ and $\gamma'$ admit a common representative by Proposition~\ref{prop-copc}. This is the basic idea behind the following technically useful result.

 \begin{lemma} \label{lem-domainconst}
 Let  $\gamma, \gamma' \colon [0,1] \to \fM$ be leafwise paths. Suppose that $x = \gamma(0), x' = \gamma'(0) \in U_i$ and
  $y = \gamma(1), y' = \gamma'(1) \in U_j$. If $d_{\fM}(\gamma(t) , \gamma'(t)) \leq \eU/4$ for all $0 \leq t \leq 1$, then the induced holonomy maps $h_{\gamma}, h_{\gamma'}$ agree on their common domain $D(h_{\gamma}) \cap D(h_{\gamma'}) \subset \fT_i$.
\end{lemma}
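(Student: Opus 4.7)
My plan is to construct a single admissible sequence $\cI = (i_0, i_1, \ldots, i_{\alpha})$ with $i_0 = i$ and $i_{\alpha} = j$ that covers \emph{both} paths $\gamma$ and $\gamma'$ in the sense of (\ref{eq-cover}). Once $(\cI, w)$ covers $\gamma$ with $w = \pi_i(\gamma(0))$ and $(\cI, w')$ covers $\gamma'$ with $w' = \pi_i(\gamma'(0))$, then the germs of $h_\gamma$ and $h_{\gamma'}$ at any $\zeta \in D(h_\gamma) \cap D(h_{\gamma'})$ are both represented by $h_{\cI}$, and Proposition~\ref{prop-copc} (applied to each path and the common sequence $\cI$) forces $h_\gamma(\zeta) = h_{\cI}(\zeta) = h_{\gamma'}(\zeta)$.

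To build $\cI$, I would first use uniform continuity of $\gamma$ and $\gamma'$ on the compact interval $[0,1]$ to pick a partition $0 = s_0 < s_1 < \cdots < s_{\alpha} = 1$ fine enough that both $\gamma([s_\ell, s_{\ell+1}])$ and $\gamma'([s_\ell, s_{\ell+1}])$ have $\fM$-diameter at most $\eU/8$. By further refining near the endpoints, arrange that $\gamma([0,s_1]) \cup \gamma'([0,s_1]) \subset U_i$ and $\gamma([s_{\alpha-1},1]) \cup \gamma'([s_{\alpha-1},1]) \subset U_j$; this is possible because $U_i, U_j$ are open and contain the respective endpoints of both paths (using the hypothesis $d_{\fM}(\gamma(0), \gamma'(0)) \leq \eU/4$, and similarly at $t=1$). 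For each intermediate $1 \leq \ell \leq \alpha - 1$, the Lebesgue property of $\cU$ provides an index $i_\ell$ with $B_{\fM}(\gamma(s_\ell), \eU) \subset U_{i_\ell}$. Set $i_0 = i$ and $i_\alpha = j$.

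To verify the covering condition (\ref{eq-cover}) for both paths, observe that for intermediate $\ell$, the triangle inequality combined with $d_{\fM}(\gamma(t),\gamma'(t)) \leq \eU/4$ and the diameter bound $\eU/8$ yields
\[ \gamma'([s_\ell, s_{\ell+1}]) \subset B_{\fM}(\gamma(s_\ell), \tfrac{3\eU}{8}) \subset B_{\fM}(\gamma(s_\ell), \eU) \subset U_{i_\ell}, \]
and the same inclusion for $\gamma$. Each such segment is a connected subset of a single leaf lying in $U_{i_\ell}$, hence it is contained in one plaque of $U_{i_\ell}$; consecutive plaques overlap in the transition point $\gamma(s_{\ell+1})$ (respectively $\gamma'(s_{\ell+1})$), giving admissibility of $\cI$ and the cover condition for both $(\cI,w)$ and $(\cI,w')$. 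Applying Proposition~\ref{prop-copc} separately to $\gamma$ (comparing $\cI$ to any admissible sequence defining $h_\gamma$) and to $\gamma'$ then delivers the conclusion.

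The principal technical delicacy I anticipate is accommodating the \emph{prescribed} endpoint charts $U_i$ and $U_j$ in the single admissible sequence $\cI$, rather than letting them be chosen by the standard Lebesgue-number procedure of Section~\ref{subsec-admissible}; this is handled by the refinement near $t=0$ and $t=1$ described above, which exploits openness of $U_i, U_j$ together with the closeness hypothesis at the endpoints. The remaining steps --- confinement to single plaques by connectedness, and extraction of the final identification from Proposition~\ref{prop-copc} --- are routine.
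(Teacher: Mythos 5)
Your proof is correct and takes essentially the same approach as the paper: build a single admissible sequence that covers both $\gamma$ and $\gamma'$ by choosing a common fine partition and exploiting the $\eU/4$ closeness via the triangle inequality, then invoke Proposition~\ref{prop-copc}. The only minor difference is that you explicitly arrange $i_0 = i$ and $i_\alpha = j$ by refining the partition near the endpoints, whereas the paper picks all indices by the Lebesgue procedure and relies on Proposition~\ref{prop-copc} to reconcile the endpoint-chart ambiguity; both are sound.
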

\proof  Choose a partition of the interval $[0,1]$, $0 = s_0 < s_1 < \cdots < s_{\alpha} = 1$ such that for each $0 \leq \ell \leq \alpha$, both paths satisfy the conditions
$$ d_{\fM}(\gamma(s_{\ell}) , \gamma(s_{\ell +1})) < \eFU \quad , \quad  d_{\fM}(\gamma'(s_{\ell} ) , \gamma'(s_{\ell +1})) < \eFU ~ .$$
Set $x_{\ell} = \gamma(s_{\ell})$  and $x'_{\ell} = \gamma'(s_{\ell})$ for   $0 \leq \ell \leq \alpha$, and where for  notational convenience,  we   let   $s_{\alpha+1} = s_{\alpha}$.

Then note that for all $s_{\alpha -1} \leq   t' \leq s_{\alpha +1}$ we have
\begin{equation}\label{eq-pathests}
d_{\fM}(\gamma(s_{\alpha}) , \gamma'(t')) \leq
d_{\fM}(\gamma(s_{\alpha}) , \gamma'(s_{\alpha})) + d_{\fM}(\gamma'(s_{\alpha}) , \gamma'(t')) \leq \eU/4 + \eU/8 < \eU/2 ~ .
\end{equation}

For each $0 \leq \ell \leq \alpha$, choose an index $1 \leq i_{\ell} \leq \nu$ so that $ B_{\fM}(x_{\ell}, \eU) \subset U_{i_{\ell}}$.

Then   for all $s_{\ell} \leq t \leq s_{\ell +1}$, $B_{\fM}(\gamma(t), \eU/2) \subset U_{i_{\ell}}$, so that
$x_{\ell+1} \in U_{i_{\ell}} \cap U_{i_{\ell +1}}$. It follows that $\cI = (i_0, i_1, \ldots, i_{\alpha})$ is an admissible sequence. Set $h_{\gamma} = h_{\cI}$.

Also, by (\ref{eq-pathests}) we have $\gamma'([s_{\ell -1}, s_{\ell +1}]) \subset B_{\fM}(\gamma(s_{\ell}), \eU/2)$ so that $\cI$ is also an admissible sequence defining $h_{\gamma'} = h_{\cI}$.
Thus, $x,x' \in D(h_{\cI}) \subset \fT_{i_0}$.

As the domains for $h_{\gamma}$ and $h_{\gamma'}$ are defined to be the maximal subsets of $ \fT_{i_0}$ where the maps are defined, this shows they agree on the subset $ D(h_{\cI}) \subset D(h_{\gamma}) \cap D(h_{\gamma'})$,  so we are done by Proposition~\ref{prop-copc}.
 \endproof

 \medskip

\subsection{Homotopy independence}
Two leafwise paths $\gamma , \gamma' \colon [0,1] \to \fM$ are homotopic if there exists a family of leafwise paths $\gamma_s \colon [0,1] \to \fM$ with $\gamma_0 = \gamma$ and $\gamma_1 = \gamma'$. We are most interested in the special case when $\gamma(0) = \gamma'(0) = x$ and $\gamma(1) = \gamma'(1) = y$. Then $\gamma$ and $\gamma'$ are \emph{endpoint-homotopic}
if they are homotopic with $\gamma_s(0) = x$ for all $0 \leq s \leq 1$, and similarly
$\gamma_s(1) = y$ for all $0 \leq s \leq 1$. Thus, the family of curves $\{ \gamma_s(t) \mid 0 \leq s \leq 1\}$ are all contained in a common leaf $L_{x}$. The following property then follows from an  inductive application of Lemma~\ref{lem-domainconst}:

\begin{lemma}\label{lem-homotopic}
Let $\gamma, \gamma' \colon [0,1] \to \fM$ be endpoint-homotopic leafwise paths. Then their holonomy maps $h_{\gamma}$ and $h_{\gamma'}$ agree on some open subset $U \subset D(h_{\gamma}) \cap D(h_{\gamma'}) \subset \fT_*$. In particular, they determine the same germinal holonomy maps.
\end{lemma}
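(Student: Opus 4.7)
The plan is to discretize the endpoint-preserving homotopy into finitely many stages, each pair of consecutive stages being close enough that Lemma~\ref{lem-domainconst} applies, and then intersect the resulting common domains to obtain a single open set on which all intermediate holonomies coincide.

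Concretely, let $H \colon [0,1]\times[0,1] \to \fM$ denote the endpoint-preserving homotopy, so that $H(0,t)=\gamma(t)$, $H(1,t)=\gamma'(t)$, $H(s,0)\equiv x$, and $H(s,1)\equiv y$, with each $\gamma_s(t):=H(s,t)$ a leafwise path in $L_x$. The square $[0,1]^2$ is compact, so $H$ is uniformly continuous; choose a partition $0=s_0<s_1<\cdots<s_N=1$ fine enough that
$$
\sup_{t\in[0,1]} d_{\fM}\bigl(\gamma_{s_k}(t),\gamma_{s_{k+1}}(t)\bigr) \leq \eU/4
\qquad \text{for every } 0\leq k< N.
$$
Fix indices $i,j$ with $x\in U_i$ and $y\in U_j$. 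Since every $\gamma_{s_k}$ starts at $x\in U_i$ and ends at $y\in U_j$, Lemma~\ref{lem-domainconst} applies to each consecutive pair and gives that $h_{\gamma_{s_k}}$ and $h_{\gamma_{s_{k+1}}}$ coincide on the common open domain $D(h_{\gamma_{s_k}})\cap D(h_{\gamma_{s_{k+1}}})\subset\fT_i$.

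Set $U=\bigcap_{k=0}^{N} D(h_{\gamma_{s_k}})\subset\fT_i$. As a finite intersection of open sets $U$ is open, and it contains $w:=\pi_i(x)$ because every $\gamma_{s_k}$ starts at $x$, so $U$ is a nonempty open neighborhood of $w$ in $D(h_\gamma)\cap D(h_{\gamma'})$. A straightforward induction on $k$, using the pairwise agreement above, yields $h_{\gamma_{s_0}}=h_{\gamma_{s_1}}=\cdots=h_{\gamma_{s_N}}$ on $U$. In particular $h_\gamma$ and $h_{\gamma'}$ agree on $U$, and since $U$ is an open neighborhood of each of its points, the germs $[h_\gamma]_{w'}$ and $[h_{\gamma'}]_{w'}$ coincide for every $w'\in U$, in particular at $w$.

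The only real work is already absorbed in Lemma~\ref{lem-domainconst}; the remaining obstacle is simply to verify that uniform continuity of $H$ on the compact square produces a partition valid for \emph{all} $t$ simultaneously, and that choosing fixed initial and terminal charts $U_i$, $U_j$ (possible because the homotopy fixes the endpoints $x,y$) keeps all the domains $D(h_{\gamma_{s_k}})$ inside the common transversal $\fT_i$, so that their finite intersection is still an open neighborhood of $w$ rather than being empty or lying in different transversals.
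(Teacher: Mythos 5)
Your proposal is correct and follows the same essential strategy as the paper: discretize the homotopy in the $s$-variable into finitely many stages so that consecutive stages are uniformly within $\eU/4$ of each other, then apply Lemma~\ref{lem-domainconst} inductively along the chain $\gamma = \gamma_{s_0}, \gamma_{s_1}, \ldots, \gamma_{s_N} = \gamma'$. The only cosmetic difference is that the paper first fixes a $t$-partition with the $\eFU$ estimate valid for every $s$ before choosing the $s$-partition; this is a convenient uniformity but, as you implicitly observe, not strictly needed since Lemma~\ref{lem-domainconst} constructs its own $t$-partition internally. Your explicit identification of the common open set as the finite intersection $U = \bigcap_k D(h_{\gamma_{s_k}})$, and the observation that $w = \pi_i(x)$ lies in it because every stage of the homotopy starts at $x \in U_i$, is a clean way to pin down what the paper leaves to "the conclusion follows."
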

\proof
Let $H(s,t) \colon [0,1] \times [0,1] \to \fM$ with $H(0,t) = \gamma(t)$, $H(1,t) = \gamma'(t)$, and $H(s,0) = \gamma(0)$, $H(s,1) = \gamma(1)$.
Choose a partition of the interval $[0,1]$, $0 = t_0 < t_1 < \cdots < t_{\alpha} = 1$ such that for all $0 \leq s \leq 1$ the leafwise distance estimate holds,
$$ d_{\fM}(H(s, t_{\ell}) , H(s, t_{\ell +1})) < \eFU ~ .$$
Then choose a partition $0 = s_0 < s_1 < \cdots <s_{\beta} = 1$ so that for all $0 \leq \ell < \beta$ we have the uniform estimate
$$d_{\fM}(H(s_{\ell}, t) , H(s_{\ell +1}, t)) \leq \eU/4 \quad {\rm for~ all} \quad 0 \leq t \leq 1 ~ .$$
Then apply Lemma~\ref{lem-domainconst} inductively to the paths $t \mapsto H(s_{\ell} , t)$ and $t \mapsto H(s_{\ell +1} , t)$ for $0 \leq \ell < \beta$, and the conclusion follows.
\endproof
The following is another consequence of the total convexity of the plaques in the foliation covering:
\begin{lemma} \label{lem-close}
Suppose that $\gamma, \gamma' \colon [0,1] \to \fM$ are leafwise paths for which $\gamma(0) = \gamma'(0) = x$ and $\gamma(1) = \gamma'(1) = x'$, and suppose that $d_{\fM}(\gamma(t), \gamma'(t)) < \eU/4$ for all $a \leq t \leq b$.
Then $\gamma, \gamma' \colon [0,1] \to \fM$ are endpoint-homotopic. \hfill $\Box$
\end{lemma}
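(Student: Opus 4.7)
The plan is to construct an explicit homotopy by partitioning $[0,1]$ into intervals short enough that on each, both $\gamma$ and $\gamma'$ lie in a single strongly convex plaque of a foliation chart, and then interpolating between them using the unique leafwise geodesic inside that plaque. Following the partition used in the proof of Lemma~\ref{lem-domainconst}, I would choose $0 = t_0 < t_1 < \cdots < t_N = 1$ and chart indices $i_k$ so that $B_{\fM}(\gamma(t_k), \eU) \subset U_{i_k}$ and $\gamma([t_k, t_{k+1}]) \subset D_{\F}(\gamma(t_k), \eFU) \subset B_{\fM}(\gamma(t_k), \eU/8)$. Combined with the hypothesis $d_{\fM}(\gamma(t), \gamma'(t)) < \eU/4$, this ensures $\gamma'([t_k, t_{k+1}]) \subset B_{\fM}(\gamma(t_k), \eU/2) \subset U_{i_k}$, and also $\gamma(t_k), \gamma'(t_k) \in U_{i_{k-1}} \cap U_{i_k}$ for $k \geq 1$.

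The central step is the inductive claim that $\gamma(t_k)$ and $\gamma'(t_k)$ lie in the same plaque of $U_{i_k}$. The base case is immediate since $\gamma(0) = \gamma'(0) = x$. For the inductive step, each of $\gamma|_{[t_{k-1}, t_k]}$ and $\gamma'|_{[t_{k-1}, t_k]}$ is connected in $L_x \cap U_{i_{k-1}}$, hence lies in a single plaque of $U_{i_{k-1}}$; by the inductive hypothesis, both start in the same plaque at $t_{k-1}$, so both arcs lie entirely in that plaque, giving $\pi_{i_{k-1}}(\gamma(t_k)) = \pi_{i_{k-1}}(\gamma'(t_k))$. Applying the well-defined holonomy transition $h_{i_k, i_{k-1}}$, which sends $\pi_{i_{k-1}}(z)$ to $\pi_{i_k}(z)$ for $z \in U_{i_{k-1}} \cap U_{i_k}$, then yields $\pi_{i_k}(\gamma(t_k)) = \pi_{i_k}(\gamma'(t_k))$, closing the induction. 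Repeating the connectedness argument on $[t_k, t_{k+1}]$ shows that both arcs $\gamma|_{[t_k, t_{k+1}]}$ and $\gamma'|_{[t_k, t_{k+1}]}$ lie in the common plaque $P_k := \cP_{i_k}(\gamma(t_k))$.

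Because $P_k$ is strongly convex in the leafwise metric $d_{\F}$ (its diameter being at most $\lF/2 < \lF$), for each $t \in [t_k, t_{k+1}]$ there is a unique shortest $d_{\F}$-geodesic in $P_k$ from $\gamma(t)$ to $\gamma'(t)$, varying continuously with $t$. Parameterizing this geodesic proportionally on $[0,1]$ and letting $H(s,t)$ be its value at parameter $s$ produces a continuous map on $[0,1] \times [t_k, t_{k+1}]$. At the seams $t = t_k$, the geodesic joining $\gamma(t_k)$ and $\gamma'(t_k)$ is intrinsic to the leaf $L_x$, so the pieces defined in $P_{k-1}$ and $P_k$ agree, giving a continuous map $H \colon [0,1] \times [0,1] \to L_x$. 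By construction $H(0,t) = \gamma(t)$, $H(1,t) = \gamma'(t)$, and $H(s,0) \equiv x$, $H(s,1) \equiv x'$ since the geodesic between coincident endpoints is constant, so $H$ is the required endpoint-homotopy. The main obstacle is the inductive plaque-tracking: \emph{a priori} the path $\gamma'$ could sit in a different plaque of $U_{i_k}$ from $\gamma$, and it is the single-valuedness of the holonomy change of coordinates together with the agreement at $t=0$ that rules this out.
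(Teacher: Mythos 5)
Your proof is correct, and it is exactly the argument the paper signals but leaves unwritten (the $\Box$ with the remark that the lemma follows from ``total convexity of the plaques''): inductively track $\gamma$ and $\gamma'$ through a common plaque chain via the single-valued transition maps, then interpolate at each time $t$ by the unique shortest leafwise geodesic inside the strongly convex plaque, checking consistency at the seams.
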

Given $g \in \cGF^*$ and $w \in D(g)$, let $[g]_w$ denote the germ of the map $g$ at $w$. Set
\begin{equation}\label{eq-holodef}
\G_{\F,w} = \{ [g]_w \mid g \in \cGF^* ~ , ~ w \in D(g) ~ , ~ g(w) = w\} ~ .
\end{equation}
Given $x \in U_i$ with $w = \pi_i(x) \in \fT_*$, the elements of $\G_{\F,w}$ form a group, and by Lemma~\ref{lem-homotopic} there is a well-defined homomorphism $h_{\F,x} \colon \pi_1(L_x , x) \to \G_{\F,w}$ which is called the \emph{holonomy group} of $\F$ at $x$.

\subsection{Non-trivial   holonomy}
Note that if $y \in L_x$ then the homomorphism
$h_{\F , y}$ is conjugate (by   an element of $\cGF^*$) to the homomorphism $h_{\F , x}$.
A leaf $L$ is said to have \emph{non-trivial germinal holonomy} if for some $x \in L$, the homomorphism $h_{\F , x}$ is non-trivial. If the homomorphism $h_{\F , x}$ is trivial, then we say that $L_x$ is a \emph{leaf without holonomy}. This property depends only on $L$, and not the basepoint $x \in L$.
The foliated space $\fM$ is said to be \emph{without holonomy} if for every $x \in \fM$,  the leaf $L_x$ is   without germinal holonomy.
\begin{lemma} \label{lem-noholo}
Let $\fM$ be a foliated space without holonomy. Fix a regular covering for $\fM$ as above. Let $\cI$, $\cJ$ be two plaques chains such that $w \in Dom(h_{\cI}) \cap Dom(h_{\cJ})$ with $h_{\cI}(w) = w' = h_{\cJ}(w)$. Then $h_{\cI}$ and $h_{\cJ}$ have the same germinal holonomy at $w$. Thus, for each $w' \in \cO(w)$ in the  $\cGF^*$ orbit of $w$,
there is a well-defined holonomy germ $h_{w,w'}$.
\end{lemma}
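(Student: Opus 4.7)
My approach is to convert each plaque chain into a leafwise path via the construction of Section~\ref{subsec-admissible}, and then exhibit the ``difference'' of the two holonomies as the germinal holonomy of a leafwise loop, which by hypothesis is trivial.

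First I would set $x = \tau_{i_0}(w)$ and choose a common leaf basepoint. Using the procedure in Section~\ref{subsec-admissible}, the chain $\cI$ determines a leafwise piecewise-geodesic path $\gamma_{\cI} \colon [0,1] \to L_x$ covered by the plaque chain $\cP_{\cI}(w)$, so that $\gamma_{\cI}(0) = \tau_{i_0}(w)$ and $\gamma_{\cI}(1) = \tau_{i_{\alpha}}(w')$, with $h_{\cI} = h_{\gamma_{\cI}}$. If the starting chart of $\cJ$ differs from that of $\cI$, I prepend to $\cJ$ the transition index $i_0$; since $w \in D(h_{\cI}) \cap D(h_{\cJ})$ forces $\tau_{j_0}(w)$ to lie in the same plaque chain component (equivalently, in $L_x$) once we also prepend the transition $h_{j_0,i_0}$ on the other side and recall Proposition~\ref{prop-copc}, we obtain a leafwise path $\gamma_{\cJ} \colon [0,1] \to L_x$ with $\gamma_{\cJ}(0) = x$ and $\gamma_{\cJ}(1) = \tau_{j_{\beta}}(w')$. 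An analogous argument at the terminal end, using $h_{j_{\beta}, i_{\alpha}}$, produces endpoints that differ only by a transition with trivial germ at $w'$; absorbing these into the paths yields two leafwise paths in $L_x$ with common starting point $x$ and common endpoint $x'$ over $w'$, and with $h_{\cI}, h_{\cJ}$ representing their holonomies in some neighborhood of $w$.

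Now form the concatenated loop $\gamma = \gamma_{\cI} \ast \gamma_{\cJ}^{-1}$ at $x$, which represents an element $[\gamma] \in \pi_1(L_x, x)$. By the groupoid identity $h_{\gamma} = h_{\gamma_{\cI}} \circ h_{\gamma_{\cJ}}^{-1}$ on the common domain of definition (Proposition~\ref{prop-copc}) and Lemma~\ref{lem-homotopic} giving homotopy invariance of germs, we have the germ identity
\begin{equation*}
[h_{\gamma}]_w \; = \; [h_{\cI}]_w \cdot [h_{\cJ}]_w^{-1}  ~ \in ~ \Gamma_{\F, w}.
\end{equation*}
The hypothesis that $\fM$ is without holonomy means that $h_{\F, x} \colon \pi_1(L_x, x) \to \Gamma_{\F, w}$ is the trivial homomorphism, so $[h_{\gamma}]_w = [\mathrm{id}]_w$ and hence $[h_{\cI}]_w = [h_{\cJ}]_w$. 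The well-definedness of $h_{w, w'}$ for $w' \in \cO(w)$ is then an immediate consequence, since any two plaque-chain realizations of the orbit relation $w \mapsto w'$ yield the same germ.

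The main technical obstacle is the first paragraph: carefully matching up the two chains so that they correspond to leafwise paths with common endpoints in a single leaf, even when the starting and terminating charts of $\cI$ and $\cJ$ differ. This requires invoking the transition maps $h_{j_0, i_0}$ and $h_{j_\beta, i_\alpha}$ at the endpoints and verifying (via Proposition~\ref{prop-copc}) that these insertions do not affect germinal identities — so that we may honestly interpret the composition $h_{\cJ}^{-1} \circ h_{\cI}$ as the holonomy of an honest loop in $L_x$.
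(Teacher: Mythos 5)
Your proof is correct and follows essentially the same approach as the paper's: form the composition $h_{\cJ}^{-1}\circ h_{\cI}$, observe that it fixes $w$, realize its germ at $w$ as the holonomy germ of a leafwise loop, and then invoke the without-holonomy hypothesis to conclude the germ is trivial. The paper's proof is a single sentence that leaves the loop-realization step (and the endpoint bookkeeping) implicit; you spell these out, which is harmless and consistent with the constructions of Section~\ref{subsec-admissible} and Lemma~\ref{lem-homotopic}.
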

\proof
The composition $g = h_{\cJ}^{-1} \circ h_{\cI}$ satisfies $g(w) = w$, so by assumption there is some open neighborhood $w \in U$ for which $g | U$ is the trivial map. That is, $ h_{\cI} | U = h_{\cJ} | U$.
\endproof
We introduce a mild generalization of the notion of a foliation without holonomy.
\begin{defn}\label{def-finiteholo}
The foliated space $\fM$ is said to have \emph{finite holonomy} if there is a (compact) foliated space without holonomy $\widetilde{\fM}$ with foliation $\wtF$, and a finite-to-one foliated map $\Pi \colon \widetilde{\fM} \to \fM$ which is a surjection, and the restrictions of $\Pi$ to leaves of $\wtF$ are covering maps onto leaves of $\F$.
\end{defn}

Finally, we recall a basic result of Epstein, Millet and Tischler \cite{EMT1977} for foliated manifolds, whose proof applies verbatim in the case of foliated spaces.
\begin{thm} \label{thm-emt}
The union of all leaves without holonomy in a foliated space $\fM$ is a dense $G_{\delta}$ subset of $\fM$. In particular, there exists at least one   leaf without germinal holonomy. \hfill $\Box$
\end{thm}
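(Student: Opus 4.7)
My plan is to adapt the classical Epstein–Millet–Tischler argument \cite{EMT1977} to the present foliated-space setting. I will first localize the problem to the transverse model $\fT_*$, produce a dense $G_\delta$ of ``good'' transversal points there, and then transport this genericity back to $\fM$ via the finite regular covering $\cU = \{U_1, \ldots, U_\nu\}$.

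For each admissible sequence $\cI = (i_0, \ldots, i_\alpha)$, I will consider the fixed-point set
$F(\cI) = \{ w \in D(h_\cI) \mid h_\cI(w) = w\}$,
which is relatively closed in the open set $D(h_\cI) \subset \fT_*$, and set $G(\cI) = F(\cI) \setminus \mathrm{int}_{D(h_\cI)} F(\cI)$. The key observation is that a point $w \in \fT_*$ carries a nontrivial germinal holonomy element precisely when $w \in G(\cI)$ for some admissible $\cI$: every element of $\G_{\F,w}$ is represented by a restriction of some $h_\cI$, and the germ $[h_\cI]_w$ is nontrivial exactly when $h_\cI$ fixes $w$ but fails to be the identity on any neighborhood of $w$ in $D(h_\cI)$, i.e., $w \in G(\cI)$.

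The core step is to show each $G(\cI)$ is nowhere dense in $\fT_*$. Suppose $W \subset \overline{G(\cI)}$ is a nonempty open subset of $\fT_*$. If $W \cap D(h_\cI) \ne \emptyset$, then since $G(\cI)$ is relatively closed in $D(h_\cI)$, the set $W \cap D(h_\cI)$ is a nonempty open subset of $D(h_\cI)$ contained in $G(\cI)$, contradicting $\mathrm{int}_{D(h_\cI)} G(\cI) = \emptyset$. Otherwise $W$ is contained in the boundary of the open set $D(h_\cI)$, but the boundary of an open subset of any topological space has empty interior. Since the cover $\cU$ is finite, the family of admissible sequences is countable, so $\fT_*^{\mathrm{bad}} := \bigcup_\cI G(\cI)$ is a meager $F_\sigma$ subset of $\fT_*$, and the Baire category theorem (applied to the compact metric space $\fT_*$) gives that $\fT_*^{\mathrm{good}} := \fT_* \setminus \fT_*^{\mathrm{bad}}$ is a dense $G_\delta$ in $\fT_*$.

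To finish, I transfer this to $\fM$. Because holonomy groups at different points of a single leaf are conjugate (via Lemma~\ref{lem-homotopic} and transport along a leafwise path), the set $\fM^{\mathrm{good}}$ of points on leaves without germinal holonomy satisfies $\fM^{\mathrm{good}} \cap U_i = \pi_i^{-1}(\fT_*^{\mathrm{good}} \cap \fT_i)$, whence
$$\fM^{\mathrm{good}} = \bigcup_{i=1}^{\nu} \pi_i^{-1}(\fT_*^{\mathrm{good}} \cap \fT_i).$$
Each summand is $G_\delta$ in the open set $U_i$, hence $G_\delta$ in $\fM$, and a finite union of $G_\delta$ sets in a metric space is again $G_\delta$. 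Density follows from the local product structure $U_i \cong [-1,1]^n \times \fT_i$ together with density of $\fT_*^{\mathrm{good}}$: any open $W \subset \fM$ meets some $U_i$ in a set of the form $V \times V'$, and density of $\fT_*^{\mathrm{good}}$ lets me choose $w \in V' \cap \fT_*^{\mathrm{good}}$, producing a point of $\fM^{\mathrm{good}} \cap W$. Nonemptiness is then automatic since $\fM$ is a compact metric space, hence Baire. The delicate point in this plan is the passage from $\mathrm{int}_{D(h_\cI)} G(\cI) = \emptyset$ to nowhere-denseness of $G(\cI)$ in the ambient $\fT_*$; the remaining steps are routine Baire-category bookkeeping.
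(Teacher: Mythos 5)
Your argument is correct and is precisely the Epstein--Millet--Tischler Baire-category argument that the paper invokes (and asserts applies verbatim to foliated spaces): for each of the countably many admissible sequences $\cI$, the set $G(\cI)$ of fixed points of $h_{\cI}$ carrying a nontrivial germ is closed with empty interior in the open set $D(h_{\cI})$, hence nowhere dense in $\fT_*$, and the finiteness of the atlas together with conjugation-invariance of germinal holonomy along a leaf transfers the transversal genericity to a dense $G_{\delta}$ subset of $\fM$. The one cosmetic caveat is that $\fT_* = \fT_1 \cup \cdots \cup \fT_{\nu}$ is a literal union inside $\fX$, so a point in an overlap $\fT_i \cap \fT_j$ may represent distinct leaves; it is cleaner to run the Baire argument chart-by-chart on each $\fT_i$ (equivalently on the disjoint union), though this changes nothing of substance.
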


\section{Matchbox manifolds and equicontinuity}\label{sec-mme}

Let  $\fM$ be a matchbox manifold. Then   the local transverse models for $\F$ are totally disconnected, and the leaves of $\F$ are defined to be the path components for the induced fine topology on $\fM$. These remarks are the basis for several elementary but important observations.
\begin{lemma}\label{lem-pc}
Every continuous map $\gamma \colon [0,1] \to \fM$ is a leafwise path.
\end{lemma}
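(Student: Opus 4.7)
The key point is that $\fM$ being a matchbox manifold means each local transverse model $\fT_x$ is totally disconnected, while $[0,1]$ is connected. My plan is to exploit this mismatch chart by chart, then patch using compactness of $[0,1]$.

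First, for each $t \in [0,1]$, choose an index $1 \leq i(t) \leq \nu$ with $\gamma(t) \in U_{i(t)}$. By continuity of $\gamma$ and openness of $U_{i(t)}$, there is an open interval $I_t \subset [0,1]$ containing $t$ with $\gamma(I_t) \subset U_{i(t)}$. The composition $\pi_{i(t)} \circ \gamma \colon I_t \to \fT_{i(t)} \subset \fX$ is then a continuous map from a connected space into the totally disconnected space $\fT_{i(t)}$, hence constant. Thus there is a single $w_t \in \fT_{i(t)}$ with $\gamma(I_t) \subset \cP_{i(t)}(w_t) \subset L_{\gamma(t)}$, so on $I_t$ the path stays within a single leaf.

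Next, by compactness of $[0,1]$, extract a finite subcover of $\{I_{t}\}_{t \in [0,1]}$ and use it to produce a partition $0 = s_0 < s_1 < \cdots < s_\alpha = 1$ and indices $i_\ell = i(t_\ell)$ such that each closed subinterval $[s_\ell, s_{\ell+1}]$ lies in some $I_{t_\ell}$. Then $\gamma([s_\ell, s_{\ell+1}])$ lies in a single plaque $\cP_{i_\ell}(w_{\ell}) \subset L_{\gamma(s_\ell)}$. Since the point $\gamma(s_{\ell+1})$ belongs to both $\cP_{i_\ell}(w_\ell)$ and $\cP_{i_{\ell+1}}(w_{\ell+1})$, these consecutive plaques meet and so lie in a common leaf. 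A finite induction on $\ell$ shows $L_{\gamma(0)} = L_{\gamma(s_\ell)}$ for every $\ell$, giving $\gamma([0,1]) \subset L_{\gamma(0)}$.

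There is no real obstacle here; the only point that requires any care is ensuring the local constancy step is phrased in terms of the transverse projection $\pi_i$ (whose target is totally disconnected) rather than in terms of $\vp_i$ (whose target $[-1,1]^n \times \fT_i$ is not totally disconnected). Everything else is straightforward compactness and the fact that overlapping plaques lie in a common leaf, which follows from the definition of $\F$ via the fine topology.
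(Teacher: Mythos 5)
Your proof is correct and follows essentially the same route as the paper: both use that the transverse projection $\pi_{i} \circ \gamma$ is locally constant because $[0,1]$ is connected and $\fT_i$ is totally disconnected, so $\gamma$ locally stays in a single plaque. The paper compresses the patching step into the phrase ``by standard arguments,'' whereas you spell out the compactness argument explicitly; the content is the same.
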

\proof
Let $a \leq c \leq b$ and choose a local chart $\vp_i \colon U_i \to (-1,1)^n \times \fT_i$ with $\gamma(c) \in U_i$. The image path $\pi_i(\gamma(t)) \in \fT_i$ must be constant for $t$ near to $c$, as $\fT_i$ is assumed to be totally disconnected. Thus, by standard arguments, $\gamma(t)$ lies in the leaf $L_{x}$ of $\F$ containing the initial point $x = \gamma(a)$.
\endproof
\begin{cor} \label{cor-cpc}
Let   $X$ be a path connected
topological space, and $h \colon X \to \fM$ a continuous map. Then
there exists a leaf $L_h \subset \fM$ for which $h(X) \subset L_h$.
\end{cor}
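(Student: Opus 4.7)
The plan is to reduce Corollary~\ref{cor-cpc} directly to Lemma~\ref{lem-pc}, using nothing more than path-connectedness of $X$. The corollary is essentially saying that a continuous image of a path-connected space into $\fM$ cannot spread across different leaves, which is a leaf-local version of the statement that the leaves are exactly the path components of $\fM$ in its own topology.

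First I would fix a base point $x_0 \in X$ and set $L_h = L_{h(x_0)}$, the leaf of $\F$ containing $h(x_0)$. Given any other point $x_1 \in X$, use path-connectedness of $X$ to choose a continuous path $\gamma \colon [0,1] \to X$ with $\gamma(0) = x_0$ and $\gamma(1) = x_1$. Then the composition $h \circ \gamma \colon [0,1] \to \fM$ is continuous, so by Lemma~\ref{lem-pc} it is a leafwise path; in particular, $(h \circ \gamma)(0) = h(x_0)$ and $(h \circ \gamma)(1) = h(x_1)$ lie in the same leaf of $\F$. Hence $h(x_1) \in L_h$, and since $x_1$ was arbitrary, $h(X) \subset L_h$.

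There is essentially no obstacle here: the real content is in Lemma~\ref{lem-pc}, which in turn depends on total disconnectedness of the transverse models $\fT_i$. Once one has that every $[0,1]$-path in $\fM$ stays in a single leaf, the extension to arbitrary path-connected domains is the standard argument that a topological property preserved under continuous images of $[0,1]$ passes to continuous images of any path-connected space. No estimates on charts or holonomy are required, and the proof is a one-line application of Lemma~\ref{lem-pc}.
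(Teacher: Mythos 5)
Your proof is correct and is essentially the same as the paper's, which also fixes a point $x \in X$, sets $L_h = L_{h(x)}$, and applies Lemma~\ref{lem-pc}; you have simply written out the path argument that the paper leaves implicit.
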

\proof
Let $x \in X$ and $L_h$ be the leaf of $\F$ containing $h(x)$. Then apply Lemma~\ref{lem-pc}.
\endproof
\begin{cor} \label{cor-foliated1}
Let $\fM$ and $\fM'$ be matchbox manifolds, and $h \colon \fM' \to \fM$ a continuous map. Then $h$ maps the leaves of $\F'$ to leaves of $\F$.
\end{cor}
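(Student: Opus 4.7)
The plan is to apply Corollary~\ref{cor-cpc} leaf by leaf. Let $L'$ be an arbitrary leaf of $\F'$. By Theorem~\ref{thm-riemannian} (applied to $\fM'$), $L'$ carries the structure of a complete connected Riemannian manifold, hence in particular $L'$ is a path connected topological space in its intrinsic (leafwise) topology.

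Now consider the restriction $h|_{L'} \colon L' \to \fM$. This restriction is continuous from $L'$ (with the intrinsic leaf topology, which is finer than the subspace topology inherited from $\fM'$) into $\fM$, because continuous paths in $L'$ map to continuous paths in $\fM$, and by Lemma~\ref{lem-pc} every such image path in $\fM$ lies in a single leaf of $\F$. Applying Corollary~\ref{cor-cpc} to $h|_{L'}$ with $X = L'$ yields a leaf $L_{h,L'}$ of $\F$ such that $h(L') \subset L_{h,L'}$.

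Since $L' \subset \fM'$ was arbitrary and the leaves of $\F'$ partition $\fM'$, this shows that $h$ sends each leaf of $\F'$ into a single leaf of $\F$, which is the required statement. The one subtlety worth noting is that what we need is only that $L'$ is path connected as an abstract topological space (not that its inclusion into $\fM'$ is continuous in the leafwise topology), and this follows immediately from $L'$ being a connected smooth manifold via Theorem~\ref{thm-riemannian}. There is no real obstacle: the corollary is essentially a restatement of Corollary~\ref{cor-cpc} over the partition of $\fM'$ into leaves.
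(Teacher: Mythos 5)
Your proof is correct and takes essentially the same approach as the paper, which applies Corollary~\ref{cor-cpc} to each leaf $L'$ of $\F'$ after observing that leaves are path connected. (One small simplification: path-connectedness of $L'$ is immediate from the definition of leaves as path components of the fine topology, so the appeal to Theorem~\ref{thm-riemannian} is unnecessary.)
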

\proof
The leaves of $\F'$ are path-connected, so their images under $h$ are contained in leaves of $\F$.
\endproof
\begin{cor} \label{cor-foliated2}
A homeomorphism $h \colon \fM \to \fM$ of a matchbox manifold is a foliated map. \hfill $\Box$
\end{cor}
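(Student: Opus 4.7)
The plan is to observe that Corollary~\ref{cor-foliated2} is essentially an immediate consequence of Corollary~\ref{cor-foliated1}, applied symmetrically to $h$ and $h^{-1}$. I would first invoke Corollary~\ref{cor-foliated1} with $\fM' = \fM$ and the continuous map $h \colon \fM \to \fM$. That corollary asserts that $h$ sends each leaf of $\F$ into some leaf of $\F$; concretely, for $x \in \fM$ with leaf $L_x$, there is a leaf $L_{h(x)}$ of $\F$ containing $h(L_x)$.

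Next, since $h$ is a homeomorphism, its inverse $h^{-1} \colon \fM \to \fM$ is also continuous, and Corollary~\ref{cor-foliated1} again applies to show that for each $y \in \fM$, the set $h^{-1}(L_y)$ is contained in some leaf of $\F$. Combining these two inclusions, for any $x \in \fM$ we get $L_x \subset h^{-1}(h(L_x)) \subset h^{-1}(L_{h(x)}) \subset L_x$, so all inclusions are equalities. Hence $h(L_x) = L_{h(x)}$, and $h$ induces a bijection on the set of leaves of $\F$. This is exactly the condition that $h$ be a foliated map in the sense relevant here.

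There is no real obstacle in this proof, since the entire content was already packaged into Lemma~\ref{lem-pc} and Corollaries~\ref{cor-cpc} and \ref{cor-foliated1}: the total disconnectedness of the transverse models forces any continuous path in $\fM$ to lie in a single leaf, which is precisely what makes continuity into a leaf-preserving condition in the matchbox setting. The only thing to note is that the statement is about matching leaves as sets and does not assert leafwise smoothness of $h$; if smoothness along leaves is part of the intended notion of ``foliated map,'' this is not contained in the statement and would require a separate argument (which indeed is not provided here, since the corollary follows directly from the preceding continuous-map result).
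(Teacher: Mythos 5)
Your proof is correct and follows exactly the approach the paper intends: Corollary~\ref{cor-foliated2} is stated with no separate argument precisely because it is immediate from Corollary~\ref{cor-foliated1} applied to both $h$ and $h^{-1}$, which is what you carry out. Your closing remark about leafwise smoothness is also accurate; the term ``foliated map'' here means leaf-to-leaf, and nothing more is being asserted.
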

Let $\cH(\fM)$ denote the group of homeomorphisms of $\fM$, and $\cH(\fM, \F)$ the subgroup of $\cH(\fM)$ consisting of homeomorphisms which preserve the foliation $\F$; that is, every leaf of $\F$ is mapped to some leaf of $\F$.
Then Corollary~\ref{cor-foliated2} states that $\cH(\fM) = \cH(\fM, \F)$.

\medskip

\subsection{Equicontinuous pseudogroups} The following is  one of the main concepts used in this work.

\begin{defn} \label{def-equicontinuous}
The holonomy pseudogroup $\cGF$ of $\F$ is \emph{equicontinuous} if for all $\epsilon > 0$, there exists $\delta > 0$ such that for all $g \in \cGF^*$, if $w, w' \in D(g)$ and $d_{\fX}(w,w') < \delta$, then $d_{\fX}(g(w), g(w')) < \epsilon$.
\end{defn}

\medskip

We note that equicontinuity is a strong hypothesis on a pseudogroup. In particular, as noted by Plante \cite[Theorem 3.1]{Plante1975a}, Sacksteder proved:
\begin{thm}[Sacksteder \cite{Sacksteder1965}] \label{thm-sacksteder} If $\cGF$ is an equicontinuous pseudogroup modeled on a compact Polish space $X$, then there exists a Borel probability measure $\mu$ on $X$ which is $\cGF$-invariant. \hfill $\Box$
\end{thm}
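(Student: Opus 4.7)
The plan is to build an invariant Borel probability measure by first replacing $d_\fX$ with an equivalent metric under which all pseudogroup generators act as isometries, then passing to a minimal $\cGF$-invariant closed set, and finally averaging via Haar measure on the compact group obtained as the Ellis-type closure of the pseudogroup action on that minimal set.

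First, I would exploit equicontinuity to define, for $w,w' \in X$,
\begin{equation*}
d'(w,w') \;=\; \sup\bigl\{\, d_\fX(g(w), g(w')) \,\bigm|\, g \in \cGF^* \cup \{\mathrm{id}\},\ w,w' \in D(g)\,\bigr\}.
\end{equation*}
Equicontinuity gives $d'(w,w') \leq \rho(d_\fX(w,w'))$ for a common modulus of continuity $\rho$ with $\rho(r) \to 0$ as $r \to 0$, while $d' \geq d_\fX$ (taking $g = \mathrm{id}$). Hence $d'$ is a metric topologically equivalent to $d_\fX$ on $X$, and by construction every element of $\cGF^*$ is a $d'$-isometry on its domain. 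Since the original metric is replaced only up to equivalence, the compactness and pseudogroup structure are unchanged.

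Next, by Zorn's lemma applied to the nonempty family of closed nonempty $\cGF^*$-invariant subsets of $X$ (which is closed under directed intersection because $X$ is compact), choose a minimal such set $K \subset X$. On $K$ every $\cGF^*$-orbit is dense. For a fixed basepoint $y_0 \in K$, consider the family of partial isometries of $(K, d')$ given by restrictions of elements of $\cGF^*$ whose domain contains $y_0$. Using minimality together with the fact that these are partial $d'$-isometries, each such partial map extends uniquely to a global isometry of $K$ (extend first to the dense orbit by the equicontinuous pseudogroup law, then to all of $K$ by uniform continuity and completeness of the compact metric space $K$). Let $G$ denote the closure in $(C(K,K), \text{sup metric})$ of this family of global isometries; by Arzel\`a--Ascoli (equicontinuity is automatic for isometries) and closure under composition and inverse, $G$ is a compact topological group acting by isometries on $K$ and acting transitively on $K$ by minimality.

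Finally, I would take the Haar probability measure $\eta$ on $G$ and push it forward by the orbit map $e_{y_0} \colon G \to K$, $g \mapsto g(y_0)$, to obtain a Borel probability measure $\mu = (e_{y_0})_* \eta$ on $K$, extended by zero to all of $X$. Left-invariance of Haar measure translates into $G$-invariance of $\mu$, which restricts to $\cGF^*$-invariance in the pseudogroup sense: for each $g \in \cGF^*$ one has $g_*(\mu\!\upharpoonright\! D(g) \cap K) = \mu\!\upharpoonright\! R(g) \cap K$, since the restriction of $g$ to $K$ lies in $G$. Outside $K$ both sides vanish, giving the desired $\cGF$-invariant Borel probability measure on $X$. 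The main obstacle I expect is the careful extension in the previous paragraph: the pseudogroup elements are only partially defined, so promoting them to genuine isometries of the orbit closure requires simultaneously invoking minimality (to get density of the relevant partial orbits), equicontinuity (to get uniform extendability), and the isometry property under $d'$ (to get uniqueness of the extension and closure under composition), so as to justify that $G$ really is a compact group and not merely a semigroup.
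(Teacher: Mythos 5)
The paper does not prove this theorem (it cites Sacksteder and Plante and closes the statement with $\Box$), so I assess your argument on its own terms. Its overall shape---pass to the invariant supremum metric $d'$, restrict to a minimal set $K$, promote the local isometries to a compact transitive group $G$, push forward Haar measure---is natural, and the parts you carry out in detail are correct: $d'$ is a metric with $d_{\fX}\le d'\le\rho\circ d_{\fX}$ by equicontinuity and hence is topologically equivalent to $d_{\fX}$, every $g\in\cGF^*$ is a $d'$-isometry on its domain, and the Zorn argument yields a minimal closed invariant $K$. But the key middle step has a genuine gap, and it is exactly the one you flag at the end.

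You claim that each partial $d'$-isometry $g$ with $y_0\in D(g)$ extends uniquely to a global isometry of $K$, ``first to the dense orbit by the equicontinuous pseudogroup law, then to all of $K$ by uniform continuity and completeness.'' That sketch does not define an extension. A pseudogroup element is prescribed only on its open domain $D(g)\cap K$; for $z\in\cO(y_0)$ lying outside $D(g)$ there is no pseudogroup law that produces a value $\widehat{g}(z)$. Writing $z=h(y_0)$, the natural candidates $h(g(y_0))$ and $g(h(y_0))$ require domain conditions ($g(y_0)\in D(h)$, resp.\ $z\in D(g)$) that need not hold, and different choices of $h$ representing $z$ need not give the same answer unless one already knows the relevant germs act trivially, which is not available at this stage. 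Uniform continuity extends $g$ only to $\overline{D(g)}\cap K$, not to $K$, and uniqueness of a global isometric extension is not argued. Absent the extension, what you build is a compact family of \emph{partial} isometries---a groupoid, not a group---to which Haar measure does not apply, so the final pushforward cannot be run. Closing the gap needs considerably more than abstract equicontinuity: for the holonomy pseudogroup of a compact foliated space one can try to combine the uniform domain estimate of Proposition~\ref{prop-uniformdom} with minimality to propagate a local isometry across a finite $\dTU/2$-net of $K$, but that propagation argument is the real content here and is not present in your proposal.
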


We also introduce the notion of a \emph{distal} pseudogroup. While not used directly in this work, we refer to this in discussing open questions in Section~\ref{sec-remarks}.
\begin{defn} \label{def-distal}
The holonomy pseudogroup $\cGF$ of $\F$ is \emph{distal} if for all $w, w' \in \fT_*$, if $w \ne w'$ then there exists $\delta_{w,w'} > 0$ such that for all $g \in \cGF^*$ with  $w,  w' \in D(g)$, then $d_{\fX}(g(w), g(w')) \geq \delta_{w,w'}$.
\end{defn}
Distal and equicontinuous pseudogroups are closely related \cite{AAG2008,EEN2001,Furstenberg1963,Lindenstrauss1999,Veech1970}.

\medskip
We next prove the fundamental result, that the equicontinuity hypothesis on $\cGF$   gives uniform control over the domains of arbitrary compositions of the generators of $\cGF^*$.
\begin{prop}\label{prop-uniformdom}
Assume the holonomy pseudogroup $\cGF$ of $\F$ is equicontinuous.  Then there exists $\dTU > 0$ such that for every leafwise path
$\gamma \colon [0,1] \to \fM$, there is a corresponding admissible sequence $\cI_{\gamma} = (i_0 , i_1 , \ldots , i_{\alpha})$ so that
$B_{\fX}(w_0, \dTU) \subset D(h_{\cI_{\gamma}})$, where $x = \gamma(0)$ and $w_0 = \pi_{i_0}(x)$.

Moreover, for all $0 < \e_1 \leq \eTU$ there exists $0 < \delta_1 \leq \dTU$ independent of the path $\gamma$, such that
$h_{\cI_{\gamma}}(D_{\fX}(w_0, \delta_1)) \subset D_{\fX}(w', \e_1)$ where $w' = \pi_{i_{\alpha}}(\gamma(1))$.

Thus,  $\cGF^*$ is equicontinuous as a family of local group actions.
\end{prop}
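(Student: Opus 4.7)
The overall strategy is to show, using equicontinuity, that the domains of the partial compositions $h_{\cI_\ell}$ along a well-chosen admissible sequence cannot shrink below a uniform positive radius, and then to deduce uniform equicontinuity of the full compositions as a direct consequence.

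First I would construct $\cI_\gamma = (i_0, i_1, \ldots, i_\alpha)$ exactly as in Section~\ref{subsec-admissible}, so that at each transition point the uniform inclusion (\ref{eq-domains}) holds: writing $w_\ell = \pi_{i_\ell}(\gamma(s_\ell))$, each single-step generator satisfies $B_{\fX}(w_\ell, \eTU) \subset \fD_{i_\ell, i_{\ell+1}} = D(h_{i_{\ell+1}, i_\ell})$, with $\eTU > 0$ depending only on the covering $\cU$. This supplies the uniform ``one-step domain radius'' which equicontinuity will propagate along the chain.

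Next, apply the equicontinuity hypothesis at scale $\eTU$ to extract a single $\dTU \in (0, \eTU]$, independent of $\gamma$, such that whenever $g \in \cGF^*$ and $v, v' \in D(g)$ with $d_{\fX}(v, v') < \dTU$ one has $d_{\fX}(g(v), g(v')) < \eTU$. I then induct on $\ell$ to prove $B_{\fX}(w_0, \dTU) \subset D(h_{\cI_\ell})$: given $v \in B_{\fX}(w_0, \dTU) \cap D(h_{\cI_\ell})$, equicontinuity applied to $h_{\cI_\ell}$ with the pair $(v, w_0)$ yields $d_{\fX}(h_{\cI_\ell}(v), w_\ell) < \eTU$, whence $h_{\cI_\ell}(v) \in B_{\fX}(w_\ell, \eTU) \subset D(h_{i_{\ell+1}, i_\ell})$ and $h_{\cI_{\ell+1}}(v)$ is defined. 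Taking $\ell = \alpha$ gives the first assertion.

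For the second assertion, given $0 < \e_1 \leq \eTU$, apply equicontinuity at scale $\e_1$ to obtain $\delta_1' > 0$, and set $\delta_1 = \min(\delta_1'/2, \dTU) > 0$. Then for any $v \in D_{\fX}(w_0, \delta_1)$ both $v$ and $w_0$ lie in $D(h_{\cI_\gamma})$ with $d_{\fX}(v, w_0) < \delta_1'$, so $d_{\fX}(h_{\cI_\gamma}(v), w') < \e_1$; since $\delta_1$ depends only on $\e_1$ and not on $\gamma$, this is precisely the claimed uniform equicontinuity of $\cGF^*$. The main subtlety is the induction: the crucial point is that Definition~\ref{def-equicontinuous} already quantifies over \emph{all} of $\cGF^*$, including arbitrary partial compositions $h_{\cI_\ell}$, so $\dTU$ can be chosen once and for all before $\gamma$ is fixed --- without this uniform quantifier one would face the familiar risk of geometric shrinking of domains under iteration.
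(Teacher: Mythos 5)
Your proof is correct and takes essentially the same route as the paper: you construct the admissible sequence with the uniform one-step domain radius $\eTU$ from (\ref{eq-domains}), choose $\dTU$ as the equicontinuity modulus at scale $\eTU$, and propagate $B_{\fX}(w_0, \dTU) \subset D(h_{\cI_\ell})$ by induction using the fact that each partial composition $h_{\cI_\ell}$ is itself an element of $\cGF^*$. The only differences from the paper's write-up are presentational (you argue pointwise over $v$, the paper argues set-wise with the ball), and your closing remark about the quantifier in Definition~\ref{def-equicontinuous} makes explicit the key reason the domains cannot shrink under iteration.
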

\proof
Recall that $\eTU> 0$ is defined by (\ref{eq-transdiam}).
Let $\dTU > 0$ be the modulus associated to $\e = \eTU$ by Definition~\ref{def-equicontinuous}. Note that $\dTU \leq \eTU$ as $\cGF^*$ contains the identity map for every open subset of $\fT_*$.

Given $\gamma \colon [0,1] \to \fM$, let $0 = s_0 < s_1 < \cdots < s_{\alpha} = 1$ be a partition of the interval $[0,1]$ as in Section~\ref{sec-holonomy},
so that for each $0 \leq \ell \leq \alpha$,
$\gamma([s_{\ell}, s_{\ell + 1}]) \subset D_{\F}(x_{\ell}, \eFU)$ where $x_{\ell} = \gamma(s_{\ell})$. Moreover, for the associated admissible sequence $\cI_{\gamma} = (i_0, i_1, \ldots, i_{\alpha})$, we have that
for all $0 \leq t \leq 1$, $B_{\fM}(\gamma(t), \frac{1}{2} \eU) \subset U_{i_{\ell}}$.

For each $1 \leq \ell \leq \alpha'$, set $x_{\ell} = \gamma(s_{\ell})$ and let $w_{\ell} = \pi_{i_{\ell}}(x_{\ell})$.
Set $\cI_{\ell} = (i_0, i_1, \ldots, i_{\ell})$ with corresponding holonomy map $h_{\cI_{\ell}}$.
Then $h_{\cI_{\ell}}(w_0) = w_{\ell}$. Let $h_{\ell} = h_{i_{\ell +1} , i_{\ell}}$ so that $h_{\ell} \circ h_{\cI_{\ell}} = h_{\cI_{\ell +1}}$
and $h_0 = h_{\cI_1}$.

We use induction on $\ell$ to show that $B_{\fX}(w_0, \dTU) \subset D(h_{\cI_{\gamma}})$.
First, note that $\dTU \leq \eTU$ implies $B_{\fX}(w_0, \dTU) \subset B_{\fX}(w_0, \eTU)$.
By the remarks following (\ref{eq-unifest}) we have that
$B_{\fX}(w_0, \eTU) \subset D(h_0) = D(h_{\cI_1})$.
By the definition of $\dTU$ we have that
$h_{\cI_1}(B_{\fX}(w_0 , \dTU)) \subset B_{\fX}(w_{1}, \eTU)$.
Then $B_{\fX}(w_{1}, \eTU) \subset D(h_1)$. Thus $B_{\fX}(w_0, \eTU) \subset D(h_{\cI_2})$.

Suppose that $B_{\fX}(w_0, \eTU) \subset D(h_{\cI_{\ell}})$.
As before, we have that
$$h_{\cI_{\ell}}(B_{\fX}(w_0 , \dTU)) \subset B_{\fX}(w_{\ell}, \eTU) \subset D(h_{\ell}) ~ .$$
Thus, $B_{\fX}(w_0, \eTU) \subset D(h_{\cI_{\ell +1}})$ and
$h_{\cI_{\ell +1}}(B_{\fX}(w_0 , \dTU)) \subset B_{\fX}(w_{\ell +1}, \eTU)$.

This completes the induction.
The last assertion on the existence of $\delta_1$ given $\e_1$ is just a restatement of equicontinuity for $h_{\cI_{\gamma}}$.
\endproof

Note that similar techniques can be used to prove the following, which implies that equicontinuity is a property of the foliation $\F$ of $\fM$, and does not depend on the particular covering chosen:
\begin{prop}\label{prop-indep} Let $\fM$ be a foliated space, with a regular covering $\cU$ such that
$\cGF$ is an equicontinuous pseudogroup. Then for any other choice of regular covering $\cU'$ of $\fM$, the
resulting pseudogroup $\cGF'$ will also be equicontinuous. \hfill $\Box$
\end{prop}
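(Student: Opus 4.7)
The plan is to show that $\cGF$ and $\cGF'$ are related by ``change of chart'' maps between the transversals $\fT_*$ and $\fT_*'$, and that any element of $(\cGF')^*$ factors as such a change of chart, followed by an element of $\cGF^*$, followed by another change of chart. Since there are only finitely many change of chart generators, each a homeomorphism between compact subsets of $\fX$, they admit a uniform modulus of continuity, and equicontinuity of $\cGF$ transfers to equicontinuity of $\cGF'$.

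First I would introduce, for each pair of indices $(i,j')$ with $U_i \cap U_{j'}' \ne \emptyset$, a transition homeomorphism $\sigma_{j',i} \colon \pi_i(U_i \cap U_{j'}') \to \pi_{j'}'(U_i \cap U_{j'}')$ given by $\sigma_{j',i}(w) = \pi_{j'}'(\tau_i(w))$ (and correctly handled for plaques not centered at $\xi = \vec 0$ via the modulus $\rt$ from Lemma~\ref{lem-modtau}). Exactly as in the proof of Lemma~\ref{lem-modgen}, because the set of admissible pairs $(i,j')$ is finite and each $\sigma_{j',i}$ extends to a homeomorphism of the closure of its domain, there is a single continuous modulus $\kappa^*$ with $\kappa^*(r) \to 0$ as $r \to 0$ controlling all the $\sigma_{j',i}$ and their inverses simultaneously. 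By compactness, there is also a uniform lower bound $\delta^* > 0$ on how large a ball around any image point $\sigma_{j',i}(w)$ remains inside $R(\sigma_{j',i})$, for $w$ ranging over an interior neighborhood of $D(\sigma_{j',i})$.

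Next I would take an arbitrary generator $h'_{\cI'} \in (\cGF')^*$ with $w' \in D(h'_{\cI'})$. Applying the converse construction from Section~\ref{subsec-admissible} to the covering $\cU'$, one produces a leafwise path $\gamma = \gamma^x_{\cI'}$ whose $\cU'$-plaque chain is exactly $\cP_{\cI'}(w')$. Now apply the forward construction of Section~\ref{subsec-admissible}, but relative to the covering $\cU$, to this same path $\gamma$: one obtains an admissible sequence $\cI$ in $\cU$ with $h_{\cI} \in \cGF^*$. Arguing exactly as in Proposition~\ref{prop-copc}, but allowing the initial and terminal charts to come from the two different coverings, one shows that on a common open subdomain
\begin{equation*}
h'_{\cI'} \;=\; \sigma_{i'_{\alpha'},\, i_{\alpha}} \,\circ\, h_{\cI} \,\circ\, \sigma_{i_0,\, i'_0}.
\end{equation*}

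Finally I would deduce equicontinuity. Given $\ve > 0$, pick $\ve_1 > 0$ small enough that $\kappa^*(\ve_1) < \ve$; by equicontinuity of $\cGF$, pick $\delta_1 > 0$ such that every $g \in \cGF^*$ maps $\delta_1$-close pairs in $D(g)$ to $\ve_1$-close pairs; then pick $\delta > 0$ with $\kappa^*(\delta) < \delta_1$. The display above yields the required equicontinuity estimate for $h'_{\cI'}$. The main obstacle will be the domain bookkeeping: one must verify that the ball of radius $\delta$ inside $D(h'_{\cI'})$ actually maps forward through each of the three factors without escaping their domains. This is handled by combining $\delta^*$ above with Proposition~\ref{prop-uniformdom} applied to $\cGF$ (which provides a uniform lower bound $\dTU$ on $\cU$-side domains), shrinking $\delta$ further if necessary so that $\sigma_{i_0, i'_0}$ lands inside the $\dTU$-interior of $D(h_{\cI})$, and $h_{\cI}$ lands inside the interior of $D(\sigma_{i'_{\alpha'}, i_\alpha})$; all of these thresholds depend only on the two fixed coverings, not on $\cI'$.
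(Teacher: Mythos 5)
The paper does not actually give a proof of Proposition~\ref{prop-indep}: it is stated with only the preceding remark that ``similar techniques can be used,'' referring to the argument just given for Proposition~\ref{prop-uniformdom}. So there is no proof in the paper against which to compare; your task was to make that hint precise, and you have done so along the most natural lines.

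Your strategy --- introduce finitely many change-of-transversal homeomorphisms $\sigma_{j',i}$ between the two regular coverings, extract a uniform modulus $\kappa^*$ exactly as in Lemma~\ref{lem-modgen}, and then show that every element of $(\cGF')^*$ factors on a common subdomain as $\sigma_{i'_{\alpha'},\,i_\alpha} \circ h_{\cI} \circ \sigma_{i_0,\,i'_0}$ with $h_{\cI}\in\cGF^*$ --- is exactly the architecture of the proofs of Proposition~\ref{prop-copc}, Lemma~\ref{lem-modgen}, and Theorem~\ref{thm-equic}, adapted to two coverings. The factorization step is legitimate: Lemma~\ref{lem-conhull} is a statement about $\fM$ itself and not about a particular regular covering, so the plaque-chain induction in Proposition~\ref{prop-copc} goes through unchanged when the initial/terminal charts are drawn from $\cU'$ rather than $\cU$, provided both coverings satisfy the regularity bound $\oU_i\subset B_{\fM}(x_i,\eF)$ with respect to the \emph{same} $\eF$ from Lemma~\ref{lem-conhull}. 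You should make that compatibility explicit (refine $\cU'$ if necessary); it is a one-line remark but it is what licenses the two-covering version of Proposition~\ref{prop-copc}.

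The one place your write-up is genuinely thin is the domain bookkeeping in the final paragraph. You invoke an unquantified $\delta^*$ and assert that all thresholds are independent of $\cI'$, but this needs to be tied to specific constants. The clean way is: choose the terminal charts in the forward $\cU$-construction so that $B_{\fM}(\gamma(1),\eU)\subset U_{i_\alpha}$, and similarly choose $i'_{\alpha'}$ with $B_{\fM}(\gamma(1),\eU')\subset U'_{i'_{\alpha'}}$; then the analogue of $\eTU$ from~(\ref{eq-transdiam}), computed for the \emph{pair} of coverings, gives a uniform radius ball about $w_\alpha=h_{\cI}(w)$ inside $D(\sigma_{i'_{\alpha'},\,i_\alpha})$. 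Combined with Proposition~\ref{prop-uniformdom} for $\cGF$ (giving $B_{\fX}(w,\dTU)\subset D(h_{\cI})$) and equicontinuity of $\cGF$, you can then shrink $\delta$ once-and-for-all so that the chain of inclusions you describe holds. With that made explicit, the argument is complete and correct.
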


We say that $\fM$ is equicontinuous if for some regular covering of $\fM$, the groupoid $\cGF$ is   equicontinuous.

\subsection{Minimal foliations}

\begin{defn} \label{def-minimal}
A foliated space $\fM$ is minimal if each leaf $L \subset \fM$ is dense.
\end{defn}
The following is an immediate consequence of the definitions:
\begin{lemma} \label{lem-minimal}
A foliated space $\fM$ is minimal if and only if for some regular covering of $\fM$,
the holonomy pseudogroup $\cGF$ of $\F$ is minimal; that is, for all $w \in \fT_*$, the $\cGF$ orbit $\cO(w)$ of $w$ is dense.
\end{lemma}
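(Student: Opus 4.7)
The lemma follows from the standard correspondence between leaves of $\F$ and orbits of $\cGF$: for any $w_0 \in \fT_*$ with $x_0 = \tau_{i_0}(w_0)$ on the leaf $L_0$, and for each chart index $i$, one has $\cO(w_0) \cap \fT_i = \pi_i(L_0 \cap U_i)$. This is the dictionary between leafwise paths and admissible sequences laid out in Section~\ref{subsec-admissible}: a leafwise path from $x_0$ to $y \in L_0 \cap U_i$ yields (after pre- and post-composing with generators $h_{i_0, j}$ and $h_{i, i_\alpha}$, admissible since $x_0 \in U_j \cap U_{i_0}$ and $y \in U_{i_\alpha} \cap U_i$, exactly as in the proof of Proposition~\ref{prop-copc}) an admissible sequence whose holonomy sends $w_0$ to $\pi_i(y)$; conversely, an admissible sequence realizing $w^* \in \cO(w_0) \cap \fT_i$ produces a leafwise path ending in the plaque $\cP_i(w^*) \subset L_0$.

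For the forward direction, assume $\fM$ is minimal. Given $w_0 \in \fT_*$ and a nonempty open $W \subset \fT_*$, pick $w \in W$ with $w \in \fT_i$ and set $x = \tau_i(w)$. Density of $L_0$ in $\fM$ provides $y \in L_0 \cap U_i$ arbitrarily $\fM$-close to $x$; by the modulus of continuity $\rp$ (Lemma~\ref{lem-modpi}), $\pi_i(y) \in W$, and by the correspondence $\pi_i(y) \in \cO(w_0) \cap W$, so $\cO(w_0)$ is dense.

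For the reverse direction, assume every orbit is dense. Fix a leaf $L$, a point $x \in \fM$, and a neighborhood $V$ of $x$. Using the Lebesgue number $\eU$ for $\cU$, choose $i$ with $B_\fM(x, \eU) \subset U_i$, and set $w = \pi_i(x)$. The uniform transverse estimate (\ref{eq-transdiam}) gives $B_\fX(w, \eTU) \subset \fT_i$, so every sufficiently small $\fX$-ball about $w$ lies inside $\fT_i$. Picking $w_0 \in \fT_*$ corresponding to $L$ and invoking density of $\cO(w_0)$, we find $w^* \in \cO(w_0) \cap B_\fX(w, \eta)$ for $\eta < \eTU$ chosen small enough that $\tau_i(B_\fX(w, \eta) \cap \fT_i) \subset V$ (possible by Lemma~\ref{lem-modtau}); then $w^* \in \fT_i$, and the correspondence produces $\tau_i(w^*) \in L \cap V$. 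The principal obstacle is ensuring that $w^*$ lies in the specific $\fT_i$ rather than only in $\fT_*$, since the correspondence requires this; the uniform estimate (\ref{eq-transdiam}) supplies exactly the fixed $\eTU$-margin around $w$ that guarantees it.
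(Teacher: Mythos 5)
The paper offers no argument for this lemma, stating it to be an immediate consequence of the definitions; so there is nothing to compare against directly. Your forward direction is correct: density of $L_0$ together with the inclusion $\pi_i(L_0 \cap U_i) \subseteq \cO(w_0)$ (established by running leafwise paths from $x_0$ and pre/post-composing chart changes, as you say) gives density of $\cO(w_0)$ in $\fT_*$.

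The reverse direction, however, rests on the claimed \emph{equality} $\cO(w_0) \cap \fT_i = \pi_i(L_0 \cap U_i)$, and this is where there is a gap. Only the inclusion $\supseteq$ is automatic. The opposite inclusion --- which is precisely what you use to deduce $\tau_i(w^*) \in L$ from $w^* \in \cO(w_0) \cap \fT_i$ --- can fail: a point $w^* \in \cO(w_0)$ is $h_{\cI}(w_0)$ for some admissible $\cI = (j_0,\dots,j_\alpha)$ and lives naturally in $\fT_{j_\alpha}$, with $\tau_{j_\alpha}(w^*) \in L$. Since Definition~\ref{def-fs} allows the compacta $\fT_i$ to overlap inside $\fX$, the same $w^*$ can simultaneously be a point of $\fT_i$ with $j_\alpha \ne i$, and $\tau_i(w^*)$, $\tau_{j_\alpha}(w^*)$ are then different points of $\fM$ which need not lie on the same leaf. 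Your appeal to (\ref{eq-transdiam}) solves only the geometric question of whether $w^*$ lies in $\fT_i$ as a subset of $\fX$; it does not address which chart the admissible sequence ends in, which is the actual obstruction. The standard repair is to take the model of the pseudogroup to be the \emph{disjoint} union $\fT_1 \sqcup \cdots \sqcup \fT_\nu$ (or, equivalently, work directly with the transversal $\cT = \cT_1 \cup \cdots \cup \cT_\nu \subset \fM$): then $w^* \in \fT_i$ forces $j_\alpha = i$, so $\tau_i(w^*) \in L$, and Lemma~\ref{lem-modtau} places it in $V$; (\ref{eq-transdiam}) is then no longer needed for this step. Alternatively, one can sidestep the chart bookkeeping entirely by observing that a leaf is dense in $\fM$ if and only if it is dense in $\cT$, since every point of $\fM$ lies on a plaque meeting $\cT$, and this is clearly equivalent to density of the corresponding pseudogroup orbit.
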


A standard argument shows that equicontinuity of the action of $\cGF$ on $\fT_*$ implies that for each $w \in \fT_*$ the closure $\overline{\cO(w)}$ of its orbit  is a minimal set. This argument applies also in the case of an equicontinuous foliation of a compact manifold $M$ (see \cite{ALC2009}), and implies  that the ambient space $M$ is a disjoint union of minimal sets. However, as seen from the case of Riemannian foliations, where the closures of the leaves in $M$ can form a non-trivial fibration, this does not imply that the foliated manifold itself is minimal.
 Thus, the following result is, at first glance, very surprising: An equicontinuous action of the holonomy pseudogroup $\cGF$ on the \emph{totally disconnected} transverse space associated to matchbox manifolds is minimal. This has been previously shown in the context of   flows on homogeneous matchbox manifolds in \cite[page 5]{AHO1991}, and for homogeneous $\mR^n$-actions in \cite[page 275]{Clark2002}, and previously a version for equicontinuous group actions on compact Hausdorff spaces appears in  J. Auslander \cite{Auslander1988}. The proof below is a technical generalization of   these proofs, and extends the previous results to an equicontinuous action of holonomy pseudogroup $\cGF$ of $\F$ for a matchbox manifold.
In fact, the result can be thought of as a partial generalization to foliated spaces  of a well-known result of Sacksteder \cite{Sacksteder1965} for codimension-one foliations.

\begin{thm}\label{thm-minimal}
If $\fM$ is an equicontinuous matchbox manifold, then $\fM$ is minimal.
\end{thm}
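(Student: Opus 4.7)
The plan is to use Lemma~\ref{lem-minimal} and show every $\cGF^*$-orbit in $\fT_*$ is dense. Since $\fT_*$ is compact and totally disconnected, clopen subsets of $\fT_*$ form a basis for its topology, so it suffices to show: for every $w \in \fT_*$, say $w \in \fT_{i_0}$, and every clopen neighborhood $V \subset \fT_{i_0}$ of $w$, every orbit meets $V$. Fix such $w$ and $V$, and let
$$
W \ = \ \{ x \in \fM \mid L_x \cap \tau_{i_0}(V) \neq \emptyset \}
$$
be the $\F$-saturation of $\tau_{i_0}(V)$ in $\fM$. I will show that $W$ is nonempty (it contains $\tau_{i_0}(V)$) and clopen; since $\fM$ is a continuum, this forces $W = \fM$, which is precisely the statement that every leaf, hence every $\cGF^*$-orbit, meets $V$.

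\emph{Openness of $W$} is a pure continuity statement. For $x \in W$, pick a leafwise path $\gamma$ from $x$ to some $\tau_{i_0}(v) \in \tau_{i_0}(V)$ and construct, as in Section~\ref{subsec-admissible}, an admissible sequence $\cI_\gamma$ starting at a chart $\oU_{i_x} \ni x$ and ending at $\oU_{i_0}$, with associated holonomy $h_\gamma \in \cGF^*$ satisfying $h_\gamma(\pi_{i_x}(x)) = v$. Continuity of $h_\gamma$ and openness of $V$ produce an open set $N \subset D(h_\gamma)$ with $\pi_{i_x}(x) \in N$ and $h_\gamma(N) \subset V$; then $\vp_{i_x}^{-1}((-1,1)^n \times N)$ is an open neighborhood of $x$ contained in $W$, since each of its plaques continues via the plaque chain of $\cI_\gamma$ to a plaque in $\oU_{i_0}$ at a transversal parameter in $V$.

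\emph{Closedness of $W$} is the heart of the matter and is where equicontinuity is used. Suppose $x_n \in W$ with $x_n \to x$. Passing to a subsequence and using finiteness of the cover $\cU$, I may assume there is a fixed index $i_x$ so that the admissible sequences $\cI_{\gamma_n}$ constructed from leafwise paths $\gamma_n$ joining $x_n$ to points $\tau_{i_0}(v_n) \in \tau_{i_0}(V)$ all start at $\oU_{i_x}$ and end at $\oU_{i_0}$. Setting $z_n = \pi_{i_x}(x_n)$ and $z_x = \pi_{i_x}(x)$, we have $z_n \to z_x$ and $h_n := h_{\cI_{\gamma_n}}$ satisfies $h_n(z_n) = v_n \in V$. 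By Proposition~\ref{prop-uniformdom}, $B_{\fX}(z_n, \dTU) \subset D(h_n)$, so $z_x \in D(h_n)$ for $n$ large. The key observation is that, because $V$ and $\fT_{i_0} \setminus V$ are disjoint compact subsets of the compact metric space $\fT_{i_0}$, the gap
$$
\rho \ = \ d_{\fX}(V, \ \fT_{i_0} \setminus V)
$$
is strictly positive. Equicontinuity with $\e = \rho$ yields $\delta > 0$ so that $d_{\fX}(u,u') < \delta$ forces $d_{\fX}(g(u), g(u')) < \rho$ for all $g \in \cGF^*$ and $u, u' \in D(g)$. For $n$ large enough that $d_{\fX}(z_n, z_x) < \delta$, this gives $d_{\fX}(h_n(z_x), v_n) < \rho$, and since $v_n \in V$ this forces $h_n(z_x) \in V$. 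The plaque chain of $\cI_{\gamma_n}$ applied to $z_x$ is a leafwise path in $L_x$ terminating at $\tau_{i_0}(h_n(z_x)) \in \tau_{i_0}(V)$, so $x \in W$.

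Hence $W$ is a nonempty clopen subset of the continuum $\fM$ and thus $W = \fM$. Letting $V$ range over clopen neighborhoods of arbitrary $w \in \fT_*$, every orbit is dense in $\fT_*$, and $\fM$ is minimal by Lemma~\ref{lem-minimal}. The main obstacle is the closedness step: the holonomies $h_n$ are attached to the approximating points $x_n$ rather than to the limit $x$, and transferring the conclusion to the leaf $L_x$ requires both the uniform lower bound on domains provided by Proposition~\ref{prop-uniformdom} and the strictly positive gap $\rho$ between the clopen set $V$ and its complement in $\fT_{i_0}$ coming from total disconnectedness and compactness.
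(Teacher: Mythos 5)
Your proof is correct and takes essentially the same approach as the paper's: show the $\F$-saturation of a small clopen transverse set is clopen, with closedness obtained from the uniform domain bound of Proposition~\ref{prop-uniformdom} together with equicontinuity, and conclude by connectedness of the continuum $\fM$. The paper packages the identical idea as a contradiction argument inside $\fT_*$ (using reverse plaque chains $h_{\cJ^{(\ell)}}$ with domains stabilizing around $w_*$, and a local ball $B_{\fX}(\xi_*, 2\e_1) \subset W_0$ where you use the global gap $\rho = d_{\fX}(V, \fT_{i_0}\setminus V)$), so these are cosmetic variations; the one step you assert without justification --- that $\cI_{\gamma_n}$ can be arranged to terminate at $\oU_{i_0}$, so that $h_n(z_n)=v_n$ lands in $\fT_{i_0}$ --- is a routine matter of appending a fixed last transition index $j$ and invoking the uniform modulus $\kappa$ of Lemma~\ref{lem-modgen} to transfer your $\rho$-estimate through $h_{i_0,j}$, a detail the paper's own proof likewise treats loosely.
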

\proof
The assumption that $\fM$ is a continua implies that it is connected. Thus, if $\fM$ is the \emph{disjoint} union of  open saturated subsets $U, V$ , then one of them must be empty. As the $\F$-saturation of disjoint open subsets of $\fT_*$ are disjoint and open in $\fM$, this implies that a clopen subset $\cW \subset \fT_*$ which is $\cGF$-invariant must be all of $\fT_*$.  We show below that if there exists a $\cGF$-invariant open non-empty proper subset  $\cW \subset \fT_*$ then $\fT_*$  contains a proper clopen subset, which contradicts that $\fM$ is connected. Thus, if $w \in \fT_*$ and its orbit closure $\overline{\cO(w)} \ne \fT_*$, then the complement $\cW = \fT_* - \overline{\cO(w)}$ is an open, non-empty proper subset,  which leads to  a contradiction. Thus, the closure of every orbit of $\cGF$ must be all of $\fT_*$.

Let $\cW \subset \fT_*$ be a $\cGF$-invariant open proper  subset  and $w \in \cW$. Let $i_{\alpha}$ be an index such that $w \in \fT_{i_{\alpha}}$.
The assumption $\fT_*$ that is totally disconnected implies that its topology has a basis of clopen subsets. Thus $w$ has a neighborhood system consisting of sets which are both open and closed,   hence compact.

Let $W_0 \subset \cW \cap \fT_{i_{\alpha}}$ be a clopen neighborhood of $w$. The $\cGF$-saturation of $W_0$ is the set
\begin{equation}\label{eq-saturate}
\cO(W_0) = \bigcup ~ \{g(W_0 \cap D(g)) \mid g \in \cGF^* ~, ~ D(g) \cap W_0 \ne \emptyset \} \subset \fT_* ~ .
\end{equation}
Since each map $g \colon D(g) \to R(g)$ is a homeomorphism, and $D(g)$ is open,  the set $\cO(W_0)$ is open.

We claim  that $\cO(W_0)$ is   closed. If not,  then there exists  $w_* \in \overline{\cO(W_0)} - \cO(W_0) \subset \fT_*$.
Choose $\{w_{\ell} \in \cO(W_0) \mid \ell = 1, 2, \ldots\}$ such that $\ds \lim_{\ell \to \infty} w_{\ell} = w_*$.
For each $\ell \geq 1$, there exists an admissible sequence $\cI^{(\ell)}$
and $\xi_{\ell} \in W_0 \cap D(h_{\cI^{(\ell)}}) \subset \fT_{i_{\alpha}}$ such that $w_{\ell} = h_{\cI^{(\ell)}}(\xi_{\ell})$.
As $\fT_*$ is the finite union of the compact sets $\{\fT_i \mid i =1, \ldots , \nu\}$, by passing to a subsequence, we can assume that there exists an index $i_{\beta}$ such that $w_{\ell} \in \fT_{i_{\beta}}$ for all $\ell \geq 1$.
Moreover, as $W_0$ is compact, we can also assume without loss of generality, that $\ds \lim_{\ell \to \infty} ~ \xi_{\ell} = \xi_* \in W_0$.

Let $\dTU > 0$ be the constant of Proposition~\ref{prop-uniformdom}.
As $W_0$ is open, there exists $0 < \e_1 < \eTU$ such that $B_{\fX}(\xi_*, 2 \e_1) \subset W_0$.
Let $0 < \delta_1 \leq \dTU$ be the   constant of equicontinuity for the action of $\cGF^*$ corresponding to  $\e_1$ which exists by Proposition~\ref{prop-uniformdom} as well.

Let $\gamma_{\ell}$ denote the leafwise path from $y_{\ell} = \tau_{i_{\beta}}(w_{\ell})$ to $x_{\ell} = \tau_{i_{\alpha}}(\xi_{\ell})$ determined by the \emph{reverse} of the admissible sequence $\cI^{(\ell)}$. Thus, $\gamma_{\ell}(0) = y_{\ell}$ and $\gamma_{\ell}(1) = x_{\ell}$.
By Proposition~\ref{prop-uniformdom}, for each $\ell \geq 1$, the path $\gamma_{\ell}$ defines an admissible sequence $\cJ^{(\ell)}$ such that $h_{\cJ^{(\ell)}}(w_{\ell}) = \xi_{\ell}$, $B_{\fX}(w_{\ell}, \dTU) \subset D(h_{\cJ^{(\ell)}})$
and $h_{\cJ^{(\ell)}}(B_{\fX}(w_{\ell}, \delta_1)) \subset B(\xi_{\ell}, \e_1)$.

Choose $\ell_0$ sufficiently large so that $\ell \geq \ell_0$ implies
$d_{\fX}(\xi_* , \xi_{\ell}) < \e_1$ and $d_{\fX}(w_* , w_{\ell}) < \delta_1$.
Then
$d_{\fX}(w_* , w_{\ell}) < \delta_1$ implies $w_* \in B_{\fX}(w_{\ell}, \delta_1)$,
and
$d_{\fX}(\xi_* , \xi_{\ell}) < \e_1$ implies $B_{\fX}(\xi_{\ell}, \e_1) \subset W_0$.

Thus,
$h_{\cJ^{(\ell)}}(B_{\fX}(w_{\ell}, \delta_1)) \subset W_0$,
so $h_{\cJ^{(\ell)}}(w_*) \in W_0$ hence $w_* \in \cO(W_0)$, contrary to its choice.

Thus, $\cO(W_0)$ is  a clopen subset of $\cW$, and is proper in $\fT_*$ as   $\cW$ is proper.
\endproof

\section{Homogeneous matchbox manifolds}\label{sec-hmm}

We next draw a connection between the homogeneity of a matchbox
manifold and the dynamics of its associated foliation. To do so, we
first recall a fundamental result of Effros~\cite{Effros1965},
presented in the spirit of~\cite{Ancel1987,vanMill2004}.   All topological
spaces considered here are assumed to be separable and metrizable.

\subsection{Micro-transitive actions}

Let $G$ be a topological group with identity $e$. An action $A$ of
$G$ on the space $X$  is a continuous map $A(g,x)=gx$ from $G\times
X$ to $X$ such that $ex=x$ for all $x\in X$,  and $f(gx)=(fg)x$ for
all $f,g\in G$ and $x\in X$. For $U\subseteq G$ and $x\in X$, let
$Ux=\{gx \mid g\in U\}$. An action of $G$ on $X$ is
\emph{transitive} if $Gx=X$ for all $x\in X$. It is
\emph{micro-transitive} if for every $x\in X$ and every neighborhood
$U \subset G$ of $e$, $Ux$ is a neighborhood of $x$. According to the theorem
of Effros, if a completely metrizable group $G$ acts transitively on
a second category space $X$, then it acts micro-transitively on $X$.
This result is a form of the ``Open Mapping Principle''  \cite{vanMill2004}.

Now consider the homeomorphism group $\cH(X)$ of a separable, locally
compact, metric space $X$ with the metric $d_{\cH}$ on $\cH(X)$ induced by the metric
$d_X$ on $X$:
$$
d_{\cH} \left( f, g\right) := \sup \left\{d_X \left( f(x), g(x)
\right) \mid x \in X\right\} +\sup \left\{ d_X \left( f^{-1}(x),
g^{-1}(x)\right) \mid x \in X\right\} ~ .
$$
With this metric, $\cH(X)$ is complete and acts continuously on $X$ in
the natural way: for $h \in \cH(X)$ and $x \in X$, $hx=h(x)$. Notice
that $X$ is homogeneous if and only if this action is transitive.
Effros' Theorem applied to this action states that if it is transitive, then it is also micro-transitive.
In the special case that $X$ is compact, we obtain that for any given $\e>0$ there
is a corresponding $\delta > 0$ so that if $d_X(x,y)< \delta$, there
is a homeomorphism $h \colon X\rightarrow X$ with $d_{\cH}(h,id_X)<\e$ and
$h(x)=y$.

\medskip

Let the homeomorphism group $\cH(\fM)$ have the metric
$d_{\cH}$ induced from the metric $d_{\fM}$. Then $\cH(\fM)$ is complete, so we can
apply the theorem of Effros to obtain:
\begin{cor}[Effros] \label{cor-effros}
Let $\fM$ be a homogeneous foliated space.
Given $\e^* > 0$, there is a corresponding $0 < \delta^* \leq \e^*$ so that for any $x,y \in \fM$ with $d_{\fM}(x,y) < \delta^*$, there is a
homeomorphism $h: \fM\rightarrow \fM$ with $d_{\cH}(h,id_{\fM}) <\e^*$ and $h(x)=y$. \hfill $\Box$
\end{cor}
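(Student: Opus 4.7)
The plan is to apply the compact-case form of Effros' Theorem, stated in the paragraph immediately preceding the corollary, to the natural action of $\cH(\fM)$ on $\fM$. The only thing to observe is that a foliated space is by definition a continuum, hence a compact metric (in particular, separable and locally compact) space, so the general discussion of $\cH(X)$ with the metric $d_{\cH}$ applies verbatim with $X = \fM$. The homogeneity hypothesis is precisely the statement that this action is transitive, and any locally compact Hausdorff space is Baire, hence of second category in itself, so Effros' Theorem applies and the action is micro-transitive.

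From micro-transitivity I would extract the corollary in two steps. First, given $\e^* > 0$, consider the open neighborhood $V = \{h \in \cH(\fM) \mid d_{\cH}(h, id_{\fM}) < \e^*/4\}$ of the identity. For each $x \in \fM$, micro-transitivity yields some $\delta(x) > 0$ with $B_{\fM}(x, \delta(x)) \subset V \cdot x$. Second, to get a single $\delta^*$ that works for all $x$, I would use the compactness of $\fM$: cover $\fM$ by finitely many balls $B_{\fM}(x_i, \delta(x_i)/2)$ for $i = 1, \ldots, N$, and set
$$
\delta^* = \min \bigl\{ \e^*, \, \tfrac{1}{2} \min_{1 \le i \le N} \delta(x_i) \bigr\}.
$$

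Given $x, y \in \fM$ with $d_{\fM}(x,y) < \delta^*$, pick $x_i$ with $x \in B_{\fM}(x_i, \delta(x_i)/2)$. The triangle inequality gives $y \in B_{\fM}(x_i, \delta(x_i))$, so there exist $h_1, h_2 \in V$ with $h_1(x_i) = x$ and $h_2(x_i) = y$. Set $h = h_2 \circ h_1^{-1}$; then $h(x) = y$. To bound $d_{\cH}(h, id_{\fM})$, I would substitute $z = h_1(w)$ in the first sup defining $d_{\cH}$ to get $\sup_w d_{\fM}(h_2(w), h_1(w)) \le \sup_w d_{\fM}(h_2(w), w) + \sup_w d_{\fM}(w, h_1(w))$, and similarly for the inverse term via $z = h_2(w)$; each summand is dominated by $d_{\cH}(h_j, id_{\fM}) < \e^*/4$, so $d_{\cH}(h, id_{\fM}) < \e^*$.

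The only mildly technical point, and the place where one must take a little care, is the estimate on $d_{\cH}(h_2 \circ h_1^{-1}, id_{\fM})$ — one cannot simply use a triangle inequality for $d_{\cH}$ applied to composition, since $d_{\cH}$ is not bi-invariant, and one must instead use the defining suprema separately for both the map and its inverse, which is why the radius of $V$ is taken to be $\e^*/4$ rather than $\e^*/2$. Everything else is formal, so this is where the small amount of real work in the argument lies.
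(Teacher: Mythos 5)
Your proposal is correct and follows the same route as the paper: the paper simply invokes the compact-case form of Effros' Theorem stated in the preceding paragraph (and gives no further proof, hence the $\Box$), while you supply the standard derivation of that uniform statement from micro-transitivity via compactness of $\fM$ and the composition estimate on $d_{\cH}$. The details you flag — the covering-by-balls step and the careful $\e^*/4$ bookkeeping for $d_{\cH}(h_2 \circ h_1^{-1}, id_{\fM})$ — are exactly what the paper leaves implicit, and you handle them correctly.
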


\subsection{A key application}
The papers \cite{AHO1991, ALC2009, Ungar1975} give applications and examples of Effros Theorem related to the dynamics of flows. The fact that $\cH(\fM) = \cH(\fM , \F)$ by Corollary~\ref{cor-foliated2} is a key fact in these applications, and for the following application to foliated spaces.

\begin{thm}\label{thm-equic}
If $\fM$ is a homogeneous matchbox manifold, then $\fM$ is equicontinuous.
\end{thm}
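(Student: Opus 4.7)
The plan is to use Effros' Corollary~\ref{cor-effros} to convert the homogeneity of $\fM$ into uniform control of nearby leafwise paths, and then transfer this control to the transverse pseudogroup action via Lemma~\ref{lem-domainconst}. The core idea: if two points $w, w' \in \fT_{i_0}$ are transversely close, then the corresponding plaque sections $x = \tau_{i_0}(w)$ and $x'' = \tau_{i_0}(w')$ are close in $\fM$; Effros provides a homeomorphism $h$ with $h(x) = x''$ that is uniformly close to the identity. Then for any holonomy map $g = h_{\cI}|U$ realized by a leafwise path $\gamma$ from $x$ to $y = \tau_{i_\alpha}(g(w))$, the image $\gamma' = h \circ \gamma$ (which is a leafwise path by Corollary~\ref{cor-foliated1}) is a nearby leafwise path from $x''$ whose holonomy must therefore agree with that of $\gamma$ on the common transverse domain. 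This forces $g(w')$ to be close to $g(w)$, uniformly in $g$.

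The order of choices of constants, working backwards from a given $\e > 0$: first use Lemma~\ref{lem-modgen} to pick $\e_1$ with $\kappa(\e_1) < \e$; then choose $\e_0 \le \min(\rp(\e_1),\, \eU/4)$; apply Effros to obtain the corresponding $\delta_0 > 0$; finally set $\delta = \rt(\delta_0)$ so that $d_{\fX}(w,w') < \delta$ forces $d_{\fM}(\tau_{i_0}(w),\tau_{i_0}(w')) < \delta_0$. With these in hand, given $g \in \cGF^*$ and $w, w' \in D(g)$ with $d_{\fX}(w,w') < \delta$, write $g = h_{\cI}|U$ with $\cI = (i_0, \ldots, i_\alpha)$; produce the homeomorphism $h$ via Effros; form the plaque-chain path $\gamma$ and its image $\gamma' = h \circ \gamma$, which is uniformly within $\e_0 \le \eU/4$ of $\gamma$.

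To apply Lemma~\ref{lem-domainconst}, both paths must start in a common chart and end in a common chart. The start is automatic: both $x$ and $x''$ lie in $U_{i_0}$. The termination is the main technical issue, since $y \in U_{i_\alpha}$ but $h(y)$ need not be in $U_{i_\alpha}$. I would resolve this by invoking the Lebesgue number $\eU$: since $d_{\fM}(y, h(y)) < \eU/4$, there exists an index $j$ with $B_{\fM}(y, \eU) \subset U_j$, so $y, h(y) \in U_j$. Extending the admissible sequence $\cI$ by appending $j$ (using $y \in U_{i_\alpha} \cap U_j$) yields a holonomy $h_{j,i_\alpha} \circ h_{\cI}$ from $\fT_{i_0}$ to $\fT_j$; similarly form an admissible sequence for $\gamma'$ ending in $U_j$. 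Lemma~\ref{lem-domainconst} then gives equality of these two holonomies on their common domain (which contains $w$ and $w'$ provided $\delta_0$ is small enough, an open-condition verification).

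Evaluating, this equality reads $\pi_j(y) = h_{j,i_\alpha}(g(w))$ and $\pi_j(h(y)) = h_{j,i_\alpha}(g(w'))$. By Lemma~\ref{lem-modpi} (modulus $\rp$ of $\pi_j$) combined with $d_{\fM}(y, h(y)) < \e_0 \le \rp(\e_1)$, we get $d_{\fX}(h_{j,i_\alpha}(g(w)), h_{j,i_\alpha}(g(w'))) < \e_1$. Finally, applying the modulus of continuity of the coordinate change $h_{i_\alpha,j}$ from Lemma~\ref{lem-modgen} yields $d_{\fX}(g(w), g(w')) \le \kappa(\e_1) < \e$, completing the equicontinuity estimate with $\delta$ independent of $g$. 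The main obstacle throughout is the bookkeeping to keep the terminal chart common and to ensure the admissible-sequence construction genuinely realizes the holonomies at both $w$ and $w'$; the modulus functions $\rp$, $\rt$, $\kappa$ assembled in Section~\ref{sec-concepts} and Section~\ref{sec-holonomy} are exactly what makes the estimates uniform over $\cGF^*$.
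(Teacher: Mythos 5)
Your proposal is correct and follows essentially the same strategy as the paper's proof: use Effros (Corollary~\ref{cor-effros}) to produce a homeomorphism $h$ close to the identity with $h(\tau_{i_0}(w)) = \tau_{i_0}(w')$, compose with the plaque-chain path $\gamma$ to get a shadowing path $\gamma' = h\circ\gamma$, observe that the two paths are covered by a common admissible sequence so their holonomies coincide, and then pass back to transverse coordinates via the moduli $\rp$ and $\kappa$; your cascade of constants ($\e_1$, $\e_0$, $\delta_0$, $\delta$) matches the paper's ($\e'$, $\e^*$, $\delta^*$, $\delta$) exactly. The only cosmetic difference is that you invoke Lemma~\ref{lem-domainconst} directly to get agreement of the two holonomies, whereas the paper repackages that shadowing argument as Lemma~\ref{lem-key} with an explicit auxiliary admissible sequence $\cJ$ and an application of Proposition~\ref{prop-copc}; these are the same mechanism.
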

\proof
The idea of the proof is simple in principle, though somewhat technical to show precisely. Basically, given a point $w \in \fT_*$ and an element $h_{\cI} \in \cGF^*$ with $w \in D(h_{\cI})$,  let $\gamma$ be a path defining the admissible sequence $\cI$. Then for a point $\eta \in D(h_{\cI})$, the value of $h_{\cI}(\eta)$ is defined by a path $\gamma_{\eta}$ starting at $\eta$ and shadowing the path $\gamma$.
Using Corollary~\ref{cor-effros}, for $\e > 0$ given, and $\eta$  sufficiently close to $w$, we can find such a path  $\gamma_{\eta} = \gamma^h$ starting at $\eta$ which is a conjugate of $\gamma$ by a homeomorphism $h$ of $\fM$ which is $\e$ close to $\gamma$. It follows that the endpoint of $\gamma^h$ is within $\e$ of $h_{\cI}(w)$, hence the action is equicontinuous. We give the details below.

Fix a regular cover of $\fM$, and let $\cGF$ be the associated groupoid, with notations as above.

Let $\e > 0$ be given.
Recall that $\kappa$ as defined by (\ref{eq-modgendef}) satisfies $\lim_{r \to 0} ~ \kappa(r) =0$, so there exists $r_0 > 0$ so that for all $0 < r \leq r_0$ we have $\kappa(r) < \e$.
Choose $0 < \e' < \e$ so that $\kappa(\e') < \e$.

Set $\e^* = \min \{\rp(\e'), \eU/4\}$, where $\rp$ is the uniform modulus of continuity function for the projections $\pi_i$ introduced in Lemma~\ref{lem-modpi}.

Let $\delta^*$ be determined by $\e^*$ as in Corollary~\ref{cor-effros}, and also assume that $\delta^* \leq \e^*$.
Let $\delta =\rt(\delta^*)$, where $\rt$ is the uniform modulus of continuity function for the sections $\tau_i$ introduced in Lemma~\ref{lem-modtau}.

Let $g \in \cGF^*$ be defined by an admissible sequence $\cI = (i_0 , \ldots , i_{\alpha})$. That is, $g = h_{\cI} | U$ for some open $U \subset D(h_{\cI})$. We show that for $w, \xi \in D(h_{\cI})$ with $d_{\fX}(w, \xi) < \delta$,  then
$d_{\fX}(w', \xi') < \epsilon$ where $w'= h_{\cI}(w)$ and $\xi' = h_{\cI}(\xi)$.

Let $x = \tau_{i_0}(w)$ and $y = \tau_{i_0}(\xi)$, then $x,y \in U_{i_0}$ and $d_{\fM}(x,y) < \delta^* \leq \eU/2$ by the definition of $\rt$ in Lemma~\ref{lem-modtau}.
Then set $x' = \tau_{i_{\alpha}}(w')$ and $y' = \tau_{i_{\alpha}}(\xi')$ so that $x' ,y' \in \oU_{i_{\alpha}}$.

Let $\gamma^x_{\cI} \colon [0,1] \to \fM$ be a leafwise piecewise geodesic path from $x$ to $x'$ determined by
the plaque chain $\cP_{\cI}(x)$, as in Section~\ref{sec-holonomy}.
Let $0 = s_0 < s_1 < \cdots < s_{\beta} = 1$ be a partition, and $\cJ = (j_0, \ldots , j_{\beta})$ an admissible sequence covering $\gamma^x_{\cI}$ so that
by equation (\ref{eq-unifest}),
\begin{equation}\label{eq-unifest2}
B_{\fM}(\gamma^x_{\cI}(t), \eU/2) \subset U_{j_{\ell}} \quad {\rm for ~ all}~ s_{\ell} \leq t \leq s_{\ell +1} ~ .
\end{equation}

Proposition~\ref{prop-copc} implies that
$\ds h_{\cI} | U = h_{i_{\alpha} , j_{\beta}} \circ h_{\cJ} \circ h_{j_{0} , i_{0}} | U$
where $U = D(h_{\cI}) \cap D(h_{i_{\alpha} , j_{\beta}} \circ h_{\cJ} \circ h_{j_{0} , i_{0}})$.

For $s=0$, by (\ref{eq-unifest2}) we have that $x, y \in U_{j_0}$, hence $x, y \in U_{i_0} \cap U_{j_0}$. Also, $x' \in U_{j_{\beta}}$ by construction.

Let $\eta = \pi_{j_0}(y) = h_{j_{0} , i_{0}}(\xi)$ and $w'' = \pi_{j_{\beta}}(x') = h_{j_{\beta} , i_{\alpha}}(w')$.

\medskip

The essence of the proof of Theorem~\ref{thm-equic} is the following.
\begin{lemma}\label{lem-key}
Let $\eta \in D(h_{\cJ})$. Set $\eta' = h_{\cJ}(\eta)$, then $d_{\fX}(w'' , \eta') < \e'$.
\end{lemma}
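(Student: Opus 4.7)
\textbf{Proof plan for Lemma~\ref{lem-key}.} My approach is to use Effros' theorem to produce a homeomorphism that shifts $x$ to $y$, transplants the reference path $\gamma^x_{\cI}$ to a shadowing path starting at $y$, and then to argue that the admissible sequence $\cJ$ covers this shadowing path too, so that the holonomy $h_{\cJ}$ applied to $\eta$ is the transverse projection of the shadow's endpoint.

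First, since $d_{\fM}(x,y) \leq \delta^*$, Corollary~\ref{cor-effros} produces a homeomorphism $h \colon \fM \to \fM$ with $h(x)=y$ and $d_{\cH}(h, id_{\fM}) < \e^*$. By Corollary~\ref{cor-foliated2}, $h$ is foliated, so the \emph{shadowing path} $\gamma^y := h \circ \gamma^x_{\cI} \colon [0,1] \to \fM$ is a leafwise path from $y$ to $h(x')$ satisfying $d_{\fM}(\gamma^y(t), \gamma^x_{\cI}(t)) < \e^* \leq \eU/4$ for every $t \in [0,1]$.

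Next I would verify that $\cJ = (j_0, \ldots , j_{\beta})$, together with the partition $0 = s_0 < s_1 < \cdots < s_{\beta} = 1$, also covers $\gamma^y$. For $s_{\ell} \leq t \leq s_{\ell+1}$, the uniform estimate (\ref{eq-unifest2}) gives $B_{\fM}(\gamma^x_{\cI}(t), \eU/2) \subset U_{j_{\ell}}$, and since $d_{\fM}(\gamma^y(t), \gamma^x_{\cI}(t)) < \eU/4 < \eU/2$, we get $\gamma^y(t) \in U_{j_{\ell}}$. Thus $y \in U_{j_{0}}$ (so $\eta = \pi_{j_{0}}(y)$ and belongs to $\fT_{j_{0}}$), and $h(x') \in U_{j_{\beta}}$; the plaques of $\vp_{j_{\ell}}$ through $\gamma^y(s_{\ell})$ and $\gamma^y(s_{\ell+1})$ meet by total disconnectedness of $\fT_{j_{\ell}}$ and connectedness of $\gamma^y([s_{\ell}, s_{\ell+1}])$. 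Then Lemma~\ref{lem-domainconst}, applied with the uniform shadowing bound $\eU/4$, shows that $h_{\gamma^y}$ and $h_{\gamma^x_{\cI}} = h_{\cJ}$ agree on their common domain. In particular $\eta \in D(h_{\cJ})$ and $\eta' = h_{\cJ}(\eta) = \pi_{j_{\beta}}(h(x'))$.

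Finally, since $d_{\fM}(h(x'), x') < \e^* \leq \rp(\e')$ and both $h(x'), x' \in \oU_{j_{\beta}}$, Lemma~\ref{lem-modpi} gives
\begin{equation*}
d_{\fX}(\eta', w'') = d_{\fX}\bigl(\pi_{j_{\beta}}(h(x')), \pi_{j_{\beta}}(x')\bigr) < \e',
\end{equation*}
which is the desired inequality. The principal obstacle I expect is the covering step: ensuring that the admissible sequence $\cJ$, chosen originally to cover $\gamma^x_{\cI}$, remains admissible for the shadow $\gamma^y$, and that Lemma~\ref{lem-domainconst} applies to conclude that the two holonomy maps coincide on $\eta$. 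This is exactly where the $\eU/4$ shadowing tolerance is used, and it is the reason $\e^*$ was chosen to be the minimum of $\rp(\e')$ and $\eU/4$ at the beginning of the proof of Theorem~\ref{thm-equic}.
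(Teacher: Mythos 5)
Your proof is correct and follows essentially the same route as the paper: use Effros (via Corollary~\ref{cor-effros}) and Corollary~\ref{cor-foliated2} to build the shadowing path $h \circ \gamma^x_{\cI}$, use the $\eU/4$ bound together with (\ref{eq-unifest2}) to show that $\cJ$ also covers the shadow, read off $\eta' = \pi_{j_{\beta}}(h(x'))$, and close with the modulus-of-continuity estimate from Lemma~\ref{lem-modpi}. The only cosmetic difference is that you route the identification $\eta' = \pi_{j_{\beta}}(h(x'))$ through Lemma~\ref{lem-domainconst}, whereas the paper reads it off directly from the fact that the trace of the shadow lies in the plaque chain $\cP_{\cJ}(y)$; both are sound.
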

\proof

As $d_{\fM}(x,y) < \delta^*$, by Corollary~\ref{cor-effros} there exists $h \in \cH(\fM)$ with $h(x) = y$ and $d_{\cH}(h,id_{\fM}) < \e^*$.

By Corollary~\ref{cor-foliated2}, the composition $\gamma_{\cI}^h(t) = h \circ \gamma^x_{\cI}(t)$ is a leafwise path from $y = h(x)$ to $z = h(x')$, which satisfies
\begin{equation}\label{eq-effrosest}
d_{\fM}(\gamma_{\cI}^h(t)), \gamma_{\cI}^x(t))<\e^* \leq \eU/4 ~ , ~ {\rm for ~all} ~ 0 \leq t \leq 1 ~ .
\end{equation}

The conditions $d_{\fM}(h(\gamma^x_{\cI}(t)), \gamma^x_{\cI}(t))< \eU/4$ for all $0 \leq t \leq 1$, and
$B_{\fM}(\gamma^x_{\cI}(t), \eU/2) \subset U_{j_{\ell}}$ for each $s_{\ell} \leq t \leq s_{\ell +1}$, imply that
$\gamma^h_{\cI}([s_{\ell}, s_{\ell +1}]) \subset U_{j_{\ell}}$ for all $0 \leq t \leq 1$. Thus, $\eta \in D(h_{\cJ})$ and the trace of $\gamma^h_{\cI}(t)$ is contained in the plaque chain $\cP_{\cJ}(y)$.

Set $z = \gamma^h_{\cI}(1) = h(x')$, and note that $\eta' = \pi_{j_{\beta}}(z)$.
Then by (\ref{eq-effrosest}) we have that $d_{\fM}(x', z) < \e^* <\rp(\e')$,
so by the definition of $\rp$ we have
$\ds d_{\fX}(w'' , \eta' ) = d_{\fX}(\pi_{j_{\beta}}(x') , \pi_{j_{\beta}}(z)) < \e'$.
This completes the proof of the Lemma.
\endproof
By Lemma~\ref{lem-key} and Proposition~\ref{prop-copc} we have that
$$
\xi' = h_{\cI}(\xi) = h_{i_{\alpha} , j_{\beta}} \circ h_{\cJ} \circ h_{j_{0} , i_{0}}(\xi)
= h_{i_{\alpha} , j_{\beta}} \circ h_{\cJ}(\eta)
= h_{i_{\alpha} , j_{\beta}}(\eta') ~ .
$$
Since $w' = h_{ i_{\alpha}, j_{\beta}}(w'')$ by the definition of $\kappa$ and $\e'$, we have that
$$d_{\fX}(w' , \xi') \leq \kappa(d_{\fX}(w'', \eta')) \leq \kappa(\e') < \e ~ .$$
This completes the proof of Theorem~\ref{thm-equic}.
\endproof

We make another observation about the dynamics of homogeneous foliated spaces, which imposes further restrictions on which matchbox manifolds can be homogeneous.
\begin{lemma}\label{lem-homonoholo}
If $\fM$ is a homogeneous foliated space, then $\F$ is without holonomy.
\end{lemma}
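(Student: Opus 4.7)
The plan is to combine Theorem~\ref{thm-emt} with homogeneity, via the observation that the set of leaves without germinal holonomy is invariant under the action of $\cH(\fM)$. Let $\fM_0 \subset \fM$ denote the union of all leaves without germinal holonomy; by Theorem~\ref{thm-emt}, $\fM_0$ is a dense $G_{\delta}$, hence non-empty. Since $\fM$ is homogeneous, $\cH(\fM)$ acts transitively on $\fM$, so any non-empty $\cH(\fM)$-invariant subset equals $\fM$. Thus, once I establish the $\cH(\fM)$-invariance of $\fM_0$, I will have $\fM_0 = \fM$, and every leaf of $\F$ will be without holonomy.

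So the task reduces to the following: for any $h \in \cH(\fM)$ with $h(x) = y$, if $L_x$ has no germinal holonomy at $x$, then $L_y$ has no germinal holonomy at $y$. By Corollary~\ref{cor-foliated2}, $h$ is a foliated homeomorphism, mapping $L_x$ onto $L_y$. Choose coordinate charts $\vp_i$, $\vp_j$ with $x \in U_i$, $y \in U_j$, and set $w = \pi_i(x)$, $w' = \pi_j(y)$. For $\eta \in \fT_i$ sufficiently close to $w$, the point $\tau_i(\eta)$ lies close to $x$, so $h(\tau_i(\eta)) \in U_j$; thus one may define
$$\widetilde{h}(\eta) = \pi_j \circ h \circ \tau_i(\eta) \in \fT_j,$$
a continuous map sending $w$ to $w'$.

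The heart of the matter is to verify that $\widetilde{h}$ is a local homeomorphism at $w$ which conjugates germinal holonomy at $x$ to germinal holonomy at $y$: for every leafwise loop $\gamma$ based at $x$, the image $h \circ \gamma$ is a leafwise loop based at $y$ (this uses Corollary~\ref{cor-foliated2}), and one has the germinal identity
$$[h_{h \circ \gamma}]_{w'} = [\widetilde{h}]_w \circ [h_\gamma]_w \circ [\widetilde{h}]_w^{-1}.$$
This is the main technical obstacle. It is established by a plaque-chain argument: given an admissible sequence $\cI$ covering $\gamma$ with plaque chain $\cP_\cI(x)$, the curve $h \circ \gamma$ lies in $L_y$ and can be covered by an admissible sequence $\cJ$ whose plaques contain the $h$-images of the plaques of $\cP_\cI(x)$. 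The transverse projection of $h$ through these two plaque chains, traced via the sections and projections $\tau_i, \pi_j$, is seen to coincide with $\widetilde{h}$ on a uniform neighborhood of $w$. Invertibility of $\widetilde{h}$ near $w$ follows by applying the same construction to $h^{-1}$.

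Given this conjugation, the triviality of $[h_\gamma]_w$ for every loop $\gamma \subset L_x$ based at $x$ forces triviality of $[h_{h \circ \gamma}]_{w'}$ for every image loop. Since $h$ restricts to a homeomorphism $L_x \to L_y$, every loop in $L_y$ based at $y$ has the form $h \circ \gamma$ for some $\gamma \subset L_x$ based at $x$, namely $\gamma = h^{-1}$ applied to the given loop. Hence $L_y$ is without germinal holonomy, as required.
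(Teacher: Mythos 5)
Your proof is correct and takes essentially the same approach as the paper: invoke Theorem~\ref{thm-emt} to produce a leaf without holonomy, then use Corollary~\ref{cor-foliated2} to transport this property across all leaves via homeomorphisms. The only difference is that you spell out, via the local conjugation $[h_{h \circ \gamma}]_{w'} = [\widetilde{h}]_w \circ [h_\gamma]_w \circ [\widetilde{h}]_w^{-1}$, why a foliated homeomorphism of $\fM$ preserves triviality of germinal holonomy, a step the paper treats as immediate from the fact that $h$ restricts to a homeomorphism of leaves.
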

\proof
By Theorem~\ref{thm-emt} the set of leaves without holonomy is a dense $G_{\delta}$, hence there exists at least one leaf $L_x$ without holonomy. Given any other leaf $L'$ of $\F$, choose basepoints $x \in L_x$ and $y \in L'$. As $\fM$ is homogeneous, there exists a homeomorphism $h$ of $\fM$ such that $h(x) = y$, and $h$ is a foliated map by   Corollary~\ref{cor-foliated2}. Then  the restriction $h \colon L \to L'$ is a homeomorphism,  hence $L'$ also is without holonomy.
\endproof

\section{Transverse holonomy and orbit coding}\label{sec-codes}

Let $\fM$ be an equicontinuous matchbox manifold.
In this section, we show that the orbits of the equicontinuous pseudogroup associated to $\F$ admit  finite codings, which is the basis for the proof of Theorem~\ref{thm-main2}. The techniques are inspired by the   paper of Thomas \cite{EThomas1973} which showed a similar result for equicontinuous actions  of group $\mZ$ on a Cantor set. For  matchbox manifolds with dimension $ n \geq 2$, this requires extending the basic ideas from  group actions to pseudogroup actions.

Fix a regular covering $\cU = \{\vp_i
\colon \oU_i \to [-1,1]^n \times \fT_i \mid 1 \leq i \leq \nu \}$
and pseudogroup $\cGF$ as in Section~\ref{sec-holonomy}.
Let $\dTU > 0$ be the constant of Proposition~\ref{prop-uniformdom},
such that for every leafwise path
$\gamma \colon [0,1] \to \fM$, we have
$B_{\fX}(w_0, \dTU) \subset D(h_{\gamma})$
for the   holonomy map $h_{\gamma} \in \cGF^*$ where $w_0 \in \fT_i$ corresponds to $\gamma(0)$.

Theorem~\ref{thm-minimal} implies that for any open subset $W \subset \fT_*$ the $\cGF$-saturation
$\cO(W)$, as defined in (\ref{eq-saturate}), is all of $\fT_*$. We study the dynamics of $\cGF$ restricted to
sufficiently small clopen subsets of $\fT_*$.

\subsection{Holonomy groupoids} Fix a coordinate transversal, say $\fT_1$, and  a  basepoint  $w_0 \in int(\fT_1)$.

Choose $W \subset B_{\fX}(w_0, \dTU/4) \subset \fT_1$ such that $W$ is clopen and $w_0 \in int(W) = W$. The leaf $L_{w_0}$ of $\F$ through $w_0$ is dense in $\fM$, so  by the minimality of $\F$, the union of all images of $W$ under the holonomy along $L_{w_0}$ is all of $\fT_*$.
The goal is to  choose a sequence of clopen subsets
\begin{equation}\label{eq-chain}
w_0 \in \cdots \subset V^{\ell} \subset V^{\ell -1} \subset \cdots V^1 \subset V^0 \subset W
\end{equation}
all   which contain $w_0$, and such that the $\cGF^*$ orbits of  each $V^{\ell}$ has a ``finite order periodic coding''.

Let $\cRF \subset \fT_* \times \fT_*$ denote the equivalence relation on  $\fT_*$ induced by $\F$, where     $(w,w') \in \cRF$ if and only if $w,w'$ correspond to points on the same leaf of $\F$.

Let $(w,w') \in \cRF$, and $\gamma = \gamma_{x,x'} \colon [0,1] \to \fM$  denote a path from $x = \tau_{i_x}(w)$ to $x' = \tau_{i_{x'}}(w')$.
Recall from Definition~\ref{def-germ} that $[h_{\gamma}]_w$ denotes its germinal class, which by
 Lemma~\ref{lem-homotopic}  depends only on the endpoint-fixed homotopy class of $\gamma$.
 The holonomy groupoid $\GF$ of $\F$ is the collection of all such germs, and the source and range maps from Definition~\ref{def-germ} define a groupoid map  $s \times r \colon \GF \to \cRF$.

 Given an element  $g \in \GF$ with $s(g) = w$ and $r(g) = w'$, there exists a   path $\gamma = \gamma_{x,x'} \colon [0,1] \to \fM$   from $x = \tau_{i_x}(w)$ to $x' = \tau_{i_{x'}}(w')$, where $w \in W$, so that $g = [h_{\gamma_{x,x'}}]_w$. Note that by Proposition~\ref{prop-uniformdom},    there is a plaque chain covering $\gamma$ such that
$W \subset D(h_{\gamma_{x,x'}})$ for the associated holonomy map $h_{\gamma_{x,x'}}$. As the constant $\dTU > 0$ is independent of the choice of the path $\gamma$, given  $g \in\GF$ we abuse notation, and let $\gamma$ denote   the holonomy map $h_{\gamma_{x,x'}}$   whose germ is $g$ and satisfies this domain condition. Then given such $\gamma$ and any $u \in W$,  let $\gamma_u$ denote the germ of $\gamma$ at $u$. Thus, $\gamma_w =  g$ for example.

We have previously introduced in (\ref{eq-holodef})  the germinal holonomy subgroups $\ds \G_{\F,w} \equiv (s \times r)^{-1}(w,w)$ for $w \in \fT_*$. Also, consider
the following subsets of $\GF$:
\begin{eqnarray*}
\G_W & = & \left\{ \gamma_u \in \GF \mid u \in W \right\}   \\
\G_W^W & = & \left\{ \gamma_u \in \GF \mid u \in W ~, ~ r(\gamma_u) \in W\right\}   \\
\G_w^W & = & \left\{  \gamma_{u} \in \GF  \mid  u = w ~, ~ r(\gamma_u) \in W \right\}
\end{eqnarray*}

Note that $\G_W^W$ is a subgroupoid of $\GF$, with object space $W$. For each $w_0 \in W$, let  $*_{w_0}$ denote the constant path at $w_0$, and by abuse of notation, also let  $*_{w_0} \in \G_{w_0}^W$  denote the germ of the identity map at $w_0$.
The composition rule for $\GF$ is defined by the concatenation of paths. In the case where $\cGF^*$ is equicontinuous, a   stronger form of composition holds.

 \begin{lemma}
There is a well-defined     composition law, $* \colon \G_W^W \times \G_W^W \to \G_W$.
 \end{lemma}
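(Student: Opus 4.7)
The plan is to invoke Proposition~\ref{prop-uniformdom} together with the fact that $W \subset B_{\fX}(w_0, \dTU/4)$ has small diameter, in order to replace each germ in $\G_W^W$ by a holonomy representative whose domain contains all of $W$. Once this is in place, the ordinary set-theoretic composition of such maps, followed by passage to germs at the source, will define $*$.

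First I would check that for any $\gamma_u \in \G_W^W$ and any admissible sequence $\cI$ realizing $\gamma_u$ via the construction of Proposition~\ref{prop-uniformdom}, the holonomy map $h_{\cI}$ satisfies $W \subset D(h_{\cI})$. Indeed, $B_{\fX}(u, \dTU) \subset D(h_{\cI})$ by Proposition~\ref{prop-uniformdom}, and since $u \in W \subset B_{\fX}(w_0, \dTU/4)$, the triangle inequality forces $W \subset B_{\fX}(u, \dTU/2) \subset D(h_{\cI})$. Using the equicontinuity statement of Proposition~\ref{prop-uniformdom} with $\e_1 = \dTU/2$, and the assumption that $W$ lies inside the corresponding uniform $\delta_1$-neighborhood of $w_0$, one also obtains $h_{\cI}(W) \subset B_{\fX}(r(\gamma_u), \dTU/2)$. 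The same argument applied to $\delta_v$ yields a holonomy map $h_{\cJ}$ with $W \subset D(h_{\cJ})$. Since $u' := r(\gamma_u) \in W \subset B_{\fX}(v, \dTU/2) \subset D(h_{\cJ})$, the composition $h_{\cJ} \circ h_{\cI}$ is defined on an open neighborhood of $u$, and I would set
\[
\gamma_u * \delta_v := [h_{\cJ} \circ h_{\cI}]_u ,
\]
which lies in $\G_W$ because its source is $u \in W$.

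The main obstacle is well-definedness, that is, independence from the choice of admissible sequences representing $\gamma_u$ and $\delta_v$. Independence from $\cI$ is immediate: two admissible sequences realizing the germ $\gamma_u$ produce holonomy maps agreeing on an open neighborhood of $u$, so post-composition with $h_{\cJ}$ preserves this agreement and hence the germ at $u$. Independence from $\cJ$ is the subtler case, since two representatives of $\delta_v$ agree as germs only at $v$ while we evaluate them near $u' \neq v$ in general; to handle this I would combine Proposition~\ref{prop-copc} with Lemma~\ref{lem-homotopic}, using that any two admissible sequences $\cJ, \cJ'$ representing the same element of $\GF$ at $v$ arise from leafwise paths which can be taken endpoint-homotopic (once one fixes the initial and terminal charts and restricts to the plaque chains constructed above), and therefore induce the same germinal holonomy on the entire common domain, which by the uniform domain estimate contains $u'$. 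Pre-composing with $h_{\cI}$ then yields equal germs at $u$, establishing that $*$ is a well-defined composition law $\G_W^W \times \G_W^W \to \G_W$.
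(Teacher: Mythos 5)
The core construction you give is exactly the paper's: Proposition~\ref{prop-uniformdom} together with the choice $W \subset B_{\fX}(w_0, \dTU/4)$ forces every element of $\G_W^W$ to have a holonomy representative whose domain contains all of $W$, and then $\gamma_w * \gamma_u' := [\gamma' \circ \gamma]_w$ makes sense because $w' = r(\gamma_w) \in W \subset D(\gamma')$. That is the entirety of the paper's one-line proof. The phrase ``well-defined'' in the lemma refers only to the fact that the composition of maps is defined on a common domain; the paragraph preceding the lemma establishes a convention under which $\gamma_u$ is bound to a specific representative map $\gamma$ (equivalently, a fixed homotopy class of leafwise path) with $W \subset D(\gamma)$, so no independence-of-representative claim is being made or needed.

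Your extra paragraph on independence from the choice of $\cJ$ introduces a genuine gap. You assert that any two admissible sequences $\cJ, \cJ'$ representing the same germ $\delta_v$ ``arise from leafwise paths which can be taken endpoint-homotopic.'' That is false in general: two leafwise paths from $v$ to $v'$ may induce the same germinal holonomy at $v$ without being endpoint-homotopic, and this is exactly what happens when $L_v$ has nontrivial holonomy. Section~\ref{sec-codes} works under the hypothesis of equicontinuity alone, not the without-holonomy condition (which is derived from homogeneity in Lemma~\ref{lem-homonoholo} and is not yet available). Without endpoint-homotopy, Lemma~\ref{lem-homotopic} only gives agreement of $h_{\cJ}$ and $h_{\cJ'}$ near $v$, which does not control their values at $u' = h_{\cI}(u)$ when $u' \neq v$, so the composed germs $[h_{\cJ} \circ h_{\cI}]_u$ and $[h_{\cJ'} \circ h_{\cI}]_u$ can genuinely differ. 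This does not endanger the lemma --- the paper does not make the stronger claim --- but your independence-from-$\cJ$ argument as written would fail; it should either be removed, or be prefaced by the without-holonomy hypothesis, in which case Lemma~\ref{lem-noholo} supplies exactly what is needed.
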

 \proof
 Let $\gamma_w , \gamma_u' \in \G_W$ with $w,u \in W$. Then $w' = r(\gamma_w) \in W$, so $w' \in Dom(\gamma')$, hence  $w'' = \gamma'(w')$ is well-defined.
 Then define
 \begin{equation}\label{eq-complaw}
 \gamma_w * \gamma_u' \equiv  [\gamma' \circ \gamma]_w \quad {\rm so ~ that} \quad r(\gamma_w * \gamma_u') = w'' ~ . \qquad\qquad\qquad  \Box
\end{equation}

The composition law  (\ref{eq-complaw})  has a natural intuitive definition in terms of the definition of the holonomy along paths, which yields a slightly stronger conclusion. The holonomy map $\gamma'$ is defined as the holonomy along a leafwise path $\gamma_{y,y'}'$ between $y= \tau_1(u)$ and $y' = \tau_1(u')$.
 The action of $\gamma'$ on the point $w'$ is defined by a leafwise path $\gamma_{\xi,\xi'}'$ between $\xi = \tau_1(w')$ and $\xi' = \tau_1(w'')$ which ``shadows'' $\gamma_{y,y'}'$.
 Form the concatenation of the two paths, $\gamma_{x,x''}'' = \gamma_{\xi,\xi'}' * \gamma_{x,x'}$ which is leafwise path between $x = \tau_1(w)$ and $x'' = \xi' = \tau_1(w'')$.
 Then the product $\gamma_w * \gamma_u'$ is the germinal holonomy along the path
$\gamma_{x,x''}''$.
We will use this geometric description of the composition law in later arguments.
  Note that by Proposition~\ref{prop-uniformdom},    there is a representative for $h_{\gamma''}$ with    $W \subset D(h_{\gamma''})$.
However,  there is no assertion that $h_{\gamma''}(W) =W$.

\medskip

\subsection{Coding functions}

We use the equicontinuity of $\cGF$ and that $\fT_*$ is totally disconnected to define the subsets $V^{\ell} \subset W$ in (\ref{eq-chain}),
for which the holonomy action of $\G_{w_0}^W
$ on $V^{\ell}$ has ``uniform return times''. This procedure corresponds to the procedure in Thomas' work, in the case where $\F$ is defined by a flow, where the section $W$ is partitioned into sets with uniform return times.

We begin the inductive construction of the clopen sets $V^{\ell} \subset W$ for $\ell \geq 1$.

Let $\ell = 1$ and set $\e_1 = \dTU/4$. Choose a partition of $W$ into disjoint clopen subsets,
\begin{equation}
\cW_1 = \{W^1_1, \ldots , W^1_{\beta_1} \} \quad , \quad  W = W^1_1 \cup \cdots \cup W^1_{\beta_1} \quad , \quad w_0 \in W^1_1
\end{equation}
 where  each $W^1_i$ has diameter (in $\fX$) less than $\e_1$.
If $\e_1 >  \dX(W)$, one can choose  $W^1_1 = W$ and   $\beta_1 = 1$. However,  in the later stages of the inductive construction, $\e_{\ell} \to 0$ so $\beta_{\ell} \to \infty$.
 Introduce the first code space $\cC_1 = \left\{0, 1,\ldots, \beta_1\right\}$.

Let $\ds \eta_1 = \min\left\{d_{\fX}(W, \fT_*-W), d_{\fX}(W^1_i,W^1_j)\,|\,i \neq j\,\right\} > 0$, which is positive as $W$ is clopen in $\fX$.

Let $\delta_1 > 0$ be a constant of equicontinuity for $\cGF$ corresponding to $\eta_1$.

\medskip

Next, introduce the coding function corresponding to the partition $\cW_1$.
\begin{defn}\label{def-coding-1} For $w \in W$,
the $\cC^1_w$-code of $u \in W$ is the function $C^1_{w,u} \colon \G_w^W \to \cC_1$ defined as, for $\gamma \in \G_w^W$:
$$C^1_{w,u}(\gamma) = \begin{cases}
   i   &  \text{if  } ~  \gamma(u) \in W^1_i\\
   0   & \text{if  } ~ \gamma(u) \in \fT_* - W.
\end{cases}
$$
\end{defn}
The function $C^1_{w, u}$ encodes the terminal point  for the path starting at $u$, and shadowing the path $\gamma$ from $w$ to   $w' = r(\gamma_w) \in \cO(w)$.
In particular,   $C^1_{w,u}(\gamma) = 0$ corresponds to those points $u \in W$
such that $\gamma(u) \not\in W$. Thus, if $W$ is invariant under $\cGF^*$ then the code value $C^1_{w,u}(\gamma) = 0$ never occurs.

\medskip

\begin{lemma}\label{lem-equivariant-1}
Let $w, u \in W$ and $\gamma' \in \G_w^W$. Set $w' = r(\gamma')$ and suppose that $u' = \gamma'(u) \in W$, then
\begin{equation}\label{eq-equivariant-1}
  C^1_{w', u'}(\gamma) = C^1_{w,u}(\gamma * \gamma') ~ {\rm for ~ all} ~ \gamma\in \G_{w'}^W.
\end{equation}
\end{lemma}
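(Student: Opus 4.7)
The proof is essentially a matter of carefully unwinding the definitions, with the only substantive input being that representatives of all the germs involved are defined on the common set $W$.

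First, I would observe that by the choice $W \subset B_{\fX}(w_0, \dTU/4)$ and Proposition~\ref{prop-uniformdom}, every germ in $\G_W$ admits a representative whose domain contains $W$; in particular, composition of germs with sources in $W$ agrees with ordinary composition of such representatives evaluated at points of $W$. Applied to the composition law \eqref{eq-complaw}, this means that for $\gamma' \in \G_w^W$ with $r(\gamma') = w' \in W$ and for $\gamma \in \G_{w'}^W$, the product $\gamma * \gamma' \in \G_w$ is represented, on the domain $W$, by the honest composition of representatives, so that for every $u \in W$ with $u' = \gamma'(u) \in W$ we have
\begin{equation*}
(\gamma * \gamma')(u) \;=\; \gamma\bigl(\gamma'(u)\bigr) \;=\; \gamma(u').
\end{equation*}

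Next, I would simply read off the codes from Definition~\ref{def-coding-1}. The value $C^1_{w,u}(\gamma * \gamma')$ is, by definition, the index $i \in \{1,\dots,\beta_1\}$ such that $(\gamma * \gamma')(u) \in W^1_i$, or $0$ if $(\gamma * \gamma')(u) \notin W$. Similarly, $C^1_{w',u'}(\gamma)$ is $i$ if $\gamma(u') \in W^1_i$, and $0$ if $\gamma(u') \notin W$. Since the two points $(\gamma * \gamma')(u)$ and $\gamma(u')$ coincide by the displayed identity, the same case of Definition~\ref{def-coding-1} applies to both codes and yields the same value, which is \eqref{eq-equivariant-1}.

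There is no real obstacle here: the lemma is an equivariance statement that amounts to saying that the coding function is determined by the image point in $\fT_*$, and both sides of \eqref{eq-equivariant-1} name the same image point $\gamma(u')$. The only place one must be a little careful is to ensure that one is truly allowed to replace the germ composition $\gamma * \gamma'$ by the composition of representatives at the specific point $u$; this is exactly what the uniform-domain estimate of Proposition~\ref{prop-uniformdom} together with the choice of $W$ guarantees, and it is what makes Proposition~\ref{prop-copc} applicable if one needs to relate different plaque-chain presentations.
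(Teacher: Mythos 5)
Your proof is correct and takes essentially the same route as the paper's: both hinge on the single identity $(\gamma * \gamma')(u) = \gamma(\gamma'(u)) = \gamma(u')$, justified by the uniform-domain estimate from Proposition~\ref{prop-uniformdom} so that the composition of representatives can be evaluated honestly at points of $W$, after which the equality of codes is an immediate reading of Definition~\ref{def-coding-1}.
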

\proof
The function $C^1_{w', u'}$ codes for a path $\gamma_{u', u''}$ starting at $u'$ and shadowing the path $\gamma = \gamma_{w', w''}$ from $w'$ to   $w'' = r(\gamma)  \in   \cO(w') = \cO(w)$. Pre-compose the  path $\gamma_{w', w''}$ with $\gamma'$, to obtain
paths
$$\gamma''_{w,w''} = \gamma_{w', w''} * \gamma'_{w, w'}  \in \G_w^W \quad , \quad \gamma''_{u, u''} = \gamma_{u', u''} * \gamma'_{u, u'}  \in \G_u^W$$
where the latter shadows the former.  We then have
$\gamma(u') = \gamma \circ \gamma'(u)$,
from which (\ref{eq-equivariant-1}) follows.
\endproof

\medskip

\begin{lemma} \label{lem-locconstant}
If $u,v \in W$ with $d_{\fX}(u,v) < \delta_1$ then $C^1_{w,u}(\gamma) = C^1_{w,v}(\gamma)$ for all $\gamma \in \G_w^W$.
Hence, the function $C^1_w$ defined by $C^1_w(u) = C^1_{w,u}$ is locally constant, and so $V^1$ is open.
\end{lemma}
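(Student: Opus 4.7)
The plan is to reduce the claim directly to the equicontinuity of $\cGF^*$ together with the separation properties encoded in the choice of $\eta_1$. First, I would verify that any germ $\gamma \in \G_w^W$ has a representative whose domain contains all of $W$: indeed, $W \subset B_{\fX}(w_0, \dTU/4)$ has diameter at most $\dTU/2$, and any representative of $\gamma$ supplied by Proposition~\ref{prop-uniformdom} has domain containing $B_{\fX}(w, \dTU) \supset W$. Thus $\gamma(u)$ and $\gamma(v)$ both make sense for $u,v \in W$ as values of the same holonomy map.

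Next, given $u,v \in W$ with $d_{\fX}(u,v) < \delta_1$, apply equicontinuity from Proposition~\ref{prop-uniformdom}: since $\delta_1$ is a modulus of equicontinuity associated to $\eta_1$, for every $\gamma \in \G_w^W$ we obtain
$$ d_{\fX}(\gamma(u), \gamma(v)) < \eta_1. $$
Recall $\ds\eta_1 = \min\{d_{\fX}(W,\fT_*-W),\ d_{\fX}(W^1_i,W^1_j)\mid i\ne j\}$. I split into two cases, according to the value of $C^1_{w,u}(\gamma)$. If $\gamma(u) \in W^1_i$ for some $i \in \{1,\ldots,\beta_1\}$, then $\gamma(v)$ cannot lie in any $W^1_j$ with $j \ne i$ (distance would be at least $\eta_1$) nor in $\fT_* - W$ (distance from $W$ to the complement is at least $\eta_1$); since $W = W^1_1 \sqcup \cdots \sqcup W^1_{\beta_1}$, we conclude $\gamma(v) \in W^1_i$, so both codes equal $i$. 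If instead $\gamma(u) \in \fT_* - W$, then $\gamma(v)$ cannot lie in $W$ for the same separation reason, so both codes equal $0$. In every case $C^1_{w,u}(\gamma) = C^1_{w,v}(\gamma)$.

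Since this equality holds uniformly over all $\gamma \in \G_w^W$, the functions $C^1_{w,u}$ and $C^1_{w,v}$ coincide whenever $d_{\fX}(u,v)<\delta_1$, so $u \mapsto C^1_{w,u}$ is constant on $B_{\fX}(u,\delta_1) \cap W$. This establishes local constancy of $C^1_w$ on $W$, and hence $V^1$, which is a level set of this locally constant function, is open (in fact clopen in $W$). The only technical subtlety is the domain bookkeeping in the first step; the rest is an essentially combinatorial consequence of how $\eta_1$ was defined, so I do not anticipate any serious obstacle.
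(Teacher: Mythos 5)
Your proof is correct and takes essentially the same route as the paper's: apply equicontinuity with modulus $\delta_1$ to force $d_{\fX}(\gamma(u),\gamma(v))<\eta_1$, then use the separation built into the definition of $\eta_1$ to conclude the two images lie in the same piece of the partition. Your explicit domain bookkeeping at the start and the separate treatment of the code-$0$ case are details the paper leaves implicit, but they do not change the argument.
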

\proof
Let  $\gamma \in \G^W_W$, and suppose that $u,v \in W$ with $d_{\fX}(u,v) < \delta_1$.
Set $u' = \gamma(u)$ and $v' = \gamma(v)$.
By the equicontinuity of $\cGF$, $d_{\fX}(u,v) < \delta_1$ implies $d_{\fX}(u',v') < \eta_1$.
Assume that  $u' = \gamma(u) \in W^1_i$ then
$d_{\fX}(u',v') < d_{\fX}(W, \fT_*-W)$ implies  $v' \in W$.
Moreover, $d_{\fX}(u',v') < d_{\fX}(W^1_i,W^1_j)$ for all $j \ne i$ implies that $v' \in W^1_i$.
Thus, $C^1_{w,u}(\gamma) = C^1_{w,v}(\gamma)$.
\endproof

We now begin the construction of the coding partitions. For the first stage, $\ell =1$, set:
 \begin{eqnarray}
V^1 & = &  \left\{ u \in W \mid C^1_{w_0,u}(\gamma) = C^1_{w_0 , w_0}(\gamma) ~ {\rm for ~ all} ~ \gamma \in \G_{w_0}^W
\right\} \label{eq-codedef-1} \\
& = & \bigcap \left\{  h_{\gamma}^{-1}(W^1_i)  \mid \gamma \in \G_{w_0}^W ~ , ~ \gamma(w_0) \in  W^1_i\right\} \subset W^1_1 ~ . \nonumber
\end{eqnarray}
Note that $w_0 \in V^1$, so that $V^1$   is non-empty, and it is open by Lemma~\ref{lem-locconstant}.

For $\gamma \in \G_{w_0}^W$ we have $\gamma(w_0)  \in W$  by definition of $\G_{w_0}^W$, and the function $u \mapsto  C^1_{w_0,u}(\gamma)$ is constant on $V^1$, hence $\gamma(V^1) \subset W$. Moreover,  the image $\gamma(V^1) \subset W$ is   open for each $\gamma \in \G_{w_0}^W$.

\medskip

\begin{lemma}\label{lem-fixset}
Let $\gamma \in \G_{w_0}^W$ and set
$V^1_{\gamma} = \gamma(V^1)$. If $V^1 \cap V^1_{\gamma} \ne \emptyset$,  then $V^1 = V^1_{\gamma}$.
\end{lemma}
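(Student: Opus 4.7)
The plan is to proceed in three steps: identify $V^1_\gamma$ with a natural analogue of $V^1$ based at $w_1 := \gamma(w_0) \in W$, use the intersection hypothesis to match the codings of $w_0$ and $w_1$, and invoke equicontinuity to eliminate the remaining obstruction.

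First, composition with $\gamma^{\pm 1}$ gives a bijection $\G_{w_0}^W \leftrightarrow \G_{w_1}^W$ via $\alpha \leftrightarrow \beta := \alpha \circ \gamma^{-1}$, and this bijection preserves piece labels since $\beta(w_1) = \alpha(w_0)$. Using the factorization $h_\alpha = h_\beta \circ h_\gamma$, one rewrites
\begin{equation*}
\gamma(V^1) \;=\; h_\gamma\Bigl(\bigcap_{\alpha \in \G_{w_0}^W} h_\alpha^{-1}(W^1_{i(\alpha)})\Bigr) \;=\; \bigcap_{\beta \in \G_{w_1}^W} h_\beta^{-1}(W^1_{i(\beta)}) \;=:\; V^1(w_1),
\end{equation*}
the natural analogue of $V^1$ based at $w_1$ rather than $w_0$. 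Next, I would choose $u_* \in V^1 \cap V^1_\gamma$ and write $u_* = \gamma(v_*)$ with $v_* \in V^1$; for any $\alpha \in \G_{w_0}^W$ with $\alpha(w_1) \in W$ the composition $\alpha \circ \gamma$ again lies in $\G_{w_0}^W$, so applying $v_* \in V^1$ to $\alpha \circ \gamma$ places $\alpha(u_*) = (\alpha \circ \gamma)(v_*)$ in the piece of $\alpha(w_1)$, while applying $u_* \in V^1$ to $\alpha$ places $\alpha(u_*)$ in the piece of $\alpha(w_0)$. The disjointness of the pieces of $\cW_1$ forces the two pieces to coincide.

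The main obstacle is to exclude the escape possibility $\alpha_0(w_1) \notin W$ for some $\alpha_0 \in \G_{w_0}^W$. Were this to occur, $\alpha_0 \circ \gamma \in \cGF^*$ would send $w_0$ into $\fT_* - W$ while sending $v_*$ into $W$; by the equicontinuity of $\cGF^*$ (Proposition~\ref{prop-uniformdom}), with the modulus $\delta_1$ corresponding to $\eta_1 \leq d_{\fX}(W, \fT_* - W)$, the whole $\delta_1$-ball about $w_0$ would then be mapped into $\fT_* - W$. By Lemma~\ref{lem-locconstant} this ball lies inside $V^1$, and the clopen set $V^1 \cap \gamma^{-1}(V^1)$ -- nonempty by hypothesis and $\delta_1$-separated from its complement in $V^1$ via a second application of Lemma~\ref{lem-locconstant} transported through $\gamma$ -- must therefore contain a point $v$ close enough to $w_0$ that $\gamma(v) \in V^1$, giving $\alpha_0(\gamma(v)) \in W$ and contradicting the forced escape. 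Combined with the previous step this forces $w_1 \in V^1$; feeding $w_1 \in V^1$ back into the composition argument shows $\gamma(V^1) \subset V^1$, and the reverse inclusion $V^1 \subset \gamma(V^1)$ follows by applying the same reasoning with the roles of $w_0, w_1$ interchanged and $\gamma$ replaced by $\gamma^{-1} \in \G_{w_1}^W$ acting on $V^1(w_1)$.
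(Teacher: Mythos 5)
Your strategy of reducing the lemma to the single claim $w_1 := \gamma(w_0) \in V^1$ is a sensible reorganization, and your first two steps are correct: the factorization $\gamma(V^1) = V^1(w_1)$ follows from the bijection $\alpha \leftrightarrow \alpha \circ \gamma^{-1}$ between $\G_{w_0}^W$ and $\G_{w_1}^W$ (which preserves piece labels since $\alpha(w_0) = (\alpha \circ \gamma^{-1})(w_1)$), and the comparison through $u_* = \gamma(v_*)$ correctly shows that $\alpha(w_0)$ and $\alpha(w_1)$ lie in the same piece whenever $\alpha(w_1) \in W$. The paper organizes the argument differently --- it fixes an arbitrary $v' = \gamma(v) \in V^1_\gamma$ and equates $C^1_{w_0,v'}(\gamma')$ with $C^1_{w_0,u'}(\gamma')$ via a chain through Lemma~\ref{lem-equivariant-1} and Lemma~\ref{lem-domainconst}, so that $V^1_\gamma \subset V^1$ is obtained all at once without isolating $w_1$, with the reverse inclusion from $\gamma^{-1}$ --- but your reformulation through $w_1$ is a legitimate alternative framing of the same core idea.

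The argument breaks down at the third step. You correctly observe that the ball $B := B_{\fX}(w_0,\delta_1)$ lies in $V^1$ (Lemma~\ref{lem-locconstant}) and that if $\alpha_0(w_1) \notin W$ then $\alpha_0 \circ \gamma$ carries all of $B$ into $\fT_* - W$. But the crucial inference --- that $A := V^1 \cap \gamma^{-1}(V^1)$ ``must therefore contain a point $v$ close enough to $w_0$'' --- does not follow. A minor issue: the separation scale for $A$ is not $\delta_1$ itself but a smaller modulus $\delta_1'$ obtained by transporting $\delta_1$ through $\gamma$ via a further equicontinuity estimate. The more serious issue: knowing that $A$ is nonempty, clopen, and at positive distance from $V^1 \setminus A$ gives no information about whether $A$ meets the ball $B$ at all. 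Since $w_0 \in V^1 = A \sqcup (V^1 \setminus A)$, the separation produces only a dichotomy --- either $w_0 \in A$, which is exactly the conclusion $w_1 \in V^1$ you are trying to establish, or $B \cap A = \emptyset$ and the known point $v_*$ simply sits at distance $\geq \delta_1'$ from $w_0$ somewhere else in the totally disconnected set $V^1$, with no way to drag it closer. The second alternative is not excluded, so the argument is circular and no contradiction is reached; the escape case $\alpha_0(w_1) \notin W$ remains open. Closing it requires a different mechanism; in the paper this is absorbed into the equivariance manipulation of Lemma~\ref{lem-equivariant-1} applied to the composed germ $\wtgamma' * \gamma$ rather than handled as a separate case.
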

\proof
By assumption, there exists $u' = \gamma(u) \in V^1 \cap V^1_{\gamma}$ for some $u \in V^1$.
We first show that all points $v' \in V^1_{\gamma}$ have the same code function as $u'  \in V^1$, so that $v' \in V^1$, hence
$V^1_{\gamma} \subset V^1$.

Let $w' = \gamma(w_0) \in W^1_i$.

Given $v' \in V^1_{\gamma}$   there exists $v \in V^1$ with $v' = \gamma(v)$.  Then $w_0, u,v \in V^1$ implies by (\ref{eq-codedef-1}) we have that  $C^1_{w_0,u}(\gamma) = C^1_{w_0,v}(\gamma) = C^1_{w_0,w_0}(\gamma)$ so that $v'$ and $u'$ lie in the same partition   $W^1_i$.

For $\gamma' \in \G_{w_0}^W$   we    claim that $C^1_{w_0,v'}(\gamma') = C^1_{w_0, u'}(\gamma')$.
Let   $\wtgamma'$ be a path starting at $w'$   shadowing $\gamma'$.

Define  $\gamma'' = \wtgamma' * \gamma \in \G_{w_0}^W$.
 Then by Lemma~\ref{lem-equivariant-1},   we have
\begin{equation}
C^1_{w_0,v'}(\gamma') =  C^1_{w',v'}(\wtgamma') =  C^1_{w_0,v}(\wtgamma' * \gamma) =  C^1_{w_0,u}(\wtgamma' * \gamma) =  C^1_{w',u'}(\wtgamma')  =  C^1_{w_0,u'}(\gamma')
\end{equation}
where we also use that the holonomy along the path $\gamma$ starting at $w_0$ and the path $\wtgamma$ starting at $w'$ agree by Lemma~\ref{lem-domainconst}.   Thus, $V^1_{\gamma} \subset V^1$.

The reverse inclusion $V^1 \subset V^1_{\gamma}$ follows by the same arguments applied to  $\gamma^{-1}$.
\endproof

\begin{lemma} \label{lem-equalsets}
Let $\gamma, \sigma\in \G_W^W$, and   $V^1_{\gamma} = \gamma(V^1)$, $V^1_{\sigma} = \sigma(V^1)$.
If $d_{\fX}(V^1_\gamma, V^1_{\sigma} ) < \delta_1$, then
$V^1_{\gamma} = V^1_{\sigma}$.
\end{lemma}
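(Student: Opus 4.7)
The approach combines two ingredients: the thickness of $V^1$ inherited from the local constancy of the coding function, and a generalization of the rigidity statement of Lemma~\ref{lem-fixset} to arbitrary intersecting translates.

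First I would establish the following thickness property: for any $u \in V^1$ and $v \in W$ with $d_{\fX}(u,v) < \delta_1$, one has $v \in V^1$. This is immediate from Lemma~\ref{lem-locconstant}, since the equality $C^1_{w_0,u}(\eta) = C^1_{w_0,v}(\eta)$ for all $\eta \in \G_{w_0}^W$ means that $v$ satisfies the defining condition (\ref{eq-codedef-1}) of $V^1$ whenever $u$ does. Applying the code-equivariance of Lemma~\ref{lem-equivariant-1} together with the same local constancy at shifted basepoints, this thickness transfers to translates: for any $\gamma \in \G_W^W$, if $v \in W$ lies at distance less than $\delta_1$ from some point of $V^1_\gamma$, then $v \in V^1_\gamma$.

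With thickness in hand, the hypothesis $d_{\fX}(V^1_\gamma,V^1_\sigma) < \delta_1$ produces $a \in V^1_\gamma$ and $b \in V^1_\sigma$ with $d_{\fX}(a,b) < \delta_1$; the thickness of $V^1_\gamma$ applied to the point $b$ yields $b \in V^1_\gamma \cap V^1_\sigma$, so the intersection is non-empty. To conclude, I would generalize Lemma~\ref{lem-fixset} to the setting of intersecting translates: given $c = \gamma(c_0) = \sigma(c_1) \in V^1_\gamma \cap V^1_\sigma$ with $c_0,c_1 \in V^1$, then for any $v = \gamma(v_0) \in V^1_\gamma$ the code-equivariance shows that the coding of $v$ based at $r(\sigma)$ agrees with that of $c$, hence $v \in V^1_\sigma$; the symmetric inclusion yields $V^1_\gamma = V^1_\sigma$.

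The main obstacle is that elements of $\G_W^W$ have sources $s(\gamma), s(\sigma) \in W$ which may differ from the fixed basepoint $w_0$; consequently $V^1_\gamma$ is not literally a level set of the coding function based at $w_0$, and its $\delta_1$-thickness must be derived via the equivariance $C^1_{r(\gamma),\gamma(v)}(\eta) = C^1_{s(\gamma),v}(\eta * \gamma)$ combined with local constancy at the shifted source $s(\gamma)$. Proposition~\ref{prop-uniformdom} is used throughout to guarantee that representatives of $\gamma$, $\sigma$, and their inverses are defined on the whole of $W$, so that the germ compositions and image sets in the argument are unambiguous. Once these technical dependencies are tracked with care, the three-step outline above produces the desired equality.
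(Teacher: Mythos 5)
Your proposal is correct and relies on precisely the same three ingredients as the paper's proof — local constancy of the coding function (Lemma~\ref{lem-locconstant}), code equivariance (Lemma~\ref{lem-equivariant-1}), and the rigidity of Lemma~\ref{lem-fixset} — but it arranges them in a slightly longer chain. The paper takes a shortcut: starting from $\xi' \in V^1_\gamma$ and $\zeta' \in V^1_\sigma$ with $d_{\fX}(\xi',\zeta') < \delta_1$, it pulls $\xi'$ back to $\xi'' = \sigma^{-1}(\xi')$, uses local constancy plus equivariance to show that $\xi''$ and $\zeta$ share the same $w_0$-based code so that $\xi'' \in V^1$, and then applies Lemma~\ref{lem-fixset} directly to the single composition $\sigma^{-1}\circ\gamma$ to obtain $V^1 = \sigma^{-1}\circ\gamma(V^1)$, hence $V^1_\sigma = V^1_\gamma$. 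Your version instead first establishes a ``$\delta_1$-thickness'' for each translate $V^1_\gamma$, uses it to locate a common point $b \in V^1_\gamma \cap V^1_\sigma$, and then invokes a ``generalized'' fix-set statement — but that generalization is exactly Lemma~\ref{lem-fixset} applied to $\sigma^{-1}\circ\gamma$ after conjugating, so it produces no new content; and your thickness-of-translates step is more than what is needed, since once you have one point in both translates you are already in the situation where Lemma~\ref{lem-fixset} applies. One small caution on that thickness step: it cannot be derived by inverting with $\gamma^{-1}$ and applying the thickness of $V^1$ (the equicontinuity constant $\delta_1$ only guarantees that $\gamma^{-1}$ carries $\delta_1$-close points to $\eta_1$-close points, not $\delta_1$-close), so one must argue through the codes at the shifted basepoint $r(\gamma)$ via Lemma~\ref{lem-equivariant-1} as you indicate; and one must also verify, via Proposition~\ref{prop-uniformdom} and the diameter bound $\dX(V^1_\gamma) < \e_1 = \dTU/4$, that the nearby point $v$ actually lies in the domain of $\gamma^{-1}$. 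These are the same domain and basepoint bookkeeping issues the paper handles, and your final paragraph shows you are aware of them; the outline is sound.
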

\proof
By assumption, there exists $\xi, \zeta \in V^1$ such that
$\xi' = \gamma(\xi) \in V^1_{\gamma}$ and $\zeta' =\sigma(\zeta) \in V^1_{\sigma}$ such that
$d_{\fX}(\xi' , \zeta') < \delta_1$. In particular, if $\xi' \in W^1_i$ then also $\zeta' \in W^1_i$.

Set $w' = \gamma(w_0)$, then by Lemma~\ref{lem-locconstant},
for all $\gamma' \in \G_{w_0}^W$ we have
$C^1_{w',\xi'}(\gamma') = C^1_{w',\zeta'}(\gamma')$.

Set  $\xi'' = \sigma^{-1}(\xi') = \sigma^{-1} \circ \gamma(\xi)$.
Then for all $\gamma'' \in \G_{w_0}^W$ we have
$C^1_{w_0,\xi''}(\gamma'') = C^1_{w_0,\zeta}(\gamma'')$.
As   $\zeta   \in V^1$ this implies $\xi'' \in V^1$ and thus $V^1 \cap \sigma^{-1} \circ \gamma(V^1) \ne \emptyset$.
By Lemma~\ref{lem-fixset} this implies $V^1 = \sigma^{-1} \circ \gamma(V^1)$ hence $V^1_{\sigma} =    V^1_{\gamma}$ as was to be shown.
\endproof

 \medskip
To complete the construction for the first stage, note that $V^1$ is an open set, hence the
 minimality of $\F$ implies that for each $\xi \in W$, we have $\cO(\xi) \cap V^1 \ne \emptyset$, hence there exists $\sigma \in \G_W^W$ with $\xi' = \sigma(\xi) \in V^1$. So for $\gamma = \sigma^{-1}$ we have $\xi \in V^1_{\gamma}$.
  Thus, the set $\left\{\gamma(V^1) \mid \gamma \in \G_{w_0}^W \right\}$ of all $\G_{w_0}^W$-translates of $V^1$ is an open cover of the clopen set $W$.
  As $W$ is compact, the cover admits a finite subcover
$\cV_1 \equiv \left\{V^1_1, \ldots , V^1_{n_1}\right\}$.

Moreover, $V^1_i \cap V^1_j = \emptyset$ for $i \ne j$ by Lemma~\ref{lem-equalsets}.
As $\cV_1$ is a finite covering of the compact set $W$ by disjoint open sets, each of the sets $V^1_i$  is also closed, hence is clopen.

For each $1 \leq i \leq n_1$ choose $\gamma_i^1 \in \G_{w_0}^W$ so that $V^1_i = \gamma_i^1(V^1)$.
Without loss of generality, we can assume that $\gamma_1^1 = *_{w_0}$ is the constant path, so that $V^1_1 = V^1$.
Set $w_i^1 = \gamma_i^1(w_0)$.

Note that by definition of the coding function used to define $V^1$, we have $V^1_i \subset W^1_j$ where $j = C^1_{w_0, w_0}(\gamma_i^1)$.
In particular,  $\dX(V^1_i) \leq \dX(W^1_j) < \e_1$ by construction.

Thus, the collection $\cV_1$ is a finite partition of $W$ by clopen subsets of diameter less that $\e_1$, which refines the initial partition $\cW_1$. This concludes the construction for the initial inductive step $\ell = 1$.

\medskip

Now consider the general inductive step, for $\ell > 1$.
Assume that for each $1 < \lambda < \ell$ there is given:
\begin{itemize}
\item a constant $\e_{\lambda}$ with $0 < \e_{\lambda} < \e_{\lambda-1}/2$,
\item a clopen set $V^{\lambda}$ with $w_0 \in V^{\lambda} \subset V^{\lambda -1}$,
\item a collection $ \{\gamma^{\lambda}_1, \ldots , \gamma^{\lambda}_{n_{\lambda}} \} \subset \G_{w_0}^W$ with $\gamma^{\lambda}_1 = *_{w_0}$,
\end{itemize}
 such that $\cV_{\lambda} = \{ V^{\lambda}_i \equiv \gamma^{\lambda}_i(V^{\lambda}) \mid 1 \leq i \leq n_{\lambda} \}$ is a finite partition of $W$ by disjoint clopen sets with $V^{\lambda}_1 = V^{\lambda}$, and the diameter of each $V^{\lambda}_i$ is less than $\e_{\lambda}$.
Moreover,   we assume that    the holonomy maps $\left\{ \gamma^{\lambda}_i \mid  1 \leq i \leq n_{\lambda}\right\}$ are such that the collection
$\cV_{\lambda}^* = \left\{ V^{\lambda}_i \mid 1 \leq j \leq \alpha_{\lambda}\right\}$ is a partition of the clopen set $V^{\lambda}$ into  disjoint clopen sets.
Furthermore,
 the covering $\cV_{\lambda}$ is assumed to be given by the  union of the translates of the partition  $\cV_{\lambda}^*$ by the maps $\{ \gamma^{\lambda-1}_j  \mid 1 \leq j \leq n_{\lambda -1} \}$.
That is, the covering $\cV_{\lambda}$ is a refinement of the covering $\cV_{\lambda -1}$ obtained by partitioning $V^{\lambda -1}$ into the clopen subsets of $\cV_{\lambda}^*$ and then translating them to obtain the covering $\cV_{\lambda}$ of $W$.
It follows that  $n_{\lambda} = \alpha_{\lambda} \cdot n_{\lambda -1}$ for some integer $\alpha_{\lambda} \geq 1$.

We begin the construction of the next partition $\cV_{\ell} =  \{ V^{\ell}_i \equiv \gamma^{\ell}_i(V^{\ell}) \mid 1 \leq i \leq n_{\ell} \}$, given the above data. First, set
\begin{equation}\label{eq-epsilonell}
\e_{\ell} = \frac{1}{2} \min \left \{ \dX(V^{\ell -1}_i) \mid 1 \leq i \leq n_{\ell -1} \right\} < e_{\lambda_{\ell -1}}/2 ~ .
\end{equation}
Let $\e_{\ell}' >0$ be a constant of equicontinuity for $\cGF$ corresponding to $\e_{\ell}$.

Next, choose  a partition of $V^{\ell -1}$ into disjoint clopen subsets,
$\ds \cW_{\ell}^* = \left\{W^{\ell}_1, \ldots , W^{\ell}_{\alpha'_{\ell}}\right\}$ where $w_0 \in W^{\ell}_1$ and each $W^{\ell}_i$ has diameter less than $\e_{\ell}'$.
As in the case $\ell =1$, this partition of $V^{\ell -1}$  need not be ``compatible'' with the dynamics of $\cGF$. The partition will be ``pruned'' using the coding map, as in the case $\ell =1$, to obtain a partition that is ``compatible'' with the dynamics of $\cGF$.

Extend the partition $\cW_{\ell}^*$ of $V^{\ell -1}$ to all of $W$, setting
$\ds \cW_{\ell} = \cW_{\ell}^1 \cup \cdots \cup \cW_{\ell}^{n_{\ell -1}}$
where $\ds \cW_{\ell}^i = \left\{ \gamma^{\ell-1}_i(\cW_{\ell}^j) \mid 1 \leq j \leq \alpha'_{\ell}\right\}$ is itself a partition of
$V^{\ell-1}_i = \gamma^{\ell-1}_i(V^{\ell -1})$ into clopen sets.

Note that by the choice of $\e_{\ell}'$ each set $V^{\ell-1}_i$ has diameter at most $\e_{\ell}$.
Define
\begin{equation}\label{eq-ellpartition}
 W^{\ell}_k = \gamma^{\ell-1}_i(W^{\ell}_j) ~ {\rm where} ~ k = j + (i-1) \cdot \alpha'_{\ell} ~, ~ 1 \leq j \leq \alpha'_{\ell} ~ , ~ 1 \leq i \leq  n_{\ell -1} ~ .
\end{equation}

That is, we relabel the collection $\cW_{\ell}$ using a lexicographical ordering.
Set $\beta_{\ell} = \alpha'_{\ell} \cdot n_{\ell -1}$, and define the code space $\cC_{\ell} = \left\{ 1,\ldots, \beta_{\ell}\right\}$.

The corresponding coding function is defined as before:
\begin{defn}\label{def-coding-ell} For $w \in W$,
the $C^{\ell}_w$-code of $u \in W$ is the function $C^{\ell}_{w,u} \colon \G_w^W \to \cC_{\ell}$ defined as:
for $\gamma \in \G_w^W$ set $C^{\ell}_{w,u}(\gamma) = k$ if $\gamma(u) \in W^{\ell}_k$.
Then define
\begin{equation}\label{eq-codedef-ell}
V^{\ell} = \left\{ u \in W^{\ell}_1 \subset V^{\ell -1} \mid C^{\ell}_{w_0,u}(\gamma) = C^{\ell}_{w_0 , w_0}(\gamma) ~ {\rm for ~ all} ~ \gamma \in \G_{w_0}^W \right\} ~ .
\end{equation}
Note that $\gamma(V^{\ell}) \subset W$ for all $\gamma \in \G_{w_0}^W$.
\end{defn}

Let $\eta_{\ell} = \min\left\{d_{\fX}(W^{\ell}_{k},W^{\ell}_{k'}) \mid 1 \leq k \neq k' \leq \beta_{\ell}\right\}>0$.
Let $\delta_{\ell} > 0$ be a constant of equicontinuity for $\cGF$ corresponding to $\eta_{\ell}$.
Then the following results are proved exactly as for the case $\ell = 1$.

\begin{lemma} \label{lem-locconstant-ell}
If $u,v \in W$ with $d_{\fX}(u,v) < \delta_{\ell}$ then $C^{\ell}_{w,u}(\gamma) = C^{\ell}_{w,v}(\gamma)$ for all $\gamma \in \G_w^W$.
Hence, the function $C^{\ell}_w$ defined by $C^{\ell}_w(u) = C^{\ell}_{w,u}$ is locally constant, and so $V^{\ell}$ is open.
\end{lemma}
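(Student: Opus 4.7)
The plan is to mirror exactly the proof of Lemma~\ref{lem-locconstant}, since the inductive construction at level $\ell$ produces essentially the same data as at level $1$: a finite clopen partition $\cW_{\ell}$ of $W$ whose distinct pieces are separated by at least $\eta_{\ell}$, and $\delta_{\ell}$ is the equicontinuity modulus associated to $\eta_{\ell}$ by Proposition~\ref{prop-uniformdom}. The only new ingredient is bookkeeping: one must make sure the uniformity of domains supplied by Proposition~\ref{prop-uniformdom} applies to the representatives of the germs in $\G_w^W$.

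First I would fix $w \in W$, $\gamma \in \G_w^W$, and $u, v \in W$ with $d_{\fX}(u,v) < \delta_{\ell}$. By Proposition~\ref{prop-uniformdom} one may select a representative $h_{\gamma}$ of the germ $\gamma$ whose domain contains $B_{\fX}(w, \dTU)$; because $W \subset B_{\fX}(w_0, \dTU/4)$ and $w \in W$, both points $u, v$ lie in $D(h_{\gamma})$. Then the equicontinuity of $\cGF^{*}$ from the same proposition, applied to the pair $(\delta_{\ell}, \eta_{\ell})$, yields $d_{\fX}(\gamma(u), \gamma(v)) < \eta_{\ell}$. Since $\gamma$ has range in $W$, and by the coding hypothesis $\gamma(u) \in W^{\ell}_{k}$ and $\gamma(v) \in W^{\ell}_{k'}$ for some $k, k'$; if $k \neq k'$ then $d_{\fX}(\gamma(u), \gamma(v)) \geq d_{\fX}(W^{\ell}_{k}, W^{\ell}_{k'}) \geq \eta_{\ell}$, a contradiction. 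Hence $k = k'$, so $C^{\ell}_{w,u}(\gamma) = C^{\ell}_{w,v}(\gamma)$, as required.

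Since this equality holds for every $\gamma \in \G_w^W$, the function $u \mapsto C^{\ell}_{w,u}$ is locally constant on each $\delta_{\ell}$-ball in $W$. Applied at $w = w_0$, this shows that $V^{\ell}$, defined in \eqref{eq-codedef-ell} as the locus where $C^{\ell}_{w_0,u}(\gamma) = C^{\ell}_{w_0,w_0}(\gamma)$ for all $\gamma \in \G_{w_0}^{W}$, is a union of open $\delta_{\ell}$-balls intersected with the clopen set $W^{\ell}_1$, hence open. I do not anticipate any real obstacle; the argument is a verbatim generalization of Lemma~\ref{lem-locconstant}, and the only potential subtlety—that infinitely many germs $\gamma$ might in principle fail to share a uniform equicontinuity modulus—is precisely ruled out by Proposition~\ref{prop-uniformdom}, which supplies $\dTU$ and the modulus $\delta_{\ell}$ independently of the choice of admissible sequence representing $\gamma$.
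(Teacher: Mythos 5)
Your proof is correct and takes the same route as the paper, which simply defers this lemma to the proof of Lemma~\ref{lem-locconstant} by stating that the level-$\ell$ results are ``proved exactly as for the case $\ell = 1$.'' Your explicit appeal to Proposition~\ref{prop-uniformdom} to guarantee a single representative $h_\gamma$ with $W \subset D(h_\gamma)$, uniformly over $\gamma \in \G_w^W$, is a point the paper leaves implicit but which is indeed what makes the argument go through.

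One small inaccuracy is worth flagging: you assert that ``$\gamma$ has range in $W$,'' but membership in $\G_w^W$ only forces $\gamma(w) \in W$, not $\gamma(u) \in W$ for every $u \in W$. The level-1 proof handles the possibility $\gamma(u) \notin W$ (code $0$) by including $d_{\fX}(W, \fT_*-W)$ among the quantities whose minimum defines $\eta_1$; the level-$\ell$ constant $\eta_{\ell}$ as written omits that term, so to make the step rigorous one should either add it back, or restrict attention to $u, v \in V^{\ell-1}$, where the inductive construction already guarantees $\gamma(V^{\ell-1}) \subset W$ for $\gamma \in \G_{w_0}^W$. This wrinkle traces back to an omission in Definition~\ref{def-coding-ell} (no code $0$ at level $\ell$) rather than to a flaw in your reasoning, and the fix is trivial.
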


\begin{lemma}\label{lem-fixset-ell}
Let $\gamma \in \G_{w_0}^W$ and set $V^{\ell}_{\gamma} = \gamma(V^{\ell})$. If $V^{\ell} \cap V^{\ell}_{\gamma} \ne \emptyset$,  then $V^{\ell} = V^{\ell}_{\gamma}$.
\end{lemma}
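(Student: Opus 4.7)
The proof follows the same strategy as Lemma~\ref{lem-fixset}, adapted to the $\ell$-th stage coding. The plan is to establish an equivariance property for $C^{\ell}_{w,u}$ analogous to Lemma~\ref{lem-equivariant-1}, and then use it to show that the defining code for $V^{\ell}$ is preserved under any translate by $\gamma$ whose image meets $V^{\ell}$.

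First, I would record the equivariance property: if $w,u \in W$, $\gamma' \in \G_w^W$ with $w' = r(\gamma')$ and $u' = \gamma'(u) \in W$, then for any $\gamma \in \G_{w'}^W$,
\begin{equation}\label{eq-equivariant-ell}
C^{\ell}_{w', u'}(\gamma) = C^{\ell}_{w, u}(\gamma * \gamma').
\end{equation}
This follows exactly as in the proof of Lemma~\ref{lem-equivariant-1}: form the concatenated paths using the composition law (\ref{eq-complaw}), and observe that the image under $\gamma \circ \gamma'$ of $u$ lies in the same partition element $W^{\ell}_k$ as the image under $\gamma$ of $u'$, since the partition $\cW_{\ell}$ of $W$ is a fixed set partition.

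Next, assume $u \in V^{\ell}$ with $u' = \gamma(u) \in V^{\ell} \cap V^{\ell}_{\gamma}$, and let $w' = \gamma(w_0)$. Given any $v' \in V^{\ell}_{\gamma}$, write $v' = \gamma(v)$ with $v \in V^{\ell}$. For any $\gamma' \in \G_{w_0}^W$ choose a shadowing path $\widetilde{\gamma}'$ starting at $w'$ (obtained via Lemma~\ref{lem-domainconst} applied to the plaque chain of $\gamma'$ translated by the domain estimate of Proposition~\ref{prop-uniformdom}), and set $\gamma'' = \widetilde{\gamma}' * \gamma \in \G_{w_0}^W$. Applying (\ref{eq-equivariant-ell}) twice,
\begin{equation*}
C^{\ell}_{w_0, v'}(\gamma') = C^{\ell}_{w', v'}(\widetilde{\gamma}') = C^{\ell}_{w_0, v}(\gamma'') = C^{\ell}_{w_0, w_0}(\gamma''),
\end{equation*}
where the last equality uses $v \in V^{\ell}$ and the defining property (\ref{eq-codedef-ell}). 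The same calculation with $v$ replaced by $w_0$ (using $u \in V^{\ell}$ to identify $C^{\ell}_{w_0,u}(\gamma'') = C^{\ell}_{w_0,w_0}(\gamma'')$ and then tracking the code along $\widetilde{\gamma}'$) shows that $C^{\ell}_{w_0,v'}(\gamma') = C^{\ell}_{w_0,w_0}(\gamma')$ for every $\gamma' \in \G_{w_0}^W$. Combined with the observation that $v' \in W^{\ell}_1 \subset V^{\ell-1}$ (since $u' \in W^{\ell}_1$ and $\{v',u'\} \subset V^{\ell}_\gamma$ lies in a single partition element by equivariance with the trivial path), this gives $v' \in V^{\ell}$, hence $V^{\ell}_{\gamma} \subset V^{\ell}$.

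The reverse inclusion $V^{\ell} \subset V^{\ell}_{\gamma}$ follows by applying the same argument to $\gamma^{-1}$, whose germ lies in $\G_{w'}^W$ and maps $u' \in V^{\ell}$ back to $u \in V^{\ell}$. The main technical issue I expect is verifying the equivariance (\ref{eq-equivariant-ell}) carefully: one must be sure that the composition $\gamma * \gamma'$ is well-defined on a neighborhood of $u$ large enough to contain the points being compared, which is where Proposition~\ref{prop-uniformdom} and the choice $W \subset B_{\fX}(w_0,\dTU/4)$ are essential. Everything else is a routine translation of the $\ell = 1$ argument.
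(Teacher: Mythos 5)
Your proof is correct and follows exactly the approach the paper intends: the paper itself gives no separate proof for this lemma, merely stating that the stage-$\ell$ results ``are proved exactly as for the case $\ell = 1$,'' and your argument is a faithful translation of the proof of Lemma~\ref{lem-fixset}, using the obvious $\ell$-analog of the equivariance Lemma~\ref{lem-equivariant-1} together with the shadowing-path construction.
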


\begin{lemma} \label{lem-equalsets-ell}
Let $\gamma, \sigma\in \G_W^W$, and   $V^{\ell}_{\gamma} = \gamma(V^{\ell})$, $V^{\ell}_{\sigma} = \sigma(V^{\ell})$.
If $d_{\fX}(V^{\ell}_\gamma, V^{\ell}_{\sigma} ) < \delta_{\ell}$, then
$V^{\ell}_{\gamma} = V^{\ell}_{\sigma}$.
\end{lemma}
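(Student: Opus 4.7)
The plan is to mimic the proof of Lemma~\ref{lem-equalsets} verbatim, substituting the level-$\ell$ objects for their level-$1$ counterparts. The three ingredients are: (i) the level-$\ell$ equivariance identity (the direct analog of Lemma~\ref{lem-equivariant-1}, which carries over with the same proof because its verification only uses concatenation of germs and the formal definition of the coding function); (ii) the local constancy Lemma~\ref{lem-locconstant-ell}; and (iii) the idempotency Lemma~\ref{lem-fixset-ell}.

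First, I would unpack the hypothesis: by the definition of the set distance, there exist $\xi, \zeta \in V^{\ell}$ such that $\xi' = \gamma(\xi) \in V^{\ell}_{\gamma}$ and $\zeta' = \sigma(\zeta) \in V^{\ell}_{\sigma}$ with $d_{\fX}(\xi', \zeta') < \delta_{\ell}$. Set $\xi'' = \sigma^{-1}(\xi') = \sigma^{-1}\circ\gamma(\xi) \in W$. The plan is to show $\xi'' \in V^{\ell}$; granting this, $V^{\ell} \cap (\sigma^{-1}\circ\gamma)(V^{\ell}) \ni \xi''$ is nonempty, so Lemma~\ref{lem-fixset-ell} yields $V^{\ell} = (\sigma^{-1}\circ\gamma)(V^{\ell})$, equivalently $V^{\ell}_{\sigma} = V^{\ell}_{\gamma}$, which is the desired conclusion.

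To verify $\xi'' \in V^{\ell}$, I must check that $C^{\ell}_{w_0, \xi''}(\gamma'') = C^{\ell}_{w_0, w_0}(\gamma'')$ for every $\gamma'' \in \G_{w_0}^W$. Since $\zeta \in V^{\ell}$, the right-hand side equals $C^{\ell}_{w_0, \zeta}(\gamma'')$, so it suffices to establish
\[
 C^{\ell}_{w_0, \xi''}(\gamma'') \;=\; C^{\ell}_{w_0, \zeta}(\gamma'') \quad\text{for all } \gamma'' \in \G_{w_0}^W.
\]
Write $\gamma'' = \gamma \ast \sigma$ with $\gamma = \gamma''\ast\sigma^{-1} \in \G_{\sigma(w_0)}^W$ (the range condition holds since $\gamma'' \in \G_{w_0}^W$). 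Applying the level-$\ell$ equivariance identity to $\sigma$ with $u = \xi''$ (so $u' = \xi'$) and separately to $u = \zeta$ (so $u' = \zeta'$), the equality we need reduces to
\[
 C^{\ell}_{\sigma(w_0), \xi'}(\gamma) \;=\; C^{\ell}_{\sigma(w_0), \zeta'}(\gamma) \quad\text{for all } \gamma \in \G_{\sigma(w_0)}^W,
\]
which is exactly what Lemma~\ref{lem-locconstant-ell} delivers from the hypothesis $d_{\fX}(\xi', \zeta') < \delta_{\ell}$, since $\xi', \zeta' \in W$ and $\sigma(w_0) \in W$.

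There is no genuine obstacle beyond bookkeeping. The only point that deserves care is checking that the equivariance identity at level $\ell$ is indeed available with exactly the same statement as Lemma~\ref{lem-equivariant-1}; this is immediate from the definition of $C^{\ell}_{w,u}$ (only the target partition changes from $\cW_1$ to $\cW_{\ell}$, while the composition rule on germs is unchanged), and the excerpt's remark that the level-$\ell$ analogs "are proved exactly as for the case $\ell = 1$" covers this step. The symmetric role of $\gamma$ and $\sigma$ in the hypothesis makes the argument automatically give both inclusions, so the single conclusion $V^{\ell}_{\gamma} = V^{\ell}_{\sigma}$ follows at once.
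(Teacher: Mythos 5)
Your proof is correct and follows the paper's route for Lemma~\ref{lem-equalsets} verbatim at level $\ell$: set $\xi'' = \sigma^{-1}(\xi')$, use equivariance together with local constancy (Lemma~\ref{lem-locconstant-ell}) to deduce $\xi'' \in V^{\ell}$, and conclude via Lemma~\ref{lem-fixset-ell}. Your choice of intermediate base point $\sigma(w_0)$ in the equivariance step is the right one and, if anything, states the argument more cleanly than the paper's level-$1$ proof, where $w' = \gamma(w_0)$ and ``$\gamma' \in \G_{w_0}^W$'' appear to be minor slips for $\sigma(w_0)$ and $\gamma' \in \G_{w'}^W$.
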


The completion of    the $\ell$-th stage of the induction proceeds as for $\ell =1$. Note that $V^{\ell}$ is an open set, hence the
 minimality of $\F$ implies that for each $\xi \in W$, we have $\cO(\xi) \cap V^{\ell} \ne \emptyset$, hence there exists $\sigma \in \G_W^W$ with $\xi' = \sigma(\xi) \in V^{\ell}$. So for $\gamma = \sigma^{-1}$ we have $\xi \in V^{\ell}_{\gamma}$.

  Thus, the set $\left\{\gamma(V^{\ell}) \mid \gamma \in \G_{w_0}^W \right\}$ of all $\G_{w_0}^W$-translates of $V^{\ell}$ is an open cover of the clopen set $W$.   As $W$ is compact, the cover admits a finite subcover
$\cV_{\ell} \equiv \left\{V^{\ell}_1, \ldots , V^{\ell}_{n_{\ell}}\right\}$.

Moreover, $V^{\ell}_i \cap V^{\ell}_j = \emptyset$ for $i \ne j$ by Lemma~\ref{lem-equalsets}.
As $\cV_{\ell}$ is a finite covering of the compact set $W$ by disjoint open sets, each of the sets $V^{\ell}_i$  is also closed, hence is clopen.

From the definition of $V^{\ell}$ and the coding function $C^{\ell}_{w,u}$ we have $V^{\ell} \subset W^{\ell}_1$.
Moreover, if $V^{\ell}_{\gamma} = \gamma(V^{\ell}) \cap V^{\ell -1} \ne \emptyset$, then
$V^{\ell}_{\gamma} \subset V^{\ell -1}$ by Lemma~\ref{lem-locconstant-ell}, following that of Lemma~\ref{lem-locconstant}.
Thus, the collection
$$
\cV_{\ell}^* = \left\{ V^{\ell}_{\gamma}  \mid \gamma \in \G_{w_0}^W ~, ~ V^{\ell}_{\gamma} \cap V^{\ell -1} \ne \emptyset \right\}
$$
  is a partition of $V^{\ell -1}$ by   clopen sets. Hence there is a finite collection
$\left\{\gamma^{\ell}_1, \ldots , \gamma^{\ell}_{\alpha_{\ell}}\right\} \subset \G_{w_0}^W$
so that for $V^{\ell}_j = V^{\ell}_{\gamma_j} = \gamma_j(V^{\ell})$ we have
$\cV_{\ell}^* = \left\{ V^{\ell}_j   \mid 1 \leq j \leq \alpha_{\ell}\right\}$ is a partition of $V^{\ell -1}$ by \emph{disjoint} sets.
As before,  we can assume that $\gamma_{\ell}^1 = *_{w_0}$ so that $V^{\ell}_1 = V^{\ell}$.

Recall that $\ds \cW_{\ell}^* = \left\{W^{\ell}_1, \ldots , W^{\ell}_{\alpha'_{\ell}}\right\}$ is the chosen partition of $V^{\ell -1}$ into disjoint clopen subsets, so by the definition of $V^{\ell}$, we have $\gamma^{\ell}_j(V^{\ell}) \subset W^{\ell}_k$ where
$1 \leq k = C^{\ell}_{w_0,w_0}(\gamma^{\ell}_j) \leq \alpha'_{\ell}$. Thus, $\cV_{\ell}^*$ is a refinement of the partition $\cW_{\ell}^*$ of $V^{\ell -1}$. In particular, note that $\alpha_{\ell} \geq \alpha_{\ell}'$.

Finally, extend the partition $\cV_{\ell}^*$ of $V^{\ell -1}$ to all of $W$, setting
$\cV_{\ell} = \cV_{\ell}^1 \cup \cdots \cup \cV_{\ell}^{n_{\ell -1}}$
where $ \cV_{\ell}^i = \{\gamma^{\ell-1}_i(V^{\ell}_j) \mid 1 \leq j \leq \alpha_{\ell} \}$ is a partition of
$V_{\ell-1}^i = \gamma^{\ell-1}_i(V^{\ell -1}) $ into clopen sets.
Note that by the choice of $\e_{\ell}'$ each set $\gamma^{\ell-1}_i(V^{\ell}_j)$ has diameter at most $\e_{\ell}$.

Set $n_{\ell} = \alpha_{\ell} \cdot n_{\ell -1}$, and for $1 \leq k \leq n_{\ell}$
relabel the collection $\cV_{\ell}$ using a lexicographical ordering,
$$ V^{\ell}_k = \gamma^{\ell-1}_i(V^{\ell}_j) ~ {\rm where} ~ k = j + (i-1) \cdot \alpha_{\ell} ~, ~ 1 \leq j \leq \alpha_{\ell} ~ , ~ 1 \leq i \leq n_{\ell -1} ~ .$$

Thus, the collection $\cV_{\ell}$ is a finite partition of $W$ by clopen subsets of diameter less that $\e_{\ell}$, which refines the initial partition $\cW_{\ell}$. Set $w_i^{\ell} = \gamma_i^{\ell}(w_0)$.

This concludes the general inductive step of the construction of the partitions
$\ds \cV_{\ell} \equiv \left\{ V^{\ell}_k \mid 1 \leq k \leq n_{\ell}\right\}$.

\section{Equicontinuity and Thomas tubes}\label{sec-tubes}

Let $\fM$ be an equicontinuous matchbox manifold. In this section, we show how
the partitions $\cV_{\ell}$ of the transversal space $\fX$,  for $\ell \geq 1$, introduced in Section~\ref{sec-codes}, give rise to a ``presentation'' of $\fM$ by what we call the \emph{Thomas tubes}. In  Section~\ref{sec-projections}, we derive the solenoidal structure of $\fM$ from this data.

Suppose that the equicontinuous matchbox manifold $\fM$ is a \emph{Cantor bundle}, $\pi_0 \colon \fM \to M_0$, as discussed in Section~\ref{sec-intro}. That is, we assume  there exists a compact   manifold $M_0$, finitely presented group $\G = \pi_1(M_0 , b_0)$, Cantor set $\fF_b = \pi_0^{-1}(b)$, and a minimal, equicontinuous  action   $\varphi \colon \G \times \fF_b \to \fF_b$ so that $\fM$ is homeomorphic to the suspension of the action $\varphi$. Consider   $V^{\ell} \subset \fF_b \cong \fT_*$,  and   introduce the subgroup
$$\G_{\ell} \equiv \left\{\gamma \in \G \mid  \varphi(\gamma)(V^{\ell}) = V^{\ell} \right\} ~ .$$
Then the  ``Thomas tube''  associated to $V^{\ell}$ is homeomorphic to  the suspension of the action of $\G_{\ell}$ on $V^{\ell}$, which is a Cantor bundle over the covering space $M_{\ell} \to M_0$ associated to $\G_{\ell} \subset \G$.

 The complication which arises for the more general case  is that one requires a ``foliated product theorem'' in place of the suspension construction, as the leaves of $\F$ are not given as covering  spaces of a given fixed base manifold $M_0$. We introduce the notion of \emph{foliated microbundles} in the context of matchbox manifolds to obtain such a product theorem.

 \subsection{Foliated microbundles}\label{subsec-microbundles}

The ``foliated microbundle'' associated to a leaf in a foliated space is one of the most basic concepts, originating with the first works of Reeb (see Milnor \cite{Milnor2009} for a discussion).
 Its construction   is a generalization of that for   the holonomy map $h_{\cI_{\gamma}}$ associated to a leafwise curve $\gamma$, as in Section~\ref{subsec-admissible}, in that it follows essentially the same procedure, but \emph{uniformly} for all paths in a given leaf.
We give this construction for foliated spaces;   then for the case where $L_x$ is dense and $\F$ is equicontinuous, it provides a framework for analyzing the structure of $\fM$.

Recall that $w_0 \in int(\fT_1)$ is the fixed base-point of Section~\ref{sec-codes}.
Let $x_0 = \tau_1(w_0) \in U_1$ and $L_0$ be the leaf through $x_0$. Let $h_{\F, x_0} \colon \pi_1(L_0 , x_0) \to \cGF^{w_0}$ denote the holonomy homomorphism of $L_0$, whose kernel $\cK_0 \subset \pi_1(L_{x_0} , x_0)$ of $h_{\F,x_0}$ is a normal subgroup.
Let $\Pi \colon \wtL_{0} \to L_0$ be the covering associated to $\cK_0$ and choose $\wtx_0 \in \wtL_0$ such that $\pi(\wtx_0) = x_0$. By definition, given any closed path $\wtgamma \colon [0,1] \to \wtL_0$ with basepoint $\wtx_0 = \wtgamma(0) = \wtgamma(1)$, the image of $\wtgamma$ in $L_0$ has trivial germinal holonomy as a leafwise path in $\fM$.
It follows that the holonomy map  defined by a path $\wtgamma$  in $\wtL_0$ starting at $\wtx_0$  is  determined by the endpoint $\wtgamma(1)$.

The construction of the foliated microbundle associated to $L_0$ begins with the  selection of  a collection of points in $\wtL_0$ which are ``sufficiently dense in $\wtL_0$'' to capture  the holonomy of $L_0$. This is assured by choosing a suitably fine net in $L_0$ and then lifting this to a net in $\wtL_0$.

\begin{defn} \label{def-net}
Let $(X, d_X)$ be a complete separable metric space. Given
$0 < e_1 < e_2$, a subset $\cN \subset X$ is an \emph{$(e_1 , e_2)$-net} (or \emph{Delone set}) if:
\begin{enumerate}
\item  $\cN$ is $e_1$-separated: for all $y \ne z \in \cN$, $e_1 \leq d_{X}(y,z)$;
\item  $\cN$ is $e_2$-dense: for all $x \in X$, there exists some $z \in \cN$ such that $d_{X}(x,z) \leq e_2$.
\end{enumerate}
\end{defn}
It is a standard fact that given a separable, complete metric space $X$ and any $e_2 > 0$, there exists $0 < e_1 < e_2$ and a $(e_1 , e_2)$-net $\cN \subset X$.

Recall that $\eFU$ defined by (\ref{eq-leafdiam}) was chosen so that every leafwise disk of radius $\eFU$ is contained in a metric ball of $\fM$ of radius $\eU/2$. That is, for all $y \in \fM$, $D_{\F}(y, \eFU) \subset D_{\fM}(y, \eU/2)$.

Let $e_2 = \eFU/4$, then choose $\cN_0 \subset L_0$   an $(e_1 , e_2)$-net for $L_0$
for some $0 < e_1 < \eFU/4$.
We can assume without loss of generality that $x_0 \in \cN_0$. Condition~(\ref{def-net}.2) implies that the collection of leafwise open disks
$\ds \{B_{\F}(z , \eFU/2) \mid z \in \cN_0 \}$ is a covering of $L_0$.

For each $z \in \cN_0$, choose an index $1 \leq i_z \leq \nu$ so that $B_{\fM}(z, \eU) \subset U_{i_z}$. Without loss, we can assume that $B_{\fM}(x_0, \eU) \subset U_{1}$.
Then note that for all $z' \in D_{\F}(z, \eFU)$, we have $z' \in  B_{\fM}(z, \eU/2)$ so
the triangle inequality implies that
\begin{equation}\label{eq-containment}
D_{\F}(z', \eFU) \subset B_{\fM}(z', \eU/2) \subset B_{\fM}(z, \eU) \subset U_{i_z} ~ .
\end{equation}

\begin{lemma}\label{lem-newcover}
The collection $\{U_{i_z} \mid z \in \cN_0\}$ is a subcover for $\fM$, with Lebesgue number $\eU/3$.
\end{lemma}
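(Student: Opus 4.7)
The plan is to establish the two claims of the lemma separately: that $\{U_{i_z} \mid z \in \cN_0\}$ actually covers $\fM$, and that $\eU/3$ serves as a Lebesgue number. Both rest on three ingredients assembled just before the statement: the minimality of $\fM$ (via Theorem~\ref{thm-minimal}, since $\fM$ is equicontinuous), so that the leaf $L_0$ through $x_0$ is dense in $\fM$; the $e_2$-density of $\cN_0$ in $L_0$ measured in $d_{\F}$, with $e_2 = \eFU/4$; and the defining inclusion (\ref{eq-leafdiam}), namely $D_{\F}(z,\eFU) \subset D_{\fM}(z,\eU/8)$, which converts leafwise closeness into ambient closeness.

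First I would show the family covers $\fM$. For any $y \in \fM$, density of $L_0$ gives $x \in L_0$ with $d_{\fM}(x,y) < \eU/2$; by the net property choose $z \in \cN_0$ with $d_{\F}(x,z) \leq \eFU/4 < \eFU$, which yields $d_{\fM}(x,z) \leq \eU/8$. The triangle inequality then gives $d_{\fM}(y,z) < \eU/2 + \eU/8 < \eU$, placing $y$ in $B_{\fM}(z,\eU) \subset U_{i_z}$. Next, for the Lebesgue number, fix $y \in \fM$ and $y' \in B_{\fM}(y,\eU/3)$. Using density, select $x \in L_0$ with $d_{\fM}(x,y) < 13\eU/24$, and as above find $z \in \cN_0$ with $d_{\fM}(x,z) \leq \eU/8$. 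The triangle inequality then gives
$$
d_{\fM}(y',z) \leq d_{\fM}(y',y) + d_{\fM}(y,x) + d_{\fM}(x,z) < \tfrac{\eU}{3} + \tfrac{13\eU}{24} + \tfrac{\eU}{8} = \eU,
$$
so $y' \in B_{\fM}(z,\eU) \subset U_{i_z}$, whence $B_{\fM}(y,\eU/3) \subset U_{i_z}$.

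There is no substantial obstacle: the constants $e_2 = \eFU/4$ and the property $D_{\F}(\cdot,\eFU) \subset D_{\fM}(\cdot,\eU/8)$ were arranged in Section~\ref{subsec-locestimates} precisely so the arithmetic $\tfrac{1}{3}+\tfrac{13}{24}+\tfrac{1}{8}=1$ closes. The one conceptual point worth highlighting is that equicontinuity enters only implicitly, via Theorem~\ref{thm-minimal}, to ensure density of $L_0$; without that density the net in $L_0$ would cover only a neighborhood of $L_0$, never all of $\fM$, let alone with a uniform Lebesgue number.
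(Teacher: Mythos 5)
Your proof is correct and takes essentially the same route as the paper's: use density of $L_0$ to approximate an arbitrary $y\in\fM$ by a leaf point, then use the $\eFU/4$-density of the net $\cN_0$ together with (\ref{eq-leafdiam}) to find $z\in\cN_0$ within $\eU/8$ of that leaf point, and close with a triangle inequality. The paper's version is slightly more concise (it picks the leaf point within $\eU/6$ of $y$, leaving slack in the estimate, and absorbs the covering assertion into the Lebesgue-number computation, whereas you choose the tighter threshold $13\eU/24$ and state the covering claim separately, which is redundant but harmless); the underlying ideas and use of prior constants are the same.
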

\proof
Let $y \in \fM$, then $L_0$ is dense so there exists $y' \in L_0$ with $d_{\fM}(y,y') < \eU/6$.
Let $z \in \cN_0$ with $d_{\F}(y', z) < \eFU/2$.
Then $y' \in B_{\fM}(z, \eU/2)$,
hence $y \in B_{\fM}(z, \eU) \subset U_{i_z}$ by (\ref{eq-containment}).
Therefore, $B_{\fM}(y, \eU/3) \subset U_{i_z}$.
\endproof

Let $\wtcN_0 = \Pi^{-1}(\cN_0)$ which is a $(e_1 , e_2)$-net for $\wtL_0$ with the Riemannian metric lifted from $L_0$.
The points of $\wtcN_0$ are denoted by $\wtz$, where $\Pi(\wtz) = z \in \cN_0$.
In particular, $\wtx_0 \in \wtcN_0$ as $\Pi(\wtx_0) = x_0 \in \cN_0$.

For each $\wtz \in \wtcN_0$, let $z = \Pi(\wtz)$ and set $\wtU_{\wtz} = \oU_{i_z} \times \{\wtz\}$.
For $(x, \wtz) \in \wtU_{\wtz}$ define  $\Pi \colon \wtU_{\wtz} \to \oU_{i_z}$ by $\Pi(x, \wtz) = x$.
For $\wtz \ne \wtz' \in \wtcN_0$ with $\Pi(\wtz) = \Pi(\wtz') = z$,
the sets $\wtU_{\wtz}$ and $\wtU_{\wtz'}$ are   disjoint by definition, though their projections to $\fM$ agree.

For $\wtz \in \wtcN_0$ and $\wty = (x, \wtz)  \in \wtU_{\wtz}$, let
$\wtcP_{\wtz}(\wty) = \cP_{i_z}(x) \times \{\wtz\}$ denote the plaque of $\wtU_{\wtz}$ containing $\wty$. If $x \in \cP_{i_z}(z)$ then we abuse notation and identify $\wtcP_{\wtz}(\wty)$ with  the plaque of $\wtL_0$ containing $\wtz$.
Note that $B_{\wtL_0}(\wtz , \e^{\F}_{\cU}) \subset \wtcP_{\wtz}(\wtz)$ for each $\wtz \in \wtcN_0$,
so the collection
$\ds \{\wtcP_{\wtz}(\wtz) \mid \wtz \in \cN_0 \}$ is a covering of $\wtL_0$.

One thinks of the plaques $\wtcP_{\wtz}(\wtz)$ as ``convex tiles'', and the collection $\{ \wtcP_{\wtz}(\wtz) \mid \wtz \in \wtcN_0\}$ as a ``tiling'' of $\wtL_0$. The interiors of the plaques need not be disjoint, so this is not a proper tiling in the usual sense (for example see \cite{AP1998, BG2003, FHK2002, Senechal1995}, or \cite[\S 11.3.C]{CandelConlon2000}). In particular, the combinatorics of the covering of $\wtL_0$ by plaques is not a consequence of the geometry of the ``tiles'', but rather is determined by the dynamical properties of the leaf $L_0$. (See \cite{AH1996,Hurder1994} for a discussion of this point of view.)
The net $\wtcN_0$ can also be used to generate Voronoi decompositions of $\wtL_0$, as in the work \cite{CHL2011a}.

The \emph{foliated microbundle} of $\wtL_0$ is the foliated space
$\ds \wtfN_0 = \left\{ \cup_{\wtz \in \wtcN_0} ~ \wtU_{\wtz}\right\} {\Big \slash} \sim$ ,
where $\wty \in \wtU_{\wtz}$ and $\wty' \in \wtU_{\wtz'}$ are identified
if $\Pi(\wty) = \Pi(\wty')$ and $\wtcP_{\wtz}(\wtz) \cap \wtcP_{\wtz'}(\wtz') \ne \emptyset$.
Let $\wtF$ denote the foliation whose leaves are the path components of $\wtfN_0$.

For each $\wtz \in \wtcN_0$, let $\wtfT_{\wtz} = \fT_{i_{\wtz}}$.
The composition
$\ds \wtvp_{\wtz} \equiv \vp_{i_z} \circ \Pi \colon \wtU_{\wtz} \to [-1,1]^n \times \fT_{\wtz}$
defines a foliated coordinate chart on $\wtfN_0$ for $\wtF$.
Let $\wtpi_{\wtz} \colon \wtU_{\wtz} \to \fT_{\wtz}$ be the normal coordinate,
and $\wtlambda_{\wtz} \colon \wtU_{\wtz} \to [-1,1]^n$ be the leafwise coordinate.

Given $\wtz \in \wtcN_0$, subset $V \subset \fT_{\wtz}$ and $\xi \in [-1,1]^n$, we obtain a local section for $\wtF$ by
\begin{equation}
\wttau_{\wtz , \xi} \colon V \to \wtU_{\wtz} ~ , ~ \wttau_{\wtz, \xi}(w) = \wtvp_{\wtz}^{-1}(\xi, w) = (\vp_{i_z} ^{-1}(\xi, w) , \wtz)
\end{equation}

 The  foliated microbundle   can be viewed as constructing, a uniform setting,    all of the holonomy maps for paths in the leaf $\wtL_0$.
To be precise,  we say that a path $\wtgamma \colon [0,1] \to \wtL_0$ is  \emph{nice}, if there exists a partition
$a = s_0 < s_1 < \cdots < s_{\alpha} = b$ such that for each $0 \leq \ell \leq \alpha$, the restriction
$\wtgamma \colon [s_{\ell}, s_{\ell + 1}] \to \wtL_0$ is a geodesic segment between points
$\wtz_{\ell} = \wtgamma(s_{\ell}), \wtz_{\ell +1} = \wtgamma(s_{\ell +1})\in \wtcN_0$ with $d_{\F}(\wtz_{\ell}, \wtz_{\ell +1}) < \eFU$.
Then $\cI = (i_{\wtz_{0}}, \ldots , i_{\wtz_{\alpha}})$ is an admissible sequence for \emph{both} $\wtF$ and $\F$, and so defines holonomy maps $\wth_{\cI}$ for $\wtF$ and $h_{\cI}$ for $\F$.
Clearly, $\wth_{\cI}$ is just the lift of $h_{\cI}$, and $h_{\cI}$ is the holonomy map for the leafwise path $\gamma = \Pi \circ \wtgamma$ constructed in Section~\ref{sec-holonomy}.
As before, we note that $\wth_{\cI}$ depends only on the endpoints of $\cI$. For $\wtz \in \wtcN_0$ let $\wth_{\wtz}$ denote the holonomy along some nice path $\wtgamma_{\wtz}$ from $\wtx_0$ to $\wtz$, considered as a transformation of the space $\wtfT$, which is the disjoint union of the local transversals  $\fT_{\wtz}$.
Let $h_{\wtz}$ denote the holonomy along the projected path, $\gamma_{\wtz} = \Pi \circ \wtgamma_{\wtz}$.

Recall that $W \subset B_{\fX}(w_0, \dTU/2) \subset \fT_1$ is the clopen neighborhood of $w_0$ chosen in Section~\ref{sec-codes}.
\begin{lemma}
Let $\wtgamma \colon [0,1] \to \wtL_0$ be a nice path with $\wtgamma(0) = \wtx_0$. Then $W \subset D(\wth_{\cI})$.
\end{lemma}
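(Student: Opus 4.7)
The plan is to reduce the claim to Proposition~\ref{prop-uniformdom} by pushing $\wtgamma$ down to a leafwise path in $\fM$ and checking that the admissible sequence $\cI$ supplied by the niceness condition is exactly (one of) the admissible sequences produced in the proof of that proposition. Once this identification is made, the uniform domain estimate $B_{\fX}(w_0, \dTU) \subset D(h_{\cI})$ follows, and the inclusion $W \subset D(\wth_{\cI})$ is immediate from $W \subset B_{\fX}(w_0, \dTU/4)$ together with the fact that $\wth_{\cI}$ is defined as the lift of $h_{\cI}$ through $\Pi$.

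First I would set $\gamma = \Pi \circ \wtgamma \colon [0,1] \to L_0 \subset \fM$, and note that $\gamma(0) = x_0$ with $w_0 = \pi_1(x_0) \in \fT_1$. Because $\wtgamma$ is nice, there is a partition $0 = s_0 < \cdots < s_{\alpha} = 1$ with $\wtgamma([s_{\ell},s_{\ell+1}])$ a geodesic segment in $\wtL_0$ of length $< \eFU$ joining $\wtz_{\ell}, \wtz_{\ell+1} \in \wtcN_0$. Setting $z_{\ell} = \Pi(\wtz_{\ell}) = \gamma(s_{\ell})$, the projection is geodesic on each subinterval, so $\gamma([s_{\ell},s_{\ell+1}]) \subset D_{\F}(z_{\ell}, \eFU)$. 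By the choice of indices in Section~\ref{subsec-microbundles}, $B_{\fM}(z_{\ell}, \eU) \subset U_{i_{\wtz_{\ell}}} = U_{i_{z_{\ell}}}$, so $\cI = (i_{\wtz_0}, \ldots, i_{\wtz_{\alpha}})$ satisfies precisely the hypotheses on the admissible sequence used in the proof of Proposition~\ref{prop-uniformdom} (via the inductive shrinking argument there).

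Consequently, Proposition~\ref{prop-uniformdom} applied to the leafwise path $\gamma$ with this sequence $\cI$ yields $B_{\fX}(w_0, \dTU) \subset D(h_{\cI})$. Since $W \subset B_{\fX}(w_0, \dTU/4) \subset B_{\fX}(w_0, \dTU)$, we obtain $W \subset D(h_{\cI}) \subset \fT_{i_{\wtz_0}} = \fT_1$. Finally, $\wth_{\cI}$ is by construction the composition of the lifted transition charts corresponding to $\cI$ via $\Pi$; the source transversal of $\wth_{\cI}$ is $\fT_{\wtx_0} = \fT_1$, and $\Pi$ identifies the domain of $\wth_{\cI}$ inside $\fT_{\wtx_0}$ with that of $h_{\cI}$ inside $\fT_1$. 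Hence $W \subset D(\wth_{\cI})$.

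The only place that needs care is the verification in the first step that the sequence $\cI$ arising from the niceness partition actually meets the selection rule in Proposition~\ref{prop-uniformdom}, rather than some \emph{other} admissible sequence; the main obstacle is therefore a bookkeeping check that the geometric conditions $\gamma([s_{\ell},s_{\ell+1}]) \subset D_{\F}(z_{\ell}, \eFU)$ and $B_{\fM}(z_{\ell}, \eU) \subset U_{i_{z_{\ell}}}$, together with the estimate (\ref{eq-unifest}) on overlaps, hold for \emph{all} indices simultaneously, which is exactly guaranteed by the net construction of $\cN_0$ and $\wtcN_0$ in Section~\ref{subsec-microbundles}.
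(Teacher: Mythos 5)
Your proposal is correct and takes the same approach as the paper, whose proof is the one-line appeal ``This follows directly from Proposition~\ref{prop-uniformdom} and the definition of $\dTU$.''\ You have simply filled in the details: in particular, your careful check that the niceness partition of $\wtgamma$ produces an admissible sequence $\cI$ satisfying exactly the two conditions ($\gamma([s_{\ell},s_{\ell+1}]) \subset D_{\F}(z_{\ell},\eFU)$ and $B_{\fM}(z_{\ell},\eU) \subset U_{i_{z_{\ell}}}$) under which the proof of Proposition~\ref{prop-uniformdom} yields $B_{\fX}(w_0,\dTU) \subset D(h_{\cI})$ is exactly the point the paper is implicitly invoking.
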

\proof
This follows directly from Proposition~\ref{prop-uniformdom} and the definition of $\dTU$.
\endproof

 \subsection{Thomas tubes}

For each $\ell \geq 1$, let $V^{\ell} \subset W$ be the clopen subset defined by (\ref{eq-codedef-ell}).
For $\wtz \in \wtcN_0$ define
\begin{equation}\label{def-basicblocks}
 V^{\ell}_{\wtz} =   h_{\wtz}(V^{\ell}) \subset \fT_{i_{\wtz}} ~ , ~  \wtV^{\ell}_{\wtz} = \wth_{\wtz}(V^{\ell}) =  V^{\ell}_{\wtz} \times \{\wtz\} \subset \fT_{\wtz}  ~ .
\end{equation}
The union of the sets $\wtV^{\ell}_{\wtz}$ is just the saturation of $\wtV^{\ell}$ under the action of the pseudogroup $\cG_{\wtF}$.

Lemma~\ref{lem-fixset-ell} implies that if $\wtV^{\ell}_{\wtz} \cap \wtV^{\ell}_{\wtz'} \ne \emptyset$, then
$V^{\ell}_{\wtz} = V^{\ell}_{\wtz'}$ although this need not imply that $\wtz = \wtz'$ if $L_0$ has non-trivial germinal holonomy. The sets
$\wtV^{\ell}_{\wtz}$ and $\wtV^{\ell}_{\wtz'}$ are   disjoint if $\wtz \ne \wtz'$.

Also, introduce the local coordinate chart saturation of each of these sets:
\begin{equation}\label{eq-coordinatecovers}
\fU^{\ell}_{\wtz} =  \pi_{i_{\wtz}}^{-1}(V^{\ell}_{\wtz}) \subset \oU_{i_{\wtz}} ~, ~ \wtfU^{\ell}_{\wtz} = \wtpi_{\wtz}^{-1}(\wtV^{\ell}_{\wtz}) = \fU^{\ell}_{\wtz}  \times \{\wtz\} \subset \wtU_{\wtz}    ~ .
\end{equation}
Then $\fU^{\ell}_{\wtz} $ is the union of the plaques in $\oU_{i_{\wtz}}$ through the points of $V^{\ell}_{\wtz}$.

\begin{defn} \label{def-thomas}
The \emph{Thomas tube} associated with  $V^{\ell}$ is the subset of the microbundle $\wtfN_0$,
\begin{equation} \label{eq-thomas}
\wtfN_{\ell} ~ = ~ \bigcup_{\wtz \in \wtcN_0} ~ \wtfU^{\ell}_{\wtz}
\end{equation}
\end{defn}
In the case that $\ell = \ell_0$ note that  $\wtfN_{\ell_0} = \wtfN_0$. For all $\ell \geq \ell_0$, the image $\Pi(\wtfN_{\ell}) \subset \fM$ is the saturation by $\F$ of the clopen set $V^{\ell}$, hence $\Pi(\wtfN_{\ell}) = \fM$.

Note that $\wtfN_{\ell}$ is a (non-compact) foliated space whose leaves $\wtL$ are coverings of corresponding leaves of $\F$. That is, the restriction
$\Pi \colon \wtL \to L$ is a smooth covering map, which is a local isometry for the induced leafwise metric on $\wtfN_{\ell}$.
Also, the leaf space for $\wtfN_{\ell} $  is homeomorphic to    $V^{\ell}$ by construction, and
 for $\ell' > \ell$ the inclusion $V^{\ell'} \subset V^{\ell}$ induces a natural inclusion $\wtfN_{\ell'} \subset \wtfN_{\ell}$.

\section{Solenoidal structure for equicontinuous foliations}\label{sec-projections}

In this section, we show that if $\fM$ is an equicontinuous matchbox manifold, then it has  a presentation   as an inverse limit, and thus is homeomorphic to a generalized solenoid as in  (\ref{eq-solenoid}).

The strategy of the proof begins with an observation, that if we assume that $\fM$ is homeomorphic to a solenoid $\cS$, then  the bonding maps $p_{\ell+1} \colon M_{\ell +1} \to M_{\ell}$ induce, for each $\ell \geq 0$,  a map $q_{\ell} \colon \fM \to M_{\ell}$ which is a fibration \cite{McCord1965}. For each $x \in M_{\ell}$ the   fiber   $K_{\ell}(x) \equiv q_{\ell}^{-1}(x)$ is an embedded Cantor set in $\fM$, and the fibration structure implies that the family of  Cantor sets $\{K_{\ell}(x) \mid x \in M_{\ell}\}$ form what we call a \emph{Cantor foliation transverse to $\F$}. Moreover, the property $q_{\ell} = p_{\ell +1} \circ q_{\ell +1 }$ of a solenoid imply that the  Cantor foliations associated to $q_{\ell}$ and $q_{\ell + 1}$ are naturally related.
We show that given these consequences, then $\fM$ is homeomorphic to a solenoid.

We begin with a definition. Recall that we assume there  is   a fixed regular covering $\{U_{i} \mid 1 \leq i \leq \nu\}$ of $\fM$ by foliation charts, as in Definition~\ref{def-regcover}, with charts $\vp_i \colon \oU_i \to [-1,1]^n \times \fT_i$ where $\fT_i \subset \fX$ is a clopen subset. By construction, each chart admits a foliated  extension $\wtvp_i \colon \wtU_i \to (-2,2)^n \times \fT_i$ where $\oU_i \subset \wtU_i \subset \fX$ is an open neighborhood of $\oU_i$ and $\wtvp_i | \oU_i = \vp_i$.

\begin{defn}\label{def-cantorfol}
Let $\fM$ be a matchbox manifold. We say that $\fM$ admits  a \emph{Cantor foliation $\cH$ transverse to $\F$} if there exists an equivalence relation $\approx$ on $\fM$ such that:
\begin{enumerate}
\item for $x \in \fM$ the class $H_x = \{y \in \fM \mid y \approx x\}$ is a Cantor set;
\item for each $x \in U_i$ with   $w = \pi_i(x) \in \fT_i$ there exists a clopen neighborhood $w \in V_x \subset \fT_i$
and a homeomorphism $\Phi_x \colon [-1,1]^n \times V_x \to \wtU_i$ such that, for each $\xi \in [-1,1]^n$, the image $\Phi_x(\{\xi\} \times V_x) \subset \oU_i$ is a complete equivalence class.
\end{enumerate}
The leaves of the ``foliation'' $\cH$ are   defined to be the equivalence classes of $\approx$.
\end{defn}
We call  $V_x$ the model space for $\cH$ at $x$.  For a standard foliation, the space $V_x$ would be homeomorphic to  $(-1,1)^n$, while for a Cantor foliation, it is a homeomorphic to a Cantor set.

Condition~\ref{def-cantorfol}.1 implies the leaves of $\cH$ are Cantor sets, and Condition~\ref{def-cantorfol}.2 states that these leaves are ``vertical'' segments for a regular coordinate chart, after reparametrization by the maps $\Phi_x$. In other words, every point $x\in \fM$ admits what is sometimes called in the foliation literature, a ``bi-foliated neighborhood'', where the leaves of $\F$ correspond to the ``horizontal'' Euclidean slices of $[-1,1]^n \times V_x$, and the leaves of $\cH$ correspond to the ``vertical'' Cantor set slices.

 For example, if $\pi \colon \fM \to M$ is a Cantor bundle over a compact manifold $M$, then the fibers of $\pi$ define a   Cantor foliation of $\fM$
 which is transverse to the foliation $\F$ of $\fM$. As a Cantor bundle need not be a solenoid, the existence of the transverse foliation $\cH$ is clearly not sufficient to show that $\fM$ is homeomorphic to  a solenoid. What is required, in addition, is that there exists a sequence $\{\cH_{\ell}  \mid \ell \geq \ell_0\}$ of nested Cantor foliations, which in our situation is provided by constructing Cantor foliations \textit{adapted} to the Thomas tubes of Definition~\ref{def-thomas}.

For $\ell \geq 1$,  let $\wtfN_{\ell}$ be the    Thomas tube with transversal model $V^{\ell}$, with notation as in Section~\ref{sec-tubes}.
Recall that the foliated space $\wtfN_{\ell}$   contains the holonomy covering $\wtL_0$ of the leaf $L_0 \subset \fM$ corresponding to $w_0$.
Also, for $\ell' > \ell$ we have $\wtfN_{\ell'} \subset  \wtfN_{\ell}$ is a foliated subspace.

\begin{defn}\label{def-adapted}
We say that a transverse Cantor foliation $\cH_{\ell}$ on $\fM$ is \emph{adapted} to $\wtfN_{\ell}$ if, for each $\wtz \in \wtN_0$ and $x \in \fU^{\ell}_{\wtz} \subset \fM$, we can choose $V_x = V^{\ell}_{\wtz}$ in Definition~\ref{def-cantorfol}.2.
\end{defn}

Note that if $\cH_{\ell}$ is adapted to $\wtfN_{\ell}$, then for each foliated coordinate chart $\wtfU^{\ell}_{\wtz}$ for $\wtF$, the leaves of $\cH_{\ell}$ form complete transversals to the image of the restriction  $\Pi \colon \wtfU^{\ell}_{\wtz} \to \fU^{\ell}_{\wtz}$.
It follows that $\cH_{\ell}$ induces a transverse Cantor foliation $\wtcH_{\ell}$ on $\wtfN_{\ell}$. Actually, this is evident from the definitions as well.

Given such $\cH_{\ell}$,  let $\approx_{\ell}$ denote the equivalence relation on   $\fM$ defined by its leaves, and by a small abuse of notation, we also let $\approx_{\ell}$ denote the corresponding   equivalence relation on  $\wtfN_{\ell}$.

Observe that for $\ell' > \ell$, the restriction of $\approx_{\ell}$ to the foliated subspace $\wtfN_{\ell'} \subset  \wtfN_{\ell}$ defines an equivalence relation, which is denoted by $\approx_{\ell'}$. The Cantor foliation $\wtcH_{\ell'}$ of $\wtfN_{\ell'}$
defined by   $\approx_{\ell'}$ is the lift of an adapted  transverse Cantor foliation $\cH_{\ell'}$ on $\fM$. The model sets $V_x'$ for $\cH_{\ell'}$ are given by the collection of translates   $\{V^{\ell'}_{\wtz} \mid \wtz \in \wtN_0\}$.

If $\pi \colon \fM \to M_0$ is an equicontinuous Cantor bundle over a compact manifold $M_0$  then the fibers of $\pi$ define a Cantor foliation of $\fM$ which adapts to each Thomas tube $\wtfN_{\ell}$, for $\ell \geq 1$.  For example,  this is the case studied in in the work  \cite{Clark2002} by the first author, for $M_0 = \mT^n$.
 In  general, the existence of an adapted  transverse Cantor foliation on $\fM$ is not ``obvious'', though in fact one can show:

\begin{thm}[\cite{CHL2011a}] \label{thm-foliate}
Let $\fM$ be an equicontinuous matchbox manifold.
Then for some $\ell_0 \geq 1$,   there exists a transverse Cantor foliation $\cH_{\ell_0}$ on $\fM$ adapted to  $\wtfN_{\ell}$.
\end{thm}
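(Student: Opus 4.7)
The plan is to construct the Cantor foliation $\cH_{\ell_0}$ by using the partitions $\cV_{\ell}$ from Section~\ref{sec-codes} to define a compatible family of local ``vertical'' partitions on each foliation chart, and to invoke equicontinuity crucially to ensure the pieces glue together globally. I would begin by fixing $\ell_0$ large enough that the mesh $\e_{\ell_0}$ of the partition $\cV_{\ell_0}$ of $W$ is smaller than a suitable constant $\delta^*_0$, where $\delta^*_0$ is an equicontinuity modulus (via Proposition~\ref{prop-uniformdom}) corresponding to an $\epsilon$ on the scale of $\eTU$. This guarantees that for any admissible $(i,j)$ the transition map $h_{j,i}$ restricted to any piece $V^{\ell_0}_k \subset \fT_i \cap W$ meeting $\fD_{i,j}$ maps $V^{\ell_0}_k$ into a single piece of $\cV_{\ell_0}$ in $\fT_j$ (after translation), giving the essential chart-overlap compatibility.

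Next, I would define the equivalence relation $\approx_{\ell_0}$ locally, chart-by-chart. Since $\fM$ is minimal by Theorem~\ref{thm-minimal}, for every $i$ and every $w \in \fT_i$ there exists $\sigma \in \cGF^*$ and some $V^{\ell_0}_k \in \cV_{\ell_0}$ with $w \in \sigma(V^{\ell_0}_k) \subset \fT_i$; by Proposition~\ref{prop-uniformdom} and the choice of $\ell_0$, we may take $\sigma$ defined on a neighborhood of radius $\dTU$ of some point of $W$. Declare two points $x,y \in \oU_i$ to be related if $\lambda_i(x)=\lambda_i(y)$ and if $\pi_i(x), \pi_i(y)$ lie in a common translate $\sigma(V^{\ell_0}_k)$, and let $\approx_{\ell_0}$ be the equivalence relation globally generated. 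The equivariance Lemma~\ref{lem-equivariant-1}, Lemma~\ref{lem-fixset-ell}, and the above modulus estimate together show that on an overlap $\oU_i \cap \oU_j$, the local partition of $\fT_i$ obtained this way is carried by $h_{j,i}$ bijectively onto the corresponding local partition of $\fT_j$, so the local relations are compatible and patch together unambiguously. The bi-foliated chart map required by Definition~\ref{def-cantorfol} is then $\Phi_x(\xi,w) = \wtvp_i^{-1}(\xi,w)$ with model set $V_x = \sigma(V^{\ell_0}_k)$, and adaptation to $\wtfN_{\ell_0}$ is immediate: for $\wtz \in \wtN_0$ and $x \in \fU^{\ell_0}_{\wtz}$, the defining holonomy $\sigma$ can be taken to be $h_{\wtz}$, giving $V_x = V^{\ell_0}_{\wtz}$ as required.

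The main obstacle, and the reason this argument is genuinely delicate rather than formal, is establishing that the globally generated relation $\approx_{\ell_0}$ has \emph{closed} equivalence classes, each homeomorphic to a Cantor set, rather than accidentally collapsing or identifying too much along long chains of chart overlaps. A point $x$ may be linked to many other points through very long sequences of overlaps involving holonomy along arbitrarily long leafwise paths, and one needs to rule out that such chains can produce transverse ``drift'' that eventually identifies distinct $V^{\ell_0}_{\wtz}$'s in the same chart. The control comes from two inputs: equicontinuity (Proposition~\ref{prop-uniformdom}) gives uniform domain estimates on compositions of generators, while the coding construction forces each partition translate $\sigma(V^{\ell_0}_k)$ to be \emph{canonically determined} by the image of its base point, via Lemma~\ref{lem-fixset-ell}. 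In particular, if $\sigma_1(V^{\ell_0}_k)$ and $\sigma_2(V^{\ell_0}_k)$ meet in $\fT_i$, they coincide. These two ingredients together show that equivalence classes intersect each transversal $\fT_i$ in a single clopen translate of $V^{\ell_0}_k$; since each such translate is a Cantor set (being clopen in the totally disconnected $\fX$) and the relation is closed, the equivalence classes are the Cantor leaves of $\cH_{\ell_0}$.

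The actual verification that these local patches extend to a globally coherent transverse Cantor foliation requires careful Voronoi-style combinatorial arguments on the holonomy covering $\wtL_0$ of Section~\ref{subsec-microbundles}, together with uniform geometric estimates to control how plaques of $\F$ in one chart can intersect plaques in an overlapping chart — which is precisely why the detailed proof is relegated to the companion paper \cite{CHL2011a}. The role of this theorem in the present paper is to provide the nested sequence of Cantor foliations $\{\cH_{\ell} \mid \ell \geq \ell_0\}$ needed to identify $\fM$ with an inverse limit in Section~\ref{sec-projections}.
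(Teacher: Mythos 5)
Your proposal correctly identifies the transverse ingredients — using the coding partitions $\cV_\ell$ of Section~\ref{sec-codes}, the uniform domain control from Proposition~\ref{prop-uniformdom}, and Lemma~\ref{lem-fixset-ell} to ensure that overlapping translates $\sigma(V^{\ell_0}_k)$ coincide — and these are indeed part of the picture. But there is a genuine gap in the leafwise direction, located precisely at the line where you set $\Phi_x(\xi,w) = \wtvp_i^{-1}(\xi,w)$ with the local relation ``$\lambda_i(x)=\lambda_i(y)$''. This declares the leaves of $\cH_{\ell_0}$ to be the constant-$\lambda_i$ slices of the chart $\vp_i$, i.e.\ the naive vertical foliation given by the sections $\tau_{i,\xi}$. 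The transition maps of a foliated space have the form $\vp_j\circ\vp_i^{-1}(\xi,w) = (\alpha(\xi,w),\beta(w))$ where $\alpha$ genuinely depends on $w$; there is no reason that $\lambda_i(x)=\lambda_i(y)$ forces $\lambda_j(x)=\lambda_j(y)$ on $U_i\cap U_j$. Consequently the ``globally generated'' equivalence relation you define can identify far more than a single Cantor fiber: chaining $x\sim y$ (same $\lambda_i$) with $y\sim z$ (same $\lambda_j$ but different $\lambda_i$) produces transverse drift, and iterating along long admissible sequences can collapse classes. None of Lemma~\ref{lem-equivariant-1}, Lemma~\ref{lem-fixset-ell}, or the equicontinuity modulus address this, since they control only the transverse coordinate $\pi_i$, not the leafwise coordinate $\lambda_i$.

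This mismatch is exactly the obstruction the paper singles out: the vertical foliations of neighboring charts need not match up, and the role of the maps $\Phi_x$ in Definition~\ref{def-cantorfol} is precisely to \emph{correct} the naive verticals, not to reproduce them. So $\Phi_x$ cannot simply be $\wtvp_i^{-1}$; that choice gives up the flexibility that makes the definition workable. The paper's actual route (in \cite{CHL2011a}) is to refine the leafwise charts, not just the transversal: one builds a uniformly bounded Delaunay triangulation of the leaves from a fine net (extending the Voronoi ideas set up in Section~\ref{subsec-microbundles}), checks that the simplices are stable under the small transverse perturbations permitted by equicontinuity, and then uses barycentric coordinates on each simplex to define the genuinely adjusted vertical foliation; the $\Phi_x$ encode these barycentric adjustments, which is also why they must take values in the enlarged chart $\wtU_i$. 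Your outline acknowledges Voronoi-style arguments are needed but defers them; the point is that those arguments are not an add-on to your construction but a replacement for the $\Phi_x = \wtvp_i^{-1}$ step, which as written does not yield a well-defined transverse Cantor foliation.
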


The idea of the proof of  this result  is straightforward enough. For each foliated coordinate chart, $\oU_i$ or a subchart
$\fU^{\ell}_{\wtz}  \subset \oU_i$, there is a natural ``vertical'' foliation whose leaves are the images of the transversals
$\tau_{\wtz , \xi}$. The problem is that on the overlap of two charts, these vertical foliations need not match up, as  the requirement on a foliation chart for $\F$ is that the horizontal plaques match up. The exception is when $\fM$ is given with a fibration structure, then the coordinates can be chosen to be adapted to the fibration structure, and so the fibers of the bundle are compatible on overlaps.

For the general case, the idea is then to subdivide the horizontal plaques into small enough regions, and restrict the   diameters of the model set $V_x = V^{\ell}_{\wtz}$, so that the   vertical leaves become sufficiently close on overlaps, so that they can be made compatible on overlaps. More precisely, one constructs a uniform triangulation of the leaves of $\F$ on $\fM$ so that the triangles have sufficiently small diameter and in ``general position'', so that they are stable in transverse directions, for small perturbations. Then, the vertical foliations are defined using barycentric coordinates based on each simplex in the triangulation. The functions $\Phi_x$ introduced in Condition~\ref{def-cantorfol}.2 are the adjustments to the vertical foliation needed to make the foliations match up. The requirement that the images of the maps $\Phi_x$ be allowed to take values in the open neighborhood $\wtU_i$ is due to the fact that on the boundary points of $\oU_i$, the leaves of the  foliation $\cH$ need not have constant horizontal coordinate   $\lambda_i$.

A uniform triangulation of the leaves (satisfying the required stability conditions above) is constructed as the Delaunay simplicial complex associated to a very fine Voronoi tessellation of the leaves. The proof that all this can be done is quite tedious,   and  uses only ``elementary techniques'', along with effective estimates in each stage of the process. The details as given in \cite{CHL2011a} are quite lengthy and  involved.

 \medskip

 Given Theorem~\ref{thm-foliate}, we     complete the proof of Theorem~\ref{thm-main2}.
 Let $\fM$ be an equicontinuous matchbox manifold.
Assume that $\ell_0$ is such that  there exists an adapted transverse Cantor foliation $\cH_{\ell_0}$ on $\fM$. Let $\approx_{\ell}$ denote the restricted equivalence relation on $\fM$ adapted to  $\wtfN_{\ell}$.   For each $\ell \geq \ell_0$, introduce the quotient spaces $\wtM_{\ell} ~ \equiv ~ \wtfN_{\ell}/\approx_{\ell}$ and $M_{\ell} \equiv  \fM /\approx_{\ell}$. Given a point $x \in \fM$, let $[x]_{\ell}  \in M_{\ell}$ denote its $\approx_{\ell}$ equivalence class.

\begin{prop}\label{prop-structure1} Let $\fM$ be an equicontinuous matchbox manifold, with $\ell_0$ as specified in Theorem~\ref{thm-foliate}. Then for all  $\ell \geq \ell_0$,    $M_{\ell}$ is a closed $n$-dimensional topological manifold.
\end{prop}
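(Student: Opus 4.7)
My plan is to establish, in order: (i) Hausdorffness of $M_\ell$ via closedness of the equivalence relation $\approx_\ell$ in $\fM \times \fM$; (ii) compactness and second countability of $M_\ell$ as a quotient of $\fM$; and (iii) the existence of Euclidean charts of dimension $n$ centered at each point of $M_\ell$, induced by the bi-foliated coordinate maps $\Phi_x$ of Definition~\ref{def-cantorfol}. Since $M_\ell$ is a quotient of a compact connected space, (ii) is essentially immediate, and compactness together with local Euclidean structure without boundary gives ``closed manifold.'' So the work is concentrated in (i) and (iii).

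For (i), let $q_\ell \colon \fM \to M_\ell$ denote the quotient and let $x_n \approx_\ell y_n$ with $x_n \to x$ and $y_n \to y$. Pick a bi-foliated chart $\Phi_x \colon [-1,1]^n \times V_x \to \wtU_i$ at $x$. For large $n$, $x_n = \Phi_x(\xi_n, v_n)$ with $\xi_n \to \xi$, where $x = \Phi_x(\xi, v)$. Since Condition~\ref{def-cantorfol}.2 asserts that each vertical slice $\Phi_x(\{\xi_n\} \times V_x)$ is a \emph{complete} $\approx_\ell$-equivalence class, we have $y_n = \Phi_x(\xi_n, v_n')$ for some $v_n' \in V_x$. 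Compactness of $V_x$ (a Cantor set) lets us pass to a subsequence with $v_n' \to v' \in V_x$, whence $y = \Phi_x(\xi, v') \in \Phi_x(\{\xi\} \times V_x)$, i.e.\ $x \approx_\ell y$. Thus $\approx_\ell$ is closed, and $M_\ell$ is Hausdorff.

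For (iii), fix $x \in \fM$ and define $\overline{\Phi}_x \colon [-1,1]^n \to M_\ell$ by $\overline{\Phi}_x(\xi) = q_\ell(\Phi_x(\xi, v))$ for any $v \in V_x$; this is well-defined by Condition~\ref{def-cantorfol}.2 and continuous as a composition. Injectivity follows because equivalence classes are either disjoint or equal: if $\overline{\Phi}_x(\xi) = \overline{\Phi}_x(\xi')$ then the two vertical slices $\Phi_x(\{\xi\}\times V_x)$ and $\Phi_x(\{\xi'\}\times V_x)$ coincide, forcing $\xi=\xi'$ since $\Phi_x$ is a homeomorphism. The set $\Phi_x((-1,1)^n \times V_x)$ is open in $\fM$ and saturated by $\approx_\ell$, so its image $\overline{\Phi}_x((-1,1)^n)$ is open in $M_\ell$ by definition of the quotient topology. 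Finally, $\overline{\Phi}_x$ restricted to $(-1,1)^n$ is a continuous bijection onto this open set; its inverse can be realized as the composition of a local section of $q_\ell$ (obtainable from $\Phi_x^{-1}$ followed by projection to $[-1,1]^n$) with the projection, showing it is a homeomorphism onto its open image. This provides a chart for $M_\ell$ at $[x]_\ell$ homeomorphic to an open $n$-cube.

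Combining these: $M_\ell$ is a compact Hausdorff second countable space locally homeomorphic to $\mR^n$, i.e.\ a closed topological $n$-manifold. The main technical obstacle is step (i): what makes the argument go through is precisely the strong form of Condition~\ref{def-cantorfol}.2, which states that entire $\approx_\ell$-equivalence classes appear as single vertical slices in the bi-foliated chart, so that the equivalence classes vary continuously (in Hausdorff distance) with the basepoint. Without this, one could worry that nearby Cantor leaves might spread out transversally and destroy Hausdorffness of the quotient.
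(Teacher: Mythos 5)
Your proof is correct and follows essentially the same route as the paper's: establish that $M_\ell$ is Hausdorff and compact, then build $n$-dimensional charts from the bi-foliated maps $\Phi_x$ of Definition~\ref{def-cantorfol}, whose vertical slices are exactly the $\approx_\ell$-classes. One remark: your Hausdorffness step is actually more careful than the paper's one-line appeal to compactness of the classes (which alone does not guarantee a Hausdorff quotient), whereas your invocation of a ``local section'' to get continuity of $\overline{\Phi}_x^{-1}$ reads circularly as written -- a clean repair is to note that $\mathrm{pr}_{[-1,1]^n}\circ\Phi_x^{-1}$ is constant on $\approx_\ell$-classes and hence descends to the continuous inverse, or simply that $\overline{\Phi}_x$ extends to a continuous injection on the compact cube $[-1,1]^n$ into the Hausdorff space $M_\ell$.
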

\proof
For each $x \in \fM$, the  equivalence class $[x]_{\ell}$ is compact, for  the quotient topology, $M_{\ell}$ is a Hausdorff topological space. As $\fM$ is compact and connected, $M_{\ell}$ is also compact and connected.

For $\wtz \in \wtcN_0$ and $x \in \fU^{\ell}_{\wtz} \cap U_{i_{\wtz}}$,  recall that    $\cP_{\wtz}(x)$ is an open plaque of $\F$ containing  $x$.
The restriction $Q_{\ell} \colon \cP_{\wtz}(x) \to M_{\ell}$ is a homeomorphism onto its image by Condition~\ref{def-cantorfol}.2. We define    the composition
\begin{equation}
\phi_{\wtz} ~ \colon ~ (-1,1)^n \cong  (-1,1)^n  \times \pi_{i_{\wtz}}(x) \subset (-1,1)^n \times V^{\ell}_{\wtz} \stackrel{\vp_{i_{\wtz}}}{\longrightarrow} \cP_{\wtz}(x) \stackrel{Q_{\ell}}{\longrightarrow}  Q_{\ell}(\cP_{\wtz}(x) )
\end{equation}
which is a coordinate neighborhood of $[x]_{\ell}$.  Moreover, if  there exists $\wtz'$ such that  $x \in \fU^{\ell}_{\wtz} \cap  \fU^{\ell}_{\wtz'} \cap U_{i_{\wtz}}$, then the change of coordinate map $\phi_{\wtz'} \circ \phi_{\wtz}$ is a homeomorphism, as each of the maps $\Phi_{i_{\wtz}}$ and $\Phi_{i_{\wtz'}}$ in  Condition~\ref{def-cantorfol}.2 are homeomorphisms.
 Thus, $M_{\ell}$ has the structure of a topological manifold.
\endproof

We observe a basic point about the transverse foliations $\wtcH_{\ell}$ induced on the Thomas tube $\wtcH_{\ell}$.
\begin{lemma}\label{lem-product}
The inclusion map induces   a homeomorphism $\iota_{\ell} \colon \wtL_0 \cong \wtcH_{\ell}/\approx_{\ell}$.
\end{lemma}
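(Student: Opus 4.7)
My plan is to verify that $\iota_{\ell}$ is continuous, a local homeomorphism, and globally bijective, hence a homeomorphism. Continuity is immediate, since $\iota_{\ell}$ is the composition of the inclusion $\wtL_0 \hookrightarrow \wtfN_{\ell}$ with the quotient map $Q_{\ell} \colon \wtfN_{\ell} \to \wtM_{\ell}$. For each $\wtz \in \wtcN_0$, Definition~\ref{def-adapted} combined with Definition~\ref{def-cantorfol}, applied in the Thomas tube chart $\wtfU^{\ell}_{\wtz}$, yields a product homeomorphism onto (an open neighborhood in) $[-1,1]^n \times V^{\ell}_{\wtz}$ under which the $\approx_{\ell}$-classes are the vertical slices $\{\xi\} \times V^{\ell}_{\wtz}$, and under which $\wtL_0 \cap \wtfU^{\ell}_{\wtz}$ corresponds to the horizontal slice at the basepoint $w_{\wtz} = \wtpi_{\wtz}(\wtz) \in V^{\ell}_{\wtz}$, namely the central plaque $\wtcP_{\wtz}(\wtz)$. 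From this I would deduce surjectivity of $\iota_{\ell}$ (every $\wtp \in \wtfU^{\ell}_{\wtz}$ is $\approx_{\ell}$-equivalent to the unique point of $\wtcP_{\wtz}(\wtz)$ at the same horizontal coordinate) and observe that $Q_{\ell}$ restricted to $\wtcP_{\wtz}(\wtz)$ is a homeomorphism onto the open set $Q_{\ell}(\wtfU^{\ell}_{\wtz})$, so $\iota_{\ell}$ is an open local homeomorphism.

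For injectivity, I plan to construct a continuous projection $p \colon \wtfN_{\ell} \to \wtL_0$ chart-by-chart: for $\wtp \in \wtfU^{\ell}_{\wtz}$ with product coordinates $(\xi,v)$, set $p(\wtp)$ to be the point of $\wtcP_{\wtz}(\wtz)$ with coordinates $(\xi,w_{\wtz})$, i.e., the unique point of $\wtL_0$ in the $\approx_{\ell}$-class of $\wtp$ within the chart. By construction, $p$ is constant on $\approx_{\ell}$-classes and satisfies $p|_{\wtL_0} = \mathrm{id}_{\wtL_0}$, so once global well-definedness is established, $p$ descends to a continuous left-inverse $\bar{p} \colon \wtM_{\ell} \to \wtL_0$ of $\iota_{\ell}$, delivering injectivity. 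Well-definedness on an overlap $\wtfU^{\ell}_{\wtz} \cap \wtfU^{\ell}_{\wtz'}$ uses that $\wtcH_{\ell}$ is a globally defined transverse foliation: the transition between the two adapted product charts preserves both the $\wtF$-leaves and the $\wtcH_{\ell}$-leaves, and so must have product form $(\xi,v) \mapsto (f(\xi),g(v))$, whence the horizontal projection to the central plaque computed in either chart gives the same point of $\wtL_0$.

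The main obstacle I expect is precisely this overlap-compatibility check, which is the most delicate step. Conceptually, it amounts to showing that $\wtfN_{\ell}$ is a \emph{trivial} Cantor bundle over $\wtL_0$ with model fiber $V^{\ell}$: the microbundle gluing rule in Section~\ref{subsec-microbundles} forces overlapping charts to have overlapping central plaques tiling $\wtL_0$ along $\wtcN_0$, while $\wtL_0$ being the holonomy covering of $L_0$ precludes non-trivial monodromy of the Cantor fibers around loops in $\wtL_0$. Together these yield the product form of transition functions needed to finish the argument.
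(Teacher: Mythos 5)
Your proposal is correct, but it takes a more constructive route than the paper's three-sentence argument. The paper simply observes that each $\wtF$-leaf in $\wtfN_{\ell}$ meets each leaf of $\wtcH_{\ell}$ in exactly one point (giving bijectivity of $\iota_{\ell}$ outright), notes that compactness of the $\approx_{\ell}$-classes makes the quotient Hausdorff, and concludes from continuity and the plaque structure that $\iota_{\ell}$ is a homeomorphism; openness is left implicit, coming from the fact that $\iota_{\ell}$ restricted to a plaque is a homeomorphism onto an open set of the quotient. You instead build an explicit continuous retraction $p\colon \wtfN_{\ell}\to\wtL_0$, which descends to a continuous two-sided inverse of $\iota_{\ell}$ — this is more work but yields a self-contained verification and makes the role of the adapted transverse Cantor foliation very concrete.

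Two small remarks on your overlap-compatibility step. First, the product form $(\xi,v)\mapsto(f(\xi),g(v))$ of the transition is correct, but by itself it is not quite enough: you also need that the transition carries the distinguished transverse value $w_{\wtz}$ of one chart to the distinguished value $w_{\wtz'}$ of the other, i.e.\ $g(w_{\wtz})=w_{\wtz'}$, so that the two chart-wise projections onto central plaques actually agree. This does hold, and the reason is exactly what you point to — overlapping charts $\wtfU^{\ell}_{\wtz}$, $\wtfU^{\ell}_{\wtz'}$ of the microbundle are glued precisely along overlapping central plaques $\wtcP_{\wtz}(\wtz)\cap\wtcP_{\wtz'}(\wtz')$, both of which lie on $\wtL_0$ — but this deserves to be stated explicitly as the mechanism. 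Second, the appeal to ``no monodromy around loops in $\wtL_0$'' is not needed here: since you are merely gluing a function defined chart-by-chart, pairwise agreement on double overlaps already makes $p$ globally well-defined, with no cocycle or holonomy condition to verify. (Triviality of the Cantor bundle over $\wtL_0$ is a consequence of the lemma, not a prerequisite for proving it.)
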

\proof
Each leaf  $\wtL \subset \wtcH_{\ell}$  intersects each transversal leaf of $\wtcH_{\ell}$ in exactly one point, so the map $\iota_{\ell}$ induced by the inclusion is a 1-1 onto map. The equivalences classes for $\approx_{\ell}$ in $\wtcH_{\ell}$ are   compact, so the quotient topology is Hausdorff. The    map $\iota_{\ell}$ is continuous as it is induced by the inclusion of plaques, hence it is a homeomorphism.
\endproof

Let  $Q_{\ell} \colon \fM \to M_{\ell}$, given by $Q_{\ell}(x) = [x]_{\ell}$, denote the quotient map.

\begin{lemma}\label{lem-structure2} For all  $\ell \geq \ell_0$,    the restriction $\wtpi_{\ell} = Q_{\ell} | \wtL_0 \colon \wtL_0 \to M_{\ell}$ is a covering map.
\end{lemma}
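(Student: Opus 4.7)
My strategy is to verify the three defining properties of a covering map: continuity together with surjectivity, the local homeomorphism property, and the existence of evenly covered neighborhoods. Continuity of $\wtpi_\ell$ is immediate, since it is the restriction to $\wtL_0$ of the composition $Q_\ell \circ \Pi$, where $\Pi \colon \wtfN_\ell \to \fM$ is the natural projection. For surjectivity, given $[y]_\ell \in M_\ell$ with representative $y \in U_i$, I will apply Definition~\ref{def-cantorfol}(2) to produce a bi-foliated chart $\Phi_y \colon [-1,1]^n \times V_y \to \wtU_i$; by Theorem~\ref{thm-minimal} the orbit $\cO(w_0)$ meets the nonempty clopen $V_y$ at some $w^{\ast}$, and the point $y^{\ast} = \Phi_y(\lambda_i(y), w^{\ast})$ lies on both the $\cH_\ell$-leaf through $y$ and the $\F$-plaque through $\tau_i(w^{\ast}) \in L_0$, so any lift of $y^{\ast}$ into $\wtL_0$ is a $\wtpi_\ell$-preimage of $[y]_\ell$. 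The local homeomorphism property follows from Proposition~\ref{prop-structure1}: for $\wtx \in \wtfU^\ell_{\wtz}$ with $x = \Pi(\wtx)$, both $\Pi$ restricted to the plaque $\wtcP_{\wtz}(\wtx) \subset \wtL_0$ and $Q_\ell$ restricted to $\cP_{i_{\wtz}}(x)$ are homeomorphisms onto their images, so $\wtpi_\ell$ sends $\wtcP_{\wtz}(\wtx)$ homeomorphically onto the coordinate patch $\phi_{\wtz}((-1,1)^n) \subset M_\ell$.

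The main step is exhibiting an evenly covered neighborhood of an arbitrary $[y]_\ell \in M_\ell$. Let $F = \wtpi_\ell^{-1}([y]_\ell)$. The fiber $F$ is discrete in $\wtL_0$, because the bi-foliated chart structure (Definition~\ref{def-cantorfol}(2)) forces each $\F$-plaque to meet the $\cH_\ell$-leaf through $y$ in at most one point, so within any leafwise plaque neighborhood $\wtcP_{\wtz}(\wtx)$ only $\wtx$ itself lies in $F$. Using the uniform leafwise radius $\dFU$ from Section~\ref{subsec-regcovers} I then choose pairwise disjoint plaque neighborhoods $\wtcP_{\wtz}(\wtx)$ for $\wtx \in F$. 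Because the regular covering contains only finitely many chart ``types'' and the plaque sizes are uniform, the chart images $\phi_{\wtz}((-1,1)^n)$ have a uniform lower size bound in $M_\ell$, so one can produce a single open neighborhood $V \ni [y]_\ell$ contained in $\wtpi_\ell(\wtcP_{\wtz}(\wtx))$ for every $\wtx \in F$. Writing $\widetilde{V}_{\wtx} = (\wtpi_\ell|\wtcP_{\wtz}(\wtx))^{-1}(V)$ gives disjoint open sets, each mapping homeomorphically onto $V$. To conclude $\wtpi_\ell^{-1}(V) = \bigsqcup_{\wtx \in F} \widetilde{V}_{\wtx}$, given any $\wtz' \in \wtpi_\ell^{-1}(V)$, a bi-foliated chart about $\Pi(\wtz')$ exhibits a unique point $y^{\ast\ast}$ on the $\F$-plaque through $\Pi(\wtz')$ that lies on the $\cH_\ell$-leaf through $y$; since $\Pi(\wtz') \in L_0$ we have $y^{\ast\ast} \in L_0$ as well, and its unique lift in the $\wtL_0$-plaque of $\wtz'$ belongs to $F$, producing the required sheet index.

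The main obstacle I anticipate is the uniform choice of the neighborhood $V$ when the fiber $F$ is infinite: intersecting countably many open sets may fail to remain open, so the argument must rely on the a priori uniform plaque sizes and finite chart types provided by the regular covering of Section~\ref{subsec-regcovers}, rather than on any naive finite-intersection argument. A secondary technical point is the ``sheet-index'' argument that every preimage of $V$ lies in some $\widetilde{V}_{\wtx}$, which requires careful use of the adapted bi-foliated chart structure at $\Pi(\wtz')$ to track $\wtz'$ back to a point of $F$.
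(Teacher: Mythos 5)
Your approach is genuinely different from the paper's. The paper's proof is a one-liner: the restriction to each plaque $\wtcP_{\wtz}(\wtz)$ is a homeomorphism, so $\wtpi_\ell$ is a local homeomorphism from a complete (Hausdorff) space, hence a covering map — invoking a standard criterion that a local homeomorphism from a complete Riemannian manifold with a uniform lifting radius is a covering. You instead unpack the definition and build the evenly covered neighborhoods by hand. That is a perfectly legitimate and more self-contained route, and it has the virtue of making explicit the uniformity estimate that the paper's terse "hence" quietly relies on. Both arguments are ultimately powered by the same facts: the bounded geometry of the leaf, the finite atlas, and the uniform plaque radius $\dFU$.

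There is, however, a concrete gap in your argument at the point where you assert that the chart images $\phi_{\wtz}((-1,1)^n)$ have a uniform lower size bound \emph{around} $[y]_\ell$. A plaque $\wtcP_{\wtz}(\wtx)$ has uniform total size, but the fiber point $\wtx$ (and hence its image $[y]_\ell$) may sit arbitrarily close to the boundary of that plaque; the image is then not a neighborhood of $[y]_\ell$ of uniform radius, and the ``intersection over all sheets'' you want to form could fail to contain any open set. The fix is to use the $(e_1,e_2)$-net structure on $\wtL_0$ from Section~\ref{subsec-microbundles}: each $\wtx\in F$ lies within leafwise distance $\eFU/4$ of some net point $\wtz$, and $B_{\wtL_0}(\wtz,\eFU)\subset\wtcP_{\wtz}(\wtz)$, so $\wtcP_{\wtz}(\wtz)$ contains the uniformly sized leafwise ball $B_{\wtL_0}(\wtx, 3\eFU/4)$; replacing your plaques by these net-centered plaques restores the uniform lifting radius and salvages the construction of the common $V$. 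Note also that your claim that the plaque neighborhoods for distinct $\wtx\in F$ can be taken pairwise disjoint needs this same uniform separation: it is the injectivity of $Q_\ell$ on plaques together with the fact that two fiber points within leafwise distance $\eFU$ would be forced into a common plaque that gives a positive lower bound on the separation of $F$ in $\wtL_0$. Once you insert the net-centered plaques, the rest of your sheet-indexing argument via the bi-foliated charts of Definition~\ref{def-cantorfol} goes through.
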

\proof
The map $\wtpi_{\ell} \colon \wtL_0 \to M_{\ell}$ is a homeomorphism when restricted to each plaque $\wtcP_{\wtz}(\wtz)$ so the map is a local homeomorphism of a complete Hausdorff topological space, hence is a covering map.
\endproof

\begin{lemma}\label{lem-structure3}  For all  $\ell' > \ell \geq \ell_0$,
  there is a   covering map $q_{\ell', \ell} \colon M_{\ell'} \to M_{\ell}$ such that, for $q_{\ell} \equiv   q_{\ell, \ell_0}$, the   maps satisfy
$q_{\ell'} = q_{\ell} \circ q_{\ell', \ell}$.
\end{lemma}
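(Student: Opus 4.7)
The plan is to obtain $q_{\ell',\ell}$ from the universal property of the quotient and then verify the covering property using the finite clopen partition of $V^\ell$ into translates of $V^{\ell'}$ produced in Section~\ref{sec-codes}.

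First, I would show that the equivalence relation $\approx_{\ell'}$ refines $\approx_\ell$ on all of $\fM$. Near a point $x \in \fU^\ell_{\wtz}$, the adapted chart of Definition~\ref{def-adapted} for $\cH_\ell$ identifies the class $[x]_\ell$ with a vertical slice of $[-1,1]^n \times V^\ell_{\wtz}$; similarly, an adapted chart for $\cH_{\ell'}$ at $x$ identifies $[x]_{\ell'}$ with a slice of $[-1,1]^n \times V^{\ell'}_{\wtz'}$ for some $\wtz'$, and this smaller Cantor set embeds as a clopen subset of $V^\ell_{\wtz}$ via the nested inclusion $\wtfN_{\ell'} \subset \wtfN_\ell$ together with the fact (observed before Definition~\ref{def-adapted}) that $\approx_{\ell'}$ is the restriction of $\approx_\ell$. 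Hence $[x]_{\ell'} \subseteq [x]_\ell$ globally, and by the universal property of the quotient topology there is a unique continuous map $q_{\ell',\ell}\colon M_{\ell'} \to M_\ell$ satisfying $Q_\ell = q_{\ell',\ell}\circ Q_{\ell'}$. Setting $q_\ell := q_{\ell,\ell_0}$, the identity $q_{\ell'} = q_\ell \circ q_{\ell',\ell}$ follows by applying the universal property to the composition and invoking uniqueness.

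For the covering property, iterating the partitions $\cV_m^*$ of $V^{m-1}$ from Section~\ref{sec-codes} for $\ell < m \leq \ell'$ yields a finite disjoint clopen decomposition
\begin{equation*}
V^\ell \;=\; \bigsqcup_{k=1}^{\alpha}\, \sigma_k(V^{\ell'}), \qquad \alpha \;=\; \alpha_{\ell+1}\,\alpha_{\ell+2}\cdots\alpha_{\ell'},
\end{equation*}
where each $\sigma_k \in \G_{w_0}^W$ and $\sigma_1 = *_{w_0}$. Fix $p \in M_\ell$ with a representative $x \in \fU^\ell_{\wtz} \cap U_{i_{\wtz}}$, and let $U = Q_\ell(\cP_{\wtz}(x))$, which is an open neighborhood of $p$ by the argument of Proposition~\ref{prop-structure1}. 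Using the adapted chart for $\cH_{\ell'}$ together with the partition above, the preimage $q_{\ell',\ell}^{-1}(U)$ decomposes into $\alpha$ pairwise disjoint clopen sheets $U_1, \ldots, U_\alpha$, one for each translate $\sigma_k(V^{\ell'})$ sitting inside $V^\ell_{\wtz}$, where $U_k$ is the image under $Q_{\ell'}$ of the plaque produced by shadowing $\cP_{\wtz}(x)$ along a leafwise path defining $\sigma_k$. Each restriction $q_{\ell',\ell}\colon U_k \to U$ is a continuous bijection between coordinate charts of the closed topological $n$-manifolds $M_{\ell'}$ and $M_\ell$, hence a homeomorphism by invariance of domain.

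The main obstacle will be verifying genuine disjointness and exhaustion of the sheets $U_k$. Disjointness at the transverse level follows from Lemma~\ref{lem-equalsets-ell}: distinct translates $\sigma_j(V^{\ell'})$ and $\sigma_k(V^{\ell'})$ are separated in $\fX$ by at least $\delta_{\ell'}$, so they determine disjoint $\approx_{\ell'}$-classes. Exhaustion relies on the fact that every $\approx_\ell$-class meeting $\cP_{\wtz}(x)$ is parameterized by $V^\ell_{\wtz}$ via the adapted chart, and this parameter set is exactly covered by $\{\sigma_k(V^{\ell'})\}_{k=1}^{\alpha}$. Since $M_\ell$ is connected, the constant finite fiber cardinality $\alpha$ upgrades the local homeomorphism property to the assertion that $q_{\ell',\ell}$ is a covering map of degree $\alpha$, completing the proof.
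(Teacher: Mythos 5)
Your proof is correct, but you establish the covering property by a genuinely different route than the paper. Both arguments obtain $q_{\ell',\ell}$ the same way: $\approx_{\ell'}$ refines $\approx_{\ell}$, so the quotient map factors, and $q_{\ell'} = q_\ell \circ q_{\ell',\ell}$ follows from uniqueness. The divergence is in verifying that $q_{\ell',\ell}$ is a covering. The paper's proof is terse: it factors the covering projection $\wtpi_\ell$ of Lemma~\ref{lem-structure2} as $\wtL_0 \to M_{\ell'} \to M_\ell$ and, in effect, invokes the standard covering-space fact that a continuous map between two coverings of the same connected, locally path-connected leaf, which intertwines the projections, is itself a covering. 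You instead build the evenly covered neighborhoods directly: iterating the clopen partitions $\cV_m^*$ from Section~\ref{sec-codes} for $\ell < m \leq \ell'$ produces a finite disjoint decomposition of $V^\ell$ into $\alpha = \alpha_{\ell+1}\cdots\alpha_{\ell'}$ translates of $V^{\ell'}$, and you exhibit the $\alpha$ sheets over a chart $Q_\ell(\cP_{\wtz}(x))$ indexed by this decomposition, with disjointness from Lemma~\ref{lem-equalsets-ell}. The trade-off: your argument is more self-contained and records the covering degree $\alpha$ explicitly (the paper never does), at the cost of hand-checking disjointness and exhaustion. Two small slips that do not affect the proof: the observation that $\approx_{\ell'}$ is the restriction of $\approx_\ell$ occurs \emph{after} Definition~\ref{def-adapted} in the paper, not before; and the translates partitioning the model $V^\ell_{\wtz}$ at a given point are the images $h_{\wtz}(\sigma_k(V^{\ell'}))$, not the $\sigma_k(V^{\ell'})$ themselves, which sit inside $V^\ell \subset W$.
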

\proof
For $\ell' > \ell \geq \ell_0$ define $\wtpi_{\ell} = q_{\ell} \circ  \Pi  \colon \wtL_0 \to M_{\ell}$. By the definitions  of the equivalence relations $\approx_{\ell}$ on $\fM$ and $\wtcN_0$,  the   map $\wtpi_{\ell}$ can be factored as a composition
\begin{equation}
\wtL_0 ~ \longrightarrow ~ \wtcN_0 ~ \longrightarrow ~ \fM /\approx_{\ell'} \, \equiv \, M_{\ell'} ~ \longrightarrow ~ \fM /\approx_{\ell} \, \equiv \, M_{\ell}
\end{equation}

Define $q_{\ell, \ell'} \colon \fM /\approx_{\ell'} \to \fM /\approx_{\ell}$ to be the natural quotient map, then $q_{\ell'} = q_{\ell} \circ q_{\ell', \ell}$ follows.
\endproof

\medskip

Recall that the inverse limit of the sequence of maps
$\ds
\left\{ q_{\ell,\ell+1} \colon M_{\ell+1} \to M_{\ell} \mid \ell \geq \ell_0 \right\}
$
is the topological space
\begin{equation}\label{eq-factoring}
\cS\{q_{\ell,\ell'} \colon M_{\ell'} \to M_{\ell}\} ~ \equiv ~ \left\{ \omega = (\omega_{\ell_0} , \omega_{\ell_0 +1} , \ldots) \in \prod_{\ell = \ell_0}^{\infty} ~ M_{\ell} ~ | ~ q_{\ell, \ell+1}(\omega_{\ell +1}) = \omega_{\ell} \right\}
\end{equation}

\medskip

\begin{prop}\label{prop-invlim} Let $\fM$ be an equicontinuous matchbox manifold, with $\ell_0$ as specified in Theorem~\ref{thm-foliate}. Then there is a homeomorphism $q \colon \fM \to \cS\{q_{\ell,\ell'} \colon M_{\ell'} \to M_{\ell}\}$ of foliated spaces.
\end{prop}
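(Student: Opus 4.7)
The plan is to define the map $q \colon \fM \to \cS\{q_{\ell,\ell+1}\}$ by
\[
q(x) = \bigl(Q_{\ell_0}(x),\, Q_{\ell_0+1}(x),\, Q_{\ell_0+2}(x),\, \ldots\bigr)
\]
and show it is a homeomorphism of foliated spaces. First I would verify that $q(x)$ lies in the inverse limit: the inductive construction in Section~\ref{sec-codes} produces each $\cV_{\ell+1}$ as a refinement of $\cV_\ell$, so $\approx_{\ell+1}$ refines $\approx_\ell$, and the bonding map of Lemma~\ref{lem-structure3} satisfies $q_{\ell,\ell+1}\circ Q_{\ell+1}=Q_\ell$, whence the compatibility condition in \eqref{eq-factoring} holds. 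Continuity of $q$ is immediate from continuity of each quotient map $Q_\ell$.

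The main step is injectivity, which reduces to showing that $\dM\bigl([x]_\ell\bigr)\to 0$ as $\ell\to\infty$. By Definition~\ref{def-adapted}, the class $[x]_\ell$ is a Cantor set $\Phi_x(\{\xi\}\times V^\ell_{\wtz})$ sitting inside a single bi-foliated neighborhood $\wtU_i$. From \eqref{eq-epsilonell} one has $\dX(V^\ell)\le\e_\ell$ with $\e_\ell\to 0$, and the transverse equicontinuity of $\cGF^*$ from Definition~\ref{def-equicontinuous} (applied to $h_{\wtz}$, whose domain contains $V^\ell$ by Proposition~\ref{prop-uniformdom}) bounds $\dX(V^\ell_{\wtz})$ uniformly in $\wtz$ by a quantity tending to $0$. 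Combining the modulus of continuity $\rt$ for the sections $\tau_{i,\xi}$ from Lemma~\ref{lem-modtau} with the uniform control over $x$ and $\ell$ of the reparametrization $\Phi_x$ that is built into the construction of $\cH_\ell$ in Theorem~\ref{thm-foliate}, one converts this to $\dM\bigl([x]_\ell\bigr)\to 0$. Consequently $q(x)=q(y)$ forces $y\in\bigcap_\ell[x]_\ell=\{x\}$. The hard part is exactly this diameter estimate, since verifying that the $\Phi_x$ are sufficiently close to the natural product charts, uniformly as $\ell$ grows, is what requires the delicate simplicial machinery of \cite{CHL2011a}.

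For surjectivity, given $\omega=(\omega_\ell)\in\cS$, the factorization $q_{\ell,\ell+1}\circ Q_{\ell+1}=Q_\ell$ together with $q_{\ell,\ell+1}(\omega_{\ell+1})=\omega_\ell$ yields the nested inclusion $Q_{\ell+1}^{-1}(\omega_{\ell+1})\subseteq Q_\ell^{-1}(\omega_\ell)$ of nonempty compact equivalence classes. Compactness of $\fM$ supplies a point in the intersection, unique by the diameter estimate, whose image under $q$ is $\omega$. A continuous bijection from the compact space $\fM$ to the Hausdorff inverse limit $\cS$ is automatically a homeomorphism. Finally, $\cS$ is itself an $n$-dimensional matchbox manifold (an inverse limit of closed manifolds under covering maps), so Corollary~\ref{cor-foliated1} applied to $q$ guarantees that it sends leaves of $\F$ to leaves of the natural foliation on $\cS$, completing the proof that $q$ is a foliated homeomorphism.
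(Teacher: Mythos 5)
Your proposal follows essentially the same route as the paper: define $q$ coordinatewise via the quotient maps $Q_\ell$, use Lemma~\ref{lem-structure3} for well-definedness, continuity from Lemma~\ref{lem-structure2}, and reduce injectivity to the decay $\mu_\ell := \max\{d_\fM(x,y)\mid x\approx_\ell y\}\to 0$ obtained by chaining $\dX(V^\ell_{\wtz})\le\e_\ell\to 0$ through $\rt$ and the reparametrizations $\Phi$. You add two things the paper leaves implicit — the surjectivity argument by nested compacta and the compact-to-Hausdorff observation that a continuous bijection is automatically a homeomorphism, plus the Corollary~\ref{cor-foliated1} check that $q$ is foliated — all of which are sound.

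One small correction of emphasis: you frame the crux as "verifying that the $\Phi_x$ are sufficiently close to the natural product charts, uniformly as $\ell$ grows," and attribute this to \cite{CHL2011a}. In fact the Cantor foliation $\cH_{\ell_0}$ and hence the maps $\Phi_{i_{\wtz}}$ are fixed once and for all at the single level $\ell_0$ of Theorem~\ref{thm-foliate}; for $\ell > \ell_0$ the relation $\approx_\ell$ is just the restriction of $\approx_{\ell_0}$ to the shrinking transversals $V^\ell$. The uniform modulus of continuity is then automatic because the collection $\{\Phi_{i_{\wtz}}\}$ is finite, with no new uniformity estimate needed as $\ell\to\infty$. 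What \cite{CHL2011a} actually supplies is the existence of the adapted Cantor foliation at level $\ell_0$; once it exists, the diameter estimate is the soft step.
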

\proof
For $x \in \fM$ define
$$q(x) = ([x]_{\ell_0} , [x]_{\ell_0 +1}, \ldots ) \in \cS\{q_{\ell,\ell'} \colon M_{\ell'} \to M_{\ell}\}$$
which is  well-defined   by Lemma~\ref{lem-structure3}.
The map to each factor, $x \mapsto q_{\ell}(x) = [x]_{\ell}$ is continuous by Lemma~\ref{lem-structure2} and (\ref{eq-factoring}), hence the map $q(x)$ is  continuous.

Finally, let $x, y \in \fM$ such that $q(x) = q(y)$.
Then $ q_{\ell}(x) = q_{\ell}(y)$ for all $\ell \geq \ell_0$.
That is, $x \approx_{\ell} y$ for all $\ell \geq \ell_0$. Define
\begin{equation}\label{eq-mu}
\mu_{\ell} \equiv \max ~ \left\{  d_{\fM}(x,y) \mid x \approx_{\ell} y ~ , ~ x,y \in \fM \right\}
\end{equation}

  For all $\wtz \in \wtN_0$ the diameter of $V^{\ell}_{\wtz} \subset \fT_* \subset \fX$ is bounded above by $\e_{\ell}$ by the inductive construction in Section~\ref{sec-codes} of these sets. By Lemma~\ref{lem-modtau}, the diameter of the sections  $\tau_{i_{\wtz}}(V^{\ell}_{\wtz})$ are bounded above by $\rt(\e_{\ell})$.  As $\e_{\ell} \to 0$ as $\ell \to \infty$,  we also have that their diameters $\rt(\e_{\ell}) \to 0$ as $\ell \to \infty$. Finally, there is a finite collection of maps $\Phi_{i_{\wtz}}$ which arise in the Condition~\ref{def-cantorfol}.2 for the fixed $\ell_0$, hence they have a uniform modulus of continuity.
  Thus  $\mu_{\ell} \to 0$ as  $\ell \to \infty$ and consequently, the map $q$ is injective.
 \endproof

Combining the above results, we obtain:
\begin{thm}\label{thm-imasolenoid}
Let $\fM$ be an equicontinuous matchbox manifold, let $\ell_0$ be defined by  Theorem~\ref{thm-foliate}, and let $M_{\ell}$ be defined as above. Then $\fM$ is homeomorphic   to the solenoid $\cS\{q_{\ell,\ell'} \colon M_{\ell'} \to M_{\ell}\}$  defined by the bonding maps $q_{\ell, \ell +1} \colon M_{\ell+1} \to M_{\ell}$. \hfill $\Box$
\end{thm}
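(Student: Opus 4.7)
The plan is to assemble the statement directly from the machinery developed in Sections~\ref{sec-codes}--\ref{sec-projections}. First I would fix the index $\ell_0$ provided by Theorem~\ref{thm-foliate}, giving an adapted transverse Cantor foliation $\cH_{\ell_0}$ on $\fM$. By the remarks following Definition~\ref{def-adapted}, the nested inclusions $V^{\ell'} \subset V^{\ell}$ (for $\ell' \geq \ell \geq \ell_0$) furnish a chain of equivalence relations $\approx_{\ell}$ on $\fM$ with $\approx_{\ell+1}$ refining $\approx_{\ell}$. Proposition~\ref{prop-structure1} supplies the closed $n$-dimensional manifold structure on $M_{\ell} = \fM/\approx_{\ell}$, and Lemma~\ref{lem-structure3} produces the covering bonding maps $q_{\ell+1, \ell} \colon M_{\ell+1} \to M_{\ell}$ satisfying $q_{\ell+1} = q_{\ell} \circ q_{\ell+1,\ell}$. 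This is a legitimate inverse system, and the solenoid $\cS\{q_{\ell,\ell'}\}$ is defined by (\ref{eq-factoring}).

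Next I would form the canonical map $q \colon \fM \to \cS\{q_{\ell,\ell'}\}$ of Proposition~\ref{prop-invlim} by sending $x \mapsto ([x]_{\ell_0}, [x]_{\ell_0 +1}, \ldots)$. Continuity of $q$ is coordinate-wise and immediate from Lemma~\ref{lem-structure2}. The delicate point — and what I expect to be the real content — is injectivity, which reduces to showing that $\mu_{\ell} \to 0$, where $\mu_{\ell}$ is the diameter bound in (\ref{eq-mu}). For this I would combine three facts: that by the inductive construction in Section~\ref{sec-codes} the transversal diameters $\e_{\ell}$ of the clopen sets $V^{\ell}_{\wtz}$ shrink to zero, that by Lemma~\ref{lem-modtau} the section maps $\tau_{i_{\wtz}}$ enjoy a uniform modulus of continuity $\rt$, and that only finitely many of the chart maps $\Phi_{i_{\wtz}}$ from Definition~\ref{def-cantorfol} enter (one per element of the regular covering). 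Together these force every $\approx_{\ell}$-class in $\fM$ to have $\fM$-diameter tending uniformly to zero with $\ell$.

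Surjectivity of $q$ follows from a standard inverse-limit argument: given $\omega = (\omega_{\ell}) \in \cS\{q_{\ell,\ell'}\}$, the preimages $q_{\ell}^{-1}(\omega_{\ell})$ form a decreasing family of nonempty closed subsets of the compact space $\fM$, so have nonempty intersection, and any point in the intersection maps to $\omega$. Here one also uses the minimality of $\F$ from Theorem~\ref{thm-minimal} to know that the $\F$-saturation of $V^{\ell}$ is all of $\fM$, so that each $q_{\ell}$ is itself surjective. Since $\fM$ is compact and $\cS\{q_{\ell,\ell'}\}$ is Hausdorff, the continuous bijection $q$ is automatically a homeomorphism.

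The foliated-space identification is then automatic: $q$ carries each leaf of $\F$ into a path component of the solenoid because the bonding maps $q_{\ell+1,\ell}$ are leafwise coverings by construction, and the local product structures on $\fM$ and on $\cS\{q_{\ell,\ell'}\}$ are identified through the adapted Cantor foliation $\cH_{\ell_0}$ together with the coordinate charts of Proposition~\ref{prop-structure1}. The genuine difficulties lie upstream — in Theorem~\ref{thm-foliate}, whose proof is deferred to \cite{CHL2011a}, and in the equicontinuous coding machinery of Section~\ref{sec-codes} — so that at this stage the theorem is essentially a compact-Hausdorff packaging of results already in hand.
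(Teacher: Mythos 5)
Your proposal follows the paper's own argument essentially verbatim: the construction of the inverse system from Proposition~\ref{prop-structure1} and Lemmas~\ref{lem-structure2}--\ref{lem-structure3}, the map $q$ of Proposition~\ref{prop-invlim}, and the injectivity argument via $\mu_{\ell}\to 0$ (shrinking transversal diameters $\e_{\ell}$, the modulus for the sections $\tau_{i_{\wtz}}$, and uniform control over the finitely many $\Phi_{i_{\wtz}}$). If anything you are slightly more complete, since you spell out the surjectivity and compact-Hausdorff step that the paper leaves implicit before concluding $q$ is a homeomorphism.
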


\section{Homogeneous matchbox manifolds}\label{sec-mccord}

An  {equicontinuous} matchbox manifold $\fM$ has the structure of a solenoid by   Theorem~\ref{thm-imasolenoid}, although it need not be a McCord solenoid.    In this section, we consider the case where $\fM$ is homogeneous, and therefore is equicontinuous and without germinal holonomy. The homogeneous  hypothesis, and     Corollary~\ref{cor-effros} of the  Effros Theorem,  implies special normality properties for the conjugation actions of the fundamental groups in the solenoidal tower (see (\ref{eq-tower}) below), which then implies  Theorem~\ref{thm-main1}, that $\fM$ is homeomorphic to a  McCord solenoid.

Note that Theorem~\ref{thm-main1}    follows directly from Theorem~3 in Fokkink and Oversteegen  \cite{FO2002}, that a homogeneous solenoid is McCord, and the proof we give is ``essentially'' the same. We include the proof for matchbox manifolds here, as the key idea  follows naturally from     our previous results.

Let $\fM$ be an homogeneous matchbox manifold.
We follow the notations of the previous sections.
Recall that $x_0 \in \fM$ is the fixed base-point, and $L_0$ is the leaf in $\fM$ containing $x_0$.
As $\F$ has no holonomy, we can identify its holonomy covering $\wtL_0$ with $L_0$.
We assume that the equivalence relations $\approx_{\ell}$ as in Section~\ref{sec-projections} have been defined for all $\ell \geq \ell_0$.
Then the basepoint for $M_{\ell}$ is the equivalence class $[x_0]_{\ell}$.
For $\ell \geq \ell_0$, set $H_{\ell} = \pi_1(M_{\ell}, [x_0]_{\ell})$.

The bonding maps of the solenoid $\ds \cS\{q_{\ell,\ell'} \colon M_{\ell'} \to M_{\ell}\}$ induce  homomorphisms of fundamental groups,
\begin{equation}\label{eq-bonding}
q_{\ell} = q_{\ell_0, \ell} \colon H_{\ell} \to H_{\ell_0} \quad , \quad p_{\ell} \colon H_{\ell} \to H_{\ell -1} \quad , \quad q_{\ell} =q_{\ell -1}   \circ  p_{\ell}
\end{equation}
so we obtain a tower   of groups for $\ell > \ell_0$,
\begin{equation}\label{eq-tower}
\cdots \longrightarrow H_{\ell +1} \stackrel{p_{\ell +1}}{\longrightarrow} H_{\ell} \longrightarrow \cdots \longrightarrow H_{\ell_0}
\end{equation}

 Let $\cH_{\ell} = q_{\ell}(H_{\ell}) \subset \cH_{\ell_0} \equiv H_{\ell_0}$ which results in a descending chain of subgroups of finite index,
 \begin{equation}\label{eq-towersubgroups}
\cdots \subset \cH_{\ell +1} \subset \cH_{\ell} \subset \cdots \subset \cH_{\ell_0+1} \subset \cH_{\ell_0}
\end{equation}
Each manifold $M_{\ell}$ is then naturally homeomorphic to the covering of $M_{\ell_0}$ defined by the subgroup $\cH_{\ell}$, and the homeomorphism type of $\ds \cS\{q_{\ell,\ell'} \colon M_{\ell'} \to M_{\ell}\}$ is determined by the chain (\ref{eq-towersubgroups}).

The claim of Theorem~\ref{thm-main1} is that $\fM$ is \emph{homeomorphic} to a McCord solenoid, and this is an essential point, as we use a well-known criteria for when two  inverse limit spaces with the same base space $M_{\ell_0}$ are homeomorphic.

\begin{thm}\label{thm-equivalence}
Suppose that $\fM \cong \cS\{q_{\ell,\ell'} \colon M_{\ell'} \to M_{\ell}\}$ with subgroups $\cH_{\ell} \subset \cH_{\ell_0}$ for $\ell > \ell_0$ defined as above. Suppose there is given a second chain of subgroups of finite index,
\begin{equation}\label{eq-towersubgroups2}
\cdots \subset \cH_{\nu +1}' \subset \cH_{\nu}' \subset \cdots \subset \cH_{\nu_0+1}' \subset \cH_{\nu_0}' \equiv \cH_{\ell_0}
\end{equation}
such that there  exists $\ell_1 \geq \ell_0$ so that for every $\ell \geq \ell_1$ there exists $\nu_{\ell} \geq \nu_0$ with $\cH_{\nu_{\ell}}' \subset \cH_{\ell}$, and
for every $\nu \geq \nu_0$ there exists $\ell_{\nu} \geq \ell_1$ with $\cH_{\ell_{\nu}} \subset \cH_{\nu}'$. Then the inverse limit space defined by the covering spaces   $\wtM_{\nu} \to M_{\ell_1}$ associated to the chain of subgroups (\ref{eq-towersubgroups2}) is homeomorphic to $\fM$.
\end{thm}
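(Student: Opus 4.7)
The plan is to apply the classical principle that two inverse systems of compact Hausdorff spaces with mutually cofinal subsystems have homeomorphic inverse limits. The interlocking subgroup hypothesis is precisely what is needed to invoke this principle.

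First I would extract from the hypotheses an interleaved sequence of nested subgroups. By alternately applying the conditions ``for every $\ell \geq \ell_1$ there exists $\nu_\ell$ with $\cH'_{\nu_\ell} \subset \cH_\ell$'' and ``for every $\nu \geq \nu_0$ there exists $\ell_\nu$ with $\cH_{\ell_\nu} \subset \cH'_\nu$'', I can inductively choose strictly increasing sequences $\ell_1 \leq k_1 < k_2 < \cdots$ and $\nu_0 \leq m_0 < m_1 < \cdots$ so that
\[
\cdots \subset \cH_{k_{j+1}} \subset \cH'_{m_j} \subset \cH_{k_j} \subset \cH'_{m_{j-1}} \subset \cdots \subset \cH_{\ell_0}.
\]
Choosing once and for all a base-point in the universal cover of $M_{\ell_0}$ over $[x_0]_{\ell_0}$ and projecting it to each intermediate cover, the associated connected finite covering spaces fit into a joint tower of pointed covering maps
\[
\cdots \to M_{k_{j+1}} \to \widetilde{M}_{m_j} \to M_{k_j} \to \widetilde{M}_{m_{j-1}} \to \cdots \to M_{\ell_0},
\]
where each arrow is the canonical covering projection induced by the corresponding subgroup inclusion. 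With consistent base-points, these projections commute strictly with the original bonding maps $q_{\ell,\ell'}$ and $\widetilde{q}_{\nu,\nu'}$, since pointed covering maps between given based covers of $M_{\ell_0}$ are uniquely determined.

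Second, I would run the cofinality argument. The subsystem $\{M_{k_j}\}_j$ is cofinal in $\{M_\ell\}_{\ell \geq \ell_0}$ (use that $\cH_{k_j} \subset \cH_\ell$ whenever $k_j \geq \ell$), so by the standard cofinality theorem for inverse limits of compact Hausdorff spaces, $\varprojlim_j M_{k_j}$ is canonically homeomorphic to $\fM \cong \varprojlim_\ell M_\ell$. Likewise, $\{\widetilde{M}_{m_j}\}_j$ is cofinal in the system defining the second inverse limit. Finally, the interleaved tower above, viewed as a single inverse sequence, contains both $\{M_{k_j}\}$ and $\{\widetilde{M}_{m_j}\}$ as cofinal subsequences, so the three limits coincide:
\[
\fM \cong \varprojlim_j M_{k_j} \cong \varprojlim\left(\cdots \to M_{k_{j+1}} \to \widetilde{M}_{m_j} \to M_{k_j} \to \cdots\right) \cong \varprojlim_j \widetilde{M}_{m_j},
\]
and the rightmost limit is precisely the inverse limit space associated to the chain \eqref{eq-towersubgroups2}.

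The main obstacle is the base-point bookkeeping required to guarantee that the interleaved tower genuinely commutes on the nose with the originally given bonding maps, rather than up to deck transformations; a wrong choice would still give maps of the correct form but twisted by covering automorphisms, which would break strict commutativity of the diagram and hence the identification of inverse limits. This is resolved cleanly by fixing a single lift of $[x_0]_{\ell_0}$ to the universal cover at the outset, which then determines compatible base-points in every intermediate cover and makes all covering maps in the tower canonical. With this in place, the cofinality step is essentially formal and completes the proof.
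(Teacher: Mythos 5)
The paper does not give a proof of this theorem; it cites references (\cite{McCord1965}, \cite{FO2002}, \cite{Mardesic2000}, \cite{Rogers1970}, etc.) where the argument appears. Your proposal reproduces the standard proof found in those sources: build a strictly interleaved chain $\cdots \subset \cH_{k_{j+1}} \subset \cH'_{m_j} \subset \cH_{k_j} \subset \cdots$, realize it as a tower of pointed finite covers of $M_{\ell_0}$ (with a single fixed lift of the basepoint to pin down each covering projection and force strict, not just up-to-deck-transformation, commutativity with the given bonding maps), and then apply cofinality of inverse limits of compact Hausdorff spaces three times. This is correct, and your remark about the basepoint bookkeeping is exactly the point that needs care; once the basepoints are fixed compatibly, the intermediate covering maps are unique and the cofinality step is formal. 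The only small discrepancy with the statement is that it phrases the second system as covers of $M_{\ell_1}$ rather than $M_{\ell_0}$; this is harmless, since truncating either tower at a finite stage does not change the inverse limit, again by cofinality.
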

The proof of this result and its applications can be found in many sources \cite{ClarkHurder2011,FO2002, Mardesic2000,McCord1965,Rogers1970, RogersTollefson1971, Schori1966}.

 Thus, to show Theorem~\ref{thm-main1}, it suffices to produce $\ell_1 \geq \ell_0$ and  a chain of normal subgroups   $\cN_{i} \subset \cH_{\ell_1}$ which satisfy the criteria of Theorem~\ref{thm-equivalence}. The existence of these normal subgroups   follows from a geometric argument using the homogeneous hypothesis.

First, we recall a basic notion of group theory. Let $H \subset G$ be a subgroup, then the \emph{normal core} of $H$, or the \emph{core} for short, is the largest   subgroup $N \subset H$ so that $N$ is normal in $G$. We use the notation $C_G(H) = N$. It may happen that $C_G(H)$ is the trivial subgroup.
If $H$ has finite index in $G$, then for a set $\{g_1, \ldots , g_m\} \subset G$ consisting of  a representative of each residue class of $G/H$, then the core of $H$ is the subgroup
$$C_G(H) = \bigcap_{1 \leq i \leq m} ~ g_i^{-1} H g_i$$
Thus, if $G$ is an infinite group and $H$ has finite index in $G$, then $C_G(H)$ is always an infinite group, with finite index in $H$.

 For $\ell \geq \ell_0$ introduce  the sections
\begin{equation}\label{eq-sectionsell}
\fS_{\ell} \equiv   \vp_1^{-1}(0, V^{\ell})  \subset \cT_1 \subset \oU_1 ~ .
\end{equation}
Note that $x_0 \in \fS_{\ell} \subset \fS_{\ell_0}$  , and for any $x \in \fS_{\ell}$ we have $[x]_{\ell} = [x_0]_{\ell}$.
For $\ell' > \ell \geq \ell_0$, the covering maps $q_{\ell, \ell'} \colon M_{\ell'} \to M_{\ell}$ are induced by expanding the equivalence classes of $\approx_{\ell'}$ to those of $\approx_{\ell}$.

 Corollary~\ref{cor-effros} implies  there exists   $\delta_{\fM}$   so that
for any $x,y \in \fM$ with $d_{\fM}(x,y) < \delta_{\fM}$, there is a
homeomorphism $\theta: \fM \rightarrow \fM$ with   $h(x)=y$ and $d_{\cH}(\theta, id_{\fM}) \leq \eU/4$.

Recall the constants $\e_{\ell}$ defined in (\ref{eq-epsilonell}) with  $ \dX(V^{\ell}) < \e_{\ell}$ and where $\e_{\ell} \to 0$ monotonically.
Let $\ell_1 \geq \ell_0$ be chosen so that  $\fS_{\ell_1} \subset B_{\fM}(x_0, \delta_{\fM})$.
It follows that for any $\xi \in \fS_{\ell_1}$ there exists a homeomorphism $\theta: \fM\rightarrow \fM$ with $d_{\cH}(\theta, id_{\fM}) <\eU/4$ and $\theta(\xi)=x_0$.   Note that this condition implies that the map $\theta$ is  homotopic to map commuting with the quotient projections   $Q_{\ell} \colon \fM \to M_{\ell}$ for $\ell \geq \ell_1$, which is the condition used in the work \cite{FO2002}.

\begin{prop}\label{prop-cores}
Let $\ell  \geq \ell_1$ then there exists $\ell' \geq   \ell$   such that
$\ds \cH_{\ell'} \subset C_{\cH_{\ell_1}}(\cH_{\ell})$.
\end{prop}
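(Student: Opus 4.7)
The core $C_{\cH_{\ell_1}}(\cH_\ell) = \bigcap_{g \in \cH_{\ell_1}} g \cH_\ell g^{-1}$ is, by standard group theory, the kernel of the monodromy action of $\cH_{\ell_1}$ on the finite coset space $\cH_{\ell_1}/\cH_\ell$. Geometrically, this space is identified with the fiber $F = q_{\ell,\ell_1}^{-1}([x_0]_{\ell_1})$ of the finite cover $q_{\ell,\ell_1} \colon M_\ell \to M_{\ell_1}$, and via the covering $L_0 \to M_\ell$ of Lemma~\ref{lem-structure2} with the finite set of $\approx_\ell$-classes contained in the single $\approx_{\ell_1}$-class of $x_0$. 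Thus $\alpha \in C_{\cH_{\ell_1}}(\cH_\ell)$ if and only if, for every representative $z \in L_0 \cap [x_0]_{\approx_{\ell_1}}$, the lift of $\alpha$ in $L_0$ starting at $z$ ends at a point $z'$ with $z' \approx_\ell z$.

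Since $\cH_{\ell'} \subset \cH_\ell$ whenever $\ell' \geq \ell$ by (\ref{eq-towersubgroups}), the plan is to find $\ell' \geq \ell$ for which $\cH_{\ell'}$ is actually \emph{normal} in $\cH_{\ell_1}$. Normality would give $g \cH_{\ell'} g^{-1} = \cH_{\ell'} \subset \cH_\ell$ for every $g \in \cH_{\ell_1}$, and hence $\cH_{\ell'} \subset \bigcap_g g \cH_\ell g^{-1} = C_{\cH_{\ell_1}}(\cH_\ell)$, which is exactly what we need. Normality of $\cH_{\ell'}$ in $\cH_{\ell_1}$ is equivalent to regularity of the finite cover $q_{\ell',\ell_1} \colon M_{\ell'} \to M_{\ell_1}$, i.e., to its deck transformation group acting transitively on each fiber.

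To produce the required deck transformations, invoke the Effros theorem. Assume $\mu_{\ell_1} < \delta_\fM$, which may require replacing the original $\ell_1$ by a larger value; this is permitted since $\mu_\nu \to 0$ as $\nu \to \infty$ by the argument in the proof of Proposition~\ref{prop-invlim}. Given two fiber points $[y_1]_{\ell'}, [y_2]_{\ell'}$ above $[x_0]_{\ell_1}$ with representatives $y_1, y_2 \in L_0 \cap [x_0]_{\approx_{\ell_1}}$, one has $d_\fM(y_1,y_2) \leq \mu_{\ell_1} < \delta_\fM$, so Corollary~\ref{cor-effros} produces a foliated homeomorphism $\Phi \colon \fM \to \fM$ with $\Phi(y_1) = y_2$ and $d_\cH(\Phi, id_\fM) < \eU/4$. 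By the remark following Corollary~\ref{cor-effros}, $\Phi$ is homotopic through close-to-identity foliated maps to a homeomorphism commuting with $Q_{\ell'}$, and therefore descends to a self-homeomorphism $\psi$ of $M_{\ell'}$. Since $Q_{\ell_1} = q_{\ell_1,\ell'} \circ Q_{\ell'}$ and the same remark yields commutation with $Q_{\ell_1}$ (up to homotopy), the induced map on $M_{\ell_1}$ is isotopic to the identity, forcing $\psi$ to cover the identity on $M_{\ell_1}$; thus $\psi$ is a deck transformation of $q_{\ell',\ell_1}$ carrying $[y_1]_{\ell'}$ to $[y_2]_{\ell'}$, and transitivity on the fiber follows.

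The main technical difficulty is the clean descent of $\Phi$ to a genuine deck transformation: the remark only asserts homotopy equivalence to a commuting map, so care is needed to verify that the resulting $\psi$ strictly respects the fibers of $q_{\ell',\ell_1}$, and not merely up to isotopy. This is where the equicontinuity of $\cGF$ (Theorem~\ref{thm-equic}) enters, controlling how the transversal partitions $\cV_\nu$ behave under Effros-close homeomorphisms; the same ingredient allows one to bypass the strengthened hypothesis $\mu_{\ell_1} < \delta_\fM$ when necessary, by chaining Effros moves through a finite sequence of intermediate representatives lying in nested $\approx_\nu$-classes of $\fM$-diameter less than $\delta_\fM$ and composing the resulting deck transformations.
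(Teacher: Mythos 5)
Your strategy departs substantially from the paper's, and it has a real gap. The paper never attempts to show that $\cH_{\ell'}$ is \emph{normal} in $\cH_{\ell_1}$ — which is stronger than required and may well fail. Instead it proves the weaker core containment directly: it picks coset representatives $g_1,\dots,g_{m_\ell}$ of $\cH_{\ell_1}/\cH_\ell$, realizes each $g_i$ by a leafwise path $\gamma_i$ from $x_0$ to a nearby point $\xi_i \in \fS_{\ell_1}$, and applies Effros to obtain a close-to-identity homeomorphism $\theta_i$ with $\theta_i(\xi_i)=x_0$. Then for an arbitrary $b \in \cH_{\ell'}$ represented by a short path $\gamma_b$, it forms the conjugate path $\sigma_i^* = \gamma_i' * \gamma_b * \gamma_i^{-1}$ and pushes it forward by $\theta_i$ to $\sigma_i = \theta_i \circ \sigma_i^*$; the uniform continuity of the finitely many $\theta_i$ (that's where $\delta_\ell^*$ and $\ell'$ come from) guarantees the terminal point of $\sigma_i$ lies in $\fS_\ell$, which is exactly the statement $g_i b g_i^{-1} \in \cH_\ell$, i.e. $b \in g_i^{-1}\cH_\ell g_i$. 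Intersecting over the finitely many $g_i$ gives $\cH_{\ell'} \subset C_{\cH_{\ell_1}}(\cH_\ell)$. No descent of any homeomorphism to the quotient manifolds $M_\ell$ or $M_{\ell'}$ is needed, and Lemma~\ref{lem-domainconst} handles the comparison of $\sigma_i$ and $\sigma_i^*$ purely at the level of leafwise holonomy.

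The gap in your proposal is concentrated exactly where you flag it, and it is not merely a technicality. You rely on the Effros homeomorphism $\Phi$ ``descending to a self-homeomorphism $\psi$ of $M_{\ell'}$'': but $\Phi$ being $\eU/4$-close to the identity on $\fM$ gives no control over whether $\Phi$ respects the equivalence relation $\approx_{\ell'}$, which is what is needed for a well-defined map on the quotient $M_{\ell'} = \fM/\approx_{\ell'}$. The paper's passing remark that a near-identity $\theta$ is ``homotopic to a map commuting with $Q_\ell$'' is an aside pointing to the Fokkink--Oversteegen framework; it is not established in the text and cannot simply be invoked at the level of strictness you need. Your subsequent inference — that $\psi$, being induced by a map isotopic to the identity, must cover the identity on $M_{\ell_1}$ and hence be a genuine deck transformation — is also unjustified: isotopy to the identity does not force covering the identity. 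Without these two steps, you never obtain the transitive deck action, and hence never obtain regularity of the cover $M_{\ell'} \to M_{\ell_1}$. To repair the argument you would in effect have to rebuild the path-concatenation machinery the paper already uses; it is cleaner (and sufficient for the later construction of the normal chain $\cN_i = C_{\cH_{\ell_1}}(\cH_{\ell_i})$) to target the core containment directly rather than normality of $\cH_{\ell'}$.
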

\proof
 Recall   the constants  $\eta_{\ell} = \min\left\{d_{\fX}(W^{\ell}_{k},W^{\ell}_{k'}) \mid 1 \leq k \neq k' \leq \beta_{\ell}\right\}>0$ defined in  Section~\ref{sec-codes}.

The subgroup $\cH_{\ell}$ has finite index in $\cH_{\ell_1}$, so we can choose $\{g_1, \ldots , g_{m_{\ell}}\} \subset H_{\ell_1}$ consisting of  a representative of each residue class of $\cH_{\ell_1}/\cH_{\ell}$.
Let $\{\gamma_1, \ldots , \gamma_{m_{\ell}}\}$ be leafwise paths so that for $1 \leq i \leq m_{\ell}$ we have:
$$\gamma_i(0) = x_0    ~, ~ \gamma_i(1) = \xi_i \in  \fS_{\ell_1} ~, ~    [\gamma_i]_{\ell_1} = g_i \in  H_{\ell_1}$$
Let $ h_i$ denote the holonomy transformation  defined by the path  $\gamma_i$. Note that by Proposition~\ref{prop-uniformdom} and the choice of $\ell_0$, we can assume   that  $V^{\ell_1} \subset V^{\ell_0} \subset D(h_i)$ for $1 \leq i \leq m$.
Note that the inverse map $h_i^{-1}$ is defined by transport along the reverse path $\gamma_i^{-1}(t) = \gamma_i(1-t)$ and that
$[\gamma^{-1}_i]_{\ell_1} = g_i^{-1}  \in  H_{\ell_1}$.

For $1 \leq i \leq m_{\ell}$,    choose a homeomorphism $\theta_i: \fM \rightarrow \fM$ with   $\theta_i(\xi_i)=x_0$ and $d_{\cH}(\theta_i, id_{\fM}) \leq \eU/4$.

Let $\delta_{\ell} > 0$ be the constant of Lemma~\ref{lem-locconstant-ell} such that $B_{\fX}(w_0, \delta_{\ell}) \subset V^{\ell} \subset \fT_{\ell}$.
Set $\e_{\ell}^* = \rp(\delta_{\ell})$,  where the modulus function   $\rp(\e)$ for   the transverse coordinate projections was defined in  Lemma~\ref{lem-modpi}.

Each map $\theta_i $   for $1 \leq i \leq m_{\ell}$ is uniformly continuous, so there exists $\delta_{\ell}^* >0$ so that if $\xi, \xi' \in \fS_{\ell_1}$ satisfies $d_{\fM}(\xi, \xi') < \delta_{\ell}^*$ then $d_{\fM}(\theta_i(\xi), \theta_i(\xi')) < \e_{\ell}^*$.

Now choose $\ell' \geq \ell$ so that  $\fS_{\ell'} \subset B_{\fM}(x_0, \delta_{\ell}^*)$.
 We claim that $\ds \cH_{\ell'} \subset C_{\cH_{\ell_1}}(\cH_{\ell})$.

  Given $g_i \in \{g_1, \ldots , g_{m_{\ell}}\}$ and $b \in    H_{\ell'}$ we show that $b \in g_i^{-1} H_{\ell} g_i$. Equivalently, we show  that  the class $g_i b g_i^{-1} \in H_{\ell_1}$   is represented by a path $\sigma_i \colon [0,1] \to \fM$ such that $\sigma_i(0) = x_0$ and $\sigma_i(1) = \xi  \in \fS_{\ell}$ and thus $g_i b g_i^{-1} \in H_{\ell}$.

Choose a leafwise path $\gamma_b \colon [0,1] \to \fM$ so that:
$$\gamma_b(0) = x_0    ~, ~ \gamma_a(1) = \xi_b \in  \fS_{\ell'} ~, ~   [\gamma_b]_{\ell'}  = b \in  H_{\ell'}$$

  The endpoint $\xi_b \in  \fS_{\ell'} \subset \fS_{\ell_1}$ so we can define a path $\gamma_i'$ which shadows  $\gamma_i$ as in
  Lemma~\ref{lem-domainconst},  with
  $\gamma_i'(0) = \xi_b$ and $\gamma_i'(1) = \xi_i'$. Note that  $\xi_b \approx_{\ell'} x_0$ and    holonomy transport along any path preserves the coding decomposition   $\cV_{\ell'}$, hence $\xi_i' = h_i(\xi_b) \approx_{\ell'} h_i(x_0) = \xi_i$.

Form the concatenation $\sigma_i^* \equiv \gamma_i' * \gamma_b * \gamma_i^{-1} \colon [0,1] \to L_0$ which satisfies
$\sigma_i ^*(0) = \xi_i \in \fS_{\ell_1}$ and $\sigma_i^*(1) = \xi_i' \in \fS_{\ell_1}$.
As the endpoints of the path $\sigma_i ^*$ are $\approx_{\ell'}$ equivalent, we obtain a class
$[\sigma_i ^*]_{\ell'} \in \pi_1(M_{\ell'}, [\xi_i]_{\ell'})$ which is a representative for the lift of the element $g_i b g_i^{-1}  \in H_{\ell_1}$.

Now define the   leafwise path $\sigma_i = \theta_i \circ \sigma_i^*$ for each $1 \leq i \leq m_{\ell}$. Note that
$$\sigma_i(0) = \theta_i(\gamma_i(1)) = \theta_i(\xi_i) = x_0 \quad , \quad \sigma_i(1) = \theta_i(\gamma_i^*(1)) = \theta_i(\xi_i') = \xi_i''$$
As $ \xi_i, \xi_i' \in \fS_{\ell'} \subset B_{\fM}(x_0, \delta_{\ell}^*)$, by the choice of $\ell'$,  $\delta_{\ell}^*$ and $\e_{\ell}^*$ we have that $\xi_i'' \approx_{\ell} x_0$.

Moreover, the leafwise paths  satisfy $d_{\fM}(\sigma_i(t) , \sigma_i^*(t)) < \eU/4$ for all $0 \leq t \leq 1$ hence
by Lemma~\ref{lem-domainconst},   they determine the same holonomy transformations on their common domain and are homotopic when projected to $M_{\ell}$.
\endproof

Now  apply Proposition~\ref{prop-cores} inductively. Let $\ell_1 \geq \ell_0$ be defined  as above. Take $\ell = \ell_1 + 1$ and let $\ell_2 = \ell'$ be defined using Proposition~\ref{prop-cores}. Then repeat, let $\ell = \ell_2$ and set $\ell_3 = \ell'$, and so forth.  Define the normal subgroups of $\cH_{\ell_1}$ by
$\cN_{i} =  C_{\cH_{\ell_1}}(\cH_{\ell_i})$ where $\cH_{\ell_{i+1}} \subset \cN_i$.

By Theorem~\ref{thm-equivalence}, this completes the proof of Theorem~\ref{thm-main1}.

\section{Two Non-homogeneous Examples} \label{sec-examples}

Every   equicontinuous matchbox manifold   is homeomorphic to a solenoid by Theorem~\ref{thm-main2}, but it need not be a McCord solenoid and  homogeneous. In this section, we give two  general constructions of examples to illustrate this point. Before giving the examples, we first establish a basic result.

\begin{prop}\label{prop-equicex}
An $n$-dimensional solenoid  is an equicontinuous foliated space.
\end{prop}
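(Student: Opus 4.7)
The approach is to exploit the inverse-limit structure of $\cS$ to exhibit a particular regular foliation covering in which the holonomy action is manifestly equicontinuous; by Proposition~\ref{prop-indep} it does not matter which regular covering is chosen, so producing one such witness suffices.

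First I would observe that the projection $\pi_0 \colon \cS \to M_0$ is a locally trivial fiber bundle with totally disconnected fiber $\ds \fF = \lim_{\leftarrow} \{p_{\ell,0}^{-1}(b_0)\}$, where $b_0 \in M_0$ and $p_{\ell,0} = p_1 \circ \cdots \circ p_\ell$. Local triviality follows because any sufficiently small simply connected $V \subset M_0$ that is evenly covered by $p_1$ is automatically evenly covered by every finite composition $p_{\ell,0}$ (induct on $\ell$, using that each $p_\ell$ is a covering of $M_{\ell-1}$). Compatible choices of lifts through the tower then yield a homeomorphism $\pi_0^{-1}(V) \cong V \times \fF$ under which the path components of $\pi_0^{-1}(V)$ are exactly the slices $V \times \{w\}$. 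From a finite good cover of $M_0$ by such trivializing $V_i$, refined so that each plaque $V_i \times \{w\}$ is strongly convex in its leaf and has diameter below $\eF$, one extracts a regular covering $\cU$ of $\cS$ in the sense of Definition~\ref{def-regcover}, with clopen local transversals $\fF_i \subset \fF$.

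In this covering every admissible sequence $\cI$ comes from a leafwise path $\gamma$ projecting to $\bar\gamma = \pi_0 \circ \gamma$ in $M_0$, and the associated holonomy generator has a very concrete form. Representing $w \in \fF_{i_0}$ by its coordinates $w = (w_\ell) \in \prod_\ell M_\ell$ with $p_\ell(w_\ell) = w_{\ell-1}$, one verifies directly from the trivializations that
\[
h_\cI(w) = \bigl(\tilde\gamma_\ell(1)\bigr)_{\ell \geq 0},
\]
where $\tilde\gamma_\ell$ is the unique lift of $\bar\gamma$ to $M_\ell$ starting at $w_\ell$. Now equip $\fF$ with the natural ultrametric generating its inverse-limit topology: declare $d_\fF(w,w') \leq 2^{-L}$ precisely when $w_\ell = w'_\ell$ for all $\ell \leq L$. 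If $w_\ell = w'_\ell$, then the two lifts of $\bar\gamma$ starting at the common point $w_\ell = w'_\ell \in M_\ell$ coincide, so their endpoints agree and $h_\cI(w)_\ell = h_\cI(w')_\ell$. Thus every $h_\cI \in \cGF^*$ is non-expansive for $d_\fF$. Since $d_\fF$ and the restriction of $d_\fX$ to the compact set $\fF$ are uniformly equivalent, this transfers to the equicontinuity of $\cGF$ required by Definition~\ref{def-equicontinuous}.

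The only mild technical point is the initial construction of the bundle trivializations compatibly with the convexity and diameter conditions of Definition~\ref{def-regcover}; this is routine because convexity is a purely leafwise condition and can be imposed by shrinking each $V_i$ to a small strongly convex disc in $M_0$, while the product structure $V_i \times \fF_i$ passes through the refinement unchanged. I do not expect any serious obstacle — the substance of the argument is the non-expansiveness computation above, which is essentially a direct unpacking of how path lifting behaves in an inverse limit of coverings.
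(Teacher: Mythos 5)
Your argument is correct and is essentially the same as the paper's: both identify holonomy with path lifting in the inverse system of coverings, and both exploit the uniqueness of lifts (same level-$\ell$ starting point forces the same level-$\ell$ endpoint), combined with the fact that fiber diameters $\max_x \mathrm{diam}\,q_\ell^{-1}(x)$ shrink to zero. Your repackaging via an ultrametric on $\fF$, under which the holonomy generators are literally non-expansive (indeed isometries), is a clean way to express what the paper does directly with the constants $\e_k$ and $\delta$; the transfer back to $d_{\fX}$ via uniform equivalence on a compact set is sound.
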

\begin{proof}
Let
 $\ds \ds \cS = \lim_{\leftarrow} ~ \{p_{\ell+1} \colon M_{\ell +1} \to M_{\ell}\}$  be an  $n$-dimensional solenoid. Recall that by definition, we assume that each bonding maps $p_{\ell}$ is a   proper covering.

For ${\ell} \in \mN $, let $q_{\ell} \colon  \cS\rightarrow M_{\ell}$ denote projection onto the ${\ell}$--th factor.
Fix a point $x_0$ in $M_0$ and let ${\widehat q_0} \colon ({\widehat M_0},{\widehat x_0})\rightarrow (M_0,x_0)$ be the universal covering.
Now let $U_0$ be a neighborhood of $x_0$ homeomorphic to an open ball that is evenly covered by ${\widehat q_0}$ and let ${\widehat U_0}$ be the component of ${\widehat q_0}^{-1}(U_0)$ that contains ${\widehat x_0}$, and for $u \in U_0$, let ${\widehat u}$ denote the point of ${\widehat U_0}$ with ${\widehat q_0}({\widehat u})=u.$  We see that $ q_0^{-1}(x_0) \times U_0$ is homeomorphic to the open set $q_0^{-1}(U_0)$ in $\cS$ by considering the homeomorphism $h \colon q_0^{-1}(x_0) \times U_0\rightarrow q_0^{-1}(U_0)$  constructed as follows. For a given $f = \langle f_{\ell}\rangle \in q_0^{-1}(x_0)$, let ${\widehat q_{\ell}}[f] \colon ( {\widehat M_0},{\widehat x_0})\rightarrow (M_{\ell},f_{\ell})$ be the universal covering satisfying $p_1 \circ \cdots \circ p_{\ell}\circ {\widehat q_{\ell}}[f]= {\widehat q_0}.$ Then
$$ h((f,u))= \langle {\widehat q_{\ell}}[f]({\widehat u}) \rangle   $$
is the desired homeomorphism. The fiber $ q_0^{-1}(x_0)$ is homeomorphic to a Cantor set, and so if we choose an appropriate open
cover of $M_0$ consisting of sets evenly covered by ${\widehat q_0}$, we see that $\cS$ meets the definition of a matchbox manifold.
Also notice that ${\widehat M_0}$ is path connected and that for a given
$f = \langle f_{\ell}\rangle \in q_0^{-1}(x_0)$, $\langle{\widehat q_{\ell}}(f)({\widehat M_0}) \rangle$ is a path connected and dense subset of $\cS$. Thus, the path components of $\cS$ are dense  and  $\cS$ is a minimal foliated space. (See also \cite[Lemma 11 ff.]{FO2002}.)

To show that the foliation of $\F$ is equicontinuous, we show equicontinuity with respect to a pseudogroup $\cG_{\F}$ determined by a
foliation atlas of the form $\{q_0^{-1}(V_1),\dots,q_0^{-1}(V_N)\},$ where $\{V_1,\dots,V_N\}$ is an open cover of $M_0$ and the inverse
of each chart is of the form $h_i \colon  q_0^{-1}(x_i) \times V_i\rightarrow q_0^{-1}(V_i)$ as above, where each $x_i \in V_i$ and
$x_i \neq x_j$ for $i \neq j.$ Thus, $\cT_i = q_0^{-1}(x_i)$ and $\cT_*$ are subspaces of $\cS$. We identify the transverse space $\fX$ with $\cT_*$, but in $\fX$ we view the distances between points in distinct $\cT_i$ as being $1$. Now let $0<\e<1.$ For ${\ell}\in \mN$ let
$$\e_{\ell} = \max \{ {\rm diam} (q_{\ell}^{-1}(x)) \,|\,x\in M_{\ell} \}.$$  Then $\e_{\ell}\rightarrow 0$.
Choose $k$ with $\e_k< \e$ and let $d$ the degree of the covering $p_1 \circ \cdots \circ p_k.$ For each $i\in\{1,\dots,N\}$ let
$$\{y_1^i,\dots,y_d^i\}= (p_1 \circ \cdots \circ p_k)^{-1}(x_i)$$ and define $\delta>0$ to be the minimum of all the distances
in $\fX$ between the compact pairwise disjoint sets $q_k^{-1}(y^i_j)$ for $ (i,j)\in \{1,\dots,N\} \times \{1,\dots,d\}.$

Suppose that $w=\langle w_{\ell}\rangle, z= \langle z_{\ell} \rangle \in \cT_r$ are within $\delta$ and that both are contained in the domain of $g \in \cG_{\F}.$
As $w$ and $z$ are within $\delta$, we must have that $w_k=z_k=y^i_j$ for some $ (i,j)\in \{1,\dots,N\} \times \{1,\dots,d\}.$
Let $\cI=(i_0,\dots,i_{\alpha})$ be the admissible sequence associated to $g$. Let $\gamma_w , \gamma_z \colon [0,1]\rightarrow \cS$ be
paths from $w$ to $g(w)$ and $z$ to $g(z)$ covered by $(\cI,w)$ and $(\cI,z)$ respectively and constructed such that $q_0 \circ \gamma_w = q_0 \circ \gamma_z$. As the covering $p_1 \circ \cdots \circ p_k$ lifts the path $q_0 \circ \gamma_w = q_0 \circ \gamma_z$ to the paths $q_k\circ \gamma_w$ and   $q_k\circ \gamma_z$ that agree on the initial point $y_j^i$ in $M_k,$ the paths must agree on their endpoints  $q_k\circ g(w)=q_k\circ \gamma_w(1)=q_k\circ \gamma_z(1)=q_k\circ g(z).$ Thus, the distance between $g(w)$ and $g(z)$ is less than or equal to $\e_k <\e,$ as required to show that $\cG_{\F}$ is equicontinuous.
\end{proof}

\subsection{Non-homogeneous solenoids}\label{subsec-nhs}

Observe that Proposition~\ref{prop-equicex}  implies that the existence of an $n$-solenoid without holonomy which is not homogeneous yields
an equicontinuous, minimal, matchbox manifold that is  not homogeneous.

The first example of a solenoid that is not homogeneous was provided
by Schori~\cite{Schori1966}. This example is formed by taking a
specific sequence of non-normal three--to--one coverings of
orientable surfaces of increasing genus.

A simpler construction    of   a non-homogeneous solenoid was constructed by Fokkink and Oversteegen   in \cite[Theorem 35]{FO2002}, which gives an    example with    \emph{simply-connected} path components. We briefly describe this example.

Let  $\mathcal{S}_p$ denote  the
$p$--adic solenoid of dimension one, considered as an abelian
topological group.  The example can be described as the orbit
space of an involution $I$ on $\mathcal{S}_3 \times
\mathcal{S}_{35}$. To describe $I$, consider $\mathcal{S}_3 \times
\mathcal{S}_{35}$ the inverse limit of an inverse sequence of tori
$\mT^2 \equiv \mR^2/\mZ^2$ with single bonding map represented by the matrix
$\left(
                                 \begin{array}{cc}
                                   3 & 0 \\
                                   0 & 35 \\
                                 \end{array}
\right).$
The involution $(x,y) \mapsto (x+\frac{1}{2},-y)$ of $\mT^2$
  then induces the involution $I$ on $\mathcal{S}_3 \times
\mathcal{S}_{35}$. As the orbit space of the involution of the torus
described above is the Klein bottle, this space could also be
described as a solenoid over Klein bottles. This example is similar
to an example of Rogers and Tollefson~\cite{RogersTollefson1971},
which could be described as the orbit space of the analogously
defined involution on $S^1 \times \mathcal{S}_2$.

\subsection{Matchbox manifolds with holonomy}\label{subsec-mmwh}

We give an   construction of a   class of examples of equicontinuous matchbox manifolds      having leaves with infinite germinal holonomy groups. The method is  very general though abstract.

Let $\Lambda_1 = \langle g_1, \ldots , g_k \rangle$ be a finitely generated group, $K_1$ a Cantor set with metric $d_1$, and let $\rho_1 \colon \Lambda_1 \to {\bf Homeo}(K_1)$ define a minimal equicontinuous action of $\Lambda_1$ on $K_1$.

Let $K_0 \subset [0,1]$ be the ``standard'' middle thirds Cantor set, with coordinate $0 \leq t \leq 1$, with $0 \in K_0$. Let $d_0$ be the metric inherited from the interval $[0,1]$.    Let $\Lambda_1$ act on $K_0 \times K_1$ via the second coordinate, where for $g \in \Lambda_1$ and $(x,y) \in K_0 \times K_1$ we set $g   (x,y) = (x, g   y)$.  Now let
$$K = (K_0 \times K_1)/\{0\} \times K_1$$
where we collapse the ``vertical slice''  $\{0\} \times K_1$ to a point, denoted by $w_0 \in K$. Note that $K$ is again a Cantor set.
The action of $\Lambda_1$ on $K_0 \times K_1$ descends to a continuous action on $K$.

Define the \emph{warp product  metric} $d_K$ on $K$ by setting, for $(x,y), (x', y')  \in K_0 \times K_1$ and letting $[x,y], [x',y'] \in K$ denote their equivalence classes,
\begin{equation}\label{eq-newmetric}
d_K([x,y], [x',y']) ~ = ~  d_0(x,x')  + \max\{x,x'\} \cdot d_1(y,y')
\end{equation}
 Then the induced action of $\Lambda_1$ on $K$ is equicontinuous for this metric, but not minimal.

 Let $K_2$ be a Cantor set, and $\phi_2 \colon \mZ \to {\bf Homeo}(K_2)$ be any minimal equicontinuous action. Choose a homeomorphism $\Phi \colon K_2 \to K$ and let $\phi \colon \mZ \to  {\bf Homeo}(K)$ be the conjugate homeomorphism. Then the action $\phi$ of $\mZ$ on $K$
 is also equicontinuous, as $K$ is compact.

 Let $M = \Sigma_{k+1}$ be a surface of genus $m = k+1$, choose a basepoint $b_0 \in M$ and a surjection
 $$\Lambda \equiv \pi_1(M, b_0) \to \mZ^{*m} = \langle f_1, \ldots f_k, f_{k+1} \rangle$$
 onto the free group on $m$ generators. Then define a surjection of $ \mZ^{*m}$ onto the free product $\Lambda_1 * \mZ$, sending $f_i \mapsto g_i$, for $1 \leq i \leq k$, and $f_m \mapsto 1 \in \mZ$. In this way, we obtain a minimal equicontinuous action of $\Lambda$ on $K$.
 Furthermore, note that the subgroup of $\Lambda$ corresponding to the first generators, $\langle f_1, \ldots f_k \rangle$ fixes the point $w_0 \in K$, yet acts non-trivially on open neighborhoods in $K$ of this point.

 Suspend the action thus constructed of the fundamental group $\Lambda$ on $K$ to obtain a $2$-dimensional matchbox manifold, which is equicontinuous, hence minimal, and has very large infinite holonomy group for the leaf determined by the point $w_0$ in the transversal $K$.

\section{Codimension one}\label{sec-codim1}

If $\fM$ is a minimal matchbox
manifold, any homeomorphic copy of $\fM$ that  occurs as an
invariant subset of a foliation $\F$ of the same leaf dimension as $\fM$
must in fact be a minimal set of $\F$ since $\fM$ would then be the closure of each
of its leaves.

 As follows from a famous theorem of Denjoy and its generalization to
foliations, any sufficiently smooth foliation of the $2$-dimensional
torus is either minimal or has compact (circular) leaves.
Reeb~\cite{Reeb1961} conjectured that sufficiently smooth
codimension one foliations on closed manifolds could not have
exceptional leaves. However, Sacksteder and Schwartz~\cite{SS1964}
constructed a $C^{1}$ codimension one foliation on a closed
$3$-manifold that has a $2$-dimensional minimal set. This was
improved to a $C^{\infty}$ example by Sacksteder
in~\cite{Sacksteder1964} and to an analytic example by Hirsch
in~\cite{Hirsch1975}. Thus, smoothness alone poses no obstacle to
the existence of exceptional minimal sets in codimension one.
Reeb~\cite{Reeb1961} and Sacksteder~\cite{Sacksteder1965} did,
however, find added conditions on the foliation that eliminate the
possibility of exceptional leaves. Perhaps most notable of these
conditions is that the foliation be defined by a locally
free action of a connected Lie group ~(\cite{Sacksteder1965}).

\medskip

Alternatively, one could view the Denjoy  dichotomy purely
topologically. The exceptional Denjoy minimal sets of foliations of
the torus are known to be not homogeneous; see, e.g.,~\cite[Example
4]{AHO1991}. Hence, any homogeneous minimal set of a foliation of
the torus is a circle or the torus itself. As indicated below,  our
results imply that this statement generalizes in a natural way.
Thus, while smoothness does not force the regularity of minimal
sets, a natural topological condition does.

We recall the definition of an orientable foliated
space~\cite[Definition 11.2.14]{CandelConlon2000}.

\begin{defn}\label{orientable}
A smooth foliated space $X$ is \emph{orientable} if its tangent
bundle is an orientable vector bundle over $X$.
\end{defn}

\begin{thm}\label{thm-Denjdichot}
Let $\F$ be a smooth codimension one transversely orientable
foliation of a closed orientable manifold $M$. If $\fM$ is a
homogeneous minimal set of $\F$, then $\fM$ is a manifold.
\end{thm}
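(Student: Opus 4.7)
The plan is to reduce Theorem~\ref{thm-Denjdichot} to Corollary~\ref{cor-codimen1} by endowing the homogeneous minimal set $\fM$ with the structure of a smooth matchbox manifold inside the orientable $(n+1)$-manifold $M$. First I would dispose of the trivial cases. A minimal set of a codimension one foliation of a connected manifold is either all of $M$, a compact leaf of $\F$, or an exceptional minimal set. In the first two cases $\fM$ is already a manifold and there is nothing to prove, so I can assume $\fM$ is exceptional.

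In the exceptional case, the transverse orientability of $\F$ together with the orientability of $M$ produces a smooth orientable transverse line field, and hence a family of smooth transverse arcs. About any point of $\fM$ I would therefore choose a foliation chart $\vp \colon U \to (-1,1)^n \times (-1,1)$ adapted simultaneously to $\F$ and to such a transverse arc $T$, so that $\vp(\fM \cap U) = (-1,1)^n \times (\fM \cap T)$. The key point is that $\fM \cap T$ is totally disconnected: if it contained a non-trivial subarc, then saturating by $\F$ would show that $\fM$ has non-empty interior $U_0$; but then $\partial \fM = \fM \setminus U_0$ would be a closed, $\F$-saturated, proper subset of $\fM$, contradicting minimality, unless $\partial \fM = \emptyset$, in which case $\fM = M$, contrary to the exceptional hypothesis. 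Hence the transverse model of $\fM$ in each chart is a closed, totally disconnected subset of an interval. Combined with the observation in Section~\ref{sec-concepts} that a closed saturated subset of a smooth foliation of a compact Riemannian manifold inherits the structure of a smooth foliated space, this exhibits $\fM$ as a smooth matchbox manifold in the sense of Definition~\ref{def-mm}, of leaf dimension $n = \dim M - 1$.

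Since $\fM$ is homogeneous by hypothesis, Theorem~\ref{thm-main1} gives that $\fM$ is homeomorphic to a McCord solenoid, and the non-embedding result of Clark-Fokkink~\cite{ClarkFokkink2004}, packaged as Corollary~\ref{cor-codimen1}, then forces $\fM$ to be a manifold: a homogeneous smooth $n$-dimensional matchbox manifold which embeds in a closed orientable $(n+1)$-dimensional manifold must already be a manifold. The main obstacle I expect is not any deep estimate but the structural verification that the exceptional minimal set $\fM$ meets the hypotheses of Corollary~\ref{cor-codimen1}, that is, that transverse orientability and codimension one really do cut out the totally-disconnected-transverse case used throughout the paper; once that is in hand, the result follows immediately from the material developed in Sections~\ref{sec-concepts}--\ref{sec-mccord}.
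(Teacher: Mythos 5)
Your proposal follows essentially the same path as the paper's proof: dispose of the trivial cases, observe that an exceptional minimal set of a codimension-one foliation has the structure of a matchbox manifold with Cantor transversal, apply Theorem~\ref{thm-main1} to get a McCord solenoid, and then invoke the Clark--Fokkink non-embedding result. Your self-contained argument for total disconnectedness of $\fM\cap T$ (via $\partial\fM$ being closed, saturated, and proper) is correct, and is used in place of the paper's citation of Camacho--Lins~Neto.

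The one place where you are slightly less careful than the paper is the orientability bookkeeping. You discharge the final step by appealing to Corollary~\ref{cor-codimen1} as a black box. But note that the statement actually proved in Section~\ref{sec-codim1} is Corollary~\ref{cor-codimen1a}, which requires $\fM$ to be \emph{orientable} as a foliated space; orientability is what guarantees the base manifolds $M_i$ in the inverse-limit presentation are orientable, which is what Lemma~2 of Clark--Fokkink~\cite{ClarkFokkink2004} needs. The paper's own proof of Theorem~\ref{thm-Denjdichot} explicitly bridges this: transverse orientability of $\F$ together with orientability of $M$ forces the tangent plane field to be orientable, hence the quotient manifolds $M_i$ are orientable, hence the non-embedding lemma applies. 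You have the transverse orientability hypothesis in hand, so this bridge is available to you; you just do not build it. Adding one sentence making that orientability deduction explicit would close the gap and bring your argument fully in line with the paper's.
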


\begin{proof}
Suppose $\fM$ is a homogeneous minimal set of $\F$ that is neither a
closed leaf nor all of $M$. Then $\fM$ is an exceptional minimal
set, which in the codimension one case is well known to have the
structure of a matchbox manifold in which the transverse space can
be taken to be the Cantor set; see, e.g.,~\cite[III, Theorem
7]{CN1985}. As $\fM$ is homogeneous, Theorem~\ref{thm-main1} applies to allow us
to conclude that $\fM$ is a McCord solenoid. By our assumption that
$\F$ is transversely orientable and that $M$ is orientable, the
plane field associated to $\F$ is orientable, see, e.g.,~\cite[II,
Theorem 5]{CN1985}. It then follows that the manifolds $M_i$ formed
by collapsing the tubes in Section~\ref{sec-projections} are orientable. By Lemma
2 of~\cite{ClarkFokkink2004}, $\fM$ does not embed (even
topologically) in $M$, a contradiction.
\end{proof}

However, there are many examples of codimension $n \geq 2$
foliations with exceptional homogeneous minimal sets. For example,
it is not difficult to construct a smooth flow on a three manifold
without fixed points that has the dyadic solenoid as a minimal set.
As the Denjoy exceptional minimal set shows, without the condition
of homogeneity the above theorems fail.

This definition implies that an orientable foliated space admits a
foliation atlas in which all the leafwise transition maps have
Jacobians with positive determinant. If the matchbox manifold $\fM$
is orientable when regarded as a foliated space and if $\fM$ is at
the same time a McCord solenoid, then there exist an inverse limit
expansion for $\fM$ in which all the manifolds are orientable. As a
consequence of this and the impossibility of codimension one
embeddings of solenoids as shown in~\cite{ClarkFokkink2004}, we
obtain the following corollary, which is a generalization of the
result of Prajs~\cite{Prajs1990} that any homogeneous continuum in
$\mR^{n+1}$ which contains an $n$-cube is an $n$-manifold.

\begin{cor}\label{cor-codimen1a}
Any orientable homogeneous $n$-dimensional matchbox manifold
embedded in a closed, orientable $(n+1)$-dimensional manifold is
itself a manifold.
\end{cor}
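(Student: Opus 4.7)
The plan is to run essentially the same argument used just above for Theorem~\ref{thm-Denjdichot}, with the foliated-space orientability of $\fM$ taking the place of the transverse orientability hypothesis there. The target is to force a contradiction with Lemma~2 of \cite{ClarkFokkink2004} unless $\fM$ is already a manifold.

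First, I would apply Theorem~\ref{thm-main1} to conclude that $\fM$ is homeomorphic to a McCord solenoid
\[
\cS = \lim_{\leftarrow} \{p_{\ell+1} \colon M_{\ell+1} \to M_\ell\},
\]
realized explicitly by the construction in Section~\ref{sec-projections}: the manifold $M_\ell$ is the quotient $\fM/\approx_\ell$ cut out by the transverse Cantor foliation $\cH_\ell$, and the bonding maps $p_{\ell+1}$ are normal covering maps. Next, I would upgrade this to the statement that each $M_\ell$ may be taken orientable. This is the step where the foliated-space orientability of $\fM$ enters. By Definition~\ref{orientable}, the leafwise tangent bundle $T\F \to \fM$ is orientable, so there is a foliation atlas whose leafwise transition maps have positive Jacobian determinant. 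The coordinate charts on $M_\ell$ produced in Proposition~\ref{prop-structure1} are the compositions $Q_\ell \circ \vp_{i_{\wtz}}^{-1}$ on a single plaque, and their transition maps factor through leafwise transition maps of $\fM$ pre- and post-composed with the trivializations $\Phi_x$ of Definition~\ref{def-cantorfol}. Since each $\Phi_x$ is the identity on the leafwise direction up to a reparametrization that preserves leafwise orientation (by the construction via barycentric coordinates in \cite{CHL2011a}), the chosen orientation on $T\F$ descends to a consistent orientation on $M_\ell$.

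Finally, suppose for contradiction that $\fM$ is not a manifold. Then the Cantor transversal $\fT_*$ is not reduced to a point, and since $\fM$ is minimal (Theorem~\ref{thm-minimal}) the solenoid $\cS$ must have infinitely many bonding maps $p_{\ell+1}$ of degree $>1$, i.e.\ $\cS$ is a genuine (non-manifold) solenoid. The embedding $\fM \hookrightarrow N^{n+1}$ into a closed orientable $(n+1)$-manifold then realizes this orientable McCord solenoid as a codimension-one subset of an orientable manifold, which is ruled out by Lemma~2 of \cite{ClarkFokkink2004}. This contradiction forces $\fM$ itself to be a manifold, completing the proof.

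The main obstacle is the middle step: turning the leafwise orientability of $\fM$ into orientability of the discrete quotients $M_\ell$. A priori the maps $\Phi_x$ in Definition~\ref{def-cantorfol} need not be leafwise orientation-preserving, so one must either invoke the explicit construction in \cite{CHL2011a} (where $\Phi_x$ is built by small perturbations of barycentric coordinates and so is isotopic to the identity on each plaque, hence orientation preserving) or, alternatively, pass to a finite orientation cover $\widetilde{M}_\ell \to M_\ell$ and argue that this cover still realizes $\fM$ as an inverse limit of orientable manifolds. Once this orientability of the $M_\ell$ is secured, the rest of the argument is formal and parallels the codimension-one discussion already given for Theorem~\ref{thm-Denjdichot}.
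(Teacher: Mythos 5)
Your proposal follows essentially the same route as the paper: apply Theorem~\ref{thm-main1} to obtain a McCord solenoid structure, use foliated-space orientability to produce a presentation by orientable manifolds $M_\ell$, and then conclude via the Clark--Fokkink impossibility of codimension-one embeddings. The paper simply asserts that orientability of $\fM$ yields an orientable inverse-limit presentation; you flag the subtle point about whether the trivializations $\Phi_x$ preserve leafwise orientation and propose to close it via the explicit barycentric construction in \cite{CHL2011a}, which is indeed the correct way to secure this step (your alternative of passing to orientation double covers is shakier, since that would realize a double cover of $\fM$, not $\fM$ itself, unless the orientation characters are checked to be compatible along the tower).
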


\bigskip

\section{Problems}\label{sec-remarks}

We state   three open problems, motivated by the results of this work and   \cite{ClarkHurder2011}. The first two are in the spirit of Corollary~\ref{cor-codimen1a}, as they concern the consequences of a matchbox manifold being the minimal set for a smooth foliation.

\begin{prob}\label{prob1}
Let $\fM$ be an equicontinuous matchbox manifold embedded as a
minimal set of a $C^2$-foliation $\F$ of a closed manifold. Show that there exists a finite foliated covering    $\Pi \colon \widetilde{\fM} \to \fM$,
as in Definition~\ref{def-finiteholo}, such that  $\widetilde{\fM}$ is a McCord solenoid.
\end{prob}

In~\cite{ClarkHurder2011} we find embeddings of solenoids as minimal sets of smooth foliations. In all of these examples, the Galois groups of the covers in a
presentation of the solenoid are abelian. This leads to the following problem, which is an even stronger form of Problem~\ref{prob1}.

\begin{prob}\label{prob2}
Let $\cS$ be a McCord solenoid embedded in a $C^2 $-foliation $\F$ of a compact manifold.
Show that $\cS$ admits a presentation in which all the covers are abelian.
\end{prob}

Finally, we formulate some questions about the relationship between a matchbox manifold and its group of homeomorphisms.
Note that Fokkink and Oversteegen   ask a related question  at the conclusion of their work \cite{FO2002}.
Define the  normal closed topological  subgroup of ${\bf  Homeo}(\fM)$ consisting  of all leaf-preserving homeomorphisms
 $${\bf Inner}(\fM) = \{ h \in {\bf  Homeo}(\fM) \mid h(L) = L ~ {\rm for ~ all} ~ L \subset \fM\} ~ .$$
 In analogy with   geometric group theory constructions,     introduce the  group  of \emph{outer automorphisms} of a matchbox manifold $\fM$, which  is the quotient topological group
 \begin{equation}
{\bf Out}(\fM) = {\bf  Homeo}(\fM)/{\bf Inner}(\fM)
\end{equation}

 One can think of ${\bf Out}(\fM)$ as the group of automorphisms of the leaf space $\fM$, and thus  should reflect many aspects of the space $\fM$ -- its topological, dynamical and algebraic properties. Very little is known, in general, concerning some basic questions in higher dimensions:

\begin{prob}\label{prob3}
Let $\fM$ be a matchbox manifold with foliation $\F_{\fM}$.
\begin{enumerate}
\item If ${\bf Out}(\fM)$ is not discrete, must it act transitively?  If not, what are the examples?
\item If ${\bf Out}(\fM)$ is   discrete and infinite, what conditions on $\fM$ imply that it is finitely generated?
\item Suppose that $\fM$ is minimal and expansive, must ${\bf Out}(\fM)$ be discrete?
\item For what hypotheses on $\fM$ must  ${\bf Out}(\fM)$ be a finite group?
\end{enumerate}
\end{prob}


\end{document}